\def\author#1{\gdef\autrun{\def\and{\unskip, }#1}\gdef\@author{#1}}
\def\email#1{e-mail: #1}
\def\keywords#1{\par\medskip
\noindent\textbf{Keywords.} #1}
\newtheorem{lemma}{Lemma}[section]
\newtheorem{theorem}{Theorem}
\newtheorem{definition}{Definition}[section]
\newtheorem{proposition}{Proposition}[section]
\newtheorem{remarks}{Remark}[section]
\newcommand{\K}{\mathcal{K}}
\newcommand{\NN}{\mathcal{N}}
\newcommand{\TT}{\mathcal{T}}
\newcommand{\Lip}{\mathrm{Lip}}
\newcommand{\mmod}{\mathrm{mod}}
\newcommand{\diam}{\mathrm{diam}}
\newcommand{\cl}{\mathrm{cl}}
\newcommand{\T}{\mathbb{T}}
\newcommand{\R}{\mathbb{R}}
\newcommand{\Z}{\mathbb{Z}}
\newcommand{\N}{\mathbb{N}}
\newcommand{\PP}{\mathcal{P}}
\newcommand{\A}{\mathcal{A}}
\newcommand{\CC}{\mathcal{C}}
\newcommand{\MMM}{\mathcal{M}}
\newcommand{\LL}{\mathcal{L}}
\DeclareMathOperator*{\argmin}{argmin} 
\DeclareMathOperator*{\argmax}{argmax}
\DeclareMathOperator*{\supp}{spt}
\DeclareMathOperator*{\ddiv}{div}
\numberwithin{equation}{section}
\begin{document}


\baselineskip=15pt


\title{Discretization and Vanishing Discount Problems for First-order Mean Field Games}

\author{
Renato Iturriaga \thanks{Centro de Investigaci\'on en Math\'ematicas, Valenciana Guanajuato 36000, Mexico; \email{\tt renato@cimat.mx}} \and	
Cristian Mendico \thanks{Institut de Math\'ematique de Bourgogne, UMR 5584 CNRS, Universit\'e Bourgogne, Dijon 21000, France; \email{\tt cristian.mendico@u-bourgogne.fr}} \and
Kaizhi Wang \thanks{School of Mathematical Sciences, CMA-Shanghai, Shanghai Jiao Tong University, Shanghai 200240, China; \email{\tt kzwang@sjtu.edu.cn}} \and  Yuchen Xu \thanks{Corresponding author. School of Mathematical Sciences, CMA-Shanghai, Shanghai Jiao Tong University, Shanghai 200240, China; \email{\tt math\_rain@sjtu.edu.cn}} 
}


\maketitle

\begin{abstract}
	This article focuses two issues related to the first-order discounted mean field games system. The first is the time discretization problem. The time discretization approach enables us to prove the existence of solutions $(u,m)$ of the system, where $u$ is a viscosity solution of the discounted Hamilton-Jacobi equation and $m$ is a  projected minimizing measure satisfying the continuity equation in the sense of distributions.
	The second is the vanishing discount problems for  both the discounted mean field games system and its discretized system. The methods we use primarily derive from weak KAM theory. Moreover, we provide an example demonstrating the non-uniqueness of solutions to the discounted mean field games system.
\end{abstract}

\keywords{Discounted mean field games; discretization; vanishing discount problem; weak KAM theory}

\noindent\textbf{MSC (2020).} 35Q89, 37J51, 49N80.

\tableofcontents

\bigskip

\section{Introduction} \label{introduction}
\setcounter{equation}{0}

\subsection{Purpose of this work}

The theory of mean field games was developed independently and almost simultaneously by Lasry and Lions \cite{MR2269875, MR2271747, MR2295621} and by Huang, Malham\'e and Caines \cite{MR2352434, MR2346927}.
In essence, mean field games provide a powerful mathematical lens through which to view and solve the problem of complexity arising from vast numbers of individual decisions. It is a framework that makes the impossible--modeling millions of interacting entities--possible.

In the present work we focus on two first-order stationary mean field games systems 
\begin{equation}\label{DMFG}
	\begin{cases}
		\lambda u + H(x, Du)= F(x,m), & x \in \T^d \quad \quad \quad  (\ref{DMFG}a)
		\\
		\ddiv \left( m \frac{\partial H}{\partial p}(x, Du) \right) = 0, & x \in \T^d \quad \quad \quad  (\ref{DMFG}b)
		\\
		\int_{\T^d} m \, dx =1, & \quad \quad \quad\quad\quad \quad  (\ref{DMFG}c)
	\end{cases}
\end{equation}
where $\lambda>0$ is the discounted rate, and 
\begin{equation}\label{MFG}
	\begin{cases}
		H(x, Du)= F(x,m) + c(m), & x \in \T^d \quad \quad \quad  (\ref{MFG}a)
		\\
		\ddiv \left( m \frac{\partial H}{\partial p}(x, Du) \right) = 0, & x \in \T^d \quad \quad \quad  (\ref{MFG}b)
		\\
		\int_{\T^d} m \, dx =1, & \quad \quad \quad\quad\quad \quad  (\ref{MFG}c)
	\end{cases}
\end{equation}
where $c(m)$ denotes the Ma\~n\'e's critical value. 
To distinguish between \eqref{DMFG} and \eqref{MFG}, we refer to them respectively as the discounted MFG system (DMFGs) and the  MFG system (MFGs).  The first equation in the above systems is a first-order Hamilton-Jacobi equation, and the second one is a continuity equation. DMFGs \eqref{DMFG} arises in certain free-market economy models, see for instance \cite{MR3363751}, while MFGs \eqref{MFG} usually arises in the study of the large time behavior of first-order mean field games with finite horizon, see \cite{MR3127145,MR4092693,MR4772553} for instance.

This paper has two objectives: First, to discretize DMFGs \eqref{DMFG} in time, prove the existence of solutions to the discretized system, and further establish the existence of solutions to \eqref{DMFG} by demonstrating the convergence of certain sequence of solutions to the discretized system as the time step approaches zero. Second, to investigate the vanishing discount problem for  \eqref{DMFG} and its discretized counterpart, which will establish a connection between \eqref{DMFG} and \eqref{MFG}.

\tikzset{
	arrow1/.style = {
		draw =black, semithick
	}	
}
\begin{figure}

\begin{tikzpicture}[]
	\node [draw, inner sep=0.5 cm,on grid] at (-4,7){Time discretization of \eqref{DMFG}};
	\node [draw, inner sep=0.5 cm,on grid] at (5,7){Time discretization of \eqref{MFG}};
	\node [draw, inner sep=0.5 cm,on grid] at (-4,4){Solutions to \eqref{DMFG}};
	\node [draw, inner sep=0.5 cm,on grid] at (5,4){Solutions to \eqref{MFG}};

	\draw[red, -latex] (-1,7)--(2,7) node [midway,above, black]{$\lambda \rightarrow 0$};
	\draw[blue, -latex] (5,6)--(5,5)  node [midway,right, black]{$\tau \rightarrow 0$ };
	\draw[-latex, red] (-4,6)--(-4,5)  node [midway,right, black]{$\tau \rightarrow 0$};
	\draw[-latex, red] (-1.8,4)--(2.9,4)  node [midway,above, black]{$\lambda \rightarrow 0$};
\end{tikzpicture}
\caption{The schematic of the research framework}
\caption*{\small In the above diagram, $\tau$ denotes the time step, and $\lambda$ denotes the discounted rate. The red arrows represent the convergence processes to be studied in this paper, while the convergence process indicated by the blue arrow was discussed in \cite{MR4605206}.} 
\label{fig}
\end{figure}
We use Fig. \ref{fig} to further illustrate the research framework of this paper.
Discretize DMFGs \eqref{DMFG} in time first.
Then we study the asymptotic behavior of solutions to the discretized system of \eqref{DMFG} as $\tau\to 0$ and  the asymptotic behavior of solutions to \eqref{DMFG} as $\lambda\to 0$.
At last we take care of the asymptotic behavior of solutions to the discretized system of \eqref{DMFG} as $\lambda\to 0$.
More precisely, for any given $\lambda > 0$, we prove the existence of solutions $(u_{\tau, m_\tau^\lambda}^\lambda, m_\tau^\lambda)$ to the discretized system of \eqref{DMFG}, where $u_{\tau, m_\tau^\lambda}^\lambda$ is the solution to the discrete Lax-Oleinik equation for (\ref{DMFG}a) and $m_\tau^\lambda$ is the minimizing $\tau$-holonomic $\lambda$-measure for DMFGs \eqref{DMFG}. And then we show that there is a subsequence  $\{( u_{\tau_i, m_{\tau_i}^\lambda}^\lambda, m_{\tau_i}^\lambda)\}_{i\in\mathbb{N}}$ converges to a solution $( u_0^\lambda, m_0^\lambda)$ to DMFGs \eqref{DMFG} as  $i \to \infty$. Next, we prove that there is a subsequence  $\{(u_0^{\lambda_i}, m_0^{\lambda_i})\}_{i\in\mathbb{N}}$ converges to the solution to MFGs \eqref{MFG} as $i \to \infty$.
At last, for any given $\tau > 0$, there is a subsequence $\{(u_{\tau, m_\tau^{\lambda_i}}^{\lambda_i}, m_\tau^{\lambda_i} )\}_{i\in\mathbb{N}}$ converges to  $(u_0^\tau, m_0^\tau )$ as $i \to \infty$, where  $(u_0^\tau, m_0^\tau )$ is a solution to the discretized system of \eqref{MFG}.

\subsection{A brief review of relevant works}
Discrete approximation schemes for first-order evolutionary mean field games systems with finite horizon were developed in \cite{MR2928379, MR3148086}. A discrete weak KAM method for first-order stationary mean field games system \eqref{MFG} was provided in \cite{MR4605206}, where 
the limit process represented by the blue arrow in Fig. \ref{fig} has been studied. To date, the discretization problem for DMFGs \eqref{DMFG} remains largely unexplored.

In addition to the discretization problem, another focus of this paper is the vanishing discount problem for DMFGs \eqref{DMFG} and its discretized counterpart. There exists a substantial body of in-depth research on the vanishing discount problem and, more broadly, the vanishing contact problem for Hamilton-Jacobi equations. See for instance  \cite{MR3952779, MR3556524,MR3670619,MR3581314, MR4275748} and the references therein.
For first-order stationary MFG systems with local coupling terms $F$, the vanishing discount problem was addressed in \cite{MR4175148, MR4567771} where different concepts of weak solutions from us were used.
They considered a solution pair $(u,m)$, where $u$ is a viscosity subsolution to the Hamilton-Jacobi equation. \cite{MR4175148} dealt with the case where the Hamiltonian has the mechanical form $H(x,p) = \frac{1}{2} \vert p \vert^2 + V(x)$ and the definition of weak solutions comes  from \cite{MR3882950}. \cite{MR4567771} utilized the concept of weak solutions from \cite{MR3408214} to solve the vanishing discount problem for more general Hamiltonians. For second-order stationary MFG systems with non-local coupling terms, the vanishing discount problem was investigated in \cite{MR3921309}.

See for instance \cite{MR4688694,MR4273184,MR3846236,MR4097936,MR4683963}
and the references therein for more recent progresses on first-order mean field games.

\subsection{Assumptions}
Let $\T^d := \R^d / \Z^d$ denote the standard flat torus. Let $H: \T^d \times \R^d \rightarrow \R$ be a Hamiltonian satisfying
\begin{enumerate}[\bfseries (H1)]
	\item \label{MFG_H1} Regularity: $H$ is of class $\CC^2$.
	\item \label{MFG_H2} Strict convexity: for each $(x,p) \in \T^d \times \R^d$, $\frac{\partial ^2 H}{\partial p^2} (x,p)$ is positive definite.
	\item \label{MFG_H3} Superlinearity: for each $K>0$, there exists $C(K) \in \R$ such that
	$$
	H(x,p) \geq K \vert p \vert + C(K), \quad \forall (x,p) \in \T^d \times \R^d.
	$$
\end{enumerate}
We call such a Hamiltonian $H$ a Tonelli Hamiltonian. Denote by $L: \T^d \times \R^d \rightarrow \R$ the associated Lagrangian, defined by
$$
L(x,v) = \sup_{p \in \R^d} \left( \left\langle p,v \right\rangle - H(x,p) \right) , \quad \forall (x,v) \in \T^d \times \R^d.
$$
It is clear that $L$ satisfies
\begin{enumerate}[\bfseries (L1)]
	\item \label{MFG_L1} Regularity: $L$ is of class $\CC^2$.
	\item \label{MFG_L2} Strict convexity: for each $(x,v) \in \T^d \times \R^d$, $\frac{\partial ^2 L}{\partial v^2} (x,v)$ is positive definite.
	\item \label{MFG_L3} Superlinearity: for each $K>0$, there exists $C(K) \in \R$ such that
	$$
	L(x,v) \geq K \vert v \vert + C(K), \quad \forall (x,v) \in \T^d \times \R^d.
	$$
\end{enumerate}

Let $\PP ( \T^d )$ denote the space of Borel probability measures on $\T^d$ endowed with the weak-* convergence. It is convenient to put a metric, i.e., the Kantorovich-Rubinstein distance $d_1$ on $\PP ( \T^d )$, which metricizes the weak-* topology. See Sect. \ref{wasserstein space} for details.
Assume the non-local coupling term $F: \T^d \times \PP ( \T^d ) \rightarrow \R$ satisfies
\begin{enumerate}[\bfseries (F1)]
	\item \label{MFG_F1} For each $m \in \PP ( \T^d )$, the function $x \mapsto F(x,m)$ is of class $\CC^2$, and there is a constant $F_{\infty} >0$ such that
	$$
	\left\Vert F ( \cdot, m ) \right\Vert_{\infty}, \left\Vert D_x F ( \cdot, m ) \right\Vert_{\infty} \leq F_{\infty}, \quad \forall m \in \PP ( \T^d ),
	$$
	where the uniform norm is defined by
	$$
	\left\Vert F ( \cdot, m ) \right\Vert_{\infty} := \sup_{x \in \T^d} \left\vert F(x,m) \right\vert.
	$$
	
	\item \label{MFG_F2} $F( \cdot, \cdot)$ and $D_x F( \cdot, \cdot)$ are continuous on $\T^d \times \PP ( \T^d )$.
	
	\item \label{MFG_F3} There is a constant $\Lip(F) >0$ such that
	$$
	\left\vert F(x,m_1) - F(x,m_2) \right\vert \leq \Lip(F) d_1(m_1,m_2), \quad \forall x \in \T^d,\,\, \forall m_1,m_2 \in \PP ( \T^d ).
	$$
\end{enumerate}

\subsection{Main results}
\begin{definition}
	A solution to DMFGs \eqref{DMFG} is a pair $(u,m)\in \CC ( \T^d ) \times \PP ( \T^d )$ such that (\ref{DMFG}a) is satisfied in the viscosity sense, $D u(x)$ exists for $m$-a.e. $x \in \T^d$, and (\ref{DMFG}b) is satisfied in the sense of distributions.
\end{definition}
\begin{remarks}
The concept of viscosity solutions comes from \cite{MR690039}.
When $D u(x)$ exists for $m$-a.e. $x \in \T^d$, we say that $m \in \PP ( \T^d )$ satisfies the continuity equation in the sense of distributions if
$$
\int_{\T^d} \left\langle D f(x), \frac{\partial H}{\partial p}(x, Du(x)) \right\rangle dm = 0,\quad \forall f \in \CC^\infty ( \T^d ).
$$
\end{remarks}

For any $m \in \PP ( \T^d )$, define
$$
H_m (x,p) := H(x,p) - F(x,m), \quad \forall (x,p) \in \T^d \times \R^d,
$$
$$
L_m(x,v) := L(x,v) + F(x,m), \quad \forall (x,v) \in \T^d \times \R^d.
$$

For any $\lambda > 0$ and any $m \in \PP ( \T^d )$, a $\Z^d$-periodic function $u_m^\lambda \in \CC ( \R^d )$ is the viscosity solution to (\ref{DMFG}a) if and only if it satisfies the Lax-Oleinik equation
$$
u_m^\lambda(y) = \inf_{x \in \R^d} \left( e^{-\lambda t} u_m^\lambda(x) + h_{t, \lambda}^m(x,y) \right), \quad \forall y \in \R^d,\,\, \forall t > 0,
$$
where the minimal action $h_{t, \lambda}^m(x,y)$ is defined by
$$
h_{t, \lambda}^m(x,y) = \inf_{\gamma} \int_{-t}^0 e^{\lambda s} L_m \left( \gamma(s), \dot{\gamma}(s) \right) ds, \quad \forall x,y \in \R^d,
$$
where the infimum is taken over all the absolutely continuous curves $\gamma : [-t, 0] \rightarrow \R^d$ with $\gamma(-t)=x$ and $\gamma(0) =y$.
In fact, the solution is unique, and can be represented as
\begin{equation} \label{equation of solution of HJE_MFG}
u_m^\lambda (x) = \inf_{\sigma} \int_{-\infty}^0 e^{\lambda s} L_m \left( \sigma(s), \dot{\sigma}(s) \right) ds, \quad \forall x \in \R^d,
\end{equation}
where the infimum is taken over all the absolutely continuous curves $\sigma:(-\infty, 0] \rightarrow \R^d$ with $\sigma(0)=x$. See, for instance, \cite{MR3556524, MR3927084} for the proofs of the above facts.

Let $\Phi^t_{L_m , \lambda}$ denote the discounted Euler-Lagrange flow generated by $L_m$ as defined in \eqref{lagrangian flow of DMFG} below.
For every $(x,v) \in \T^d \times \R^d$, let $\gamma_{(x,v)} (t) := \pi \left( \Phi^t_{L_m, \lambda} (x,v) \right)$ for all $t \in \R$, where $\pi: \T^d \times \R^d \rightarrow \T^d$ is the canonical projection.
Define the set
$$
\tilde{\Sigma}_{L_m, \lambda} := \left\{ (x,v) \in \T^d \times \R^d \,\, \middle\vert\,\, \text{the curve $\gamma_{(x,v)}$ is $(u_m^\lambda , L_m)$-$\lambda$-calibrated on $(-\infty, 0]$}  \right\}.
$$
Similar to \cite{MR3663623}, define the discounted Aubry set by
\begin{equation} \label{discounted aubry set}
\tilde{\A}_{L_m, \lambda} := \bigcap_{t \geq 0} \Phi^{-t}_{L_m, \lambda} \left( \tilde{\Sigma}_{L_m, \lambda} \right).
\end{equation}
Denote by $\A_{L_m, \lambda} = \pi (\tilde{\A}_{L_m, \lambda})$ the projected discounted Aubry set. 
Then, $\tilde{\A}_{L_m, \lambda}$ is non-empty, compact and invariant under the discounted Euler-Lagrange flow \eqref{lagrangian flow of DMFG}. Moreover, for any $x \in \A_{L_m, \lambda}$, the function $u_m^\lambda$ is differentiable at $x$.

Define the set of closed measures by
$$
\K ( \T^d \times \R^d ) := \left\{\mu \in \PP ( \T^d \times \R^d ) \, \middle\vert \,\, \int_{\T^d \times \R^d} \vert v \vert d \mu <+\infty, \,\, \int_{\T^d \times \R^d} v \cdot D\varphi(x) d\mu =0,\,\, \forall \varphi \in \CC^1 ( \T^d ) \right\}.
$$

\begin{definition} \label{lax-oleinik and solution discounted case}
	For any $\lambda \in (0, 1]$ and any $m \in \PP ( \T^d )$, we call $\mu \in \K ( \T^d \times \R^d )$ with $\supp (\mu) \subset \tilde{\A}_{L_m, \lambda}$ a minimizing $\lambda$-measure for $L_m$, if it satisfies
	$$
	\int_{\T^d \times \R^d} \left( L_m(x,v) - \lambda u_m^\lambda (x) \right) d\mu =0.
	$$	
\end{definition}
For any $\lambda \in (0, 1]$ and any $m \in \PP ( \T^d )$, Proposition \ref{existence of minimizing lambda measure} below establishes the existence of a minimizing $\lambda$-measure for Lagrangian $L_m$.

For any $\tau>0$ and any $m \in \PP ( \T^d )$, define the discrete action function by
$$
\LL_{\tau,m} (x,y) := \tau \left( L \left(x, \frac{y-x}{\tau} \right) + F(x,m) \right), \quad \forall x,y \in \R^d.
$$
For any $\tau \in (0,1)$, any $\lambda \in (0,1]$ and any $m \in \PP ( \T^d )$, define the discrete Lax-Oleinik equation for (\ref{DMFG}a) by
\begin{equation} \label{discrete_laxoleinik}
		u_{\tau,m}^\lambda (y) = \inf_{x \in \R^d} \left((1-\tau \lambda) u_{\tau,m}^\lambda(x) + \LL_{\tau,m} (x,y) \right) , \quad \forall y \in \R^d.		
\end{equation}
By Proposition \ref{existence of solution of discounted HJE} below, for any $\tau \in (0,1)$, any $\lambda \in (0,1]$ and any $m \in \PP ( \T^d )$, there exists a unique $\Z^d$-periodic continuous function $u_{\tau,m}^\lambda$ satisfies (\ref{discrete_laxoleinik}), and $\left\Vert u_{\tau,m}^\lambda \right\Vert_{\infty} \leq \frac{C_0}{\lambda}$, where
\begin{equation} \label{def of C_0}
	C_0 := \max \left\{ \sup_{\substack{\tau \in (0,1),\  x \in \R^d,\\ m \in \PP ( \T^d )}} \frac{\LL_{\tau,m}(x,x)}{\tau}, \,\,\,- \inf_{\substack{\tau \in (0,1),\ x,y\in \R^d, \\ m \in \PP ( \T^d )}} \frac{\LL_{\tau,m} (x,y)}{\tau} \right\}.
\end{equation}
Note that $C_0 < +\infty$ by (L\ref{MFG_L1}), (L\ref{MFG_L3}) and (F\ref{MFG_F1}).

By Lemma \ref{existence of calibrated configuration} below, for any $x \in \R^d$, any $m \in \PP ( \T^d )$, any $\tau \in (0,1)$ and any $\lambda \in (0,1]$, there exists $x_{-1} \in \R^d$ such that
$$
u_{\tau, m}^\lambda \left( x \right) = (1- \tau \lambda) u_{\tau,m}^\lambda (x_{-1}) + \LL_{\tau, m}(x_{-1} ,x).
$$
Moreover, there exists $x_{-2} \in \R^d$ such that $u_{\tau, m}^\lambda \left( x_{-1} \right) = (1- \tau \lambda) u_{\tau,m}^\lambda (x_{-2}) + \LL_{\tau, m}(x_{-2} ,x_{-1})$. Thus, one can inductively construct a sequence $\left\{x_{-k}\right\}_{k=0}^{+\infty}$ with $x_0 = x$ such that for any $k \geq 0$, 
$$
u_{\tau, m}^\lambda \left( x_{-k} \right) = (1- \tau \lambda) u_{\tau,m}^\lambda (x_{-k-1}) + \LL_{\tau, m}(x_{-k-1} ,x_{-k}).
$$
We call such a sequence a \textit{discounted calibrated configuration} of $u_{\tau, m}^\lambda (x)$.

\medskip

For any $x = \left( x_1, \cdots, x_d \right) \in \R^d$, define a standard universal covering projection $\Pi: \R^d \rightarrow \T^d$ by
$$
\Pi (x) = \left( x_1 \,\, \mmod\,\,1, \,\,x_2 \,\, \mmod\,\,1,\,\cdots, \,x_d \,\, \mmod\,\,1 \right).
$$
For any $\tau \in (0,1)$, any $\lambda \in (0,1]$ and any $m \in \PP ( \T^d )$, define
\begin{align*}
	\tilde{\Sigma}_{L_m, \lambda}^\tau := \left\{ ([x],v) \in \T^d \times \R^d \middle\vert \right. 
	& \,\,\exists x\in \R^d \,\,s.t.\,\, \Pi(x) = [x],\,\, \exists\,\, \text{a discounted calibrated configuration}   \\
	& \left. \left\{x_{-k}\right\}_{k=0}^{+\infty}\,\,\text{of}\,\, u_{\tau,m}^\lambda (x +\tau v)\,\, \text{satisfying}\,\,x_{-1} =x, \,\, \frac{x_0 - x_{-1}}{\tau} =v \right\}.
\end{align*}
For any $n \in \N$ and any $([x],v) \in \T^d \times \R^d$, we define a map $\Psi^n_{L_m, \lambda, \tau}: \T^d \times \R^d \rightarrow \T^d \times \R^d$ by
\begin{align*}
	\Psi^n_{L_m, \lambda, \tau} ([x], v):= \left\{ \left( [x_{-n-1}], \frac{x_{-n} - x_{-n-1}}{\tau} \right) \middle\vert  \right. 
	& \,\, \exists \,\,x\in \R^d \,\,s.t.\,\, \Pi(x) = [x],\,\,  \\
	& \exists\,\, \text{a discounted calibrated configuration}\,\,\left\{x_{-k}\right\}_{k=0}^{+\infty} \\
	& \left. \text{of}\,\, u_{\tau,m}^\lambda (x +\tau v)\,\, \text{satisfying}\,\, x_{-1} =x,\,\, v = \frac{x_0 -x_{-1}}{\tau} \right\}.
\end{align*}
Thus we can define the discrete version of the discounted Aubry set as follows.
\begin{definition} \label{def of discrete aubry set}
	For any $\tau \in (0,1)$, any $\lambda \in (0,1]$ and any $m \in \PP ( \T^d )$, define the discrete discounted Aubry set by
	$$
	\tilde{\A}_{L_m, \lambda}^\tau := \bigcap_{n \in \N} \Psi^n_{L_m, \lambda, \tau} \left( \tilde{\Sigma}_{L_m, \lambda}^\tau \right).
	$$
\end{definition}
By Lemma \ref{compact of discrete aubry set}, the set $\tilde{\A}_{L_m, \lambda}^\tau$ is compact for any $\tau \in (0,1)$, any $\lambda \in (0,1]$ and any $m \in \PP ( \T^d )$.
For any $\tau>0$, define the set of $\tau$-holonomic measures by
$$
\PP_\tau ( \T^d \times \R^d ) := \left\{ \mu \in \PP ( \T^d \times \R^d ) \,\, \middle\vert \,\, \int_{\T^d \times \R^d} \varphi(x+\tau v)d\mu = \int_{\T^d \times \R^d} \varphi(x) d\mu,\,\, \forall \varphi \in \CC ( \T^d ) \right\}.
$$

\begin{definition} \label{def of minimizng holonomic measure}
For any $\tau \in (0,1)$, any $\lambda \in (0,1]$ and any $m \in \PP ( \T^d )$, we call $\mu \in \PP_\tau ( \T^d \times \R^d )$ with $\supp(\mu) \subset \tilde{\A}_{L_m, \lambda}^\tau$ a minimizing $\tau$-holonomic $\lambda$-measure for $L_m$, if it satisfies
$$
\int_{\T^d \times \R^d} \left( L_m(x,v) - \lambda u_{\tau,m}^\lambda (x) \right) d\mu =0.
$$
\end{definition}
For any $\tau \in (0,1)$, any $\lambda \in (0,1]$ and any $m \in \PP ( \T^d )$, Proposition \ref{non-empty of minimizing holonomic measure} below establishes the existence of a minimizing $\tau$-holonomic $\lambda$-measure for Lagrangian $L_m$.

The first main result of this paper is stated as follows.
\begin{theorem} \label{theorem 1}
	Assume (L\ref{MFG_L1})-(L\ref{MFG_L3}) and (F\ref{MFG_F1})-(F\ref{MFG_F3}). Then we have the following:
	\begin{enumerate}[(1)]
		\item Fix $\lambda \in (0,1]$. \begin{enumerate}[(i)]
		\item For any $\tau \in (0, 1)$, there is $m \in \PP ( \T^d )$ such that there exists a minimizing $\tau$-holonomic $\lambda$-measure $\mu_{\tau,m}^\lambda$ for the Lagrangian $L_m$ with
		$$
		m = \pi \sharp \mu_{\tau,m}^\lambda,
		$$
		where the push-forward of $\mu_{\tau,m}^\lambda$ through the canonical projection $\pi:\T^d\times\mathbb{R}^d\to\T^d$ is defined by 
		$$
		\pi \sharp \mu_{\tau,m}^\lambda (B):= \mu_{\tau,m}^\lambda \left( \pi^{-1} (B) \right), \quad \text{for any Borel subset $B$ of} \,\,\, \T^d.
		$$
		Such a measure $m$ is called a minimizing $\tau$-holonomic $\lambda$-measure for DMFGs \eqref{DMFG} and denote it by $m_\tau^\lambda$.
		
		\item There is a subsequence $\tau_i \rightarrow 0$, a subsequence $m_{\tau_i}^\lambda \stackrel{w^*}{\longrightarrow} m_0^\lambda$, and a subsequence $u_{\tau_i, m_{\tau_i}^\lambda}^\lambda$ solutions to \eqref{discrete_laxoleinik} such that $u_{\tau_i, m_{\tau_i}^\lambda}^\lambda$ converges to $u_0^\lambda$ uniformly and $(u_0^\lambda, m_0^\lambda)$ is a solution to DMFGs \eqref{DMFG}.
		\end{enumerate}
		
		\item There exists a sequence $\lambda_j \rightarrow 0$ such that $m_0^{\lambda_j} \stackrel{w^*}{\longrightarrow} m_0$ and $u_0^{\lambda_j} + \frac{c \left( m_0^{\lambda_j} \right)}{\lambda_j} \rightarrow u_0$ uniformly, and $(u_0,m_0)$ is a solution to MFGs \eqref{MFG}. 
	\end{enumerate}
\end{theorem}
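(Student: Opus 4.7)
The proof naturally splits into three tasks matching the three assertions. For part (1)(i), the plan is to set up a Kakutani--Fan--Glicksberg fixed point argument on the convex compact (in the weak-$*$ topology) set $\PP(\T^d)$. Define a multi-valued map $\Phi_\tau^\lambda : \PP(\T^d) \rightrightarrows \PP(\T^d)$ by
\[
\Phi_\tau^\lambda(m) := \left\{ \pi \sharp \mu \,:\, \mu \text{ is a minimizing } \tau\text{-holonomic } \lambda\text{-measure for } L_m \right\}.
\]
Non-emptiness of $\Phi_\tau^\lambda(m)$ is guaranteed by Proposition \ref{non-empty of minimizing holonomic measure}, and convexity follows immediately from the linearity (in $\mu$) of the defining identity in Definition \ref{def of minimizng holonomic measure} combined with the linearity of the closedness/holonomy constraints. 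The crucial step is to verify that $\Phi_\tau^\lambda$ has closed graph: if $m_n \stackrel{w^*}{\to} m$ and $\mu_n \in \Phi_\tau^\lambda(m_n)$ with $\mu_n \stackrel{w^*}{\to} \mu$, then $\pi\sharp\mu \in \Phi_\tau^\lambda(m)$. This reduces to (a) continuous dependence $u_{\tau, m_n}^\lambda \to u_{\tau, m}^\lambda$ uniformly, which follows from (F\ref{MFG_F3}) and iterating the discrete Lax--Oleinik operator using the contraction factor $(1-\tau\lambda)$, and (b) upper semicontinuity of $m\mapsto \tilde\A_{L_m,\lambda}^\tau$, obtained by a diagonal argument on the nested intersection in Definition \ref{def of discrete aubry set} combined with the compactness supplied by Lemma \ref{compact of discrete aubry set}. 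Tightness of the minimizing family (uniform second-moment bound) comes from the superlinearity (L\ref{MFG_L3}) and the uniform bound $\|u_{\tau,m}^\lambda\|_\infty \le C_0/\lambda$.

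For part (1)(ii), with $m_\tau^\lambda$ the fixed points obtained in (i), I would first extract a subsequence $\tau_i \to 0$ with $m_{\tau_i}^\lambda \stackrel{w^*}{\longrightarrow} m_0^\lambda$ by compactness of $\PP(\T^d)$. The family $u_{\tau_i, m_{\tau_i}^\lambda}^\lambda$ is uniformly bounded by $C_0/\lambda$ and, via a one-step Lax--Oleinik comparison together with the Tonelli growth of $L$ and (F\ref{MFG_F1}), uniformly Lipschitz on $\T^d$ with constant independent of $\tau_i$. By Arzelà--Ascoli, pass to a further subsequence so that $u_{\tau_i, m_{\tau_i}^\lambda}^\lambda$ converges uniformly to some $u_0^\lambda$. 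That $u_0^\lambda$ is the viscosity solution of (\ref{DMFG}a) for $m=m_0^\lambda$ follows by a standard perturbed-test-function argument: write the discrete equation (\ref{discrete_laxoleinik}) with a smooth test function, Taylor-expand to first order in $\tau_i$, and use the uniform convergence $F(\cdot, m_{\tau_i}^\lambda)\to F(\cdot,m_0^\lambda)$ from (F\ref{MFG_F2}). For the continuity equation, by tightness of $\{\mu_{\tau_i,m_{\tau_i}^\lambda}^\lambda\}$ extract a further weak-$*$ limit $\mu_0^\lambda$, and using the calibration identity for discounted configurations together with differentiability of $u_0^\lambda$ on the projected Aubry set, conclude that $\mu_0^\lambda$ is supported on the graph $v=\partial H/\partial p(x,Du_0^\lambda(x))$ and is closed; testing against $f\in\CC^\infty(\T^d)$ then yields (\ref{DMFG}b) for $m_0^\lambda=\pi\sharp\mu_0^\lambda$.

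For part (2), I would apply a vanishing discount argument in the spirit of weak KAM theory. Using $\lambda u_0^\lambda + H(x,Du_0^\lambda) = F(x,m_0^\lambda)$ and the characterization of $c(m_0^\lambda)$ via closed measures, one obtains that $\tilde u_0^\lambda := u_0^\lambda + c(m_0^\lambda)/\lambda$ is uniformly bounded in $\lambda\in(0,1]$ and equi-Lipschitz (the Lipschitz bound comes from the Tonelli Lagrangian $L_{m_0^\lambda}$ with uniform-in-$\lambda$ bounds via (F\ref{MFG_F1})). Apply Arzelà--Ascoli and compactness of $\PP(\T^d)$ to extract $\lambda_j\to 0$ with $\tilde u_0^{\lambda_j}\to u_0$ uniformly and $m_0^{\lambda_j}\stackrel{w^*}{\to} m_0$. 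Writing the viscosity equation as $\lambda_j \tilde u_0^{\lambda_j} - c(m_0^{\lambda_j}) + H(x,D\tilde u_0^{\lambda_j}) = F(x,m_0^{\lambda_j})$ and passing to the limit (using (F\ref{MFG_F2}) and continuity of $m\mapsto c(m)$, which is standard) gives (\ref{MFG}a). For (\ref{MFG}b), the closed-measure structure carried by each $m_0^{\lambda_j}$ survives the limit by the same tightness/graph argument used in (1)(ii).

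The main obstacle will be the closed-graph property in part (1)(i), specifically the upper semicontinuity $m_n\stackrel{w^*}{\to}m \Rightarrow \tilde\A_{L_{m_n},\lambda}^\tau \to \tilde\A_{L_m,\lambda}^\tau$ in the Kuratowski sense. The discrete Aubry set is an infinite intersection of set-valued iterates $\Psi^n_{L_m,\lambda,\tau}$, and a priori small perturbations of $m$ could destabilize calibrated configurations at large backward times; exploiting the positive discount $\lambda>0$ to obtain geometric tail decay of $(1-\tau\lambda)^k$ is what makes the argument go through. A secondary delicate point is that the $\tau$-holonomy constraint $\int\varphi(x+\tau v)\,d\mu = \int \varphi(x)\,d\mu$ formally degenerates as $\tau\to 0$; extracting the closed-measure identity in the limit requires a uniform bound on $\int |v|^2\,d\mu_{\tau_i,m_{\tau_i}^\lambda}^\lambda$, which is ultimately supplied by superlinearity (L\ref{MFG_L3}) and the $L^\infty$ bound on $u_{\tau,m}^\lambda$.
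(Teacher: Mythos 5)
Your proposal follows essentially the same route as the paper: a Kakutani fixed-point argument on $\PP(\T^d)$ with the same set-valued map for (1)(i), extraction of limits via the uniform bound $C_0/\lambda$, equi-Lipschitz estimates and Arzel\`a--Ascoli for (1)(ii), and a vanishing-discount argument based on the uniform bound for $u^\lambda_{m}+c(m)/\lambda$ for (2), with the continuity equation recovered in both cases from the Fenchel equality on the support of the limiting closed measure. The only notable divergence is that you identify the limit $u_0^\lambda$ as a viscosity solution by a Barles--Souganidis-type consistency/Taylor-expansion argument, whereas the paper compares the discrete and continuous Lax--Oleinik operators (establishing $\left\Vert T_{\tau,\lambda}^m u_{\tau,m}^\lambda - \TT_{\tau,\lambda}^m u_{\tau,m}^\lambda \right\Vert_\infty \leq 2\tau^2 C$) and iterates to show $u_0^\lambda$ is a fixed point of the continuous semigroup; both are standard and correct.
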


\medskip
\medskip
\begin{remarks} \rm \label{remark 1.2}
\begin{enumerate}[(i)]

	
\item We use $c(m)$ to denote the Ma\~n\'e critical value of $L_m$ \cite{MR1479499}. 
It is well known that for any given $m \in \PP ( \T^d )$, $c(m)$ is the unique real number $k$ such that the equation $H_m \left( x, Du(x) \right) = k$ has viscosity solutions. A measure $\mu \in \PP ( \T^d \times \R^d )$ is called a Mather measure for $L_m$ if it satisfies
$$
\int_{\T^d \times \R^d} L_m(x,v) d \mu = \min_\mu \int_{\T^d \times \R^d} L_m(x,v) d \mu = -c(m),
$$
where the minimum is taken over all Borel probability measures on $\T^d \times \R^d$ invariant under the Euler-Lagrange flow $\Phi^t_{L_m}$ generated by $L_m$ as defined in \eqref{lagrangian flow of MFG} below. 
Besides, we have the conclusion that a closed measure $\mu$ satisfying
$$
\int_{\T^d \times \R^d} L_m(x,v) d \mu = -c(m)
$$
is a Mather measure.


\item 
In \cite{MR4455763}, Hu and Wang showed the 
existence of solutions to
\begin{equation*}
	\begin{cases}
		H_1(x,u, Du)= F_1(x,m), & x \in \T^d
		\\
		\ddiv \left( m \frac{\partial H_1}{\partial p}(x,u, Du) \right) = 0, & x \in \T^d 
		\\
		\int_{\T^d} m \, dx =1. 
	\end{cases}
\end{equation*}
There, besides satisfying (H\ref{MFG_H1})-(H\ref{MFG_H3}), the Hamiltonian $H_1$ is also required to be strictly monotonically increasing with respect to $u$ and reversible with respect to $p$, i.e., $H_1(x,u,p)=H_1(x,u,-p)$ for all $(x,u,p)\in\mathbb{T}^d\times\mathbb{R}\times\mathbb{R}^d$. By Theorem \ref{theorem 1}, we get the existence of solutions to DMFGs \eqref{DMFG} under (H\ref{MFG_H1})-(H\ref{MFG_H3}) without the reversibility condition.
\item In \cite{MR3556524}, a full convergence result is provided for vanishing discount  problem for Hamilton-Jacobi equations. One might naturally wonder why, for the DMFGs \eqref{DMFG}, we only established subsequential convergence (Theorem \ref{theorem 1} (2)) rather than full convergence. Based on our current understanding of the problem,
the lack of uniqueness of solutions of \eqref{DMFG} appears to be a critical factor.
An example demonstrating non-unique solutions will be provided in  Appendix B.

\end{enumerate}
\end{remarks}

Before stating the second main result, we first recall the discretization framework and key conclusions in \cite{MR4605206}.
A $\Z^d$-periodic function $u \in \CC ( \R^d )$ is a viscosity solution to (\ref{MFG}a) if and only if $u$ satisfies the Lax-Oleinik equation:
\begin{equation} \label{classical_laxoleinik}
	u(y) - c(m)t = \inf_{x \in \R^d} \left( u(x) + h_t^m (x,y) \right), \quad \forall y \in \R^d, \,\,\forall t >0,
\end{equation}
where the minimal action $h_t^m (x,y)$ is defined by
$$
h_t^m (x,y) := \inf_{\gamma} \int_0^t L_m \left( \gamma(s), \dot{\gamma}(s) \right) ds,
$$
where the infimum is taken over all absolutely continuous curves $\gamma: [0,t] \rightarrow \R^d$ with $\gamma(0) = x$ and $\gamma(t) = y$.

For each $\tau >0$ and each $m \in \PP ( \T^d )$, there is a unique constant $\bar{L} (\tau,m) \in \R$ such that the discrete Lax-Oleinik equation for (\ref{MFG}a)
	\begin{equation} \label{discrete_laxoleinik_classical}
		u_{\tau,m} (y) + \tau \bar{L} (\tau,m) = \inf_{x \in \R^d} \left( u_{\tau,m} (x) + \LL_{\tau,m} (x,y)\right) , \quad \forall y \in \R^d
	\end{equation}
	has a continuous $\Z^d$-periodic solution $u_{\tau,m}$ and $\bar{L} (\tau,m) \rightarrow - c(m)$ as $\tau \rightarrow 0$. More precisely, 
	\begin{equation} \label{need0805}
	\bar{L} (\tau,m) = \min_\mu \int_{\T^d \times \R^d} L_m (x,v) d \mu,
	\end{equation}
	where the minimum is taken over all $\tau$-holonomic measures $\PP_\tau ( \T^d \times \R^d )$. A measure $\mu$ attaining the minimum is called a minimizing $\tau$-holonomic measure for $L_m$.

There exists a constant $\tau_0>0$ such that for each $\tau \in (0,\tau_0)$, there is $m \in \PP ( \T^d )$ such that there exists a minimizing $\tau$-holonomic measure $\mu_{\tau,m}$ for the Lagrangian $L_m$ with $m = \pi \sharp \mu_{\tau,m}$. Such a measure $m$ is denoted by $m_\tau$, and we call $m_\tau$ a minimizing $\tau$-holonomic measure for MFGs \eqref{MFG}.

\medskip
\medskip

The second main result of this paper is the following. We point out that Theorem \ref{theorem 2} (2) is a direct consequence of \cite[Theorem 1 (2)]{MR4605206}.
\begin{theorem} \label{theorem 2}
Assume (L\ref{MFG_L1})-(L\ref{MFG_L3}) and (F\ref{MFG_F1})-(F\ref{MFG_F3}). Then we have the following:
\begin{enumerate} [(1)]
\item For any $\tau \in (0,\tau_0)$, there exists a sequence $\lambda_i \rightarrow 0$ such that $m_\tau^{\lambda_i} \stackrel{w^*}{\longrightarrow} m_0^\tau$ and $u^{\lambda_i}_{\tau, m_\tau^{\lambda_i}} - \frac{\bar{L} \left( \tau, m_\tau^{\lambda_i} \right)}{\lambda_i}$ converges to $u_0^{\tau}$ uniformly, where $m_0^\tau$ is a minimizing $\tau$-holonomic measure for MFGs \eqref{MFG} and $u_0^\tau$ is a solution to \eqref{discrete_laxoleinik_classical} with respect to the measure $m_0^\tau$.

\item There exists a sequence $\tau_j \rightarrow 0$ such that $m_0^{\tau_j} \stackrel{w^*}{\longrightarrow} m_0$, $u_0^{\tau_j}$ converges to $u_0$ uniformly, and $(u_0, m_0)$ is a solution to MFGs \eqref{MFG}.
\end{enumerate}
\end{theorem}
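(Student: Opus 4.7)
Part (2) is, as the authors remark directly before the statement, an immediate invocation of \cite[Theorem~1(2)]{MR4605206} applied to the family $\{(u_0^\tau, m_0^\tau)\}_{\tau \in (0,\tau_0)}$ produced by part~(1), so the bulk of the argument goes into part~(1). The overall strategy is to adapt the vanishing-discount scheme of \cite{MR3556524} to the present discretized MFG framework, with the additional complication that the coupling measure $m_\tau^\lambda$ itself depends on $\lambda$.

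First I would rewrite the equation satisfied by $v^\lambda := u^\lambda_{\tau, m_\tau^\lambda} - \bar{L}(\tau, m_\tau^\lambda)/\lambda$. Substituting into \eqref{discrete_laxoleinik} yields, after rearrangement,
\begin{equation*}
v^\lambda(y) + \tau\, \bar{L}(\tau, m_\tau^\lambda) \;=\; \inf_{x \in \R^d} \bigl( (1-\tau\lambda)\, v^\lambda(x) + \LL_{\tau, m_\tau^\lambda}(x,y) \bigr), \qquad y \in \R^d,
\end{equation*}
which is a $\lambda$-perturbation of the classical discrete Lax--Oleinik equation \eqref{discrete_laxoleinik_classical} at $m = m_\tau^\lambda$. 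I would then establish that $\{v^\lambda\}_{\lambda \in (0,1]}$ is equi-Lipschitz on $\R^d$: by the superlinearity (L\ref{MFG_L3}) and the uniform bound $\|F(\cdot,m)\|_\infty \leq F_\infty$ of (F\ref{MFG_F1}), $\LL_{\tau, m}(x,y)/\tau$ is superlinear in $|y-x|/\tau$ uniformly in $m$; this forces any calibrated predecessor $x_{-1}$ of $y$ to lie in a ball whose radius depends only on $\tau$ and on the oscillation of $v^\lambda$, and together with $\Z^d$-periodicity this produces a Lipschitz constant independent of $\lambda$.

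The main obstacle, and the step I expect to take the most care, is the uniform $L^\infty$ bound on $v^\lambda$, i.e.\ two-sided control of $\lambda u^\lambda_{\tau, m_\tau^\lambda} + \bar{L}(\tau, m_\tau^\lambda)$. I plan to obtain this by pairing the displayed equation with the two extremal measures already available. For the lower bound, integrate against any minimizing $\tau$-holonomic measure $\mu_{\tau, m_\tau^\lambda}$ for $L_{m_\tau^\lambda}$ coming from \cite{MR4605206}: $\tau$-holonomicity annihilates the discrete differences of $v^\lambda$ and, combined with \eqref{need0805}, yields $\lambda \int u_{\tau, m_\tau^\lambda}^\lambda\, d(\pi\sharp \mu_{\tau, m_\tau^\lambda}) \geq - \bar{L}(\tau, m_\tau^\lambda)$. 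For the upper bound, pair instead with the minimizing $\tau$-holonomic $\lambda$-measure $\mu_{\tau, m_\tau^\lambda}^\lambda$ of Definition~\ref{def of minimizng holonomic measure}: its defining identity $\int (L_{m_\tau^\lambda} - \lambda u^\lambda)\, d\mu_{\tau, m_\tau^\lambda}^\lambda = 0$ combined with \eqref{need0805} gives the reverse inequality up to an $o(1)$ correction. Since both extremal measures depend on $\lambda$ through $m_\tau^\lambda$, I will need uniform stability of $m \mapsto \bar{L}(\tau, m)$, which I expect to obtain from (F\ref{MFG_F3}) applied to the variational formula \eqref{need0805}.

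With $\{v^\lambda\}$ equi-Lipschitz and equibounded, Arzel\`a--Ascoli supplies a sequence $\lambda_i \to 0$ and $u_0^\tau \in \CC(\T^d)$ with $v^{\lambda_i} \to u_0^\tau$ uniformly, and weak-* compactness of $\PP(\T^d)$ supplies, after further extraction, $m_\tau^{\lambda_i} \stackrel{w^*}{\to} m_0^\tau$. By (F\ref{MFG_F2})--(F\ref{MFG_F3}), $\LL_{\tau, m_\tau^{\lambda_i}} \to \LL_{\tau, m_0^\tau}$ locally uniformly, and the continuity of $m \mapsto \bar{L}(\tau,m)$ gives $\bar{L}(\tau, m_\tau^{\lambda_i}) \to \bar{L}(\tau, m_0^\tau)$; letting $\lambda_i \to 0$ in the displayed equation then produces \eqref{discrete_laxoleinik_classical} for $(u_0^\tau, m_0^\tau)$. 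To verify that $m_0^\tau$ is a minimizing $\tau$-holonomic measure for MFGs \eqref{MFG}, I would use superlinearity to obtain tightness of $\{\mu_{\tau, m_\tau^{\lambda_i}}^{\lambda_i}\}$, extract a weak-* limit $\mu_0$, check $\tau$-holonomicity and $\pi\sharp \mu_0 = m_0^\tau$ by testing against continuous functions, and finally combine lower semicontinuity of $\mu \mapsto \int L_{m_0^\tau}\, d\mu$ with $\lambda_i u^{\lambda_i} \to -\bar{L}(\tau, m_0^\tau)$ to conclude $\int L_{m_0^\tau}\, d\mu_0 = -\bar{L}(\tau, m_0^\tau)$, so that $\mu_0$ attains the minimum in \eqref{need0805}. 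Part (2) then follows as indicated above.
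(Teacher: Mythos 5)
Your plan is correct in structure and reaches the same conclusion, but it takes a genuinely different route at the one step that carries the real weight of Theorem \ref{theorem 2}(1), namely the uniform bound on $v^\lambda = u^\lambda_{\tau,m_\tau^\lambda} - \bar L(\tau,m_\tau^\lambda)/\lambda$. The paper (Proposition \ref{convergence1_prop1}) obtains this by a pointwise comparison with the undiscounted solution $u_{\tau,m}$ of \eqref{discrete_laxoleinik_classical}: evaluating both Lax--Oleinik equations at a point where $u^\lambda_{\tau,m} - \bar L(\tau,m)/\lambda - u_{\tau,m}$ attains its max (resp.\ min) and cancelling terms yields two-sided bounds in terms of the oscillation of $u_{\tau,m}$, which is controlled by Proposition \ref{equi-lipschitz of classical solution}. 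You instead pair the equation with the two extremal measures: integrating the inequality $u^\lambda(x+\tau v)\le(1-\tau\lambda)u^\lambda(x)+\tau L_m(x,v)$ against a minimizing $\tau$-holonomic measure gives $\lambda\int u^\lambda\,d\mu\le\bar L(\tau,m)$, i.e.\ $\int v^\lambda\,d\mu\le 0$, while the defining identity of the minimizing $\tau$-holonomic $\lambda$-measure together with \eqref{need0805} gives $\int v^\lambda\,d\mu^\lambda\ge 0$; combined with equi-Lipschitz continuity and periodicity these integral bounds upgrade to a sup-norm bound. This is the Davini--Fathi--Iturriaga--Zavidovique style argument and it works here because both extremal measures are already available from Propositions \ref{non-empty of minimizing holonomic measure} and \ref{existence of minimizing tau holonomic lambda measure for MFG}; it has the mild advantage of not invoking $u_{\tau,m}$ at all, at the cost of needing the equi-Lipschitz estimate \emph{before} the $L^\infty$ bound (the paper needs it only afterwards). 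Do fix the signs throughout: with the paper's conventions $\lambda u^\lambda_{\tau,m}\to\bar L(\tau,m)$ (not $-\bar L$), the quantity to control is $\lambda u^\lambda - \bar L$ rather than $\lambda u^\lambda+\bar L$, the ordinary minimizing $\tau$-holonomic measure yields the \emph{upper} bound on $v^\lambda$ and the $\lambda$-measure the \emph{lower} one, and in the last step $\lambda_i u^{\lambda_i}\to\bar L(\tau,m_0^\tau)$. The remaining steps (passage to the limit in the Lax--Oleinik equation via Proposition \ref{minimizer x is near}, identification of $\mu_0^\tau$ as a minimizing $\tau$-holonomic measure, and reduction of part (2) to \cite[Theorem 1(2)]{MR4605206}) coincide with the paper's Propositions \ref{convergence1_prop1}--\ref{convergence1_prop2}.
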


The rest of this paper is organized as follows. Sect. \ref{priliminaries} is devoted to preliminaries. Sect. \ref{Section 3} is devoted to show the time discretization of DMFGs \eqref{DMFG}. We prove Theorem \ref{theorem 1} and Theorem \ref{theorem 2} in Sect. \ref{section 4} and Sect. \ref{section5} respectively. We leave the proof of Lemma \ref{convergence of aubry set} in Appendix A since it is quite long, and give a specific example of DMFGs \eqref{DMFG} where the solution is not unique in Appendix B.

\section{Priliminaries} \label{priliminaries}
Fix $\lambda >0$. In Sect. \ref{viscosity solutions}-\ref{aubry set and minimizing measures for DMFG}, we consider the discounted Hamilton-Jacobi equation
\begin{equation} \label{HJE_1}
	\lambda u + H(x,Du) = 0, \quad \forall x \in \T^d.
\end{equation}
Recall that $L$ denotes the associated Lagrangian.

\subsection{Viscosity solutions} \label{viscosity solutions}
\begin{definition}[Viscosity solutions] \label{viscosity solution}
Let $U$ be an open subset of $\T^d$.
\begin{itemize}
\item A function $u: U \rightarrow \R$ is called a viscosity subsolution to (\ref{HJE_1}) if for every $\CC^1$ function $\varphi : U \rightarrow \R$ and every point $x_0 \in U$ such that $u - \varphi$ attains a local maximum at $x_0$, we have
$$
\lambda u(x_0) + H \left( x_0, D \varphi(x_0) \right) \leq 0.
$$

\item A function $u: U \rightarrow \R$ is called a viscosity supersolution to (\ref{HJE_1}) if for every $\CC^1$ function $\psi : U \rightarrow \R$ and every point $y_0 \in U$ such that $u - \psi$ attains a local minimum at $y_0$, we have
$$
\lambda u(y_0) + H \left( y_0, D \psi(y_0) \right) \geq 0.
$$

\item A function $u: U \rightarrow \R$ is called a viscosity solution to (\ref{HJE_1}) if it is both a viscosity subsolution and a viscosity supersolution.
\end{itemize}
\end{definition}

\begin{proposition} \cite[Proposition 2.6, Proposition 3.5, Theorem 3.8]{MR3556524}
Equation (\ref{HJE_1}) admits a unique viscosity solution
\begin{equation} \label{representation of viscosity solution}
u_\lambda (x) = \inf_{\gamma} \int_{-\infty}^0 e^{\lambda s} L \left( \gamma(s), \dot{\gamma}(s) \right) ds, \quad \forall x \in \T^d,
\end{equation}
where the infimum is taken over all absolutely continuous curves $\gamma: \left(-\infty, 0 \right] \rightarrow \T^d$ with $\gamma(0) =x$.

Moreover, the solutions $\left\{ u_\lambda \, \middle \vert \,\, \lambda>0 \right\}$ are equi-Lipschitz and equi-bounded.
There exists a function $u_0$ such that $u_\lambda$ converges to $u_0$ as $\lambda \rightarrow 0$ on $\T^d$ uniformly, and $u_0$ is a viscosity solution to the Hamilton-Jacobi equation, i.e.,
\begin{equation} \label{classical_HJE}
H \left( x, D u \right) =0, \quad \forall x \in \T^d.
\end{equation}
\end{proposition}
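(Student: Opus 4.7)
The plan is to establish the proposition in three stages: existence together with the representation formula, uniqueness via comparison, and the vanishing discount limit as $\lambda \to 0$.

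For existence, I would introduce the discounted Lax-Oleinik semigroup
$$
(T^\lambda_t v)(y) := \inf_{x \in \T^d}\bigl\{ e^{-\lambda t} v(x) + h^\lambda_t(x,y)\bigr\},
$$
where $h^\lambda_t(x,y)$ denotes the discounted minimal action from $x$ to $y$ over time $t$. The key observation is that $T^\lambda_t$ is an $e^{-\lambda t}$-contraction on $(\CC ( \T^d ), \|\cdot\|_\infty)$, hence has a unique fixed point $u_\lambda$. A standard semigroup argument shows the fixed point is independent of $t>0$, so $u_\lambda$ satisfies the Lax-Oleinik equation for every $t$. Sending $t \to \infty$ and using (L\ref{MFG_L3}) to bound the tail $\int_{-\infty}^{-T} e^{\lambda s} L\, ds$ yields the representation formula \eqref{representation of viscosity solution}. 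Standard dynamic-programming verification then shows $u_\lambda$ is a viscosity solution of (\ref{HJE_1}).

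For uniqueness, I would invoke the classical comparison principle for discounted Hamilton-Jacobi equations: given a viscosity subsolution $u_1$ and a viscosity supersolution $u_2$, doubling the variables with penalization $|x-y|^2/\varepsilon$ and using (H\ref{MFG_H1})-(H\ref{MFG_H2}) yields $u_1 \leq u_2$. For the uniform estimates, set $C_0 := \sup_{x \in \T^d} |H(x,0)|$; then the constants $\pm C_0/\lambda$ are respectively a supersolution and a subsolution, so by the comparison principle $\lambda \|u_\lambda\|_\infty \leq C_0$. Testing the subsolution inequality with any function touching $u_\lambda$ from above and invoking superlinearity (H\ref{MFG_H3}) then yields a uniform Lipschitz bound on $u_\lambda$ independent of $\lambda$; under the normalization in which the Ma\~n\'e critical value of $H$ vanishes, these bounds upgrade to genuine equi-boundedness.

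Finally, Arzel\`a-Ascoli provides a subsequence $u_{\lambda_k} \to u_0$ converging uniformly, and the stability of viscosity solutions under uniform convergence (together with $\lambda_k u_{\lambda_k} \to 0$) gives that $u_0$ solves \eqref{classical_HJE}. The hard part is upgrading subsequential convergence to full convergence. The argument I would follow, due to Davini-Fathi-Iturriaga-Zavidovique, selects a distinguished solution $u_0$ characterized by integration against Mather measures of $L$ supported on the projected Aubry set, and shows via a duality between such probability measures and a cone of viscosity subsolutions that every subsequential limit must coincide with this distinguished solution. This selection step, which requires the full weak KAM machinery (Mather measures, the representation of subsolutions on the Aubry set, and a min-max identity), is the main obstacle; the other three stages are comparatively routine.
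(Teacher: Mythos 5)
This proposition is not proved in the paper at all: it is quoted directly from \cite{MR3556524} (Davini--Fathi--Iturriaga--Zavidovique), so there is no internal proof to compare against. Your outline is a faithful reconstruction of that reference's argument --- the $e^{-\lambda t}$-contraction fixed point for the discounted Lax--Oleinik semigroup giving existence and the representation formula, comparison with translates of a critical solution giving equi-boundedness and equi-Lipschitz bounds under the normalization $c(H)=0$ (which is implicit in the statement, since $H(x,Du)=0$ is solvable), and the Mather-measure selection principle for upgrading subsequential to full convergence --- so it is correct and consistent with the proof the paper relies on.
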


\subsection{Weak KAM solutions} \label{weak KAM solutions}
This section briefly introduces several key definitions from weak KAM theory for \eqref{HJE_1}. See \cite{MR3663623} for details.
\begin{definition}
Let $U$ be an open set of $\T^d$. A function $u: U \rightarrow \R$ is $\lambda$-dominated by $L$ and denoted by $u \prec_{\lambda} L$ if for any absolutely continuous curve $\gamma: [a,b] \rightarrow U$ with $-\infty < a < b < +\infty$, there holds
$$
e^{\lambda b} u \left( \gamma(b) \right) - e^{\lambda a} u \left( \gamma(a) \right) \leq \int_a^b e^{\lambda t} L \left( \gamma(t), \dot{\gamma}(t) \right) dt.
$$
\end{definition}

\begin{definition}
An absolutely continuous curve $\gamma : I \rightarrow \T^d$ defined on the interval $I \subset \R$ is called $(u,L)$-$\lambda$-calibrated curve, if for any $t, t^\prime \in I$,
$$
e^{\lambda t} u \left( \gamma(t) \right) - e^{\lambda t^\prime} u \left( \gamma(t^\prime) \right) = \int_{t^\prime}^t e^{\lambda s} L \left( \gamma(s), \dot{\gamma}(s) \right) ds.
$$
\end{definition}

\begin{definition} [Weak KAM solution]
	Let $U$ be an open set of $\T^d$. A function $u: U \rightarrow \R$ is a weak KAM solution to \eqref{HJE_1} if
	\begin{enumerate} [(i)]
		\item $u \prec_\lambda L$,
		\item for any $x \in U$, there exists $\gamma: (-\infty , 0] \rightarrow U$ with $\gamma(0)=x$, which is $(u,L)$-$\lambda$-calibrated.
	\end{enumerate}
\end{definition}

\subsection{Lax-Oleinik semi-groups} \label{lax-oleinik semigroup}
\begin{definition}
Let $\gamma: [a,b] \rightarrow \T^d$ be an absolutely continuous curve with $- \infty <a<b< + \infty$. Define its discounted action by
$$
A_{L,\lambda}(\gamma) := \int_a^b e^{\lambda t} L \left( \gamma(t), \dot{\gamma} (t) \right) dt.
$$
We say that an absolutely continuous curve $\gamma: [a,b] \rightarrow \T^d$ is a minimizing curve for $A_{L,\lambda}$, if for any absolutely continuous curve $\sigma: [a,b] \rightarrow \T^d$ with $\gamma(a) = \sigma(a)$ and $\gamma(b) = \sigma(b)$,
$$
A_{L,\lambda}(\gamma) \leq A_{L,\lambda}(\sigma).
$$
\end{definition}
By \cite[Remark 4]{MR3663623}, every minimizing curve for $A_{L,\lambda}$ is of class $\CC^2$.

\begin{definition}
For any $t \geq 0$, the minimal action $h_{t,\lambda}(x,y)$ is defined by
$$
h_{t,\lambda}(x,y) := \inf_{\gamma} \int_{-t}^0 e^{\lambda s} L \left( \gamma(s), \dot{\gamma}(s) \right) ds, \quad \forall x,y \in \T^d,
$$
where the infimum is taken over all the absolutely continuous curves $\gamma : [-t,0]  \rightarrow \T^d$ with $\gamma(-t) =x$ and $\gamma(0)=y$.

Define the Lax-Oleinik semigroup $\left\{ T_{t, \lambda} : \CC ( \T^d ) \rightarrow \CC ( \T^d ) \middle \vert \,\, t \geq 0 \right\}$ by $T_{0, \lambda} \varphi = \varphi$ for any $\varphi \in \CC ( \T^d )$ and
$$
T_{t, \lambda} \varphi(x) := \min_{y \in \T^d} \left( e^{- \lambda t} \varphi(y) + h_{t,\lambda}(y,x) \right) ,\quad \forall t > 0,\,\, \forall x \in \T^d,\,\, \forall \varphi \in \CC ( \T^d ).
$$
\end{definition}

\begin{proposition} \cite[Lemma 4.1, Proposition 4.2, 4.3]{MR3912650}
For any $s, t \geq 0$ and any $u,v \in \CC ( \T^d )$,
\begin{enumerate} [(i)]
\item $T_{t+s, \lambda} = T_{t, \lambda} \circ T_{s, \lambda}$;
\item if $u \leq v$, then $T_{t, \lambda} u \leq T_{t, \lambda} v$;
\item $\left \Vert T_{t, \lambda} u - T_{t, \lambda} v \right\Vert_\infty \leq e^{- \lambda t} \left\Vert u-v \right\Vert_\infty$.
\end{enumerate}
\end{proposition}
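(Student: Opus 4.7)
The plan is to prove the three properties in order; all three are standard consequences of the variational definition of the Lax--Oleinik semigroup, and only item (i) requires any bookkeeping because of the exponential discount factor. Item (ii) is essentially immediate from the definition of $T_{t,\lambda}$: if $u\leq v$, then for every $y\in\T^d$ one has $e^{-\lambda t}u(y)+h_{t,\lambda}(y,x)\leq e^{-\lambda t}v(y)+h_{t,\lambda}(y,x)$, and taking the minimum over $y$ preserves the inequality.

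For (i), the key step is to reduce the claimed equality to the identity
\[
h_{t+s,\lambda}(y,x)\,=\,\min_{z\in\T^d}\Bigl(e^{-\lambda t}h_{s,\lambda}(y,z)+h_{t,\lambda}(z,x)\Bigr).
\]
I would derive this by a concatenation-and-reparametrization argument. Given any absolutely continuous curve $\gamma:[-t-s,0]\to\T^d$ with $\gamma(-t-s)=y$ and $\gamma(0)=x$, set $z:=\gamma(-t)$, and split $\gamma$ into $\gamma_1$ on $[-t-s,-t]$ and $\gamma_2$ on $[-t,0]$. On the first interval perform the change of variable $r=r'-t$ to rewrite $\int_{-t-s}^{-t}e^{\lambda r}L(\gamma_1,\dot\gamma_1)\,dr=e^{-\lambda t}\int_{-s}^{0}e^{\lambda r'}L(\tilde\gamma_1,\dot{\tilde\gamma}_1)\,dr'$, where $\tilde\gamma_1(r')=\gamma_1(r'-t)$ satisfies $\tilde\gamma_1(-s)=y$ and $\tilde\gamma_1(0)=z$. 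Taking the infimum over admissible splits yields the identity. Plugging $T_{s,\lambda}\varphi(z)=\min_y(e^{-\lambda s}\varphi(y)+h_{s,\lambda}(y,z))$ into $T_{t,\lambda}(T_{s,\lambda}\varphi)(x)=\min_z(e^{-\lambda t}T_{s,\lambda}\varphi(z)+h_{t,\lambda}(z,x))$ and collecting the $\varphi(y)$ terms then gives $T_{t,\lambda}(T_{s,\lambda}\varphi)(x)=\min_y(e^{-\lambda(t+s)}\varphi(y)+h_{t+s,\lambda}(y,x))=T_{t+s,\lambda}\varphi(x)$. Existence of minimizers in $y$ and $z$ follows from compactness of $\T^d$ together with the continuity and lower bound of $h_{t,\lambda}$, both granted by the Tonelli assumptions (L1)--(L3).

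For (iii), I would fix $x\in\T^d$ and, for arbitrary $\varepsilon>0$, pick $y_\varepsilon\in\T^d$ almost attaining the infimum that defines $T_{t,\lambda}v(x)$, so that
\[
T_{t,\lambda}v(x)\,\geq\,e^{-\lambda t}v(y_\varepsilon)+h_{t,\lambda}(y_\varepsilon,x)-\varepsilon.
\]
Using $y_\varepsilon$ as a competitor in the infimum defining $T_{t,\lambda}u(x)$ gives $T_{t,\lambda}u(x)\leq e^{-\lambda t}u(y_\varepsilon)+h_{t,\lambda}(y_\varepsilon,x)$, whence $T_{t,\lambda}u(x)-T_{t,\lambda}v(x)\leq e^{-\lambda t}(u(y_\varepsilon)-v(y_\varepsilon))+\varepsilon\leq e^{-\lambda t}\|u-v\|_\infty+\varepsilon$. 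Letting $\varepsilon\to0$ and swapping the roles of $u$ and $v$ yields the contraction estimate, from which membership of $T_{t,\lambda}u$ in $\CC(\T^d)$ also follows by approximating $u$ by smooth functions if needed.

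The only real obstacle is the careful time-shift in item (i): the discount $e^{\lambda r}$ is not translation invariant, so the factor $e^{-\lambda t}$ in front of $h_{s,\lambda}(y,z)$ must be tracked precisely, and one must verify that restricting/concatenating curves does not lose admissibility or competitiveness. Once this reparametrization is handled, items (ii) and (iii) follow by short direct arguments.
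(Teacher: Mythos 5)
Your proposal is correct. The paper does not prove this proposition at all --- it is quoted verbatim from \cite[Lemma 4.1, Propositions 4.2, 4.3]{MR3912650} --- and your argument is the standard one that reference uses: the dynamic-programming identity $h_{t+s,\lambda}(y,x)=\min_{z}\left(e^{-\lambda t}h_{s,\lambda}(y,z)+h_{t,\lambda}(z,x)\right)$ obtained by splitting/concatenating curves at time $-t$ with the time-shift producing the factor $e^{-\lambda t}$, followed by the two short direct estimates for monotonicity and the $e^{-\lambda t}$-contraction; the only point to make fully explicit is the ``$\leq$'' direction of the splitting identity (concatenating near-minimizers after reparametrization), which you correctly flag as the step requiring care.
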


The next Proposition reveals the relations between viscosity solutions, weak KAM solutions and Lax-Oleinik operators. See, for instance, \cite{MR3927084}.
\begin{proposition}
	Let $U$ be an open set of $\T^d$. A function $u: U \rightarrow \R$ is a weak KAM solution to \eqref{HJE_1} if and only if it is a viscosity solution to \eqref{HJE_1}, and if and only if $u$ is a fixed point of the Lax-Oleinik operators, i.e., 
	$$
	T_{t, \lambda} u = u , \quad \forall t \geq 0.
	$$
\end{proposition}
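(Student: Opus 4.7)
The plan is to establish the equivalences via a cycle: weak KAM solution $\Longrightarrow$ Lax--Oleinik fixed point $\Longrightarrow$ viscosity solution $\Longrightarrow$ weak KAM solution. Along the way one uses only three ingredients: the definition of $\lambda$-domination, the existence of calibrated curves on $(-\infty,0]$, and the standard dynamic programming identities for $h_{t,\lambda}$.

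For weak KAM $\Longrightarrow$ fixed point, assume $u$ is weak KAM and fix $x\in U$ and $t>0$. On one hand, for any absolutely continuous $\sigma:[-t,0]\to U$ with $\sigma(0)=x$, the domination $u\prec_\lambda L$ applied on $[-t,0]$ yields
$$
u(x)\leq e^{-\lambda t}u(\sigma(-t))+\int_{-t}^{0}e^{\lambda s}L(\sigma(s),\dot\sigma(s))\,ds,
$$
and minimizing over $\sigma$ gives $u(x)\leq T_{t,\lambda}u(x)$. On the other hand, plugging a $(u,L)$-$\lambda$-calibrated curve $\gamma:(-\infty,0]\to U$ with $\gamma(0)=x$ into the same inequality turns it into an equality, so $u(x)\geq T_{t,\lambda}u(x)$. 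For the converse direction, if $T_{t,\lambda}u=u$ for all $t\geq 0$, then writing an arbitrary curve $\sigma:[a,b]\to U$ as the tail of the minimization defining $T_{b-a,\lambda}u(\sigma(b))$ gives $u\prec_\lambda L$ after a change of variables $s\mapsto s-b$. The calibrated curve at $x$ is built by a compactness/diagonal argument: for each $n$ pick $y_n\in U$ and a $C^2$ minimizer $\gamma_n:[-n,0]\to U$ realizing $T_{n,\lambda}u(x)=e^{-\lambda n}u(y_n)+\int_{-n}^0 e^{\lambda s}L(\gamma_n,\dot\gamma_n)\,ds$; the superlinearity (L\ref{MFG_L3}) together with the equi-boundedness of $u$ yields a uniform Lipschitz bound on $\gamma_n$ on every compact subinterval, so Ascoli--Arzel\`a produces a limit curve $\gamma$, and passing to the limit in the $T_{t,\lambda}$ identities restricted to $[-t,0]$ for every $t$ gives the calibration.

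For fixed point $\Longleftrightarrow$ viscosity, one uses the standard dynamic programming technique. If $u=T_{t,\lambda}u$ for all $t\geq 0$ and $\varphi\in C^1$ touches $u$ from above at $x_0$, then for small $t>0$ one takes the curve $\sigma(s)=x_0+sv$ in the infimum defining $T_{t,\lambda}u(x_0)$, subtracts $\varphi$, divides by $t$ and lets $t\to 0^+$ to obtain $\lambda u(x_0)+\langle D\varphi(x_0),v\rangle-L(x_0,v)\leq 0$; taking the supremum over $v\in\R^d$ and using $H(x,p)=\sup_v(\langle p,v\rangle-L(x,v))$ yields $\lambda u(x_0)+H(x_0,D\varphi(x_0))\leq 0$. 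The supersolution inequality follows by picking the minimizing curve in $T_{t,\lambda}u(x_0)$ and arguing analogously with a test function touching from below. Conversely, if $u$ is a viscosity solution, one shows $T_{t,\lambda}u$ is also a viscosity solution with the same boundary behaviour as $u$; by the uniqueness result quoted from \cite{MR3556524} (or a comparison principle on $U$), $T_{t,\lambda}u=u$.

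The main obstacle will be the third step of the first implication: producing a $(u,L)$-$\lambda$-calibrated curve on the whole half line $(-\infty,0]$ from the mere fixed point property. One has to justify that the $C^2$ minimizers $\gamma_n$ have velocity bounds independent of $n$, which is where superlinearity and regularity of $L$ are really used, and then verify that the calibration identity passes to the uniform limit on every compact subinterval. Once this construction is in hand, the remaining implications are essentially algebraic consequences of the semigroup identities $T_{t+s,\lambda}=T_{t,\lambda}\circ T_{s,\lambda}$ and the definition of $h_{t,\lambda}$.
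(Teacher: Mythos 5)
The paper does not prove this proposition at all: it is stated as a known preliminary with the citation ``See, for instance, \cite{MR3927084}'', so there is no in-paper argument to compare against. Your cycle of implications is the standard proof from that literature (the discounted analogue of the Fathi/Davini--Fathi--Iturriaga--Zavidovique equivalences), and the outline is correct: domination integrated along an arbitrary curve gives $u\le T_{t,\lambda}u$, a calibrated curve gives the reverse inequality, the dynamic programming expansion along $\sigma(s)=x_0+sv$ gives the subsolution inequality after taking the supremum in $v$ via the Legendre transform, and the supersolution inequality comes from the optimal curve. Two places in your sketch deserve to be made explicit if this were written out in full. First, in the construction of the backward calibrated curve from the fixed-point property, after extracting the Ascoli--Arzel\`a limit $\gamma$ of the minimizers $\gamma_n$ you need the lower semicontinuity of the discounted action $\int e^{\lambda s}L(\gamma,\dot\gamma)\,ds$ under uniform convergence with weak-$L^1$ convergence of velocities (Tonelli's theorem, using convexity of $L$ in $v$); lower semicontinuity gives one inequality in the calibration identity on each $[-t,0]$ and the already-established domination gives the other. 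Second, your closing step ``viscosity $\Rightarrow$ fixed point'' via uniqueness should be phrased as uniqueness for the evolutionary Cauchy problem $\partial_t v+\lambda v+H(x,Dv)=0$ with initial datum $u$: the stationary viscosity solution $u$ and $T_{t,\lambda}u$ both solve it, so they coincide. Finally, note that the statement as written mixes a domain $U\subset\T^d$ with a semigroup defined on all of $\T^d$; your argument (like the cited references) really lives on $U=\T^d$, where compactness and the comparison principle are available, and that is the only case the paper uses.
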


\subsection{Discounted Aubry sets and minimizing $\lambda$-measures} \label{aubry set and minimizing measures for DMFG}

In contrast to \cite{MR3556524}, we propose a more precise definition of the discounted Mather measure, ensuring that it converges to the classical Mather measure, which is defined as in Remark \ref{remark 1.2} (i), as the discounted rate $\lambda$ tends to 0. Furthermore, compared with \cite{MR3663623}, our definition of the discounted Mather measure is based exclusively on closed measures and does not involve the discounted Euler-Lagrange flow. This choice significantly simplifies the discretization of the discounted Mather measure later.

We first introduce both the Euler-Lagrange flow generated by $L$ and its discounted counterpart.
Denote by $\Phi_L^t$ the flow of
\begin{equation} \tag{EL} \label{lagrangian flow of MFG}
	\begin{cases}
			& \dot{x} = v, \\ 
			& \frac{d}{dt} \left( \frac{\partial L}{\partial v} \right) =  \frac{\partial L}{\partial x}.
		\end{cases}
\end{equation}
We call it the Euler-Lagrange flow generated by $L$.
Denote by $\Phi^t_{L,\lambda}$ the flow of
\begin{equation} \tag{EL$_\lambda$} \label{lagrangian flow of DMFG}
	\begin{cases}
		& \dot{x} = v, \\ 
		& \frac{d}{dt} \left( e^{\lambda t} \frac{\partial L}{\partial v} \right) = e^{\lambda t} \frac{\partial L}{\partial x}.
	\end{cases}
\end{equation}
We call it the discounted Euler-Lagrange flow generated by $L$.


Inspired by \cite{MR3663623}, we define minimizing $\lambda$-measures (discounted Mather measures) with the help of closed measures and discounted Aubry sets, without relying on the Euler-Lagrange flow.
First of all, we establish in the following proposition that for any closed measure $\mu$, the functional $\int_{\T^d \times \R^d} \left( L (x,v) - \lambda u_\lambda (x) \right) d\mu$ is non-negative.

\begin{proposition} \label{need0818}
	For any $\mu \in \K ( \T^d \times \R^d )$, there is
	$$
	\int_{\T^d \times \R^d} \left(  L (x,v) - \lambda u_\lambda (x) \right)  d\mu \geq 0.
	$$
\end{proposition}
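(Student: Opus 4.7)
\textbf{Proof plan for Proposition \ref{need0818}.}

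The plan is to combine the Fenchel inequality $L(x,v) + H(x,p) \ge \langle p, v\rangle$ with the viscosity subsolution property of $u_\lambda$ and the defining property of a closed measure. The obstacle is that $u_\lambda$ is only Lipschitz (and not $\CC^1$), so one cannot plug $p = Du_\lambda(x)$ directly into either Fenchel or the closedness identity $\int v \cdot D\varphi \, d\mu = 0$, which is stated for $\varphi \in \CC^1(\T^d)$. The fix is a standard mollification argument that leverages the convexity of $H$ in $p$. Without loss of generality, assume $\int L \, d\mu < +\infty$, else the inequality is trivial; note that $u_\lambda$ is bounded, so $\int u_\lambda \, d\mu$ is finite.

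First, I would mollify: let $\rho_\varepsilon$ be a standard $\CC^\infty$ mollifier on $\T^d$ and set $u_\lambda^\varepsilon := u_\lambda \ast \rho_\varepsilon$. Since $u_\lambda$ is Lipschitz and satisfies $\lambda u_\lambda + H(x,Du_\lambda) \le 0$ almost everywhere (as a Lipschitz viscosity subsolution), convexity of $H$ in $p$ together with Jensen's inequality, plus the uniform continuity of $H$ on the compact set $\T^d \times \overline{B}(0, \Lip(u_\lambda))$ containing $(x-y, Du_\lambda(x-y))$ for small $|y|$, gives
\begin{equation*}
\lambda u_\lambda^\varepsilon(x) + H(x, Du_\lambda^\varepsilon(x)) \le \omega(\varepsilon), \qquad \forall x \in \T^d,
\end{equation*}
where $\omega(\varepsilon) \to 0$ as $\varepsilon \to 0$.

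Next, apply the Fenchel inequality pointwise with $p = Du_\lambda^\varepsilon(x)$:
\begin{equation*}
L(x,v) + H(x, Du_\lambda^\varepsilon(x)) \ge \langle Du_\lambda^\varepsilon(x), v\rangle, \qquad \forall (x,v) \in \T^d \times \R^d.
\end{equation*}
Integrate against $\mu$. Since $u_\lambda^\varepsilon \in \CC^\infty(\T^d)$ and $\mu \in \K(\T^d \times \R^d)$, the closedness condition yields $\int \langle Du_\lambda^\varepsilon(x), v\rangle \, d\mu = 0$. Combining with the mollified subsolution bound,
\begin{equation*}
\int_{\T^d \times \R^d} L(x,v) \, d\mu \ge -\int_{\T^d \times \R^d} H(x, Du_\lambda^\varepsilon(x)) \, d\mu \ge \lambda \int_{\T^d \times \R^d} u_\lambda^\varepsilon(x) \, d\mu - \omega(\varepsilon).
\end{equation*}

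Finally, as $\varepsilon \to 0$, $u_\lambda^\varepsilon \to u_\lambda$ uniformly on $\T^d$ (since $u_\lambda$ is continuous on the compact torus), so $\int u_\lambda^\varepsilon \, d\mu \to \int u_\lambda \, d\mu$ by the fact that $\mu$ is a probability measure. Passing to the limit delivers
\begin{equation*}
\int_{\T^d \times \R^d} \left( L(x,v) - \lambda u_\lambda(x) \right) d\mu \ge 0,
\end{equation*}
as desired. The main technical point to nail down carefully is the Jensen-plus-continuity estimate giving $\omega(\varepsilon)$ in the mollified inequality; everything else is a bookkeeping integration.
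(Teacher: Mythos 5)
Your proposal is correct and follows essentially the same route as the paper: the paper invokes \cite[Lemma 2.2]{MR3556524} to produce a smooth approximation $u_\varepsilon$ with $\Vert u_\lambda - u_\varepsilon\Vert_\infty \leq \varepsilon$ and $\lambda u_\lambda + H(x, Du_\varepsilon) \leq \varepsilon$, then applies Fenchel's inequality with $p = Du_\varepsilon(x)$, integrates against $\mu$ using the closedness condition, and lets $\varepsilon \to 0$. Your mollification-plus-Jensen argument is exactly the standard proof of that cited lemma, so the only difference is that you re-derive it rather than cite it.
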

\begin{proof}
	By \cite[Lemma 2.2]{MR3556524}, for any $\varepsilon >0$, there exists $u_\varepsilon \in \CC^\infty ( \T^d )$ such that $\left\Vert u_\lambda - u_\varepsilon \right\Vert_{\infty} \leq \varepsilon$ and
	$$
	\lambda u_\lambda (x) + H \left( x, D u_\varepsilon (x) \right) \leq \varepsilon, \quad \forall x \in \T^d.
	$$
	For any $(x,v) \in \T^d \times \R^d$, Fenchel's inequality implies that
	$$
	v \cdot Du_\varepsilon (x) \leq L (x,v)+H (x, Du_\varepsilon (x)) = L (x,v) - \lambda u_\lambda(x) + \lambda u_\lambda(x) + H(x, Du_\varepsilon (x)).
	$$
	Integrating both sides with respect to $\mu$, we have
	$$
	\int_{\T^d \times \R^d} \left( L(x,v) - \lambda u_\lambda (x) \right)  d \mu \geq  \int_{\T^d \times \R^d} v \cdot Du_\varepsilon (x) d\mu - \int_{\T^d \times \R^d} \left( \lambda u_\lambda(x) +H(x, Du_\varepsilon (x)) \right)  d\mu \geq -\varepsilon.
	$$
	Let $\varepsilon$ tend to 0. The proof is finished.
\end{proof}

The following two lemmas are utilized in the proof of Proposition \ref{existence of minimizing lambda measure}, establishing the existence of a closed measure $\mu$ such that the functional $\int_{\T^d \times \R^d}\left(  L(x,v) - \lambda u_\lambda (x) \right) d\mu$ attains the minimum and the support of it belongs the discounted Aubry set.
\begin{lemma} \label{lemma1,1}
	For any $t>0$, there exists a constant $C_t < + \infty$ such that for any $x,y \in \T^d$, there is a $\CC^\infty$ curve $\gamma: [-t, 0] \rightarrow \T^d$ with $\gamma(-t) = x$, $\gamma(0)=y$ satisfying that
	$$
	A_{L, \lambda} \left( \gamma \right) = \int_{-t}^0 e^{\lambda s} L \left( \gamma(s), \dot{\gamma}(s) \right) ds \leq C_t.
	$$
\end{lemma}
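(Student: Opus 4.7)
The plan is to exploit the compactness of $\T^d$ together with the regularity of $L$, constructing a straight-line connection in a lift and bounding its action uniformly.

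First I would, for any given $x, y \in \T^d$, pick lifts $\tilde{x}, \tilde{y} \in \R^d$ under the universal covering projection $\Pi$ such that $|\tilde{x} - \tilde{y}| \leq \sqrt{d}$ (using the diameter of the standard fundamental domain $[0,1]^d$). This gives a canonical candidate curve that is uniform in the choice of endpoints on the torus.

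Next, I would define the affine curve $\tilde{\gamma}(s) := \tilde{x} + \frac{s+t}{t}(\tilde{y} - \tilde{x})$ for $s \in [-t,0]$ and set $\gamma := \Pi \circ \tilde{\gamma}$. Since $\Pi$ is a local diffeomorphism and $\tilde{\gamma}$ is affine (hence $\CC^\infty$), the projected curve $\gamma: [-t,0] \to \T^d$ is $\CC^\infty$ with $\gamma(-t) = [\tilde{x}] = x$, $\gamma(0) = [\tilde{y}] = y$, and constant velocity $\dot{\gamma}(s) \equiv \frac{\tilde{y}-\tilde{x}}{t}$, whose norm is bounded by $\sqrt{d}/t$ independently of $x, y$.

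Finally I would bound the action. The compact set $\T^d \times \overline{B_{\sqrt{d}/t}(0)}$ is contained in the domain where $L$ is continuous (by (L\ref{MFG_L1})), so
$$
M_t := \sup\left\{ |L(z,w)| : z \in \T^d, \, |w| \leq \sqrt{d}/t \right\} < +\infty.
$$
Since $\lambda > 0$ and $s \leq 0$, we have $e^{\lambda s} \leq 1$, hence
$$
A_{L,\lambda}(\gamma) = \int_{-t}^0 e^{\lambda s} L(\gamma(s), \dot{\gamma}(s))\, ds \leq M_t \int_{-t}^0 e^{\lambda s} \, ds \leq t M_t,
$$
and one can take $C_t := t M_t$. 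There is no real obstacle here; the only subtlety is merely checking that the straight-line lift descends to a genuinely smooth curve on the torus and that the velocity is controlled uniformly in the endpoints, both of which follow directly from the geometry of the covering $\Pi$.
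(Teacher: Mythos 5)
Your proposal is correct and follows essentially the same route as the paper: connect $x$ and $y$ by a constant-speed segment (the paper works directly with a geodesic segment on $\T^d$ bounding the speed by $\diam(\T^d)/t$, while you lift to $\R^d$ and project, which amounts to the same thing), then bound $L$ on the resulting compact set of $(position,velocity)$ pairs and use $e^{\lambda s}\le 1$ to get $C_t = t\cdot\sup|L|$. The lift-and-project detail you flag is harmless and the argument is complete.
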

\begin{proof}
	Define $\gamma: [-t, 0] \rightarrow \T^d$ by a segment connecting $x$ and $y$ with $\vert \dot{\gamma} \vert$ is a constant.
	Then for any $s \in  [-t,0]$, we have $\left\vert \dot{\gamma}(s) \right\vert \leq \frac{\diam ( \T^d )}{t}$.
	Define a compact set $A_t$ by
	$$
	A_t := \left\{ (x,v) \in \T^d \times \R^d \middle\vert \,\, \vert v \vert \leq \frac{\diam ( \T^d )}{t} \right\},
	$$
	which contains the graph of $\gamma$. By the compactness of $A_t$, there exists a constant $\tilde{C}_t >0$ such that 
	$$
	L(x,v) \leq \tilde{C}_t \leq \frac{\tilde{C}_t}{e^{\lambda s}}, \quad \forall (x,v) \in A_t,\,\, \forall s \in [-t,0].
	$$
	Let $C_t := t \tilde{C}_t$. Then $A_{L, \lambda} \left( \gamma \right) \leq C_t$.
\end{proof}

\begin{lemma} \label{lemma1,2}
	Fix $t>0$. There exists a compact set $K_t \subset \T^d \times \R^d$ such that for any minimizing curve $\gamma: [a, b] \rightarrow \T^d$ for $A_{L, \lambda}$ with $b-a>t$, we have
	$$
	\left( \gamma(s), \dot{\gamma}(s) \right) \in K_t, \quad \forall s \in [a, b ].
	$$
\end{lemma}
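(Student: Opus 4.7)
The plan is to first show that on every sub-interval of length $t$ inside $[a,b]$ the minimizing curve $\gamma$ attains a moderately bounded speed at some point, and then to propagate this pointwise bound via the discounted Euler--Lagrange flow to all of $[a,b]$.

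For the first step, fix $s_1 \in [a, b-t]$ and exploit the minimality of $\gamma$ on $[s_1, s_1+t]$, comparing $\gamma|_{[s_1, s_1+t]}$ with a constant-speed segment $\tilde\gamma$ from $\gamma(s_1)$ to $\gamma(s_1+t)$ in time $t$, as produced in the proof of Lemma~\ref{lemma1,1}. Factoring $e^{\lambda s_1}$ out of both sides of the minimality inequality yields
$$
\int_{s_1}^{s_1+t} e^{\lambda(r-s_1)}\, L\bigl(\gamma(r),\dot\gamma(r)\bigr)\,dr \;\leq\; \tilde C_t \cdot \frac{e^{\lambda t}-1}{\lambda},
$$
with $\tilde C_t$ the constant produced in the proof of Lemma~\ref{lemma1,1}. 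Using the pointwise bounds $e^{\lambda(r-s_1)}\in[1,e^{\lambda t}]$ together with the superlinearity hypothesis (L\ref{MFG_L3}) with $K=1$, we obtain a uniform-in-$s_1$ estimate $\int_{s_1}^{s_1+t}|\dot\gamma(r)|\,dr \leq B_t$. The mean value theorem then provides some $s_0 \in [s_1, s_1+t]$ with $|\dot\gamma(s_0)| \leq B_t/t =: M_1$.

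For the second step, we observe that the discounted Euler--Lagrange system \eqref{lagrangian flow of DMFG} can be rewritten, after differentiating the product $e^{\lambda t}\tfrac{\partial L}{\partial v}$, as the autonomous system $\dot x = v$, $\dot v = L_{vv}^{-1}(x,v)\bigl(L_x(x,v) - L_{vx}(x,v)v - \lambda L_v(x,v)\bigr)$, so $\Phi^{\cdot}_{L,\lambda}:\R \times \T^d \times \R^d \to \T^d \times \R^d$ is a continuous flow. Given any $s \in [a,b]$, we may choose the subinterval so that the good point $s_0$ above satisfies $|s-s_0| \leq t$: take $s_1 = s$ when $s \in [a, b-t]$ and $s_1 = b-t$ otherwise. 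Then $(\gamma(s), \dot\gamma(s)) = \Phi^{s-s_0}_{L,\lambda}(\gamma(s_0), \dot\gamma(s_0))$ lies in
$$
K_t \;:=\; \Phi^{\cdot}_{L,\lambda}\Bigl([-t,t] \times \T^d \times \{v \in \R^d : |v| \leq M_1\}\Bigr),
$$
which is compact as the continuous image of a compact set, and is independent of the particular minimizer $\gamma$.

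The only subtle point will be the bookkeeping of the discount weight $e^{\lambda r}$ in the minimality comparison; it is precisely the ability to pull this factor out of both sides by dividing by $e^{\lambda s_1}$ that makes the bound on $\int_{s_1}^{s_1+t}|\dot\gamma|\,dr$ independent of the location of the subinterval within $[a,b]$, and hence uniform over all minimizing curves for $A_{L,\lambda}$ defined on intervals of length greater than $t$.
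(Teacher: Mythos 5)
Your proof is correct and follows essentially the same strategy as the paper's: use a constant-speed comparison segment to bound the discounted action of the minimizer on a window of length $t$, extract via the mean value theorem a point where the velocity (in the paper, the value of $L$, which by superlinearity amounts to the same thing) is uniformly controlled, and then propagate this control to the whole interval through the autonomous discounted Euler--Lagrange flow over times $|s-s_0|\leq t$. Your explicit factoring out of $e^{\lambda s_1}$ makes the translation-invariance of the estimate cleaner than the paper's ``without loss of generality $[a,b]=[-t,0]$'' reduction, but the argument is the same in substance.
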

\begin{proof}
	We only need to consider the case that $[a,b]=[-t,0]$, since for any $t_0 \in [a,b]$, one can always find an interval of length $t$ containing $t_0$.
	Without loss of generality, assume that $L\geq 0$. Define the constant $C_t$ as in Lemma \ref{lemma1,1}. Since $\gamma$ is a minimizing curve for $A_{L, \lambda}$, we obtain that $A_{L, \lambda} \left( \gamma \right) \leq C_t$. Since the function $s \mapsto e^{\lambda s} L \left( \gamma(s), \dot{\gamma}(s) \right)$ is continuous, by the mean value theorem, there exists $s_0 \in [-t,0]$ such that
	$$
	e^{\lambda s_0} L \left( \gamma(s_0), \dot{\gamma}(s_0) \right) \leq \frac{C_t}{t}.
	$$
	Define a compact set $B$ by
	$$
	B := \left\{ (x,v) \in \T^d \times \R^d \middle\vert \,\, L \left( x,v \right) \leq \frac{C_t}{t e^{-t}} \right\}.
	$$
	By the continuity of the Lagrangian flow $\Phi^t_{L, \lambda}$, the set $K_t := \bigcup_{\vert s \vert \leq t} \Phi^s_{L, \lambda} (B)$ is compact as well. Moreover, for any $s \in [-t, 0 ]$,
	$$
	\left( \gamma(s), \dot{\gamma}(s) \right) \in \Phi^{s- s_0}_{L, \lambda} (B) \subset K_t.
	$$
\end{proof}

\begin{proposition} \label{existence of minimizing lambda measure}
	There exists $\mu \in \K ( \T^d \times \R^d )$ such that $\supp (\mu) \subset \tilde{\A}_{L, \lambda}$ and
	$$
	\int_{\T^d \times \R^d} \left( L(x,v) - \lambda u_\lambda (x) \right)  d\mu = 0,
	$$
	where $\tilde{\A}_{L, \lambda}$ is defined as in \eqref{discounted aubry set}.
\end{proposition}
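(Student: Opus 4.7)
The plan is to build the closed measure by Mather-style time averaging along a bi-infinite calibrated orbit, and then to exploit the calibration identity to force both the support and the integral condition.

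\textbf{Step 1: produce a bi-infinite calibrated curve.} Fix any $x_0\in\T^d$. The representation formula \eqref{representation of viscosity solution} together with standard weak KAM theory yields a $(u_\lambda,L)$-$\lambda$-calibrated curve $\gamma:(-\infty,0]\to\T^d$ with $\gamma(0)=x_0$, which is $\CC^2$ and solves \eqref{lagrangian flow of DMFG}. Applying Lemma \ref{lemma1,2} on every subinterval of length $>1$, the lift $(\gamma,\dot\gamma)$ stays in a compact set $K\subset\T^d\times\R^d$ on $(-\infty,-1]$. Pick $s_n\to-\infty$; along a subsequence, $(\gamma(s_n),\dot\gamma(s_n))\to(x_\infty,v_\infty)\in K$. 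Set
\[
\gamma_\infty(t):=\pi\bigl(\Phi^t_{L,\lambda}(x_\infty,v_\infty)\bigr),\qquad t\in\R.
\]
By continuity of $\Phi^t_{L,\lambda}$ in the initial data, together with continuity of $L$ and $u_\lambda$, the calibration identity satisfied by $\gamma$ on $[s_n+a,s_n+b]$ passes to the limit on $[a,b]$ for every $-\infty<a<b<+\infty$. Thus $\gamma_\infty:\R\to\T^d$ is $(u_\lambda,L)$-$\lambda$-calibrated on all of $\R$, and every $(\gamma_\infty(t),\dot\gamma_\infty(t))$ belongs to $\tilde{\A}_{L,\lambda}$ (because shifting along the orbit keeps the full backward history calibrated). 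Moreover, a uniform compact set $K'\subset\T^d\times\R^d$ contains the entire orbit by another application of Lemma \ref{lemma1,2}.

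\textbf{Step 2: empirical measures and weak-* limit.} For $T>0$ define
\[
\mu_T:=\frac{1}{T}\int_0^T \delta_{(\gamma_\infty(s),\dot\gamma_\infty(s))}\,ds\in\PP(\T^d\times\R^d).
\]
The family $\{\mu_T\}_{T\geq 1}$ is supported in $K'$, hence tight, so by Prokhorov a subsequence converges weakly-* to some $\mu\in\PP(\T^d\times\R^d)$ with $\supp\mu\subset K'\cap\tilde{\A}_{L,\lambda}$ (using that $\tilde{\A}_{L,\lambda}$ is compact). Because everything is supported in $K'$, weak-* convergence against continuous (not necessarily bounded) integrands such as $v\cdot D\varphi(x)$ and $L(x,v)-\lambda u_\lambda(x)$ is justified via a cut-off argument.

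\textbf{Step 3: closedness and integral identity.} For $\varphi\in\CC^1(\T^d)$,
\[
\int v\cdot D\varphi(x)\,d\mu_T = \frac{1}{T}\int_0^T \frac{d}{ds}\varphi(\gamma_\infty(s))\,ds = \frac{\varphi(\gamma_\infty(T))-\varphi(\gamma_\infty(0))}{T}\xrightarrow{T\to\infty}0,
\]
so $\mu\in\K(\T^d\times\R^d)$. For the integral condition, the calibration identity
\[
e^{\lambda t}u_\lambda(\gamma_\infty(t))-e^{\lambda t'}u_\lambda(\gamma_\infty(t'))=\int_{t'}^{t}e^{\lambda s}L(\gamma_\infty(s),\dot\gamma_\infty(s))\,ds
\]
combined with absolute continuity of $u_\lambda\circ\gamma_\infty$ (since $u_\lambda$ is Lipschitz and $\gamma_\infty$ is $\CC^2$) yields, after differentiating in $t$ and dividing by $e^{\lambda t}$,
\[
L(\gamma_\infty(s),\dot\gamma_\infty(s))-\lambda u_\lambda(\gamma_\infty(s))=\frac{d}{ds}\bigl[u_\lambda(\gamma_\infty(s))\bigr]\quad\text{a.e. in }s.
\]
Integrating and averaging gives
\[
\int (L-\lambda u_\lambda)\,d\mu_T=\frac{u_\lambda(\gamma_\infty(T))-u_\lambda(\gamma_\infty(0))}{T}\xrightarrow{T\to\infty}0,
\]
because $u_\lambda$ is bounded. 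Passing to the weak-* limit with the cut-off trick yields $\int(L-\lambda u_\lambda)\,d\mu\leq 0$, and the reverse inequality is precisely Proposition \ref{need0818}. Hence $\int(L-\lambda u_\lambda)\,d\mu=0$, completing the proof.

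\textbf{Main obstacle.} The delicate step is Step 1: producing a two-sided calibrated curve whose entire orbit lies in $\tilde{\A}_{L,\lambda}$, where one must combine the uniform compactness provided by Lemma \ref{lemma1,2} with continuous dependence of $\Phi^t_{L,\lambda}$ and stability of the calibration identity under limits. After that, the Fenchel-type identity that turns $L-\lambda u_\lambda$ into a total derivative along calibrated curves makes the integral vanish essentially for free; only standard care is needed for the weak-* limit since $L$ is not bounded (but is bounded on the common compact support).
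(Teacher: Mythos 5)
Your proof is correct, and its skeleton is the same as the paper's: both time-average the lift of a $(u_\lambda,L)$-$\lambda$-calibrated orbit contained in $\tilde{\A}_{L,\lambda}$, use Lemma \ref{lemma1,2} for tightness, and obtain closedness by the telescoping of $\varphi$ along the curve. There are two genuine differences worth noting. First, the paper simply starts from a point $(x,v)\in\tilde{\A}_{L,\lambda}$, leaning on its earlier (unproved in this section) assertion that the discounted Aubry set is non-empty; your Step 1 actually \emph{constructs} such a point as an $\alpha$-limit of a backward calibrated curve and verifies, via continuity of $\Phi^t_{L,\lambda}$ and stability of the calibration identity under this limit, that the full bi-infinite orbit is calibrated -- so your argument is self-contained where the paper's is not. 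Second, for the integral identity the paper uses flow-invariance of the limit measure together with the representation $u_\lambda(x)=\int_{-\infty}^0 e^{\lambda s}L(\Phi^s_{L,\lambda}(x,v))\,ds$ on the support and a Fubini computation, whereas you differentiate the calibration identity to write $L-\lambda u_\lambda$ as the total derivative $\frac{d}{ds}\bigl[u_\lambda\circ\gamma_\infty\bigr]$ and let the time-average telescope; your route is shorter, does not require invariance of $\mu$ under the discounted flow, and gives the value $0$ directly (the appeal to Proposition \ref{need0818} for the reverse inequality is not even needed, since the integrand is continuous and all the empirical measures live in a common compact set).
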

\begin{proof}
	By the definition of $u_\lambda$, for any $(x,v) \in \tilde{\A}_{L,\lambda}$, there exists a curve $\gamma_0 : (-\infty, 0] \rightarrow \T^d$ with $\gamma_0(0) = x$ and $\dot{\gamma}(0) = v$ such that $\gamma_0$ is $\left( u_\lambda, L \right)$-$\lambda$-calibrated. More precisely, for any $t>0$,
	$$
	u_\lambda \left( \gamma_0(0) \right) - e^{-\lambda t} u_\lambda \left( \gamma_0(-t) \right) = \int_{-t}^0 e^{\lambda s} L \left( \gamma_0(s), \dot{\gamma}_0(s) \right) ds.
	$$
	Since $\gamma_0$ is a minimizing curve for action, we have
	$$
	\Phi^s_{L, \lambda} \left( \gamma_0(0), \dot{\gamma}_0(0) \right) = \left( \gamma_0(s), \dot{\gamma}_0(s) \right), \quad \forall s 
	\leq 0.
	$$
	
	For $t \geq 1$, define a Borel probability measure $\mu_t$ on $\T^d \times \R^d$ by
	$$
	\mu_t \left( \theta \right) := \frac{1}{t} \int_{-t}^0 \theta \left( \Phi^s_{L,\lambda} \left( x_0, v_0 \right) \right) ds,
	$$
	for all continuous functions $\theta : \T^d \times \R^d \rightarrow \R$.
	By Lemma \ref{lemma1,2}, there exists a compact set $K_1$ such that $\supp (\mu_t) \subset K_1$ for any $t \geq 1$. Thus, there exists a sequence $\left\{ t_n \right\}_{n=1}^\infty$ with $\lim_{n \rightarrow \infty} t_n = +\infty$ such that $\mu_{t_n} \stackrel{w^*}{\longrightarrow} \mu$ for some measure $\mu$. The measure $\mu$ is compactly supported and invariant under the Lagrangian flow $\Phi^t_{L,\lambda}$ obviously.
	Since $\gamma_0$ is absolutely continuous, it is clear that
	$$
	\int_{\T^d \times \R^d} \vert v \vert d \mu = \lim_{n \rightarrow \infty} \frac{1}{t_n} \int_{-t_n}^0 \left\vert \dot{\gamma}_0(s) \right\vert ds = 0.
	$$
	Moreover, for any $\varphi \in \CC^1 ( \T^d )$,
	\begin{align*}
		\left\vert \int_{\T^d \times \R^d} v \cdot D\varphi(x) d\mu \right\vert
		& = \left\vert \lim_{n \rightarrow \infty} \frac{1}{t_n} \int_{-t_n}^0 \dot{\gamma}_0(s) \cdot D \varphi \left( \gamma_0(s) \right) ds \right\vert \\
		& = \left\vert \lim_{n \rightarrow \infty} \frac{\varphi \left( \gamma_0(0) \right) - \varphi \left( \gamma_0(- t_n) \right)}{t_n} \right\vert \\
		& \leq \lim_{n \rightarrow \infty} \frac{2 \left\Vert \varphi \right \Vert_\infty}{t_n} = 0.
	\end{align*}
	Thus, $\mu \in \K ( \T^d \times \R^d )$.
	Furthermore, since $\mu$ is invariant under \eqref{lagrangian flow of DMFG},
	\begin{align*}
		\int_{\T^d \times \R^d} \lambda u_\lambda (x) d\mu
		& = \lambda \int_{\T^d \times \R^d} \int_{-\infty}^0 e^{\lambda s} L \left( \Phi^s_{L, \lambda} (x,v) \right) ds d\mu \\
		& = \lambda \int_{-\infty}^0 e^{\lambda s} \int_{\T^d \times \R^d} L \left( \Phi^s_{L, \lambda} (x,v) \right) d\mu ds \\
		& = \lambda \int_{-\infty}^0 e^{\lambda s} \int_{\T^d \times \R^d} L \left( x, v \right) d\mu ds \\
		& = \left( \int_{\T^d \times \R^d} L \left( x, v \right) d\mu \right) \left( \int_{-\infty}^0 \lambda e^{\lambda s} ds \right) = \int_{\T^d \times \R^d} L \left( x, v \right) d\mu.
	\end{align*}
	Thus, $\int_{\T^d \times \R^d} \left(  L \left( x, v \right) - \lambda u_\lambda (x) \right)  d\mu = 0$.
\end{proof}

We now can define minimizing $\lambda$-measures for the Lagrangian $L$ as follows, and it is well-defined with the help of Proposition \ref{need0818} and Proposition \ref{existence of minimizing lambda measure}.
\begin{definition} 
	For any $\lambda \in (0,1]$, we call $\mu \in \K ( \T^d \times \R^d )$ with $\supp (\mu) \subset \tilde{\A}_{L, \lambda}$ a minimizing $\lambda$-measure for $L$, if it satisfies
	$$
	\int_{\T^d \times \R^d} \left( L(x,v) - \lambda u_\lambda (x) \right)  d\mu =0.
	$$
\end{definition}
\begin{remarks}
	The word ``minimizing'' in the definition of minimizing $\lambda$-measures for $L$ originates from the property that these measures minimize the functional $\int_{\T^d \times \R^d} \left(  L (x,v) - \lambda u_\lambda (x) \right) d\mu$ with the minimum value being precisely 0.
\end{remarks}

\subsection{Wasserstein space} \label{wasserstein space}
In this part, we recall some results from measure theory that is useful in this paper.
Throughout this part, the space $\left( M, \rho \right)$ is a separable metric space.

\medskip

For a sequence $\left\{ \mu_n \right\}_{n =1}^{\infty} \subset \PP (M)$, $\mu_n$ weakly converges to some $\mu \in \PP(M)$ as $n \rightarrow \infty$, denoted by $\mu_n \stackrel{w^*}{\longrightarrow} \mu$, if 
$$
\lim_{n \rightarrow \infty} \int_M f(x) d \mu_n = \int_M f(x) d \mu, \quad \forall f \in \CC_b ( M ).
$$

For each $p \in [1, +\infty )$, the Wasserstein space of order $p$ is defined by
$$
\PP_p (M) := \left\{ \mu \in \PP(M)\, \middle \vert \,\, \int_M \rho^p ( x_0 , x )  d \mu < +\infty \right\},
$$
where $x_0 \in M$ is an arbitrary point.
The Monge-Kantorovich distance on $\PP_p(M)$ is defined by
$$
d_p\left(\mu_1, \mu_2 \right) :=\inf _{\eta \in \tilde{\Pi}(\mu_1, \mu_2)} \left( \int_{M^2} \rho^p(x, y)  d \eta \right)^{1 / p}, \quad \forall \mu_1,\mu_2 \in \PP_p(M),
$$
where $\tilde{\Pi}(\mu_1, \mu_2)$ is the set of Borel probability measures on $M^2$ such that $\eta \left(A \times M\right)=\mu_1(A)$ and $\eta \left(M \times A\right)=\mu_2(A)$ for any Borel set $A$ of $M$.

As for the distance $d_1$, which is often called Kantorovich-Rubinstein distance, can be characterized by a useful duality formula (see, for instance, \cite{2013}) as
$$
d_1 (\mu_1, \mu_2) = \sup \left\{ \int_M g(x) d \mu_1 - \int_M g(x)  d \mu_2 \right\}, \quad \forall \mu_1, \mu_2 \in \PP_1 (M), 
$$
where the supremum is taken over all 1-Lipschitz functions $g: M \rightarrow \R$.

\medskip
\medskip

We now recall the relation between weakly convergence and $d_p$-convergence. See \cite[Theorem 7.1.5]{MR2401600} for examples.
\begin{proposition}
	If a sequence of measures $\left\{ \mu_i \right\}_{i =1}^{\infty} \subset \PP_p (M)$ converges to some $\mu \in \PP_p (M)$ in $d_p$-topology, then $\mu_i$ converges to $\mu$ weakly.
	Conversely, if $\bigcup_{i=1}^\infty \supp \{\mu_i\}$ is contained in a compact subset of $M$ and $\mu_i$ converges to $\mu$ weakly, then $\mu_i$ converges to $\mu$ in $d_p$-topology.
\end{proposition}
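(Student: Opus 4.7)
The plan is to prove the two implications separately, both resting on the Kantorovich-Rubinstein duality formula stated just before the proposition together with simple comparisons between $d_1$ and $d_p$.

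For the forward direction, I would first observe that Jensen's inequality applied to $t \mapsto t^p$ (equivalently, H\"older) yields $d_1(\mu_i, \mu) \leq d_p(\mu_i, \mu)$ for all $\mu_i, \mu \in \PP_p(M)$, so $d_1(\mu_i, \mu) \to 0$. By the duality formula, $\vert \int g \, d\mu_i - \int g \, d\mu \vert \leq L \cdot d_1(\mu_i, \mu)$ for any $L$-Lipschitz $g \colon M \to \R$ (by rescaling the 1-Lipschitz case). The Portmanteau theorem, which characterizes weak convergence via convergence of integrals of bounded Lipschitz functions, then gives $\mu_i \stackrel{w^*}{\to} \mu$.

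For the converse, let $K$ be a compact subset of $M$ containing $\bigcup_i \supp \{\mu_i\}$, and set $D := \diam(K) < \infty$. Since $K$ is closed, the weak limit $\mu$ is also supported in $K$. I would first upgrade weak convergence to $d_1$-convergence. Fix $x_0 \in K$; by subtracting a constant, the duality formula reduces to testing against 1-Lipschitz functions $g$ with $g(x_0) = 0$, for which $\vert g \vert \leq D$ on $K$. Suppose for contradiction that $d_1(\mu_{i_k}, \mu) \geq \varepsilon$ along a subsequence, realized by witnesses $g_k$; by Arzel\`a-Ascoli on $K$ (the $g_k$ being uniformly bounded and equi-Lipschitz), a further subsequence $g_{k_j}$ converges uniformly to some $g_\infty$ of $\Lip$-constant $\leq 1$. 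Writing
\[
\int g_{k_j} \, d\mu_{i_{k_j}} - \int g_{k_j} \, d\mu = \int (g_{k_j}-g_\infty) \, d\mu_{i_{k_j}} + \Big(\int g_\infty \, d\mu_{i_{k_j}} - \int g_\infty \, d\mu\Big) + \int (g_\infty - g_{k_j}) \, d\mu,
\]
all three terms tend to zero (the first and third by uniform convergence, the middle by weak convergence), contradicting the lower bound $\varepsilon$.

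Finally, every coupling of $\mu_i$ and $\mu$ is concentrated on $K \times K$, so $\rho(x,y)^p \leq D^{p-1} \rho(x,y)$ on its support; taking the infimum over couplings yields $d_p^p(\mu_i, \mu) \leq D^{p-1} d_1(\mu_i, \mu) \to 0$. The main obstacle is the Arzel\`a-Ascoli step that upgrades weak convergence to $d_1$-convergence, but once the test functions are normalized at a base point so they are uniformly bounded and equi-Lipschitz on $K$, the argument becomes routine.
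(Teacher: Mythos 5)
Your proof is correct. The paper itself offers no proof of this proposition; it simply cites \cite[Theorem 7.1.5]{MR2401600}, so your argument is a self-contained replacement for that citation rather than a variant of an argument in the text. Both directions are sound: the forward implication via $d_1\leq d_p$ (Jensen on a coupling) plus the duality formula and the Lipschitz form of the portmanteau theorem; the converse via the normalization $g(x_0)=0$, Arzel\`a--Ascoli, and the elementary bound $d_p^p\leq D^{p-1}d_1$ on couplings concentrated in $K\times K$. Two small points are worth smoothing out in a final write-up. First, the witnesses $g_k$ only \emph{nearly} attain the supremum in the duality formula, so one should take $\int g_k\,d\mu_{i_k}-\int g_k\,d\mu\geq d_1(\mu_{i_k},\mu)-1/k\geq\varepsilon/2$. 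Second, the Arzel\`a--Ascoli limit $g_\infty$ is a priori defined only on $K$, whereas weak convergence is tested against bounded continuous functions on $M$; since all the measures (including $\mu$, as you correctly note) are concentrated on $K$, it suffices to replace $g_\infty$ by a bounded $1$-Lipschitz extension to $M$ (e.g.\ the McShane extension $x\mapsto\inf_{y\in K}\bigl(g_\infty(y)+\rho(x,y)\bigr)$, truncated), which does not change any of the integrals. The standard reference instead argues via uniform integrability of $p$-th moments (weak convergence plus convergence of $\int\rho^p(x_0,\cdot)\,d\mu_i$ implies $d_p$-convergence), which is automatic under the compact-support hypothesis; your route is more elementary but equally valid.
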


\subsection{Results of \cite{MR4605206}: Discretization of mean field games} \label{recall conclusions of W&I paper}
In this section, we recall some results from \cite{MR4605206}, which used the discrete method to analyze the MFGs \eqref{MFG}.
Assume the Lagrangian $L$ satisfies (L\ref{MFG_L1})-(L\ref{MFG_L3}) and the non-local coupling term $F$ satisfies (F\ref{MFG_F1})-(F\ref{MFG_F3}).

\begin{proposition} \label{equi-lipschitz of classical solution}
	There exists a constant $C>0$ such that if $\tau \in (0,1)$, $m \in \PP ( \T^d )$ and $u_{\tau,m}$ is a solution to (\ref{discrete_laxoleinik_classical}), then $u_{\tau,m}$ is Lipschitz and $\Lip \left( u_{\tau,m} \right) \leq C$, where the Lipschitz constant is defined by
	$$
	\Lip \left( u_{\tau,m} \right) := \sup_{x,y \in \R^d} \frac{\left\vert u_{\tau,m}(x) - u_{\tau,m} (y) \right\vert}{\vert x-y \vert}.
	$$
\end{proposition}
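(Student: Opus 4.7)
The strategy is a competitor argument exploiting the superlinearity of $L$ and the boundedness of $F$, combined with a discrete viscosity-subsolution argument to handle the small-scale regime. The plan breaks into three stages: uniform control of $\bar{L}(\tau,m)$, a uniform oscillation bound for $u_{\tau,m}$, and the Lipschitz estimate itself.

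First I would establish $|\bar L(\tau,m)|\leq B$ uniformly: plugging the trivially $\tau$-holonomic measure $\delta_{(x,0)}$ into \eqref{need0805} yields $\bar L(\tau,m)\leq \inf_{x\in\T^d}\bigl(L(x,0)+F(x,m)\bigr)$, while the lower bound follows from the superlinearity (L\ref{MFG_L3}) of $L$ and (F\ref{MFG_F1}). Next, for any $y_1,y_2\in\T^d$, iterating \eqref{discrete_laxoleinik_classical} $N=\lceil 1/\tau\rceil$ times with the linear-interpolation competitor $x_{-k}:=y_2+(k/N)(y_1-y_2)$ of constant velocity $v=(y_1-y_2)/(N\tau)$ of norm at most $\diam(\T^d)$, combined with continuity of $L$ on the resulting compact set and (F\ref{MFG_F1}), yields a uniform oscillation bound $M$ for $u_{\tau,m}$.

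For the Lipschitz estimate in the regime $|y_1-y_2|\geq\tau$, the same straight-line competitor with $N:=\lceil|y_1-y_2|/\tau\rceil$ produces a constant competitor velocity of norm at most $1$, giving $|u_{\tau,m}(y_1)-u_{\tau,m}(y_2)|\leq 2(L_1^*+F_\infty+B)\,|y_1-y_2|$, where $L_1^*:=\sup\{L(x,v):x\in\T^d,\ |v|\leq 1\}$.

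The main obstacle is the short-distance regime $|y_1-y_2|<\tau$, which I would address by exploiting a discrete viscosity-subsolution inequality. Since $u_{\tau,m}$ is the infimum over $x$ of the $\CC^2$ functions $y\mapsto u_{\tau,m}(x)+\LL_{\tau,m}(x,y)$, it is semi-concave on $\T^d$ and hence differentiable almost everywhere by Alexandrov's theorem. At a point $y_0$ of differentiability with $p:=Du_{\tau,m}(y_0)$, combining the one-step competitor inequality from \eqref{discrete_laxoleinik_classical} with the semi-concave upper bound $u_{\tau,m}(y_0-\tau v)\leq u_{\tau,m}(y_0)-\tau p\cdot v+\tfrac{C_s}{2}\tau^2|v|^2$ yields
$$
p\cdot v - L(y_0-\tau v,v) - \tfrac{C_s\tau}{2}|v|^2 \leq F(y_0-\tau v,m) - \bar L(\tau,m)
$$
for every $v\in\R^d$. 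Taking the supremum over $v$ and using the $\Z^d$-periodicity of $L$ in its first variable shows that the modified Hamiltonian $H_\tau(y_0,p):=\sup_v\bigl[p\cdot v - L(y_0-\tau v,v) - \tfrac{C_s\tau}{2}|v|^2\bigr]$ is bounded by $F_\infty+B$; its sublevel sets are bounded uniformly in $\tau$ by the superlinearity of $L$ in $v$, giving $|p|\leq R$ for a universal $R$ and hence the desired Lipschitz bound. The main technical point I anticipate is that the semi-concavity constant $C_s$ may a priori be of order $1/\tau$ (coming from $L_{vv}$ evaluated at possibly large minimizer velocities), so one must combine the above estimate with a uniform bound on the minimizer velocity, leveraging the oscillation bound $M$ from the second stage to ensure that the correction $\tfrac{C_s\tau}{2}|v|^2$ does not destroy the coercivity of the modified Hamiltonian.
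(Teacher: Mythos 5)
Your proposal is correct in outline, but it takes a genuinely different route for the decisive small-scale step. The paper does not reprove this proposition (it is recalled from \cite{MR4605206}); however, its own proof of the discounted analogue, Proposition \ref{equi-Lipschitz of discounted HJE}, shows the intended argument. Your stages one to three (bounding $\bar L(\tau,m)$, the oscillation bound, and the chain of straight-line competitors for $\vert y_1-y_2\vert\geq\tau$) coincide with the paper's treatment of the large-scale regime, and both arguments then localize the minimizer: any $x$ attaining the infimum in \eqref{discrete_laxoleinik_classical} satisfies $\vert x-y\vert\leq\tau D$, by playing the large-scale Lipschitz bound against the superlinearity of $\LL_{\tau,m}$. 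Where you diverge is the regime $\vert y-z\vert<\tau$: the paper simply reuses the minimizer $x$ for $u_{\tau,m}(y)$ as a competitor for $u_{\tau,m}(z)$, so that $u_{\tau,m}(z)-u_{\tau,m}(y)\leq \LL_{\tau,m}(x,z)-\LL_{\tau,m}(x,y)\leq C\vert z-y\vert$ by the local Lipschitz property of $\LL_{\tau,m}$ on the cone $\vert y-x\vert\leq\tau(D+1)$ (Lemma \ref{222}(3)); no differentiability of $u_{\tau,m}$ is needed. Your semi-concavity and discrete-subsolution argument can be made to work, but only after the same minimizer localization (the infimum must be restricted to $\vert x-y\vert\leq\tau(D+1)$ before one can assert a finite semi-concavity constant $C_s\sim\tau^{-1}\sup_{\vert v\vert\leq D+1}\Vert L_{vv}\Vert$), and even then the coercivity of your modified Hamiltonian requires the observation that the Legendre transform of the finite convex function $v\mapsto L(\cdot,v)+\tfrac{C_s\tau}{2}\vert v\vert^2$ is still superlinear in $p$, since $C_s\tau$ is only $O(1)$. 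What your route buys is an explicit a.e. gradient bound in the spirit of viscosity-solution theory; what the paper's route buys is a two-line argument that avoids semi-concavity, a.e. differentiability, and the Legendre-transform bookkeeping entirely.
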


\begin{lemma}
	There exist a compact subset $K \subset \T^d \times \R^d$ and a constant $\tau_0 >0$ such that for any $m \in \PP ( \T^d )$, any $\tau \in (0,\tau_0)$, and any minimizing $\tau$-holonomic measure $\mu$ for $L_m$, we always have $\supp (\mu) \subset K$.
\end{lemma}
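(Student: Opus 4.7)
The plan is to combine the uniform Lipschitz bound on the discrete Lax-Oleinik solutions (Proposition \ref{equi-lipschitz of classical solution}) with the superlinearity of $L$ via a standard duality (calibration) argument: I will show that any minimizing $\tau$-holonomic measure must concentrate on pairs $(x,v)$ where the Lagrangian equals a telescoping difference of $u_{\tau,m}$ along the increment $\tau v$, and then read off a uniform velocity bound.

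\textbf{Step 1: Uniform bounds on $\bar{L}(\tau,m)$.} First, I would show that $|\bar{L}(\tau,m)|$ is bounded uniformly in $(\tau,m) \in (0,1)\times\PP(\T^d)$. For the upper bound, fix any $x_0 \in \T^d$ and test with $\mu_0:=\delta_{(x_0,0)}$; since $x_0+\tau\cdot 0=x_0$, the measure $\mu_0$ is $\tau$-holonomic, and by \eqref{need0805},
$$
\bar{L}(\tau,m)\leq \int_{\T^d\times\R^d} L_m\,d\mu_0 = L(x_0,0)+F(x_0,m)\leq \|L(\cdot,0)\|_\infty + F_\infty.
$$
For the lower bound, (L\ref{MFG_L3}) with $K=0$ yields $L(x,v)\geq C(0)$, hence $\bar{L}(\tau,m)\geq C(0)-F_\infty$. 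Denote the resulting uniform bound by $\bar M$.

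\textbf{Step 2: Calibration on $\supp(\mu)$.} Let $u_{\tau,m}$ be the solution of \eqref{discrete_laxoleinik_classical} given by the cited framework. By definition of the infimum in \eqref{discrete_laxoleinik_classical},
$$
u_{\tau,m}(y)+\tau\bar{L}(\tau,m)\leq u_{\tau,m}(x)+\LL_{\tau,m}(x,y),\quad\forall x,y\in\R^d.
$$
Set $y=x+\tau v$ and integrate against a minimizing $\tau$-holonomic $\mu$ for $L_m$, using that $u_{\tau,m}$ is $\Z^d$-periodic continuous (hence the $\tau$-holonomic identity applies) and that $\LL_{\tau,m}(x,x+\tau v)=\tau L_m(x,v)$:
$$
\int u_{\tau,m}(x+\tau v)\,d\mu + \tau\bar{L}(\tau,m)\leq \int u_{\tau,m}(x)\,d\mu + \tau\int L_m(x,v)\,d\mu.
$$
The two integrals involving $u_{\tau,m}$ coincide by $\tau$-holonomicity, and $\int L_m\,d\mu=\bar{L}(\tau,m)$. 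Thus both sides equal $\tau\bar{L}(\tau,m)$, so the pointwise inequality is an equality $\mu$-a.e.\ and, by continuity, for every $(x,v)\in\supp(\mu)$:
$$
\tau L_m(x,v)=u_{\tau,m}(x+\tau v)-u_{\tau,m}(x)+\tau\bar{L}(\tau,m).
$$

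\textbf{Step 3: Uniform velocity bound.} By Proposition \ref{equi-lipschitz of classical solution}, there is a constant $C>0$, independent of $\tau\in(0,1)$ and $m\in\PP(\T^d)$, with $\Lip(u_{\tau,m})\leq C$. Hence for $(x,v)\in\supp(\mu)$,
$$
\tau L_m(x,v)\leq C\tau|v|+\tau\bar M,\qquad\text{i.e.}\qquad L(x,v)\leq C|v|+\bar M+F_\infty.
$$
Applying (L\ref{MFG_L3}) with $K=C+1$ gives $L(x,v)\geq (C+1)|v|+C(C+1)$, so
$$
|v|\leq \bar M+F_\infty - C(C+1)=:R,
$$
a uniform bound. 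Therefore $\supp(\mu)\subset K:=\T^d\times\overline{B(0,R)}$, which is the desired compact set, and the conclusion holds with any $\tau_0\in(0,1)$.

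The main obstacle I anticipate is Step 2: verifying that the integrated Lax-Oleinik inequality collapses to the pointwise calibration identity on $\supp(\mu)$. This rests on correctly using the $\tau$-holonomicity of $\mu$ against the periodic function $u_{\tau,m}$ and on the defining minimality $\int L_m\,d\mu=\bar{L}(\tau,m)$; once this calibration is in hand, Step 3 is routine superlinearity bookkeeping.
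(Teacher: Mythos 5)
Your proof is correct, and it follows essentially the same route that the paper itself uses for the discounted counterpart of this statement (Lemma \ref{need0818_2} combined with Proposition \ref{compactness of minimizing holonomic measure}): integrate the Lax-Oleinik inequality against the holonomic measure to get calibration on the support, then play the uniform Lipschitz bound of Proposition \ref{equi-lipschitz of classical solution} off against the superlinearity (L\ref{MFG_L3}). The paper only recalls this particular lemma from \cite{MR4605206} without proof, but your argument is the standard one and in fact shows the conclusion holds for every $\tau\in(0,1)$, which is stronger than the stated existence of some $\tau_0>0$.
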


The main result of \cite{MR4605206} is stated as follows.
\begin{proposition} \label{main theorem in classical case}
	For each $\tau \in (0,\tau_0)$, there is $m \in \PP ( \T^d )$ such that there exists a minimizing $\tau$-holonomic measure $\mu_{\tau,m}$ for $L_m$ with $m = \pi \sharp \mu_{\tau,m}$. We call such a measure $m$ the minimizing $\tau$-holonomic measure for MFGs \eqref{MFG} and denote it by $m_\tau$.
	
	There is a subsequence $\tau_i \rightarrow 0$, a subsequence $m_{\tau_i} \stackrel{w^*}{\longrightarrow} m_0$, and a subsequence $u_{\tau_i, m_{\tau_i}}$ solutions to \eqref{discrete_laxoleinik_classical} such that $u_{\tau_i, m_{\tau_i}}$ converges to $u_0$ uniformly on $\T^d$ and $(u_0, m_0)$ is a solution to \eqref{MFG}.
\end{proposition}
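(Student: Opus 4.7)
The plan is to establish part (1) via a fixed-point argument on the push-forward map $m\mapsto\pi\sharp\mu_{\tau,m}$, and part (2) via a compactness/stability argument that passes to the limit as $\tau\to 0$ in the discrete Lax-Oleinik equation and in the $\tau$-holonomic constraint.

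\textbf{Step 1: Existence via fixed point.} For each $\tau\in(0,\tau_0)$, I would define a set-valued map $\Gamma_\tau:\PP(\T^d)\rightrightarrows\PP(\T^d)$ by
\[
\Gamma_\tau(m):=\bigl\{\pi\sharp\mu\,:\,\mu\text{ is a minimizing $\tau$-holonomic measure for }L_m\bigr\}.
\]
The existence of at least one minimizing $\tau$-holonomic measure for any fixed $L_m$ follows from the direct method: $\PP_\tau(\T^d\times\R^d)$ is non-empty (e.g.\ any Dirac mass at $(x,0)$), the functional $\mu\mapsto\int L_m d\mu$ is lower semi-continuous, and the constraint set is closed; by the preceding lemma all minimizers have support in a fixed compact $K$, so tightness holds and a minimizer exists. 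Thus $\Gamma_\tau(m)\neq\varnothing$. Convexity of $\Gamma_\tau(m)$ is immediate from linearity of the minimization and of the push-forward. Upper hemicontinuity with closed graph follows from: (a) uniform compact support of all minimizing measures, giving tight families; (b) joint continuity of $L_m$ in $m$ via (F\ref{MFG_F2}) and stability of the scalar $\bar{L}(\tau,m)$ under weak-$*$ convergence of $m$, which in turn uses (F\ref{MFG_F3}) to pass to the limit in $\int L_{m_n} d\mu_n$. The space $\PP(\T^d)$ is a convex compact subset of a locally convex topological vector space, so the Kakutani-Fan-Glicksberg theorem yields a fixed point $m=m_\tau\in\Gamma_\tau(m_\tau)$, which is the desired minimizing $\tau$-holonomic measure for MFGs \eqref{MFG}.

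\textbf{Step 2: Subsequential compactness as $\tau\to 0$.} The family $\{m_{\tau_i}\}$ lies in the compact space $\PP(\T^d)$, so up to subsequence $m_{\tau_i}\stackrel{w^*}{\to}m_0$. By Proposition \ref{equi-lipschitz of classical solution}, the solutions $u_{\tau_i,m_{\tau_i}}$ are uniformly Lipschitz, so after normalization (subtracting $u_{\tau_i,m_{\tau_i}}(0)$, which is equivalent to the freedom up to a constant in \eqref{discrete_laxoleinik_classical}) they are equi-bounded; Arzelà-Ascoli yields uniform convergence along a further subsequence to some $u_0\in\CC(\T^d)$. The constants $\bar{L}(\tau_i,m_{\tau_i})\to-c(m_0)$ follow from the characterization \eqref{need0805}, the continuity of $m\mapsto c(m)$ under (F\ref{MFG_F3}), and the convergence $\bar{L}(\tau,m)\to-c(m)$ uniformly in $m$ (which requires a uniform comparison between the discrete and continuous action minima, exploiting interpolation of competitors between time steps).

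\textbf{Step 3: Passing to the limit in both equations.} To show $u_0$ is a viscosity solution of (\ref{MFG}a) with critical value $c(m_0)$, I would use the standard viscosity technique: at any point $x_0$ where $u_0-\varphi$ attains a strict local extremum for some $\varphi\in\CC^1$, build nearly optimal curves in (\ref{classical_laxoleinik}) from discrete calibrated configurations of $u_{\tau_i,m_{\tau_i}}$ via piecewise affine interpolation, and pass to the limit using (F\ref{MFG_F2}) together with the uniform convergence of $u_{\tau_i,m_{\tau_i}}$. For the continuity equation, the key observation is that the $\tau$-holonomic condition, applied to $\varphi\in\CC^1(\T^d)$, gives
\[
\int\frac{\varphi(x+\tau v)-\varphi(x)}{\tau}\,d\mu_{\tau_i,m_{\tau_i}}=0,
\]
and after lifting $\mu_{\tau_i,m_{\tau_i}}$ to the compact set $K$ and extracting a weak-$*$ limit $\mu_0$, the left-hand side converges to $\int v\cdot D\varphi(x)\,d\mu_0$, so $\mu_0$ is closed. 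Pushing forward gives $m_0=\pi\sharp\mu_0$, and one finally identifies $\mu_0$-a.e.\ velocity with $\partial H/\partial p(x,Du_0(x))$ on the limiting projected Aubry set, which is where $Du_0$ exists.

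\textbf{Main obstacle.} The delicate step is Step 3's identification of the limit velocity with $\partial H/\partial p(x,Du_0)$ on the support of $m_0$; one must show that the limit $\mu_0$ concentrates on the projected Aubry set of $L_{m_0}$ (equivalently, that the discrete calibrated structure passes to the continuous calibrated structure), which requires combining the zero-variation condition $\int(L_{m_0}-c(m_0))d\mu_0=0$ inherited from the limit of Definition \ref{def of minimizng holonomic measure} (with $\lambda=0$ replaced by the critical constant) with Fathi's characterization of Mather measures to conclude that $Du_0$ exists on $\supp(m_0)$ and equals the Legendre dual of the velocity field.
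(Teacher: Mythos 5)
Your proposal is correct and takes essentially the same route as the paper: the paper itself only quotes this proposition from \cite{MR4605206}, but its own proofs of the exactly parallel discounted statements (Propositions \ref{existence of minimizing tau holonomic lambda measure for MFG}, \ref{convergence to find viscosity solution}, \ref{u_0 is solution of HJE} and \ref{m_0 is solution of continuity equation}) follow precisely your scheme --- Kakutani fixed point for the set-valued push-forward map, uniform Lipschitz and compact-support a priori estimates plus Arzel\`a-Ascoli, weak-$*$ limits of the lifted holonomic measures, and the Fenchel-equality identification of the velocity field on the support. The obstacle you flag (concentration of the limit measure on the Aubry set so that $Du_0$ exists there) is exactly the point the paper handles via the zero-variation condition combined with the pointwise Fenchel inequality, so there is no substantive difference to report.
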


\section{Discretization of discounted mean field games} \label{Section 3}
In this section, we provide detailed proofs concerning the discretization of DMFGs \eqref{DMFG}. In Sect. \ref{discrete aubry set and lambda minimizing measure}, we prove that the discrete discounted Aubry set is well-defined and compact. Sect. \ref{Priori estimates} is devoted to establishing several a priori estimates. Finally, with the help of these estimates, we prove that the minimizing $\tau$-holonomic $\lambda$-measure, which is the discretization of the minimizing $\lambda$-measure, is well-defined and one can find a compact set that contains the supports of all minimizing $\tau$-holonomic $\lambda$-measures.

\medskip

\subsection{Discrete discounted Aubry sets} \label{discrete aubry set and lambda minimizing measure}
We first prove the uniqueness of the $\Z^d$-periodic solution to the discrete Lax-Oleinik equation \eqref{discrete_laxoleinik} in Proposition \ref{existence of solution of discounted HJE}. This result ensures that discrete discounted Aubry sets and minimizing $\tau$-holonomic $\lambda$-measures are well-defined.

\begin{proposition} \label{existence of solution of discounted HJE}
For any $\tau \in (0,1)$, any $\lambda \in (0,1]$ and any $m \in \PP ( \T^d )$, there exists a unique $\Z^d$-periodic continuous function $u_{\tau,m}^\lambda$ satisfying the discrete Lax-Oleinik equation (\ref{discrete_laxoleinik}). Moreover, 
$$
\left\Vert u_{\tau,m}^\lambda \right\Vert_{\infty} \leq \frac{C_0}{\lambda},
$$
where $C_0$ is defined as in \eqref{def of C_0}.
\end{proposition}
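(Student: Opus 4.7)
The plan is to read \eqref{discrete_laxoleinik} as a fixed-point equation $u = T u$ on the Banach space $(\CC(\T^d),\|\cdot\|_\infty)$, where the discrete Lax--Oleinik operator is
$$
Tu(y) := \inf_{x \in \R^d}\bigl((1-\tau\lambda) u(x) + \LL_{\tau,m}(x,y)\bigr), \qquad y \in \R^d,
$$
and then apply the Banach fixed-point theorem. Since $1 - \tau\lambda \in (0,1)$, the contraction constant is already built into the equation; this is what makes the discounted discrete problem strictly simpler than its undiscounted counterpart.

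First I would verify that $T$ is well-defined on the space of continuous $\Z^d$-periodic functions. Given a continuous $\Z^d$-periodic $u$ (hence bounded), superlinearity of $L$ together with the boundedness of $F(\cdot,m)$ assured by (F\ref{MFG_F1}) guarantees that for each fixed $y$ the infimum in the definition of $Tu(y)$ is finite and attained on a compact set depending continuously on $y$; this yields continuity of $Tu$. Periodicity is immediate: for any $k \in \Z^d$ the change of variable $x \mapsto x + k$ preserves $(1-\tau\lambda)u(x) + \LL_{\tau,m}(x,y)$, using that $L(\cdot,\cdot)$ and $F(\cdot,m)$ are $\Z^d$-periodic in the first argument, so $\LL_{\tau,m}(x+k,y+k) = \LL_{\tau,m}(x,y)$.

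Next I would show that $T$ is a $(1-\tau\lambda)$-contraction in the sup norm. For any $u,v \in \CC(\T^d)$ and any $y$, fix a near-minimizer $x^\ast$ of the problem defining $Tv(y)$; then
$$
Tu(y) - Tv(y) \leq (1-\tau\lambda)(u(x^\ast) - v(x^\ast)) \leq (1-\tau\lambda)\|u-v\|_\infty,
$$
and by symmetry the reverse inequality holds, so $\|Tu - Tv\|_\infty \leq (1-\tau\lambda)\|u-v\|_\infty$. The Banach fixed-point theorem then produces a unique $\Z^d$-periodic continuous fixed point $u_{\tau,m}^\lambda$.

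Finally, the bound $\|u_{\tau,m}^\lambda\|_\infty \leq C_0/\lambda$ follows from two elementary estimates. Taking $x = y$ as a competitor in \eqref{discrete_laxoleinik} yields $u_{\tau,m}^\lambda(y) \leq (1-\tau\lambda)u_{\tau,m}^\lambda(y) + \LL_{\tau,m}(y,y)$, hence $\tau\lambda \, u_{\tau,m}^\lambda(y) \leq \LL_{\tau,m}(y,y) \leq \tau C_0$ by the definition of $C_0$ in \eqref{def of C_0}, giving $u_{\tau,m}^\lambda \leq C_0/\lambda$. For the lower bound, choosing any $y_0$ where $u_{\tau,m}^\lambda$ attains its minimum (using compactness of $\T^d$) and using $\LL_{\tau,m}(x,y_0) \geq -\tau C_0$, the equation gives $\min u_{\tau,m}^\lambda \geq (1-\tau\lambda)\min u_{\tau,m}^\lambda - \tau C_0$, so $\min u_{\tau,m}^\lambda \geq -C_0/\lambda$. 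The main (minor) obstacle is ensuring $T$ preserves continuity and the infimum is attained; once this is handled via (L\ref{MFG_L3}) and (F\ref{MFG_F1}), the contraction argument is immediate.
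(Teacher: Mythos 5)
Your proposal is correct and follows essentially the same route as the paper: both treat \eqref{discrete_laxoleinik} as a fixed-point equation for the operator $\TT_{\tau,\lambda}^m$ and exploit the built-in contraction factor $1-\tau\lambda$. The only (cosmetic) difference is that you apply the Banach fixed-point theorem on all of $\CC(\T^d)$ and extract the bound $\|u_{\tau,m}^\lambda\|_\infty\le C_0/\lambda$ a posteriori from the equation, whereas the paper first checks that the ball $\{\|u\|_\infty\le C_0/\lambda\}$ is invariant and then handles uniqueness among all periodic continuous solutions by repeating the invariance argument on larger balls; your ordering is, if anything, slightly cleaner.
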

\begin{proof}
Fix $\tau \in (0,1)$, $\lambda \in (0,1]$ and $m \in \PP ( \T^d )$. Define the discrete Lax-Oleinik operator for (\ref{DMFG}a) by
$$
\TT_{\tau, \lambda}^m u(y) := \inf_{x \in \R^d} \left( (1-\tau \lambda) u(x) + \LL_{\tau,m} (x,y)\right) , \quad \forall y \in \R^d.
$$
Let $u,v \in \CC ( \R^d )$ be $\Z^d$-periodic functions. For any $y \in \R^d$ and any minimizer $x$ of $\TT_{\tau, \lambda}^m v(y)$, we obtain that
\begin{align*}
\TT_{\tau, \lambda}^m u(y) -\TT_{\tau, \lambda}^m v(y)
& \leq (1-\tau \lambda) u(x) + \LL_{\tau,m} (x,y) - (1-\tau \lambda) v(x) - \LL_{\tau,m} (x,y) \\
& = (1-\tau \lambda) u(x) -(1-\tau \lambda) v(x) \leq (1-\tau \lambda) \left\Vert u-v \right\Vert_{\infty}.
\end{align*}
Similarly, we obtain $\TT_{\tau, \lambda}^m v(y) -\TT_{\tau, \lambda}^m u(y) \leq (1-\tau \lambda) \left\Vert u-v \right\Vert_{\infty}$, and conclude that
$$
\left\Vert \TT_{\tau, \lambda}^m u -\TT_{\tau, \lambda}^m v \right\Vert_{\infty} \leq (1-\tau \lambda) \left\Vert u-v \right\Vert_{\infty}.
$$

Moreover, let $u \in \CC ( \R^d )$ be a $\Z^d$-periodic function satisfying $\left\Vert u \right\Vert_\infty \leq \frac{C_0}{\lambda}$. For any $z \in \R^d$, we have
$$
\TT_{\tau, \lambda}^m u (z) \leq (1-\tau \lambda) \max_{x \in \R^d} u(x) + \max_{x \in \R^d} \LL_{\tau,m} (x,x),
$$
$$
\TT_{\tau, \lambda}^m u (z) \geq (1-\tau \lambda) \min_{x \in \R^d} u(x) + \min_{x,y\in \R^d} \LL_{\tau,m} (x,y).
$$
Thus, we obtain that
$$
\left\Vert \TT_{\tau, \lambda}^m u \right\Vert_\infty \leq (1-\tau \lambda) \left\Vert u \right\Vert_\infty + \tau C_0 \leq \frac{C_0}{\lambda}.
$$
In conclusion, there exists a unique $\Z^d$-periodic solution $u_{\tau,m}^\lambda$ satisfying $\left\Vert u_{\tau,m}^\lambda \right\Vert_\infty \leq \frac{C_0}{\lambda}$ and \eqref{discrete_laxoleinik}.

If there exists another $\Z^d$-periodic continuous function $u_0 \neq u_{\tau,m}^\lambda$ satisfying the discrete Lax-Oleinik equation \eqref{discrete_laxoleinik}, then there is a constant $M > \frac{C_0}{\lambda}$ such that $\left\Vert u_0 \right\Vert_\infty \leq M$.
Let $u \in \CC ( \R^d )$ be a $\Z^d$-periodic function with $\left\Vert u \right\Vert_\infty \leq M$. Similarly, we can also obtain that
$$
\left\Vert \TT_{\tau, \lambda}^m u \right\Vert_\infty \leq (1-\tau \lambda) \left\Vert u \right\Vert_\infty + \tau C_0 \leq M,
$$
which indicates that there exists a unique $\Z^d$-periodic function $u_0$ satisfying $\left\Vert u_0 \right\Vert_\infty \leq M$ and \eqref{discrete_laxoleinik}. Thus, $u_0 = u_{\tau,m}^\lambda$. Here comes a contradiction.
Hence, we conclude that $u_{\tau,m}^\lambda$ is the unique $\Z^d$-periodic solution to (\ref{discrete_laxoleinik}).
\end{proof}

\begin{remarks}
	From now on, for any $\tau \in (0,1)$, any $\lambda \in (0,1]$ and any $m \in \PP ( \T^d )$, denote by $u_{\tau,m}^\lambda$ the only solution to \eqref{discrete_laxoleinik}, which is bounded by $\frac{C_0}{\lambda}$.
\end{remarks}

The next lemma guarantees the existence of the discounted calibrated configuration of $u_{\tau,m}^\lambda (x)$. This concept is subsequently utilized in the definition of discrete discounted Aubry sets.

\begin{lemma} \label{existence of calibrated configuration}
	For any $\tau \in (0,1)$, any $\lambda \in (0,1]$, any $m \in \PP ( \T^d)$ and any $x_0 \in \R^d$, there always exists a point $x_{-1} \in \R^d$ such that
	$$
	u_{\tau, m}^\lambda \left( x_0 \right) = (1- \tau \lambda) u_{\tau,m}^\lambda (x_{-1}) + \LL_{\tau, m}(x_{-1} ,x_0).
	$$
\end{lemma}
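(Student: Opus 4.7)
The plan is to show that the infimum defining $u_{\tau,m}^\lambda(x_0)$ through the discrete Lax-Oleinik equation \eqref{discrete_laxoleinik} is actually a minimum, achieved at some point $x_{-1} \in \R^d$. Define
$$
\Phi(x) := (1-\tau\lambda) u_{\tau,m}^\lambda(x) + \LL_{\tau,m}(x, x_0), \quad x \in \R^d,
$$
so that $u_{\tau,m}^\lambda(x_0) = \inf_{x \in \R^d} \Phi(x)$. The function $\Phi$ is continuous by (L\ref{MFG_L1}), (F\ref{MFG_F1}), and the continuity of $u_{\tau,m}^\lambda$. It therefore suffices to prove that $\Phi$ is coercive, i.e.\ $\Phi(x) \to +\infty$ as $|x| \to \infty$, since then we may restrict the infimum to a sufficiently large closed ball and invoke the extreme value theorem.

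For the coercivity, I would use the superlinearity of $L$ together with Proposition \ref{existence of solution of discounted HJE} and the boundedness of $F$. By Proposition \ref{existence of solution of discounted HJE}, $(1-\tau\lambda)|u_{\tau,m}^\lambda(x)| \leq C_0/\lambda$ uniformly in $x$. Fix any $K > 0$. By (L\ref{MFG_L3}) and (F\ref{MFG_F1}),
$$
\LL_{\tau,m}(x,x_0) = \tau L\Bigl(x, \tfrac{x_0 - x}{\tau}\Bigr) + \tau F(x,m) \geq K|x_0 - x| + \tau C(K) - \tau F_\infty.
$$
Combining the two estimates,
$$
\Phi(x) \geq K|x_0 - x| + \tau C(K) - \tau F_\infty - \frac{C_0}{\lambda},
$$
which tends to $+\infty$ as $|x| \to \infty$ (for any fixed $K > 0$), proving coercivity.

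Consequently there exists $R > 0$ such that $\Phi(x) > \Phi(x_0) + 1$ for all $|x| \geq R$, so
$$
u_{\tau,m}^\lambda(x_0) = \inf_{|x| \leq R} \Phi(x),
$$
and by continuity of $\Phi$ on the compact ball $\{|x| \leq R\}$ this infimum is attained at some $x_{-1}$, giving the desired identity. No step is expected to be a serious obstacle; the only subtlety is to use the uniform bound on $u_{\tau,m}^\lambda$ furnished by Proposition \ref{existence of solution of discounted HJE} to absorb the $u$-term into the superlinear growth of $\LL_{\tau,m}(\cdot, x_0)$.
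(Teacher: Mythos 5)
Your proof is correct and takes essentially the same route as the paper's: both use the superlinearity (L\ref{MFG_L3}) of $L$ to dominate the bounded terms $\tau F$ and $(1-\tau\lambda)u_{\tau,m}^\lambda$ (via $\|u_{\tau,m}^\lambda\|_\infty \le C_0/\lambda$), thereby confining the infimum to a compact set on which continuity yields a minimizer. The paper states the restriction as a velocity bound $|v_0|\le R_\tau$ while you state it as coercivity of $\Phi$, but this is the same argument, and your version is if anything the more explicit about absorbing the $u$-term.
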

\begin{proof} 
	By (L\ref{MFG_L3}), there exists a constant $R_\tau > \frac{1}{\tau}$ such that
	$$
	\inf_{\left\vert v \right\vert \geq R_\tau} \inf_{x \in \R^d} L(x,v) > \sup_{\left\vert v \right\vert \leq \frac{1}{\tau}} \sup_{x \in \R^d} L(x,v) + 2F_\infty,
	$$
	indicating that
	$$
	\inf_{\left\vert v \right\vert \geq R_\tau} \inf_{x \in \R^d} \inf_{m \in \PP ( \T^d )} L_m(x,v) > \sup_{\left\vert v \right\vert \leq \frac{1}{\tau}} \sup_{x \in \R^d} \sup_{m \in \PP ( \T^d )} L_m(x,v).
	$$
	Thus, given $x_0 \in \R^d$, there exists $v_0 \in \R^d$ with $\vert v_0 \vert \leq R_\tau$ such that
	$$
	u_{\tau, m}^\lambda \left( x_0 \right) = (1- \tau \lambda) u_{\tau,m}^\lambda (x_0 - \tau v_0) + \tau L_m(x_0-\tau v_0 ,v_0).
	$$
	The proof is concluded by setting $x_{-1} = x_0 - \tau v_0$.
\end{proof}

We then prove that $\Psi^n_{L_m, \lambda, \tau} \left( \tilde{\Sigma}_{L_m, \lambda}^\tau \right)$ is compact. Consequently, the discrete discounted Aubry set, which is an intersection of a decreasing family of compact sets, is compact as well.

\begin{lemma} \label{compact of discrete aubry set}
For any $\tau \in (0,1)$, any $\lambda \in (0,1]$, any $m \in \PP ( \T^d )$ and any $n \in \N$, $\Psi^n_{L_m, \lambda, \tau} \left( \tilde{\Sigma}_{L_m, \lambda}^\tau \right)$ is compact. Consequently, $\tilde{\A}_{L_m, \lambda}^\tau$ is compact as well.
\end{lemma}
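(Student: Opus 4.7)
The strategy is to prove each $\Psi^n_{L_m, \lambda, \tau}(\tilde{\Sigma}_{L_m, \lambda}^\tau)$ is both bounded and closed in $\T^d \times \R^d$, and then deduce compactness of $\tilde{\A}_{L_m, \lambda}^\tau$ as an intersection of compact sets within a Hausdorff space.

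First I would iterate the reasoning from the proof of Lemma \ref{existence of calibrated configuration} to obtain a uniform velocity bound valid at every step of every discounted calibrated configuration. At each step the calibration identity
$$
u_{\tau,m}^\lambda(x_{-k}) = (1 - \tau \lambda)\, u_{\tau,m}^\lambda(x_{-k-1}) + \LL_{\tau,m}(x_{-k-1}, x_{-k})
$$
exhibits $v_k := (x_{-k} - x_{-k-1})/\tau$ as a minimizer of the same Lax-Oleinik infimum treated in Lemma \ref{existence of calibrated configuration}, so the argument there, based on superlinearity (L\ref{MFG_L3}) together with the uniform bound (F\ref{MFG_F1}) on $F$, yields a constant $R = R(\tau, \lambda, m)$ with $|v_k| \leq R$ for every $k \geq 0$. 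Consequently
$$
\Psi^n_{L_m, \lambda, \tau}(\tilde{\Sigma}_{L_m, \lambda}^\tau) \subset \T^d \times \overline{B}(0, R)
$$
is bounded for every $n \in \N$.

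For closedness, I would take a convergent sequence $([y_j], w_j) \to ([y^*], w^*)$ with every $([y_j], w_j) \in \Psi^n_{L_m, \lambda, \tau}(\tilde{\Sigma}_{L_m, \lambda}^\tau)$. For each $j$, choose a lift $x_j \in [0,1]^d$ of the corresponding base point in $\tilde{\Sigma}_{L_m, \lambda}^\tau$ and fix a calibrated configuration $\{x_{-k}^{(j)}\}_{k \geq 0}$ with $x_{-1}^{(j)} = x_j$ producing $([y_j], w_j)$ as its $(n+1)$-th shift. The uniform bound $R$ above forces $|x_{-k}^{(j)}| \leq \sqrt{d} + k \tau R$ for every $k \geq 0$, so each sequence $(x_{-k}^{(j)})_{j \in \N}$ is precompact in $\R^d$. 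A standard diagonal extraction produces a single subsequence of indices along which $x_{-k}^{(j)} \to x_{-k}^* \in \R^d$ for every $k \geq 0$, and the continuity of $u_{\tau,m}^\lambda$ supplied by Proposition \ref{existence of solution of discounted HJE} together with the continuity of $\LL_{\tau,m}$ lets us pass to the limit in the calibration identity at every $k$. The resulting $\{x_{-k}^*\}_{k \geq 0}$ is therefore a discounted calibrated configuration certifying $([y^*], w^*) \in \Psi^n_{L_m, \lambda, \tau}(\tilde{\Sigma}_{L_m, \lambda}^\tau)$.

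Combining boundedness and closedness, each $\Psi^n_{L_m, \lambda, \tau}(\tilde{\Sigma}_{L_m, \lambda}^\tau)$ is compact, and $\tilde{\A}_{L_m, \lambda}^\tau$ follows as an intersection of compact subsets of the Hausdorff space $\T^d \times \R^d$. The main obstacle will be coordinating the diagonal extraction so that one subsequence of $j$ makes $x_{-k}^{(j)}$ converge for every $k$ at once and so that the limit configuration remains calibrated at every step; the uniform velocity bound $R$, by confining each iterate to an a priori known ball in $\R^d$, is exactly what makes this simultaneous passage to the limit possible.
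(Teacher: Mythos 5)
Your proposal is correct and follows essentially the same route as the paper: a uniform velocity bound $R_\tau$ inherited from Lemma \ref{existence of calibrated configuration} gives precompactness of the configurations, and passing to the limit in the calibration identities (using continuity of $u_{\tau,m}^\lambda$ and $\LL_{\tau,m}$) gives closedness of $\Psi^n_{L_m,\lambda,\tau}\bigl(\tilde{\Sigma}_{L_m,\lambda}^\tau\bigr)$, after which the intersection is compact. The only cosmetic difference is that you run a diagonal extraction over all $k\ge 0$, whereas the paper only tracks the finitely many points $x^i_{-k}$, $0\le k\le n+1$, and passes to the limit in the telescoped identity \eqref{need0619}.
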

\begin{proof}
Suppose there exists a sequence $\left\{ ([x_i], v_i) \right\}_{i=1}^\infty \subset \Psi^n_{L_m, \lambda, \tau} \left( \tilde{\Sigma}_{L_m, \lambda}^\tau \right)$ converging to some point $([x_0], v_0)$.
Thus, for any $i \in \N$, there exists a sequence $\left\{ x^i_{-k} \right\}_{k=0}^{n+1} \subset \R^d$ such that $\Pi \left( x_{-n-1}^i \right) = [x_i]$ and $\frac{x_{-n}^i - x_{-n-1}^i}{\tau} = v_i$, and the sequence satisfies
\begin{equation} \label{need0619}
u^\lambda_{\tau,m} (x_0^i) = \Sigma_{k=0}^{n-1} (1- \tau\lambda)^k \tau L_m \left( x_{-k-1}^i, v_{-k-1}^i \right) + (1-\tau \lambda)^n u_{\tau,m}^\lambda (x_{-n}^i),
\end{equation}
where $v_{-k}^i = \frac{x_{-k+1}^i - x_{-k}^i}{\tau}$. 
Without loss of generality, assume that the set $\left\{ x_{-n-1}^i \right\}_{i=0}^\infty$ is bounded.
Since $\vert v^i_{-k} \vert \leq R_\tau$ for any $i \in \N$ and any integer $1 \leq k \leq n+1$, there exists a compact set containing all sequences $\left\{ x^i_{-k} \right\}_{k=0}^{n+1}$ for every $i \in \N$. 
Thus, for any integer $k \in [0, n+1]$, there exists a point $(x_{-k}^0, v_{-k}^0)$ such that $\left( x_{-k}^i , v_{-k}^i \right) \rightarrow (x_{-k}^0, v_{-k}^0)$ as $i \rightarrow \infty$. Since $v_{-k-1}^i = \frac{x_{-k}^i - x_{-k-1}^i}{\tau}$, it follows that $v_{-k-1}^0 = \frac{x_{-k}^0 - x_{-k-1}^0}{\tau}$. By \eqref{need0619}, we conclude that $([x^0_{-n-1}], v^0_{-n-1}) = ([x_0], v_0) \in \Psi^n_{L_m, \lambda, \tau} \left( \tilde{\Sigma}_{L_m, \lambda}^\tau \right)$.
In conclusion, the set $\Psi^n_{L_m, \lambda, \tau} \left( \tilde{\Sigma}_{L_m, \lambda}^\tau \right)$ is compact, as it is a subset of the non-empty compact set $\tilde{\Sigma}_{L_m, \lambda}^\tau$.
Moreover, since for any $n_1 > n_2$,
$$
\Psi^{n_1}_{L_m, \lambda, \tau} \left( \tilde{\Sigma}_{L_m, \lambda}^\tau \right) \subset \Psi^{n_2}_{L_m, \lambda, \tau} \left( \tilde{\Sigma}_{L_m, \lambda}^\tau \right),
$$
the set $\tilde{\A}_{L_m , \lambda}^\tau$ is also compact.
\end{proof}

\subsection{Priori estimates} \label{Priori estimates}

At the beginning of this section, we prove that the first and second derivatives of the minimizing curve for the action $h_{\tau, \lambda}^m(x,y)$ are bounded. This result is used in the proofs of Lemma \ref{lemma3.4}, Proposition \ref{minimizer x is near} and Proposition \ref{convergence2_prop1}.

\begin{lemma} \label{lemma 1}
For any $D>0$, there is $C(D)>0$ such that for each $\tau \in (0,1)$, each $\lambda \in (0,1]$, each $m \in \PP \left(\T^d \right)$, each $x,y \in \R^d$ with $\vert x-y \vert \leq \tau D$, and each minimizing curve $\gamma_{x,y}^{m, \lambda}: [-\tau, 0] \rightarrow \R^d$ of $L_m$ with the action $h_{\tau, \lambda}^m(x,y)$, there hold
$$
\left\vert \dot{\gamma}_{x,y}^{m, \lambda} (s) \right\vert, \left\vert \ddot{\gamma}_{x,y}^{m,\lambda} (s) \right\vert \leq C(D), \quad \forall s \in [-\tau, 0].
$$
\end{lemma}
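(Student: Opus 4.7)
The lemma is a short-time regularity estimate in the spirit of classical Tonelli theory, adapted to the discount factor $e^{\lambda s}$ and, crucially, uniform in $\lambda\in(0,1]$ and $m\in\PP(\T^d)$. The plan is: (i) bound $A_{L_m,\lambda}(\gamma_{x,y}^{m,\lambda})$ from above by testing against an affine trial curve; (ii) use superlinearity to obtain an $L^1$ bound on $\dot\gamma_{x,y}^{m,\lambda}$, hence produce a time $s_0\in[-\tau,0]$ at which $|\dot\gamma_{x,y}^{m,\lambda}(s_0)|$ is bounded; (iii) propagate the pointwise bound to all of $[-\tau,0]$ via an energy identity for the discounted Euler--Lagrange flow \eqref{lagrangian flow of DMFG}; (iv) extract the acceleration bound directly from the Euler--Lagrange equation.

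\textbf{Action and mean-velocity bounds.} The affine curve $\sigma(s):=x+\tfrac{s+\tau}{\tau}(y-x)$ on $[-\tau,0]$ has $|\dot\sigma|\le D$, so by (L\ref{MFG_L1}), (F\ref{MFG_F1}) and $e^{\lambda s}\le 1$ one finds $A_{L_m,\lambda}(\sigma)\le M_1(D)\tau$; by minimality the same bound holds for $\gamma_{x,y}^{m,\lambda}$. Using $\lambda\tau<1$, hence $e^{\lambda s}\ge e^{-1}$ on $[-\tau,0]$, together with the lower bound $L_m(x,v)\ge |v|+C(1)-F_\infty$ obtained from (L\ref{MFG_L3}) with $K=1$ and (F\ref{MFG_F1}), we get $\int_{-\tau}^0|\dot\gamma_{x,y}^{m,\lambda}(s)|\,ds\le C_2(D)\tau$. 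The mean value theorem then yields $s_0\in[-\tau,0]$ with $|\dot\gamma_{x,y}^{m,\lambda}(s_0)|\le C_2(D)$; setting $p(s):=\partial_v L_m\bigl(\gamma_{x,y}^{m,\lambda}(s),\dot\gamma_{x,y}^{m,\lambda}(s)\bigr)$, the $C^2$-regularity of $L$ gives $|p(s_0)|\le R_0(D)$.

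\textbf{Energy propagation and acceleration.} Rewriting the flow \eqref{lagrangian flow of DMFG} in Hamiltonian variables gives $\dot\gamma_{x,y}^{m,\lambda}=\partial_p H_m(\gamma,p)$ and $\dot p=-\partial_x H_m(\gamma,p)-\lambda p$; for $E(s):=H_m(\gamma_{x,y}^{m,\lambda}(s),p(s))$ the Legendre identity $p\cdot v=L_m+H_m$ yields $E'(s)+\lambda E(s)=-\lambda L_m(\gamma_{x,y}^{m,\lambda}(s),\dot\gamma_{x,y}^{m,\lambda}(s))$, so that
$$E(s)=e^{\lambda(s_0-s)}E(s_0)-\lambda\int_{s_0}^{s}e^{\lambda(r-s)}L_m\bigl(\gamma_{x,y}^{m,\lambda}(r),\dot\gamma_{x,y}^{m,\lambda}(r)\bigr)\,dr\quad\text{for }s\in[-\tau,0].$$
After a harmless additive normalization making $L\ge 0$ (so $L_m\ge -F_\infty$), the action bound of step~2 combined with $L_m+F_\infty\ge 0$ and $e^{\lambda s}\ge e^{-1}$ gives $\int_{-\tau}^0|L_m|\,ds\le C_3(D)\tau$; together with $\lambda\tau\le 1$ and $|E(s_0)|\le R_0'(D)$ this yields a uniform bound $|E(s)|\le C_4(D)$ on $[-\tau,0]$. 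Superlinearity of $H_m$, which follows from (H\ref{MFG_H3}) and (F\ref{MFG_F1}), then forces $|p(s)|\le C_5(D)$, whence $|\dot\gamma_{x,y}^{m,\lambda}(s)|=|\partial_p H(\gamma(s),p(s))|\le C(D)$ by (H\ref{MFG_H1}); finally, solving the discounted Euler--Lagrange equation for $\ddot\gamma_{x,y}^{m,\lambda}$ and using the uniform positive-definiteness of $\partial_{vv}^2 L_m$ on the compact set $\{|v|\le C(D)\}$ gives the acceleration bound. The only delicate point is the uniformity in $\lambda$ and $m$; the energy identity is the key device, as it absorbs the discount factor into the already $L^1$-controlled forcing $L_m$ and thereby avoids any Gronwall iteration on $p$, which would have required polynomial-in-$|p|$ bounds on $\partial_x H$ not guaranteed by (H\ref{MFG_H1})--(H\ref{MFG_H3}).
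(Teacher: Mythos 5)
Your proof is correct, but it reaches the uniform speed bound by a genuinely different route than the paper. Both arguments start identically, by comparing the minimizer's discounted action with that of the affine segment joining $x$ to $y$. From there the paper argues softly and by contradiction: superlinearity of $L$ produces a radius $R$ outside of which $L_m$ exceeds the segment's average cost, and a compactness/continuous-dependence argument for the flow \eqref{lagrangian flow of DMFG} (uniform over the compact parameter space $\PP(\T^d)$ via (F\ref{MFG_F2}), and over $\lambda\in(0,1]$) yields a radius $R_1$ with $\Phi^s_{L_m,\lambda}(\Sigma_R)\subset\Sigma_{R_1}$ for $|s|\le 1$; if the speed ever exceeded $R_1$ the orbit could never enter $\Sigma_R$, forcing the action above the segment's and giving a contradiction. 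You instead proceed quantitatively: the action bound plus superlinearity give an $L^1$ bound on $|\dot\gamma_{x,y}^{m,\lambda}|$, the mean value theorem pins down a time $s_0$ of controlled speed and momentum, and the discounted energy identity $E'+\lambda E=-\lambda L_m$ propagates the bound over all of $[-\tau,0]$ using only the already-controlled $\int_{-\tau}^0|L_m|\,ds$, after which superlinearity of $H$ in $p$ closes the loop. Your route trades the paper's appeal to continuous dependence of the flow on $(m,\lambda)$ (which is correct but stated somewhat informally there) for explicit inequalities in which the uniformity in $\tau$, $\lambda$ and $m$ is transparent; the paper's version is shorter and avoids passing to Hamiltonian coordinates. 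The two proofs conclude the acceleration bound in exactly the same way, by solving the discounted Euler--Lagrange equation for $\ddot\gamma_{x,y}^{m,\lambda}$ and using the uniform invertibility of $\partial^2_{vv}L$ on the compact set where the velocity bound holds.
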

\begin{proof}
Fix $D>0$. For each $\lambda \in (0,1]$, each $\tau \in (0,1)$ and each $x,y \in \R^d$ with $\vert x - y \vert \leq \tau D$, define the segment $l_{x,y} :[-\tau, 0] \rightarrow \R^d$ connecting $x$ and $y$ by
$$
l_{x,y} (s) := y + s \frac{y-x}{\tau}, \quad \forall s \in [-\tau, 0].
$$
Then, for each $m \in \PP ( \T^d )$,
\begin{align*}
\int_{-\tau}^0 e^{\lambda s} L_m \left( l_{x,y}(s), \dot{l}_{x,y}(s) \right) ds
& = \int_{-\tau}^0 e^{\lambda s} \left( L \left( l_{x,y} (s), \frac{y-x}{\tau} \right) + F \left( l_{x,y}(s),m \right) \right) ds \\
& \leq \frac{1- e^{- \lambda \tau}}{\lambda} \left( \max_{x \in \T^d, \vert v \vert \leq D} L(x,v) + F_\infty \right).
\end{align*}
Let $C_1(D):=  \max_{x \in \T^d, \vert v \vert \leq D} L(x,v) + F_\infty <+\infty$.

Since $L$ is superlinear in $v$, there is a constant $R>0$ such that for any $\vert v \vert >R$,
$$
L(x,v) + F(x,m) > C_1(D), \quad \forall x \in \T^d, \,\, \forall m \in \PP ( \T^d ).
$$
Let $\Sigma_R := \left\{ (x,v) \in \T^d \times \R^d \middle\vert \,\, \vert v \vert \leq R \right\}$. Obviously, $\Sigma_R$ is a compact subset of $\T^d \times \R^d$. By the compactness of $\Sigma_R$ and $\PP ( \T^d )$, the continuous dependence of the solutions on the initial condition and the parameter, and (F\ref{MFG_F2}), one can deduce that there is a constant $R_1>0$ independent of $\tau$ and $m$ such that
$$
\Phi^s_{L_m, \lambda} \left( \Sigma_R \right) \subset \Sigma_{R_1}:= \left\{ (x,v) \in \T^d \times \R^d \middle\vert \,\, \vert v \vert \leq R_1 \right\}, \quad \forall s \in [-1,1], \,\,\forall m \in \PP ( \T^d ).
$$

For any minimizing curve $\gamma_{x,y}^{m, \lambda} : [-\tau, 0] \rightarrow \R^d$ of $L_m$ with the action $h_{\tau, \lambda}^m (x,y)$, we assert that $\left\vert \dot{\gamma}_{x,y}^{m, \lambda} (s) \right\vert \leq R_1$ for all $s \in [-\tau, 0]$. Otherwise, there would exist some $s_0 \in [-\tau, 0]$ such that $\left\vert \dot{\gamma}_{x,y}^{m, \lambda} (s_0) \right\vert > R_1$. We define a curve $\tilde{\gamma}$ in $\T^d$ by $\tilde{\gamma}(s) := \Pi \left( \gamma_{x,y}^{m, \lambda}(s) \right)$ for any $s \in [-\tau, 0]$. Since $\gamma_{x,y}^{m, \lambda}$ is a minimizing curve, it follows that $\left( \tilde{\gamma}(s), \dot{\tilde{\gamma}}(s) \right) \in \T^d \times \R^d$ is part of the Lagrangian flow $\Phi^t_{L_m, \lambda}$. In view of $\left\vert \dot{\tilde{\gamma}}(s_0)\right\vert = \left\vert \dot{\gamma}_{x,y}^{m, \lambda} (s_0) \right\vert > R_1$, one can deduce that
$$
\left( \tilde{\gamma}(s), \dot{\tilde{\gamma}}(s) \right) \notin \Sigma_R, \quad \forall s \in [-\tau, 0].
$$
Thus, $\left\vert \dot{\tilde{\gamma}}(s)\right\vert = \left\vert \dot{\gamma}_{x,y}^{m, \lambda} (s) \right\vert > R$ for any $s \in [-\tau, 0]$. Thus, we have that
$$
L \left( \gamma_{x,y}^{m, \lambda}(s), \dot{\gamma}_{x,y}^{m, \lambda}(s) \right) + F \left( \gamma_{x,y}^{m, \lambda}(s), m \right) > C_1(D), \quad \forall s \in [-\tau, 0],
$$
implying that
$$
\int_{-\tau}^0 e^{\lambda s} L_m \left( \gamma_{x,y}^{m, \lambda}(s), \dot{\gamma}_{x,y}^{m, \lambda}(s) \right) ds > C_1(D) \frac{1- e^{-\lambda \tau}}{\lambda}  \geq \int_{-\tau}^0 e^{\lambda s} L_m \left( l_{x,y}(s), \dot{l}_{x,y}(s) \right) ds.
$$
Here comes a contradiction.

At last, note that
\begin{align*}
\ddot{\gamma}_{x,y}^{m, \lambda} = \frac{\partial^2 L}{\partial v^2} \left( \gamma_{x,y}^{m, \lambda}, \dot{\gamma}_{x,y}^{m, \lambda} \right) ^{-1} 
& \left( \frac{\partial L}{\partial x} \left( \gamma_{x,y}^{m, \lambda}, \dot{\gamma}_{x,y}^{m, \lambda} \right) + \frac{\partial F}{\partial x} \left( \gamma_{x,y}^{m, \lambda}, m \right) \right. \\
& \quad \left.  - \lambda \frac{\partial L}{\partial v} \left( \gamma_{x,y}^{m, \lambda}, \dot{\gamma}_{x,y}^{m, \lambda} \right) - \frac{\partial^2 L}{\partial x \partial v} \left( \gamma_{x,y}^{m, \lambda}, \dot{\gamma}_{x,y}^{m, \lambda} \right) \cdot \dot{\gamma}_{x,y}^{m, \lambda} \right),
\end{align*}
which completes the proof.
\end{proof}

The following lemma establishes that the minimal action $h_{\tau, \lambda}^m (x,y)$ and the discrete action $\LL_{\tau,m}(x,y)$ are close provided $x$ and $y$ are sufficiently near. This result is employed in the proof of Proposition \ref{convergence to find viscosity solution}.

\begin{lemma} \label{lemma3.4}
For any $D>0$, there is $\tilde{C}(D)>0$ such that for each $\tau \in (0,1)$, each $\lambda \in (0,1]$ and each $x,y \in \R^d$ with $\vert x-y\vert \leq \tau D$, there holds
$$
\left\vert h_{\tau, \lambda}^m (x,y) - \LL_{\tau,m}(x,y) \right\vert \leq \tau^2 \tilde{C}(D), \quad \forall m \in \PP ( \T^d ).
$$
\end{lemma}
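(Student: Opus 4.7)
\medskip

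\noindent\textbf{Proof plan.}
The plan is to obtain the upper and lower bounds $h_{\tau,\lambda}^m(x,y)\le\mathcal{L}_{\tau,m}(x,y)+O(\tau^2)$ and $\mathcal{L}_{\tau,m}(x,y)\le h_{\tau,\lambda}^m(x,y)+O(\tau^2)$ separately, with constants depending only on $D$. Throughout, fix $x,y\in\R^d$ with $|x-y|\le\tau D$ and set $v:=(y-x)/\tau$, so $|v|\le D$; note $L_m$ and its first derivatives in $x$ are uniformly bounded on the compact strip $\T^d\times\{|w|\le C(D)\}$ by (L\ref{MFG_L1}), (L\ref{MFG_L2}) and (F\ref{MFG_F1}), with bounds independent of $\tau,\lambda,m$.

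For the upper bound, the natural test curve is the segment $l_{x,y}(s):=y+sv$, $s\in[-\tau,0]$, which satisfies $l_{x,y}(-\tau)=x$, $l_{x,y}(0)=y$ and has constant velocity $v$. By definition of $h_{\tau,\lambda}^m$,
$$
h_{\tau,\lambda}^m(x,y)\le\int_{-\tau}^0 e^{\lambda s}L_m(l_{x,y}(s),v)\,ds.
$$
I would write the right hand side as
$$
L_m(x,v)\int_{-\tau}^0 e^{\lambda s}\,ds+\int_{-\tau}^0 e^{\lambda s}\bigl[L_m(x+(s+\tau)v,v)-L_m(x,v)\bigr]ds,
$$
estimate the first summand using $\bigl|\tfrac{1-e^{-\lambda\tau}}{\lambda}-\tau\bigr|\le\tfrac{\lambda\tau^2}{2}\le\tfrac{\tau^2}{2}$ (which gives a difference of $O(\tau^2)$ from $\tau L_m(x,v)=\mathcal{L}_{\tau,m}(x,y)$), and estimate the second summand by the Lipschitz bound of $L_m$ in $x$ times $|(s+\tau)v|\le\tau D$, integrated over $[-\tau,0]$, which is again $O(\tau^2)$.

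For the lower bound, let $\gamma:=\gamma_{x,y}^{m,\lambda}:[-\tau,0]\to\R^d$ be a minimizing curve for $h_{\tau,\lambda}^m(x,y)$. By Lemma \ref{lemma 1}, $|\dot\gamma(s)|\le C(D)$ on $[-\tau,0]$, hence $|\gamma(s)-x|\le\tau C(D)$. I would split
$$
\int_{-\tau}^0 e^{\lambda s}L_m(\gamma(s),\dot\gamma(s))\,ds=\int_{-\tau}^0 L_m(x,\dot\gamma(s))\,ds+R_1+R_2,
$$
where $R_1:=\int_{-\tau}^0 e^{\lambda s}\bigl[L_m(\gamma(s),\dot\gamma(s))-L_m(x,\dot\gamma(s))\bigr]ds$ is bounded by the Lipschitz constant of $L_m$ in $x$ times $\tau C(D)\cdot\tau=O(\tau^2)$, and $R_2:=\int_{-\tau}^0(e^{\lambda s}-1)L_m(x,\dot\gamma(s))\,ds$ is bounded via $|e^{\lambda s}-1|\le\lambda\tau\le\tau$ and the uniform bound on $L_m(x,\dot\gamma(s))$, again $O(\tau^2)$. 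For the leading term, since $\int_{-\tau}^0\dot\gamma(s)\,ds=y-x=\tau v$ and $L_m(x,\cdot)=L(x,\cdot)+F(x,m)$ is convex in $v$ by (L\ref{MFG_L2}), Jensen's inequality yields
$$
\int_{-\tau}^0 L_m(x,\dot\gamma(s))\,ds\ge \tau L_m\!\left(x,\tfrac1\tau\!\int_{-\tau}^0\dot\gamma(s)\,ds\right)=\tau L_m(x,v)=\mathcal{L}_{\tau,m}(x,y).
$$
Combining these gives $h_{\tau,\lambda}^m(x,y)\ge\mathcal{L}_{\tau,m}(x,y)-O(\tau^2)$, and together with the upper bound we obtain the desired estimate with $\tilde C(D)$ an explicit combination of the $C_L$, $D$, $C(D)$ and $F_\infty$ appearing above.

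There is no real obstacle; the only delicate point is recognizing that Jensen's inequality applied to the $v$-variable (for which $L_m$ is convex) converts the integral of $L_m(x,\dot\gamma)$ into the discrete action $\mathcal{L}_{\tau,m}(x,y)$ evaluated at the chord velocity. The uniform velocity bound from Lemma \ref{lemma 1} is what makes all error terms genuinely $O(\tau^2)$ with a constant depending only on $D$, and the restriction $\lambda\in(0,1]$ keeps the weight $e^{\lambda s}$ within $1+O(\tau)$.
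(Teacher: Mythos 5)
Your proof is correct, but it follows a genuinely different route from the paper's. The paper bounds $\left\vert h_{\tau,\lambda}^m(x,y)-\LL_{\tau,m}(x,y)\right\vert$ in a single two-sided estimate: using the acceleration bound $\left\vert \ddot{\gamma}_{x,y}^{m,\lambda}\right\vert\leq C(D)$ from Lemma \ref{lemma 1}, it shows the minimizer's velocity stays within $2\tau C(D)$ of the chord velocity $\frac{y-x}{\tau}$, and then compares the integrand $e^{\lambda s}L_m(\gamma,\dot\gamma)$ directly with $L\left(x,\frac{y-x}{\tau}\right)+F(x,m)$ via Lipschitz bounds on the compact set $\{\vert v\vert\leq C(D)+D\}$. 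You instead split the claim into two one-sided inequalities: the upper bound via the affine test curve (which the paper uses only inside the proof of Lemma \ref{lemma 1}), and the lower bound via Jensen's inequality applied to the convex function $L_m(x,\cdot)$ together with $\int_{-\tau}^0\dot\gamma\,ds=y-x$. The trade-off is real: your argument needs only the velocity bound from Lemma \ref{lemma 1} and convexity in $v$, so it bypasses the $\CC^2$ regularity of minimizers and the Euler--Lagrange computation behind the acceleration bound, and would survive for Lagrangians that are merely $\CC^1$ and convex in $v$; the paper's argument is a single symmetric computation but leans on the stronger structural information that $\dot\gamma$ is uniformly $O(\tau)$-close to the chord velocity. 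All your error terms ($R_1$, $R_2$, and the discrepancy between $\frac{1-e^{-\lambda\tau}}{\lambda}$ and $\tau$) are correctly of order $\tau^2$ with constants depending only on $D$, so the constant $\tilde C(D)$ you obtain is admissible.
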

\begin{proof}
Fix $D>0$. Let $C(D)$ be the constant given by Lemma \ref{lemma 1}. Let $\lambda \in (0,1]$, $\tau \in (0,1)$ and $x,y \in \R^d$ with $\vert x-y \vert \leq \tau D$. Let $\gamma_{x,y}^{m, \lambda}: [-\tau, 0] \rightarrow \R^d$ be a minimizing curve of $h_{\tau, \lambda}^m (x,y)$. Then, by Lemma \ref{lemma 1}, we get that $\left\vert \dot{\gamma}_{x,y}^{m, \lambda} (s) \right\vert, \left\vert \ddot{\gamma}_{x,y}^{m, \lambda} (s) \right\vert \leq C(D)$ for all $s \in [-\tau, 0]$. For any $s \in [-\tau, 0]$, we have that
\begin{align*}
& \left\vert \gamma_{x,y}^{m, \lambda} (s) - x \right\vert = \left\vert \gamma_{x,y}^{m, \lambda} (s) - \gamma_{x,y}^{m, \lambda} (-\tau) \right\vert \leq \tau C(D), \quad \left\vert \dot{\gamma}_{x,y}^{m, \lambda} (s) - \dot{\gamma}_{x,y}^{m, \lambda} (0) \right\vert \leq \tau C(D), \\
& \left\vert \frac{y-x}{\tau} - \dot{\gamma}_{x,y}^{m, \lambda} (0) \right\vert \leq \tau C(D), \quad \left\vert \dot{\gamma}_{x,y}^{m, \lambda} (s) - \frac{y-x}{\tau} \right\vert \leq 2\tau C(D).
\end{align*}
By (L\ref{MFG_L1}) and (F\ref{MFG_F2}), there holds
\begin{align*}
& \left\vert h_{\tau, \lambda}^m (x,y) - \LL_{\tau,m}(x,y) \right\vert \\
= & \left\vert \int_{-\tau}^0 e^{\lambda s} \left( L \left( \gamma_{x,y}^{m, \lambda}(s), \dot{\gamma}_{x,y}^{m, \lambda} (s) \right) + F\left( \gamma_{x,y}^{m, \lambda}(s),m \right) \right) - L \left( x,\frac{y-x}{\tau} \right) - F(x,m) ds  \right\vert \\
\leq & \int_{-\tau}^0 \left\vert L \left( \gamma_{x,y}^{m, \lambda}(s), \dot{\gamma}_{x,y}^{m, \lambda} (s) \right) - L \left( x,\frac{y-x}{\tau} \right) \right\vert ds + \int_{-\tau}^0 \left\vert F\left( \gamma_{x,y}^{m, \lambda} (s), m \right) - F(x,m) \right\vert ds \\
& + \left\vert \int_{-\tau}^0 (1- e^{\lambda s}) \left( L \left( \gamma_{x,y}^{m, \lambda}(s), \dot{\gamma}_{x,y}^{m, \lambda} (s) \right) + F\left( \gamma_{x,y}^{m, \lambda}(s),m \right) \right) ds \right\vert \\
\leq & \left( \sup_{x \in \T^d, \vert v \vert \leq C(D) + D} \vert DL (x,v) \vert + F_{\infty} \right) 2 \tau^2 C(D) + \left( \sup_{x \in \T^d, \vert v \vert \leq C(D)} \vert L(x,v) \vert + F_\infty \right) \left( \tau - \frac{1-e^{-\lambda \tau}}{\lambda} \right) \\
= & : \tilde{C} (D) \tau^2.
\end{align*}
\end{proof}

The following lemma is a direct consequence of assumptions (L\ref{MFG_L1})-(L\ref{MFG_L3}) and (F\ref{MFG_F1}). We omit the proof here. This result is used in the proofs of Proposition \ref{equi-Lipschitz of discounted HJE} and Proposition \ref{minimizer x is near}.

\begin{lemma} \label{222}
Functions $\LL_{\tau,m}(x,y)$ and $h_{\tau, \lambda}^m (x,y)$ satisfy the following properties.
\begin{enumerate}
\item For each $D>0$,
$$
\inf_{m \in \PP ( \T^d ), \tau \in (0,1), x,y \in \R^d} \frac{\LL_{\tau,m} (x,y)}{\tau} > -\infty,
$$
$$
\sup_{m \in \PP ( \T^d ), \tau \in (0,1), \vert y-x \vert \leq \tau D} \frac{\LL_{\tau,m} (x,y)}{\tau} < +\infty.
$$

\item The next two limits
$$
\lim_{D \rightarrow +\infty} \inf_{\tau \in (0,1), \vert x-y \vert \geq \tau D} \frac{\LL_{\tau,m}(x,y)}{\vert x-y \vert} = +\infty,
$$
$$
\lim_{D \rightarrow +\infty} \inf_{\lambda \in (0, 1], \tau \in (0,1), \vert x-y \vert \geq \tau D} \frac{h_{\tau, \lambda}^m (x,y)}{\vert x-y \vert} = +\infty
$$
are both uniformly on $m \in \PP \left(\T^d \right)$.

\item For each $D>0$, there exists a constant $C(D)>0$ such that for any $\tau \in (0,1)$, any $x,y,z \in \R^d$ and any $m \in \PP \left(\T^d \right)$,
\begin{itemize}
\item if $\vert y-x \vert \leq \tau D$ and $\vert z-x \vert \leq \tau D$, then $\left\vert \LL_{\tau,m}(x,z) - \LL_{\tau,m}(x,y) \right\vert \leq C(D) \vert z-y \vert$.
\item if $\vert z-x \vert \leq \tau D$ and $\vert z-y \vert \leq \tau D$, then $\left\vert \LL_{\tau,m}(x,z) - \LL_{\tau,m}(y,z) \right\vert \leq C(D) \vert y-x \vert$.
\end{itemize}
\end{enumerate}
\end{lemma}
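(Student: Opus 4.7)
The plan is to handle the three items separately, exploiting the explicit formula $\LL_{\tau,m}(x,y)/\tau = L(x,(y-x)/\tau)+F(x,m)$ for items (1) and (3), and working directly with the defining infimum of $h_{\tau,\lambda}^m$ over arbitrary competing curves for the second part of item (2). All constants that arise will depend only on $D$ and the structural constants in (L\ref{MFG_L1})--(L\ref{MFG_L3}) and (F\ref{MFG_F1}), so uniformity in $m$, $\tau$, $\lambda$ is automatic.

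For item (1), I would apply (L\ref{MFG_L3}) with $K=0$ to obtain $L(x,v) \geq C(0)$ on $\T^d \times \R^d$; combined with $\|F(\cdot,m)\|_\infty \leq F_\infty$ from (F\ref{MFG_F1}), this yields $\LL_{\tau,m}(x,y)/\tau \geq C(0)-F_\infty$ uniformly in $(\tau,m,x,y)$. For the upper bound under $|y-x|\leq \tau D$, the velocity $(y-x)/\tau$ lies in the closed ball of radius $D$, so the continuity of $L$ from (L\ref{MFG_L1}) on the compact set $\T^d \times \overline{B_D(0)}$ furnishes a finite maximum, which absorbs the bounded $F$-term.

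For the first limit in item (2), I set $v=(y-x)/\tau$ so that $|v|\geq D$ and apply (L\ref{MFG_L3}) with an arbitrary $K>0$:
$$\frac{\LL_{\tau,m}(x,y)}{|x-y|} = \frac{L(x,v)+F(x,m)}{|v|} \geq K - \frac{|C(K)|+F_\infty}{D},$$
so $\liminf_{D\to\infty}$ exceeds $K$, which is arbitrary. For the second limit I would work with an arbitrary absolutely continuous curve $\gamma:[-\tau,0]\to\R^d$ joining $x$ to $y$; since $\lambda\tau < 1$, one has $e^{\lambda s}\in[e^{-1},1]$ on $[-\tau,0]$, and (L\ref{MFG_L3}) together with the elementary bound $\int_{-\tau}^0|\dot\gamma|\,ds \geq |y-x|$ gives
$$\int_{-\tau}^0 e^{\lambda s} L(\gamma,\dot\gamma)\,ds \;\geq\; e^{-1}K|y-x| - |C(K)|\tau.$$
Adding the trivial lower bound $-F_\infty\tau$ for the $F$-integral and taking the infimum over $\gamma$ produces $h_{\tau,\lambda}^m(x,y)/|x-y| \geq e^{-1}K - (|C(K)|+F_\infty)/D$, completing the argument after $D\to\infty$ then $K\to\infty$, uniformly in $m$.

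For item (3), (L\ref{MFG_L1}) makes $L$ locally Lipschitz, with some constant $M(D)$ on $\T^d\times\overline{B_D(0)}$. In case (i) only the velocity argument changes and the two velocities both lie in $\overline{B_D(0)}$ with difference $(z-y)/\tau$, so multiplying by the $\tau$ in front yields the $M(D)|z-y|$ bound. In case (ii) the base point and the velocity both move; (F\ref{MFG_F1}) gives $|F(x,m)-F(y,m)|\leq F_\infty|x-y|$, while the $L$-part picks up a base-point displacement bounded by $M(D)|x-y|$ and a velocity displacement of size $|x-y|/\tau$, so after multiplying by $\tau$ one collects $|\LL_{\tau,m}(x,z)-\LL_{\tau,m}(y,z)| \leq (2M(D)+F_\infty)|y-x|$. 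The only mildly delicate point — hardly an obstacle — is bookkeeping of the $\tau$-factor in case (ii) and the observation that $\lambda\in(0,1]$, $\tau\in(0,1)$ enter the $h_{\tau,\lambda}^m$ estimate only through the crude inequality $e^{\lambda s}\geq e^{-1}$ on $[-\tau,0]$, which is enough to keep all constants uniform.
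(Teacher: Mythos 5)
Your proof is correct and supplies exactly the direct verification from (L\ref{MFG_L1})--(L\ref{MFG_L3}) and (F\ref{MFG_F1}) that the paper has in mind (the paper omits the proof, calling the lemma a direct consequence of these assumptions). The only cosmetic slip is invoking (L\ref{MFG_L3}) ``with $K=0$'' for the lower bound in item (1), since that hypothesis is stated for $K>0$; taking $K=1$ gives $L(x,v)\geq \vert v\vert + C(1)\geq C(1)$, which is all you need, and the rest of the argument, including the uniform bound $e^{\lambda s}\in[e^{-1},1]$ on $[-\tau,0]$ and the $\tau$-bookkeeping in item (3), is sound.
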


The following proposition establishes that all solutions to the discrete Lax-Oleinik equation (\ref{discrete_laxoleinik}) are equi-Lipschitz. This result is employed in the proofs of Proposition \ref{convergence to find viscosity solution} and Proposition \ref{convergence1_prop1}.

\begin{proposition} \label{equi-Lipschitz of discounted HJE}
There exists a constant $C>0$ such that for any $\tau \in (0,1)$, any $\lambda \in (0,1]$ and any $m \in \PP ( \T^d )$, the function $u_{\tau,m}^\lambda$ is Lipschitz and $\Lip \left( u_{\tau,m}^\lambda \right) \leq C$.
\end{proposition}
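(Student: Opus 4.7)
The plan is to exploit the Bellman structure of $u_{\tau,m}^\lambda$ and compare values at nearby points by reusing the same optimizer. Given $y_1, y_2 \in \R^d$ and an optimizer $x_1^* \in \R^d$ produced by Lemma \ref{existence of calibrated configuration} so that $u_{\tau,m}^\lambda(y_1) = (1-\tau\lambda)u_{\tau,m}^\lambda(x_1^*) + \LL_{\tau,m}(x_1^*, y_1)$, testing the fixed-point equation at $y_2$ with the competitor $x_1^*$ yields the one-sided bound
\[
u_{\tau,m}^\lambda(y_2) - u_{\tau,m}^\lambda(y_1) \leq \LL_{\tau,m}(x_1^*, y_2) - \LL_{\tau,m}(x_1^*, y_1),
\]
and a symmetric estimate using the optimizer $x_2^*$ for $y_2$ provides the matching lower bound. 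By Lemma \ref{222}(3), once a universal $D^* > 0$ is known such that $|y - x^*|/\tau \leq D^*$ for every optimizer $x^*$ independently of $\tau,\lambda,m$, both one-sided differences are dominated by $C(D^*)|y_1 - y_2|$ and the uniform Lipschitz bound follows.

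The crux is therefore the velocity estimate $|y - x^*|/\tau \leq D^*$ uniform in all parameters. A first attempt tests optimality against the constant competitor $x = y$ and, combined with $\|u_{\tau,m}^\lambda\|_\infty \leq C_0/\lambda$ from Proposition \ref{existence of solution of discounted HJE} and the superlinearity in Lemma \ref{222}(2), produces a bound that degrades as $\lambda \to 0$. To upgrade to a $\lambda$-uniform estimate, I would translate the fixed-point equation into a discrete Hamilton--Jacobi identity via Fenchel duality: setting $v^* := (y - x^*)/\tau$ and $p^* := D_v L(x^*, v^*)$ and using $L(x^*, v^*) = p^*\cdot v^* - H(x^*, p^*)$, the equation becomes
\[
\lambda u_{\tau,m}^\lambda(x^*) + H(x^*, p^*) - F(x^*, m) = \tau^{-1}\bigl(u_{\tau,m}^\lambda(x^*) - u_{\tau,m}^\lambda(y) + p^*\cdot(y - x^*)\bigr).
\]
The observation $\|\lambda u_{\tau,m}^\lambda\|_\infty \leq C_0$ is uniform in $\lambda$, and by (H\ref{MFG_H3}) the Hamiltonian $H$ is coercive in $p$ uniformly in $x \in \T^d$, so a bound $|p^*| \leq C$ will follow once the Bregman-type remainder on the right-hand side is controlled uniformly.

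To control this remainder, my strategy is to iterate the discounted Lax--Oleinik equation along the calibrated configuration furnished by Lemma \ref{existence of calibrated configuration} and insert an exponentially-decaying perturbation $\hat{x}_{-k} := x_{-k}^* + \alpha^k(y_2 - y_1)$ with $\alpha \in (0,1)$ (say $\alpha = 1/2$) as a competitor starting at $y_2$. The resulting term-by-term differences in the representation $u_{\tau,m}^\lambda(y_1) = \sum_{k\geq 0}(1-\tau\lambda)^k\LL_{\tau,m}(x_{-k-1}^*, x_{-k}^*)$ form a geometric series with ratio $(1-\tau\lambda)\alpha < \alpha$, bounded away from $1$ uniformly in $\tau,\lambda$ and hence summable with a uniform bound. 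The main obstacle is the pointwise estimate of each summand $\LL_{\tau,m}(\hat{x}_{-k-1}, \hat{x}_{-k}) - \LL_{\tau,m}(x_{-k-1}^*, x_{-k}^*)$, whose Taylor expansion involves $D_x L$ and $D_v L$ along the trajectory and so requires a priori control on the velocities $v_k^* = (x_{-k}^* - x_{-k-1}^*)/\tau$. This is overcome by applying the preceding Hamilton--Jacobi-type identity along every iterate to obtain $|p_k^*| \leq C$ uniformly and then inverting through the smoothness of $H$ (assumption (H\ref{MFG_H1})) to conclude $|v_k^*| = |D_p H(x_{-k-1}^*, p_k^*)| \leq D^*$; the two estimates close the loop and yield the desired uniform Lipschitz constant.
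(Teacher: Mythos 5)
Your outer skeleton (reuse the optimizer of $y_1$ as a competitor at $y_2$, reduce everything to a uniform bound $|y-x^*|\le \tau D^*$ on the optimizer distance, then invoke Lemma \ref{222}(3)) is exactly the paper's final step, but the mechanism you propose for the crucial optimizer-distance estimate does not close. The discrete Hamilton--Jacobi identity you derive via Fenchel duality is a tautology: substituting the optimality relation $u_{\tau,m}^\lambda(y) = (1-\tau\lambda)u_{\tau,m}^\lambda(x^*) + \LL_{\tau,m}(x^*,y)$ into the right-hand side $\tau^{-1}\bigl(u_{\tau,m}^\lambda(x^*) - u_{\tau,m}^\lambda(y) + p^*\cdot(y-x^*)\bigr)$ reproduces the left-hand side identically, so extracting $|p^*|\le C$ from coercivity of $H$ would require an independent bound on the difference quotient $\tau^{-1}\bigl(u_{\tau,m}^\lambda(y)-u_{\tau,m}^\lambda(x^*)\bigr)$ --- which is precisely the Lipschitz-type information you are trying to establish. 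The perturbed-configuration series you then set up to control that quotient in turn needs $|v_k^*|\le D^*$ along the whole calibrated configuration in order to Taylor-expand each summand. Declaring that ``the two estimates close the loop'' is circular; no bootstrap or continuity argument is supplied that would break the circle.

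The fix is far more elementary, and it is what the paper does: your dismissed ``first attempt'' works once you replace the sup-norm bound $u_{\tau,m}^\lambda(y)-u_{\tau,m}^\lambda(x^*) \le 2C_0/\lambda$ by a linear-in-distance bound that is uniform in $\lambda$. Concretely, for $|y-x|>\tau$ chain $n$ intermediate points $x_i = x + \tfrac{i}{n}(y-x)$ a distance at most $\tau$ apart; each step costs $u_{\tau,m}^\lambda(x_{i+1})-u_{\tau,m}^\lambda(x_i) \le \LL_{\tau,m}(x_i,x_{i+1}) + \tau\lambda\bigl\Vert u_{\tau,m}^\lambda\bigr\Vert_\infty \le \tfrac12\tau C_1 + \tau C_0$, where the key point is that $\lambda\bigl\Vert u_{\tau,m}^\lambda\bigr\Vert_\infty \le C_0$ is uniform in $\lambda$ by Proposition \ref{existence of solution of discounted HJE}. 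Summing gives $u_{\tau,m}^\lambda(y)-u_{\tau,m}^\lambda(x) \le (C_1+2C_0)|y-x|$ for $|y-x|>\tau$. Feeding this into the optimality relation at a minimizer, $u_{\tau,m}^\lambda(y)-u_{\tau,m}^\lambda(x^*) = \LL_{\tau,m}(x^*,y) - \tau\lambda u_{\tau,m}^\lambda(x^*) \ge \LL_{\tau,m}(x^*,y) - \tau C_0$, the superlinearity in Lemma \ref{222}(2) immediately forces $|y-x^*|\le \tau D$ by contradiction, with $D$ independent of $\tau$, $\lambda$ and $m$; your first paragraph then finishes the proof. No Fenchel duality and no iteration along the calibrated configuration are needed.
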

\begin{proof}
Let
\begin{align*}
& C_1 := 2 \sup_{\tau \in (0,1), \vert y-x \vert \leq \tau, m \in \PP ( \T^d )} \frac{\LL_{\tau,m}(x,y)}{\tau}, \\
& D := \inf \left\{ D^\prime>1 \middle\vert \inf_{\tau \in (0,1), \vert y-x \vert > \tau D^\prime, m \in \PP ( \T^d )} \frac{\LL_{\tau,m} (x,y) - \tau C_0}{\vert y-x \vert} > C_1 + 2 C_0 \right\}, \\
& C := \max \left\{ C_1 + 2C_0, \sup_{\vert y-x \vert, \vert z-x \vert \leq \tau (D+1), \tau \in (0,1), m \in \PP ( \T^d )} \frac{\LL_{\tau,m} (x,y) - \LL_{\tau,m} (x,z)}{\vert y-z \vert}  \right\}.
\end{align*}
Note that the three constants $C_1$, $D$ and $C$ are well-defined by Lemma \ref{222}.

Firstly, we show that if $\vert x-y \vert >\tau$, then $u_{\tau,m}^\lambda (y) - u_{\tau,m}^\lambda (x) \leq \left( C_1+2C_0 \right) \vert y-x \vert$. Indeed, choose $n \geq 2$ such that $(n-1)\tau < \vert y-x \vert \leq n\tau \leq 2 \vert y-x \vert$ and define $x_i = x +\frac{i}{n} (y-x)$ for $i=0,\cdots, n$. Then we obtain
$$
u_{\tau,m}^\lambda \left( x_{i+1} \right) -u_{\tau,m}^\lambda \left( x_{i} \right) \leq \LL_{\tau,m} \left( x_{i} , x_{i+1} \right) + \tau C_0 \leq \frac{1}{2} \tau C_1 + \tau C_0,
$$
$$
u_{\tau,m}^\lambda (y) - u_{\tau,m}^\lambda (x) \leq n\tau  \left( \frac{1}{2} C_1 + C_0 \right) \leq \left( C_1+2C_0 \right) \vert y-x \vert.
$$

Then, for any $y \in \R^d$ and any $x \in \argmin_{x \in \R^d} \left\{ (1- \tau \lambda) u_{\tau,m}^\lambda (x) + \LL_{\tau,m}(x,y) \right\}$, assume that $\vert y-x \vert > \tau D$. Then we obtain that
$$
\left( C_1+2C_0 \right) \vert y-x \vert \geq u_{\tau,m}^\lambda (y) - u_{\tau,m}^\lambda (x) \geq \LL_{\tau,m}(x,y) - \lambda \tau \left\Vert u_{\tau,m}^\lambda \right\Vert_\infty \geq \LL_{\tau,m}(x,y) - \tau C_0 > \left( C_1+2C_0 \right) \vert y-x \vert.
$$
Here comes a contradiction. Thus, $\vert y-x \vert \leq \tau D$.

Let $y,z \in \R^d$ with $\vert y-z \vert \leq \tau$. Let $x \in \argmin_{x \in \R^d} \left\{ (1- \tau \lambda) u_{\tau,m}^\lambda (x) + \LL_{\tau,m}(x,y) \right\}$. Then $\vert x - y \vert \leq \tau D$, $\vert x - z \vert \leq \tau (D+1)$ and
$$
u_{\tau,m}^\lambda (z) - u_{\tau,m}^\lambda (x) \leq (1- \tau \lambda) u_{\tau,m}^\lambda (x) + \LL_{\tau,m}(x,z) - u_{\tau,m}^\lambda (x) = \LL_{\tau,m}(x,z) - \tau \lambda u_{\tau,m}^\lambda (x).
$$
Thus
\begin{align*}
u_{\tau,m}^\lambda (z) - u_{\tau,m}^\lambda (y)
& \leq \LL_{\tau,m}(x,z) - \tau \lambda u_{\tau,m}^\lambda (x) + u_{\tau,m}^\lambda (x) - u_{\tau,m}^\lambda (y) \\
& = \LL_{\tau,m}(x,z) - \tau \lambda u_{\tau,m}^\lambda (x) + \tau \lambda u_{\tau,m}^\lambda (x) - \LL_{\tau,m}(x,y) \\
& = \LL_{\tau,m}(x,z) - \LL_{\tau,m}(x,y) \leq C \vert y-z \vert.
\end{align*}                                    
By changing the roles of $z$ and $y$, we just proved that $\Lip \left( u_{\tau,m}^\lambda \right) \leq C$.
\end{proof}

For any $\Z^d$-periodic function $\varphi$, any $\tau \in (0,1)$, any $\lambda \in (0,1]$ and any $m \in \PP ( \T^d )$, recall definitions of two Lax-Oleinik operators for (\ref{DMFG}a) by
\begin{align}
& T_{\tau, \lambda}^m \varphi(y) := \inf_{x \in \R^d} \left(  e^{- \lambda\tau} \varphi(x) + h_{\tau, \lambda}^m (x,y) \right) , \quad \forall y \in \R^d, \label{DMFG laxoleinik} \\
& \TT_{\tau, \lambda}^m \varphi(y) := \inf_{x \in \R^d} \left(  (1 - \tau \lambda) \varphi(x) + \LL_{\tau, m} (x,y) \right) , \quad \forall y \in \R^d.\label{DMFG laxoleinik_discrete}
\end{align}
The next proposition demonstrates that for any point $y$, any Lipschitz function $\varphi$ and any minimizer $x$ of $T_{\tau, \lambda}^m \varphi(y)$ or $\TT_{\tau, \lambda}^m \varphi(y)$, the distance between $x$ and $y$ and the uniform norms of $T_{\tau, \lambda}^m \varphi - \varphi$ and $\TT_{\tau, \lambda}^m \varphi - \varphi$ are all small. This result is crucial and is employed in the proofs of Proposition \ref{existence of minimizing tau holonomic lambda measure for MFG}, Proposition \ref{convergence to find viscosity solution}, Lemma \ref{convergence of aubry set}, Proposition \ref{convergence2_prop1} and Proposition \ref{convergence1_prop1}.

\begin{proposition} \label{minimizer x is near}
For any constant $\kappa>0$, any $\tau \in (0,1)$, any $\lambda \in (0,1]$ and any $m \in \PP ( \T^d )$, if there exists a constant $C>0$ such that the function $\varphi$ is a $\Z^d$-periodic Lipschitz function satisfying $\Lip(\varphi)\leq \kappa$ and $\left\Vert \varphi \right\Vert_\infty \leq C / \lambda$, then there exists a constant $D_\kappa>0$, depending only on $\kappa$, such that
\begin{enumerate}
\item for any $y \in \R^d$ and any $x \in \argmin_{x \in \R^d} \left\{ (1- \tau \lambda) \varphi(x) + \LL_{\tau,m}(x,y) \right\}$, it is clear that $\vert x-y \vert \leq \tau D_\kappa$. Moreover, there is a constant $\tilde{C}_\kappa$ such that
$$
\left\Vert \TT_{\tau, \lambda}^m \varphi - \varphi \right\Vert_{\infty} \leq \tau \tilde{C}_\kappa.
$$
\item for any $y \in \R^d$ and any $x \in \argmin_{x \in \R^d} \left\{ e^{-\lambda \tau} \varphi(x) + h_{\tau, \lambda}^m(x,y) \right\}$, it is clear that $\vert x-y \vert \leq \tau D_\kappa$. Moreover, there is a constant $\tilde{C}_\kappa$ such that
$$
\left\Vert T_{\tau, \lambda}^m \varphi - \varphi \right\Vert_{\infty} \leq \tau \tilde{C}_\kappa.
$$
\end{enumerate}
\end{proposition}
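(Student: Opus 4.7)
The plan is to exploit the superlinearity statements in Lemma \ref{222} to first pin the minimizer $x$ close to $y$, and then bound the difference $\TT^m_{\tau,\lambda}\varphi - \varphi$ (respectively $T^m_{\tau,\lambda}\varphi - \varphi$) by splitting it into an upper and a lower estimate, each of order $\tau$.

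I start with claim (1). Fix $y \in \R^d$ and a minimizer $x \in \argmin_{x \in \R^d}\bigl\{(1-\tau\lambda)\varphi(x) + \LL_{\tau,m}(x,y)\bigr\}$. Testing the minimum against the admissible point $y$ gives
$$
(1-\tau\lambda)\varphi(x) + \LL_{\tau,m}(x,y) \leq (1-\tau\lambda)\varphi(y) + \LL_{\tau,m}(y,y),
$$
so using $\Lip(\varphi) \leq \kappa$ and the definition of $C_0$ in \eqref{def of C_0} (which bounds $\LL_{\tau,m}(y,y)/\tau$ from above), I get
$$
\LL_{\tau,m}(x,y) \leq (1-\tau\lambda)\kappa\,|x-y| + \tau C_0 \leq \kappa\,|x-y| + \tau C_0.
$$
Now Lemma \ref{222}(2) says $\LL_{\tau,m}(x,y)/|x-y| \to +\infty$ as $|x-y|/\tau \to \infty$, uniformly in $m$. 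So I choose $D_\kappa$ large enough that
$$
\inf_{\substack{\tau \in (0,1),\ m \in \PP(\T^d)\\ |x-y| > \tau D_\kappa}} \frac{\LL_{\tau,m}(x,y)}{|x-y|} \;>\; \kappa + \frac{C_0}{D_\kappa}.
$$
If $|x-y| > \tau D_\kappa$, then $\tau C_0 < C_0|x-y|/D_\kappa$, and the two displays contradict each other. Hence $|x-y| \leq \tau D_\kappa$.

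For the uniform bound on $\TT^m_{\tau,\lambda}\varphi - \varphi$, the upper side is immediate from testing at $x=y$:
$$
\TT^m_{\tau,\lambda}\varphi(y) - \varphi(y) \leq -\tau\lambda\varphi(y) + \LL_{\tau,m}(y,y) \leq \tau\lambda \cdot \frac{C}{\lambda} + \tau C_0 = \tau(C+C_0).
$$
For the lower side, using the minimizer $x$ just produced (with $|x-y| \leq \tau D_\kappa$), and $\LL_{\tau,m}(x,y) \geq -\tau C_0$ from \eqref{def of C_0},
$$
\TT^m_{\tau,\lambda}\varphi(y) - \varphi(y) = \varphi(x) - \varphi(y) - \tau\lambda\varphi(x) + \LL_{\tau,m}(x,y) \geq -\kappa\tau D_\kappa - \tau C - \tau C_0.
$$
Thus $\|\TT^m_{\tau,\lambda}\varphi - \varphi\|_\infty \leq \tau \tilde C_\kappa$ for $\tilde C_\kappa := \kappa D_\kappa + C + C_0$.

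Claim (2) follows by the exact same strategy, replacing $\LL_{\tau,m}$ by $h^m_{\tau,\lambda}$ and $(1-\tau\lambda)$ by $e^{-\lambda\tau}$. The replacement of $\LL_{\tau,m}(y,y)$ is $h^m_{\tau,\lambda}(y,y)$, which is bounded above by the action of the constant curve $\gamma\equiv y$, i.e.\ $h^m_{\tau,\lambda}(y,y) \leq \frac{1-e^{-\lambda\tau}}{\lambda}(L(y,0)+F_\infty) \leq \tau(\|L(\cdot,0)\|_\infty + F_\infty)$, and bounded below by $-\tau C_0'$ for a constant coming from the superlinearity (L\ref{MFG_L3}) and (F\ref{MFG_F1}). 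I then invoke Lemma \ref{222}(2) in its $h^m_{\tau,\lambda}$-form (superlinearity in $|x-y|/\tau$, uniform in $\lambda$, $\tau$, $m$) to choose the same type of threshold $D_\kappa$; the remaining bookkeeping is identical. The only mild subtlety — and arguably the main thing to get right — is making sure the constants $D_\kappa$ and $\tilde C_\kappa$ can be chosen independently of $\tau$, $\lambda$, and $m$, which is precisely why the uniformity built into Lemma \ref{222} is essential.
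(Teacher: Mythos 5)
Your proof is correct and follows essentially the same route as the paper: test the minimum against the admissible point $y$, use the uniform superlinearity of $\LL_{\tau,m}$ (resp.\ $h^m_{\tau,\lambda}$) from Lemma \ref{222}(2) to force $\vert x-y\vert \leq \tau D_\kappa$, and then read off the $O(\tau)$ bound on $\TT^m_{\tau,\lambda}\varphi-\varphi$ (resp.\ $T^m_{\tau,\lambda}\varphi-\varphi$) from the Lipschitz bound on $\varphi$, the bound $\Vert\varphi\Vert_\infty\leq C/\lambda$, and the diagonal/lower bounds on the action. The only cosmetic difference is that the paper's threshold $D_\kappa$ is defined via $\inf\bigl(\LL_{\tau,m}(x,y)-\LL_{\tau,m}(y,y)\bigr)/\vert y-x\vert>\kappa$, whereas you absorb the diagonal term into $C_0$ and the factor $\tau<\vert x-y\vert/D_\kappa$; both rest on the same lemma and yield the same conclusion.
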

\begin{proof}
1. Let $\kappa >0$. Define
$$
D_\kappa := \inf \left\{ D^\prime>1 \middle\vert \inf_{\tau \in (0,1), \vert y-x \vert > \tau D^\prime, m \in \PP ( \T^d )} \frac{\LL_{\tau,m} (x,y) - \LL_{\tau,m} (y,y)}{\vert y- x \vert} > \kappa \right\}.
$$
Note that this constant is well-defined by Lemma \ref{222}.
Let $\varphi$ be a $\Z^d$-periodic Lipschitz function with $\Lip(\varphi) \leq \kappa$ and $\left\Vert \varphi \right\Vert_\infty \leq C / \lambda$. For any $y \in \R^d$ and any $x \in \argmin_{x \in \R^d} \left\{ (1- \tau \lambda) \varphi(x) + \LL_{\tau,m}(x,y) \right\}$, suppose that $\vert y-x \vert > \tau D_\kappa$. Then
$$
\LL_{\tau,m}(x,y) - \LL_{\tau,m}(y,y) > \kappa \vert y-x \vert.
$$
Since $(1- \tau \lambda) \varphi(x) + \LL_{\tau,m}(x,y) \leq (1- \tau \lambda) \varphi(y) + \LL_{\tau,m}(y,y)$, we obtain that
$$
\kappa \vert y-x \vert \geq (1- \tau \lambda) \kappa \vert y-x \vert \geq (1- \tau \lambda) \left( \varphi(y) - \varphi(x) \right) \geq \LL_{\tau,m}(x,y) - \LL_{\tau,m}(y,y) > \kappa \vert y-x \vert.
$$
Here comes a contradiction. Thus, $\vert y - x \vert \leq \tau D_\kappa$.
Moreover, 
\begin{align*}
\left\vert \inf_{z \in \R^d} \left( (1- \tau \lambda) \varphi(z) + \LL_{\tau,m}(z, y) \right) - \varphi(y) \right\vert
& = \left\vert  (1- \tau \lambda) \varphi(x) + \LL_{\tau,m}(x, y)  - \varphi(y) \right\vert \\
& \leq \kappa \vert x-y \vert + \tau \left( \max_{x \in \T^d, \vert v \vert \leq D_\kappa} \left\vert L(x,v) \right\vert+ F_\infty + C \right) \\
& \leq \tau \left( \kappa D_\kappa + \max_{x \in \T^d, \vert v \vert \leq D_\kappa} \left\vert L(x,v) \right\vert+ F_\infty + C \right) =: \tau \tilde{C}_\kappa.
\end{align*}

2. Let $\kappa >0$. Define
$$
D_\kappa := \inf \left\{ D^\prime>1 \middle\vert \inf_{\lambda \in (0,1], \tau \in (0,1), \vert y-x \vert > \tau D^\prime, m \in \PP \left( \T^d \right)} \frac{h_{\tau, \lambda}^m (x,y) - h_{\tau, \lambda}^m (y,y)}{\vert y- x \vert} > \kappa \right\}.
$$
Note that this constant is well-defined by Lemma \ref{222}.
Let $\varphi$ be a $\Z^d$-periodic Lipschitz function with $\Lip(\varphi) \leq \kappa$ and $\left\Vert \varphi \right\Vert_\infty \leq C / \lambda$. For any $y \in \R^d$ and any $x \in \argmin_{x \in \R^d} \left\{ e^{-\lambda \tau} \varphi(x) + h_{\tau, \lambda}^m(x,y) \right\}$, suppose that $\vert y-x \vert > \tau D_\kappa$. Then
$$
h_{\tau, \lambda}^m (x,y) - h_{\tau, \lambda}^m (y,y) > \kappa \vert y-x \vert.
$$
Since $e^{-\lambda \tau} \varphi(x) + h_{\tau, \lambda}^m(x,y) \leq e^{-\lambda \tau} \varphi(y) + h_{\tau, \lambda}^m(y,y)$, we obtain that
$$
\kappa \vert y-x \vert \geq \kappa e^{-\lambda \tau} \vert y-x \vert \geq e^{-\lambda \tau} \left( \varphi(y) - \varphi(x) \right) \geq h_{\tau, \lambda}^m (x,y) - h_{\tau, \lambda}^m (y,y) > \kappa \vert y-x \vert.
$$
Here comes a contradiction. Thus, $\vert y-x \vert \leq \tau D_\kappa$.
Moreover, 
\begin{align*}
	\left\vert \inf_{z \in \R^d} \left( e^{-\lambda \tau} \varphi(z) + h_{\tau, \lambda}^m(z, y) \right) - \varphi(y) \right\vert
	& = \left\vert e^{-\lambda \tau} \varphi(x) + h_{\tau, \lambda}^m(x, y) - \varphi(y) \right\vert \\
	& \leq \kappa \vert x-y \vert + \frac{1- e^{-\lambda \tau}}{\lambda} \left( \max_{x \in \T^d, \vert v \vert \leq C\left( D_\kappa \right)} \left\vert L(x,v) \right\vert+ F_\infty + C \right) \\
	& \leq \tau \left( \kappa D_\kappa + \max_{x \in \T^d, \vert v \vert \leq C\left( D_\kappa \right)} \left\vert L(x,v) \right\vert+ F_\infty + C \right) =: \tau \tilde{C}_\kappa,
\end{align*}
where $C\left( D_\kappa \right)$ is defined as in Lemma \ref{lemma 1}.
                              
\end{proof}

\subsection{Minimizing $\tau$-holonomic $\lambda$-measures} \label{Minimizing tau-holonomic lambda-measures for mean field games}
Firstly, we prove that for any $\tau$-holonomic measure $\mu$, the functional $\int_{\T^d \times \R^d} \left( L_m(x,v) - \lambda u_{\tau,m}^\lambda (x) \right)  d\mu$ is non-negative in Lemma \ref{need0818_1}, and establish the existence of a measure $\mu$ such that the functional attains the minimum and the support of it belongs to the discrete discounted Aubry set in Proposition \ref{non-empty of minimizing holonomic measure}.
\begin{lemma} \label{need0818_1}
	For any $\tau \in (0,1)$, any $\lambda \in (0,1]$, any $m \in \PP ( \T^d )$ and any $\mu \in \PP_\tau ( \T^d \times \R^d )$, 
	$$
	\int_{\T^d \times \R^d} \left( L_m(x,v) - \lambda u_{\tau,m}^\lambda (x) \right) d\mu  \geq 0.
	$$
\end{lemma}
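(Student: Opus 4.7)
The plan is to exploit the sub-optimality built into the discrete Lax--Oleinik equation \eqref{discrete_laxoleinik} together with the defining invariance of $\tau$-holonomic measures. The crucial observation is that for every $(x,v) \in \R^d \times \R^d$, the choice $y = x+\tau v$ in \eqref{discrete_laxoleinik} yields
$$
u_{\tau,m}^\lambda(x+\tau v) \;\leq\; (1-\tau\lambda)\, u_{\tau,m}^\lambda(x) + \LL_{\tau,m}(x, x+\tau v) \;=\; (1-\tau\lambda)\, u_{\tau,m}^\lambda(x) + \tau L_m(x,v).
$$
Rearranging and dividing by $\tau$ gives the pointwise inequality
$$
\lambda\, u_{\tau,m}^\lambda(x) \;\leq\; L_m(x,v) \;-\; \frac{u_{\tau,m}^\lambda(x+\tau v) - u_{\tau,m}^\lambda(x)}{\tau}.
$$

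Next, I would integrate the above inequality against $\mu \in \PP_\tau(\T^d \times \R^d)$. Since $u_{\tau,m}^\lambda$ is $\Z^d$-periodic and continuous (by Proposition \ref{existence of solution of discounted HJE}), it descends to an element of $\CC(\T^d)$, so the defining property of a $\tau$-holonomic measure applies to it, giving
$$
\int_{\T^d \times \R^d} u_{\tau,m}^\lambda(x+\tau v)\, d\mu \;=\; \int_{\T^d \times \R^d} u_{\tau,m}^\lambda(x)\, d\mu.
$$
Consequently the telescoping term vanishes upon integration, and one obtains
$$
\int_{\T^d \times \R^d} \lambda\, u_{\tau,m}^\lambda(x)\, d\mu \;\leq\; \int_{\T^d \times \R^d} L_m(x,v)\, d\mu,
$$
which is exactly the desired inequality.

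There is no serious obstacle in this argument; the only point requiring mild care is to ensure that each integral is well-defined. The integrability of $L_m(x,v)$ against $\mu$ would normally need to be assumed or established (for a closed/$\tau$-holonomic measure one typically has $\int |v|\, d\mu < \infty$, and superlinearity of $L$ makes this the natural setting), and the integrability of $u_{\tau,m}^\lambda$ is immediate from its uniform bound $\|u_{\tau,m}^\lambda\|_\infty \leq C_0/\lambda$ in Proposition \ref{existence of solution of discounted HJE}. If $\int L_m\, d\mu = +\infty$ the inequality is trivial, so one may assume without loss of generality that $L_m$ is $\mu$-integrable, and then the argument above applies directly.
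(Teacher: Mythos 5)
Your argument is exactly the paper's: take $y=x+\tau v$ in the discrete Lax--Oleinik equation to get the pointwise inequality $u_{\tau,m}^\lambda(x+\tau v)\leq(1-\tau\lambda)u_{\tau,m}^\lambda(x)+\tau L_m(x,v)$, integrate against $\mu$, and cancel the difference term via the $\tau$-holonomic property. The proposal is correct and coincides with the paper's proof, with the added (harmless) remarks on integrability.
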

\begin{proof}
	By \eqref{discrete_laxoleinik}, for any $(x,v) \in \T^d \times \R^d$, we have
	$$
	u_{\tau,m}^\lambda (x +\tau v) \leq (1-\tau \lambda) u_{\tau,m}^\lambda (x) + \tau L_m (x,v).
	$$
	Integrate both sides with respect to $\mu$. Since $\mu \in \PP_\tau ( \T^d \times \R^d )$, we obtain that
	$$
	\tau \int_{\T^d \times \R^d} \left( L_m(x,v) - \lambda u_{\tau,m} (x)\right)  d \mu \geq 0,
	$$
	which completes the proof.
\end{proof}

\begin{proposition} \label{non-empty of minimizing holonomic measure}
	For any $\tau \in (0,1)$, any $\lambda \in (0,1]$ and any $m \in \PP ( \T^d )$, there exists a minimizing $\tau$-holonomic $\lambda$-measure for $L_m$.
\end{proposition}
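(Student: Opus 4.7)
The plan is to adapt the construction in Proposition \ref{existence of minimizing lambda measure}, replacing the orbit average along a calibrated curve of the discounted Euler-Lagrange flow by an arithmetic average along a discounted calibrated configuration. Fix $\tau \in (0,1)$, $\lambda \in (0,1]$ and $m \in \PP ( \T^d )$. Pick any $x_0 \in \R^d$ and, by iterating Lemma \ref{existence of calibrated configuration}, construct a discounted calibrated configuration $\{ x_{-k} \}_{k=0}^{+\infty}$ of $u_{\tau, m}^\lambda ( x_0 )$; set $v_{-k-1} := ( x_{-k} - x_{-k-1} ) / \tau$. Since $u_{\tau, m}^\lambda$ is $\Z^d$-periodic, Lipschitz with a constant $C$ independent of $\tau, \lambda, m$ by Proposition \ref{equi-Lipschitz of discounted HJE}, and bounded by $C_0 / \lambda$ by Proposition \ref{existence of solution of discounted HJE}, Proposition \ref{minimizer x is near}(1) yields a uniform bound $\vert v_{-k-1} \vert \leq D_C$ independent of $k$.

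Define the averaged probability measures
\[
\mu_N := \frac{1}{N} \sum_{k=0}^{N-1} \delta_{ ( [ x_{-k-1} ], v_{-k-1} ) } \in \PP ( \T^d \times \R^d ), \quad N \geq 1.
\]
All of them are supported in the compact set $\T^d \times \{ v \in \R^d : \vert v \vert \leq D_C \}$, so I extract a subsequence $\mu_{N_j} \stackrel{w^*}{\longrightarrow} \mu$. The $\tau$-holonomy of $\mu$ follows by telescoping: since $[ x_{-k-1} ] + \tau v_{-k-1} = [ x_{-k} ]$, for every $\varphi \in \CC ( \T^d )$,
\[
\int_{\T^d \times \R^d} \varphi ( x + \tau v ) \, d \mu_N - \int_{\T^d \times \R^d} \varphi ( x ) \, d \mu_N = \frac{ \varphi ( [ x_0 ] ) - \varphi ( [ x_{-N} ] ) }{ N } = O ( 1 / N ) .
\]

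To show $\supp ( \mu ) \subset \tilde{\A}_{L_m, \lambda}^\tau$, observe that for every pair of integers $k \geq n \geq 0$, the sub-configuration $\{ x_{-k+n-j} \}_{j=0}^{+\infty}$ of the original sequence is itself a discounted calibrated configuration of $u_{\tau, m}^\lambda ( x_{-k+n} )$; this sub-configuration witnesses that $( [ x_{-k-1} ], v_{-k-1} ) \in \Psi^n_{L_m, \lambda, \tau} ( \tilde{\Sigma}_{L_m, \lambda}^\tau )$. By Lemma \ref{compact of discrete aubry set}, each $\Psi^n_{L_m, \lambda, \tau} ( \tilde{\Sigma}_{L_m, \lambda}^\tau )$ is compact (hence closed), and the family is decreasing in $n$. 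For fixed $n$ and every $N > n$,
\[
\mu_N \big( \Psi^n_{L_m, \lambda, \tau} ( \tilde{\Sigma}_{L_m, \lambda}^\tau ) \big) \geq \frac{ N - n }{ N } ,
\]
so by the Portmanteau theorem $\mu \big( \Psi^n_{L_m, \lambda, \tau} ( \tilde{\Sigma}_{L_m, \lambda}^\tau ) \big) = 1$ for every $n$, and intersecting over $n \in \N$ gives $\mu ( \tilde{\A}_{L_m, \lambda}^\tau ) = 1$, hence the required support containment.

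Finally, the calibration identity rearranges as
\[
\tau \big( L_m ( x_{-k-1} , v_{-k-1} ) - \lambda u_{\tau, m}^\lambda ( x_{-k-1} ) \big) = u_{\tau, m}^\lambda ( x_{-k} ) - u_{\tau, m}^\lambda ( x_{-k-1} ) ,
\]
and summing over $k = 0, \ldots, N-1$ and dividing by $N \tau$ produces
\[
\int_{\T^d \times \R^d} \big( L_m ( x, v ) - \lambda u_{\tau, m}^\lambda ( x ) \big) \, d \mu_N = \frac{ u_{\tau, m}^\lambda ( x_0 ) - u_{\tau, m}^\lambda ( x_{-N} ) }{ N \tau } = O ( 1 / N ) .
\]
Passing to the weak-$*$ limit, which is legitimate because the integrand is continuous on the compact support, gives the required equality. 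The main obstacle is the support containment: it hinges on the shift-invariance of the family of discounted calibrated configurations together with the compactness from Lemma \ref{compact of discrete aubry set}; the minimizing identity itself then drops out of a telescoping sum that plays the role of the flow-invariance argument used in the continuous case.
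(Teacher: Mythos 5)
Your proof is correct and follows essentially the same strategy as the paper's: average Dirac masses along a discounted calibrated configuration, extract a weak-$*$ limit using the uniform velocity bound, and obtain both the $\tau$-holonomy and the vanishing of $\int ( L_m - \lambda u_{\tau,m}^\lambda )\, d\mu$ by telescoping the calibration identity. The one genuine difference is that you start from an arbitrary $x_0$ and then explicitly verify $\supp(\mu)\subset\tilde{\A}_{L_m,\lambda}^\tau$ via the shift-invariance of calibrated configurations, the estimate $\mu_N(\Psi^n_{L_m,\lambda,\tau}(\tilde{\Sigma}_{L_m,\lambda}^\tau))\geq (N-n)/N$, and the Portmanteau theorem applied to these closed sets; the paper instead seeds the configuration at a point of $\tilde{\A}_{L_m,\lambda}^\tau$ and leaves the support containment implicit, so your treatment of that step is actually more complete.
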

\begin{proof}
	Fix $([x^0], v^0) \in \tilde{\A}_{L_m \lambda}^\tau$. Let $\left\{ x_{-k} \right\}_{k=0}^\infty$ be a discounted calibrated configuration of $u_{\tau,m}^\lambda (x^0 + \tau v^0)$ with $\Pi \left( x^0 \right) = [x^0]$.
	For any $n \in \N$, define a probability measure $\mu_n \in \PP ( \T^d \times \R^d )$ by
	$$
	\mu_n := \frac{1}{n} \Sigma_{k=1}^{n} \mathbf{1}_{([x_{-k}],v_{-k})},
	$$
	where $v_{-k} = \frac{x_{-k+1} - x_{-k}}{\tau}$ and $\mathbf{1}_{([x_{-k}],v_{-k})}$ is an indicator function of the point $([x_{-k}],v_{-k})$.
	Since $\supp (\mu_n) \subset \left\{(x,v) \middle \vert x \in \T^d, \vert v \vert \leq R_\tau \right\}$ for any $n \in \N$, there exists a measure $\mu \in \PP ( \T^d \times \R^d )$ such that $\mu_n \stackrel{w^*}{\longrightarrow} \mu$.
	For any $\varphi \in \CC ( \T^d )$, since $\mu_n$ and $\mu$ are compactly supported, it follows that
	\begin{align*}
		\left\vert \int_{\T^d \times \R^d} \left(  \varphi (x+\tau v) - \varphi(x) \right) d \mu_n \right\vert
		& \leq \frac{1}{n} \left\vert \Sigma_{k=1}^{n} \left( \varphi \left( x_{-k+1} \right) - \varphi \left( x_{-k} \right) \right) \right\vert \\
		& = \frac{1}{n} \left\vert \varphi(x_{0}) - \varphi(x_{-n}) \right\vert \leq \frac{2}{n} \left\Vert \varphi \right\Vert_\infty.
	\end{align*}
	Let $n$ tend to infinity. It is clear that
	$$
	\int_{\T^d \times \R^d} \left( \varphi (x+\tau v) - \varphi(x) \right) d \mu = 0,
	$$
	indicating that $\mu \in \PP_\tau ( \T^d \times \R^d )$.
	By the definition of $\mu_n$, we have
	$$
	\int_{\T^d \times \R^d} \left( u_{\tau, m}^\lambda (x +\tau v) - (1-\tau \lambda) u_{\tau, m}^\lambda (x) - \tau L_m(x,v) \right)  d\mu_n = 0.
	$$
	Since $\mu_n \stackrel{w^*}{\longrightarrow} \mu$, we obtain that
	$$
	\int_{\T^d \times \R^d} \left( u_{\tau, m}^\lambda (x +\tau v) - (1-\tau \lambda) u_{\tau, m}^\lambda (x) - \tau L_m(x,v) \right) d\mu = 0.
	$$
	Since $\mu \in \PP_\tau ( \T^d \times \R^d )$ and $\tau>0$,
	$$
	\int_{\T^d \times \R^d} \left( L_m(x,v) - \lambda u_{\tau,m}^\lambda (x) \right) d\mu = 0,
	$$
	indicating that $\mu$ is a minimizing $\tau$-holonomic $\lambda$-measure for $L_m$.
\end{proof}

\begin{remarks}
	The word ``minimizing'' in Definition \ref{def of minimizng holonomic measure} originates from the property that these measures minimize the functional $\int_{\T^d \times \R^d} \left( L_m (x,v) - \lambda u_{\tau,m}^\lambda (x) \right) d\mu$ with the minimum value being precisely 0.
\end{remarks}

Then, we get the conclusion that all minimizing $\tau$-holonomic $\lambda$-measures for $L_m$ are supported on a fixed compact set $K$, as established by Lemma \ref{need0818_2} and Proposition \ref{compactness of minimizing holonomic measure}. This conclusion plays an important role in the proofs of Proposition \ref{existence of minimizing tau holonomic lambda measure for MFG}, Proposition \ref{u_0 is solution of HJE}, Proposition \ref{convergence2_prop2} and Proposition \ref{convergence1_prop2}.

\begin{definition}
For each $\tau \in (0,1)$, each $\lambda \in (0,1]$ and each $m \in \PP ( \T^d )$, define the $\tau$-$\lambda$-Mather set for $L_m$ by
$$
\MMM_\tau^\lambda (L_m) := \cl \left( \bigcup \left\{ \supp(\mu) \middle\vert \,\, \mu \,\, \text{is a minimizing $\tau$-holonomic $\lambda$-measure for}\,\, L_m \right\} \right).
$$
\end{definition}

Define the set 
$$
\NN_\tau^\lambda \left( L_m, u_{\tau,m}^\lambda \right):= \left\{ (x,v) \in \T^d \times \R^d \middle\vert \,\, \tau L_m(x,v) = u_{\tau,m}^\lambda (x+\tau v)- (1- \tau \lambda) u_{\tau,m}^\lambda(x) \right\}.
$$

\begin{lemma} \label{need0818_2}
	For any $\tau \in (0,1)$, any $\lambda \in (0,1]$ and any $m \in \PP ( \T^d )$, we have $\MMM_\tau^\lambda (L_m) \subset \NN_\tau^\lambda \left( L_m, u_{\tau,m}^\lambda \right)$.
\end{lemma}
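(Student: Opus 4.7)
The plan is to show that on the support of any minimizing $\tau$-holonomic $\lambda$-measure $\mu$ for $L_m$, the pointwise Lax-Oleinik inequality must be saturated. Then, because $\NN_\tau^\lambda(L_m,u_{\tau,m}^\lambda)$ is closed and $\MMM_\tau^\lambda(L_m)$ is by definition the closure of the union of such supports, the inclusion follows.

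First I would fix an arbitrary minimizing $\tau$-holonomic $\lambda$-measure $\mu$ and write down the two defining properties: the holonomy identity $\int \varphi(x+\tau v)\,d\mu=\int \varphi(x)\,d\mu$ for every $\varphi\in\CC(\T^d)$, and the vanishing of $\int(L_m(x,v)-\lambda u_{\tau,m}^\lambda(x))\,d\mu$. Since $u_{\tau,m}^\lambda$ is continuous and $\Z^d$-periodic (Proposition \ref{existence of solution of discounted HJE}), I may apply the holonomy condition with $\varphi=u_{\tau,m}^\lambda$ to get $\int u_{\tau,m}^\lambda(x+\tau v)\,d\mu=\int u_{\tau,m}^\lambda(x)\,d\mu$.

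Next I would combine this with the Lax-Oleinik inequality: for every $(x,v)\in \T^d\times\R^d$, the choice of $x$ on the right-hand side of \eqref{discrete_laxoleinik} gives
$$
\tau L_m(x,v)-\bigl(u_{\tau,m}^\lambda(x+\tau v)-(1-\tau\lambda)u_{\tau,m}^\lambda(x)\bigr)\geq 0.
$$
Integrating this nonnegative integrand against $\mu$, using the holonomy identity to convert $\int u_{\tau,m}^\lambda(x+\tau v)\,d\mu$ into $\int u_{\tau,m}^\lambda(x)\,d\mu$, the integral collapses to exactly $\tau\int(L_m(x,v)-\lambda u_{\tau,m}^\lambda(x))\,d\mu$, which vanishes by the minimizing property of $\mu$. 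Hence a nonnegative continuous function integrates to zero against $\mu$, so it vanishes on $\supp(\mu)$; equivalently $\supp(\mu)\subset \NN_\tau^\lambda(L_m,u_{\tau,m}^\lambda)$.

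Finally I would note that $\NN_\tau^\lambda(L_m,u_{\tau,m}^\lambda)$ is closed, since $L_m$ is continuous by (L\ref{MFG_L1}) and (F\ref{MFG_F1})--(F\ref{MFG_F2}), and $u_{\tau,m}^\lambda$ is continuous. Taking the union over all minimizing $\tau$-holonomic $\lambda$-measures and then the closure preserves the inclusion, yielding $\MMM_\tau^\lambda(L_m)\subset\NN_\tau^\lambda(L_m,u_{\tau,m}^\lambda)$. No step is genuinely difficult; the only subtle point is the switch enabled by $\tau$-holonomy, which is precisely the discrete analogue of invariance under the Euler-Lagrange flow used in the continuous case.
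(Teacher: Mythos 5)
Your proposal is correct and follows essentially the same route as the paper: integrate the pointwise Lax--Oleinik inequality against $\mu$, use $\tau$-holonomy with the test function $u_{\tau,m}^\lambda$ to reduce the integral to $\tau\int(L_m-\lambda u_{\tau,m}^\lambda)\,d\mu=0$, and conclude that the nonnegative integrand vanishes on $\supp(\mu)$. Your explicit remark that $\NN_\tau^\lambda(L_m,u_{\tau,m}^\lambda)$ is closed, so the inclusion survives taking the closure of the union of supports, is a point the paper leaves implicit but is a welcome addition.
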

\begin{proof}
According to the definition of $u_{\tau,m}^\lambda$, we obtain that
$$
u_{\tau,m}^\lambda (x+ \tau v) \leq (1- \tau \lambda) u_{\tau,m}^\lambda (x) + \tau L_m(x,v), \quad \forall (x,v) \in \T^d \times \R^d.
$$
For any minimizing $\tau$-holonomic $\lambda$-measure $\mu \in \PP_\tau ( \T^d \times \R^d )$,
$$
\int_{\T^d \times \R^d} \left( \tau L_m(x,v) - u_{\tau,m}^\lambda (x+\tau v) + (1- \tau \lambda) u_{\tau,m}^\lambda (x)\right)  d\mu = \tau \int_{\T^d \times \R^d} \left( L_m(x,v) - \lambda u_{\tau,m}(x) \right) d\mu = 0.
$$
Thus, $\tau L_m(x,v) = u_{\tau,m}^\lambda (x+\tau v)- (1- \tau \lambda) u_{\tau,m}^\lambda (x)$ holds everywhere on the support of $\mu$.
\end{proof}

\begin{proposition} \label{compactness of minimizing holonomic measure}
There exists a compact set $K \subset \T^d \times \R^d$ such that $\NN_\tau^\lambda \left( L_m, u_{\tau,m}^\lambda \right) \subset K$ for any $\tau \in (0, 1)$, any $\lambda \in (0,1]$ and any $m \in \PP ( \T^d )$.
\end{proposition}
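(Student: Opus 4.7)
The plan is to extract a uniform velocity bound from the relation defining $\NN_\tau^\lambda(L_m, u_{\tau,m}^\lambda)$ by reading that relation as an ``attained infimum'' condition in the discrete Lax-Oleinik equation and then invoking the already established estimates.

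First, I would observe that if $(x,v) \in \NN_\tau^\lambda(L_m, u_{\tau,m}^\lambda)$ and we set $y := x + \tau v$, then the defining equality
$$\tau L_m(x,v) \;=\; u_{\tau,m}^\lambda(y) - (1-\tau\lambda)\, u_{\tau,m}^\lambda(x)$$
is exactly the statement $\LL_{\tau,m}(x,y) = u_{\tau,m}^\lambda(y) - (1-\tau\lambda)\, u_{\tau,m}^\lambda(x)$. Combined with \eqref{discrete_laxoleinik}, this means $x$ attains the infimum in $\TT_{\tau,\lambda}^m u_{\tau,m}^\lambda(y)$, i.e.
$$x \in \argmin_{z \in \R^d} \bigl\{ (1-\tau\lambda)\, u_{\tau,m}^\lambda(z) + \LL_{\tau,m}(z,y) \bigr\}.$$

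Next, I would apply the uniform Lipschitz estimate from Proposition \ref{equi-Lipschitz of discounted HJE}, which gives a constant $C > 0$, independent of $\tau \in (0,1)$, $\lambda \in (0,1]$ and $m \in \PP(\T^d)$, such that $\Lip(u_{\tau,m}^\lambda) \leq C$; the bound $\|u_{\tau,m}^\lambda\|_\infty \leq C_0/\lambda$ from Proposition \ref{existence of solution of discounted HJE} also holds. Then Proposition \ref{minimizer x is near}(1), applied with $\varphi = u_{\tau,m}^\lambda$ and $\kappa = C$, produces a constant $D_C > 0$ depending only on $C$ such that any such minimizer $x$ satisfies $|x - y| \leq \tau D_C$.

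Finally, since $y - x = \tau v$, the estimate $|x - y| \leq \tau D_C$ reads $|v| \leq D_C$. Thus the compact set
$$K := \T^d \times \overline{B(0, D_C)} \subset \T^d \times \R^d$$
contains $\NN_\tau^\lambda(L_m, u_{\tau,m}^\lambda)$ for every admissible choice of $\tau, \lambda, m$, which is the desired conclusion. There is no real obstacle here: the whole argument is a bookkeeping exercise that reuses (i) the uniform Lipschitz bound on the discrete discounted solutions and (ii) the ``minimizer stays close'' estimate that was set up precisely for this kind of reduction; the only thing to double-check is that the constants $C$ and $D_C$ produced by Propositions \ref{equi-Lipschitz of discounted HJE} and \ref{minimizer x is near} are indeed uniform in $\tau, \lambda, m$, which they are by construction.
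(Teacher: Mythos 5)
Your proof is correct. The key observation — that the defining identity of $\NN_\tau^\lambda\left(L_m,u_{\tau,m}^\lambda\right)$ says precisely that $x$ attains the infimum in the discrete Lax--Oleinik equation at $y=x+\tau v$ — is accurate, since $\LL_{\tau,m}(x,x+\tau v)=\tau L_m(x,v)$, and the hypotheses of Proposition \ref{minimizer x is near}(1) are satisfied uniformly: $\Lip\left(u_{\tau,m}^\lambda\right)\le C$ by Proposition \ref{equi-Lipschitz of discounted HJE} and $\left\Vert u_{\tau,m}^\lambda\right\Vert_\infty\le C_0/\lambda$ by Proposition \ref{existence of solution of discounted HJE}, so $|v|=|y-x|/\tau\le D_C$ with $D_C$ independent of $\tau$, $\lambda$, $m$. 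The paper takes a more direct route: from the defining identity, the Lipschitz and sup-norm bounds give $L(x,v)+F(x,m)\le C|v|+C_0$, and superlinearity (L\ref{MFG_L3}) in the form $L(x,v)\ge 2C|v|+\hat C$ forces $|v|\le \left(C_0+F_\infty-\hat C\right)/C$. The two arguments are equivalent in substance — Proposition \ref{minimizer x is near} is itself proved by playing the superlinearity of $\LL_{\tau,m}$ (Lemma \ref{222}(2)) against the Lipschitz bound — but yours reuses the ``minimizer stays close'' machinery and is shorter, while the paper's is self-contained and yields an explicit constant.
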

\begin{proof}
For any $(x,v) \in \NN_\tau^\lambda \left( L_m, u_{\tau,m}^\lambda \right)$,
$$
\tau L_m(x,v) = u_{\tau,m}^\lambda (x+\tau v)- (1- \tau\lambda) u_{\tau,m}^\lambda (x) \leq C \tau \vert v \vert + \tau\lambda u_{\tau,m}^\lambda (x).
$$
Thus, $L(x,v)+F(x,m) \leq C \vert v \vert + \lambda u_{\tau,m}^\lambda (x) \leq C \vert v \vert + C_0$.
At the same time, by (L\ref{MFG_L3}), there exists a constant $\hat{C}$ such that
$$
L(x,v) \geq 2C \vert v \vert + \hat{C}.
$$
Thus, $2C \vert v \vert + \hat{C} - F_\infty \leq C \vert v \vert + C_0$, indicating that
$$
\vert v \vert \leq \frac{C_0 + F_{\infty} - \hat{C}}{C} < +\infty.
$$
Let $K:= \left\{ (x,v) \in \T^d \times \R^d \middle\vert \vert v \vert \leq \frac{C_0 + F_{\infty} - \hat{C}}{C} \right\}$.
\end{proof}

\begin{remarks} \label{remark 3.3}
There exists a compact set $K$ such that for any $\tau \in (0, 1)$, any $\lambda \in (0,1]$, any $m \in \PP ( \T^d )$ and any minimizing $\tau$-holonomic $\lambda$-measure $\mu$ for $L_m$, we always have $\supp (\mu) \subset K$.
\end{remarks}

\section{Proof of Theorem \ref{theorem 1}} \label{section 4}

\subsection{Proof of Theorem \ref{theorem 1} (1)(i)} \label{proof of theorem 1(1)(i)}
Theorem \ref{theorem 1} (1)(i) is proved in Proposition \ref{existence of minimizing tau holonomic lambda measure for MFG}. Before the proof, we establish the continuity of $u_{\tau,m}^\lambda$ with respect to the measure $m$.

\begin{lemma} \label{the only limit of solution of discrete lax-oleinik}
Let $\tau \in (0, 1)$ and $\lambda \in (0,1]$. If the sequence $\left\{m_i\right\}_{i=1}^\infty \subset \PP(\T^d)$ converges to some $m_0 \in \PP ( \T^d )$ weakly, then, by taking a subsequence if necessary, the related $\Z^d$-periodic solutions $\left\{u_{\tau, m_i}^\lambda \right\}_{i=1}^\infty$ of \eqref{discrete_laxoleinik} converge to $u_{\tau, m_0}^\lambda$ uniformly.
\end{lemma}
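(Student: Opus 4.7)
The plan is to combine the \emph{a priori} estimates established in Section 3 with a standard Arzel\`a--Ascoli plus uniqueness argument. First I would note that by Proposition~\ref{existence of solution of discounted HJE} the family $\{u_{\tau, m_i}^\lambda\}_{i\in\N}$ satisfies $\|u_{\tau, m_i}^\lambda\|_\infty \le C_0/\lambda$, and by Proposition~\ref{equi-Lipschitz of discounted HJE} it is equi-Lipschitz with constant $C$ independent of $m$. Viewed as $\Z^d$-periodic functions, the restrictions to $\T^d$ form a relatively compact subset of $\CC(\T^d)$, so Arzel\`a--Ascoli produces a subsequence (still denoted $i$) and a Lipschitz $\Z^d$-periodic function $u_0$ with $u_{\tau, m_i}^\lambda \to u_0$ uniformly on $\R^d$.

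The heart of the argument is to show that $u_0$ solves the discrete Lax--Oleinik equation \eqref{discrete_laxoleinik} associated with $m_0$. Since $\T^d$ is compact, weak-$*$ convergence $m_i\to m_0$ is equivalent to $d_1(m_i,m_0)\to 0$, and (F\ref{MFG_F3}) then gives $\sup_{x\in\T^d}|F(x,m_i)-F(x,m_0)|\le \Lip(F)\,d_1(m_i,m_0)\to 0$. Consequently, for every $x,y\in\R^d$ the discrete actions satisfy $\LL_{\tau,m_i}(x,y)\to \LL_{\tau,m_0}(x,y)$, with this convergence locally uniform in $(x,y)$ thanks to continuity of $L$.

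For the inequality $u_0(y)\le (1-\tau\lambda)u_0(x)+\LL_{\tau,m_0}(x,y)$ for each $x,y\in\R^d$, I would just pass to the limit in
\[
u_{\tau,m_i}^\lambda(y)\le (1-\tau\lambda)u_{\tau,m_i}^\lambda(x)+\LL_{\tau,m_i}(x,y),
\]
using uniform convergence of $u_{\tau,m_i}^\lambda$ together with $\LL_{\tau,m_i}(x,y)\to\LL_{\tau,m_0}(x,y)$. For the reverse inequality, for each $i$ Lemma~\ref{existence of calibrated configuration} yields $x_i\in\R^d$ attaining the infimum defining $u_{\tau,m_i}^\lambda(y)$; since the family $\{u_{\tau,m_i}^\lambda\}$ is equi-Lipschitz with constant $C$, Proposition~\ref{minimizer x is near} furnishes a constant $D_C$, depending only on $C$, such that $|x_i-y|\le \tau D_C$. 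Hence $\{x_i\}$ is bounded and admits a subsequential limit $x_0$; passing to the limit in the identity $u_{\tau,m_i}^\lambda(y)=(1-\tau\lambda)u_{\tau,m_i}^\lambda(x_i)+\LL_{\tau,m_i}(x_i,y)$ gives
\[
u_0(y)=(1-\tau\lambda)u_0(x_0)+\LL_{\tau,m_0}(x_0,y)\ge \inf_{x\in\R^d}\left((1-\tau\lambda)u_0(x)+\LL_{\tau,m_0}(x,y)\right).
\]

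This shows $u_0$ satisfies \eqref{discrete_laxoleinik} for $m_0$, and since $\|u_0\|_\infty\le C_0/\lambda$ the uniqueness part of Proposition~\ref{existence of solution of discounted HJE} forces $u_0=u_{\tau,m_0}^\lambda$. The main obstacle is the lower inequality, which relies crucially on the uniform bound $|x_i-y|\le \tau D_C$ from Proposition~\ref{minimizer x is near}; without this compactness of minimizers one could not pass to the limit inside the infimum. As a final remark, since every subsequence of $\{u_{\tau,m_i}^\lambda\}$ admits a further uniformly convergent subsequence whose limit must equal the unique solution $u_{\tau,m_0}^\lambda$, the convergence actually holds for the whole sequence, though the weaker subsequential statement is all that is needed.
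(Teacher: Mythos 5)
Your argument is correct, but the central step is carried out by a different mechanism than in the paper. Both proofs begin identically: uniform boundedness (Proposition \ref{existence of solution of discounted HJE}) and equi-Lipschitz continuity (Proposition \ref{equi-Lipschitz of discounted HJE}) give, via Arzel\`a--Ascoli, a uniform subsequential limit $u_0$, and both conclude by invoking the uniqueness statement of Proposition \ref{existence of solution of discounted HJE}. Where you differ is in showing that $u_0$ solves \eqref{discrete_laxoleinik} for $m_0$: you verify the two inequalities of the equation directly, passing to the limit pointwise for the ``$\leq$'' direction and, for the ``$\geq$'' direction, localizing the minimizers via the bound $\vert x_i-y\vert\leq\tau D_C$ from Proposition \ref{minimizer x is near} so as to extract a convergent subsequence of minimizers. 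The paper instead works at the operator level: it shows $\Vert \TT_{\tau,\lambda}^{m_i}u_0-\TT_{\tau,\lambda}^{m_0}u_0\Vert_\infty\leq\tau\Lip(F)d_1(m_i,m_0)\to 0$ using (F\ref{MFG_F3}), combines this with the $(1-\tau\lambda)$-contraction property of $\TT_{\tau,\lambda}^{m}$ to get $\TT_{\tau,\lambda}^{m_i}u_{\tau,m_i}^\lambda\to\TT_{\tau,\lambda}^{m_0}u_0$, and then reads off $\TT_{\tau,\lambda}^{m_0}u_0=u_0$ from a three-term triangle inequality, never needing the localization of minimizers. The paper's route is shorter and exploits the contraction structure that is already used to prove existence and uniqueness; yours is more hands-on but has the merit of transferring verbatim to settings where the operator is merely nonexpansive rather than a strict contraction, at the cost of depending on Proposition \ref{minimizer x is near}. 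Your closing observation that the full sequence converges (by uniqueness of the subsequential limit) is correct and slightly stronger than the stated lemma, which only claims subsequential convergence.
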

\begin{proof}
Since the family $\left\{ u^\lambda_{\tau,m} \middle \vert m \in \PP ( \T^d ) \right\}$ is uniformly bounded and equi-Lipschitz, without loss of generality, suppose there exists a function $u_0$ such that $u_{\tau, m_i}^\lambda$ converges to $u_0$ uniformly. Note that $u_0$ is $\Z^d$-periodic.
First of all, we claim that
$$
\lim_{i \rightarrow \infty} \left\Vert \TT_{\tau, \lambda}^{m_i} u_0 - \TT_{\tau, \lambda}^{m_0} u_0 \right\Vert_{\infty} = 0.
$$
For any $y \in \R^d$ and any $x \in \argmin_{ x \in \R^d} \left\{ (1- \tau\lambda) u_0(x) + \LL_{\tau,m_0} (x,y) \right\}$, we obtain that
\begin{align*}
\TT_{\tau, \lambda}^{m_i} u_0(y) - \TT_{\tau, \lambda}^{m_0} u_0(y)
& \leq (1- \tau\lambda) u_0(x) + \LL_{\tau,m_i} (x,y) - (1- \tau\lambda) u_0(x) - \LL_{\tau,m_0} (x,y) \\
& = \tau \left( F (x, m_i) - F(x, m_0) \right) \leq \tau \Lip(F) d_1 (m_i, m_0).
\end{align*}
Similarly, we can obtain that $\TT_{\tau, \lambda}^{m_0} u_0(y) - \TT_{\tau, \lambda}^{m_i} u_0(y) \leq \tau \Lip(F) d_1 (m_i, m_0)$. 
Thus, 
$$
\lim_{i \rightarrow \infty} \left\Vert \TT_{\tau, \lambda}^{m_i} u_0 - \TT_{\tau, \lambda}^{m_0} u_0 \right\Vert_{\infty} \leq \lim_{i \rightarrow \infty} \Lip(F) d_1 (m_i, m_0) = 0.
$$

Based on this, it is clear that
$$
\lim_{i \rightarrow \infty} \left\Vert \TT_{\tau, \lambda}^{m_i} u_{\tau,m_i}^\lambda - \TT_{\tau, \lambda}^{m_0} u_0 \right\Vert_{\infty} \leq \lim_{i \rightarrow \infty} (1- \tau\lambda) \left\Vert u_{\tau,m_i}^\lambda - u_0 \right\Vert_{\infty} = 0.
$$
Thus, for any $i \in \N$,
$$
\left\Vert \TT_{\tau, \lambda}^{m_0} u_0 - u_0 \right\Vert_{\infty} \leq \left\Vert \TT_{\tau, \lambda}^{m_0} u_0 - \TT_{\tau, \lambda}^{m_i} u_{\tau,m_i}^\lambda \right\Vert_{\infty} + \left\Vert \TT_{\tau, \lambda}^{m_i} u_{\tau,m_i}^\lambda - u_{\tau,m_i}^\lambda \right\Vert_{\infty} + \left\Vert u_{\tau,m_i}^\lambda - u_0 \right\Vert_{\infty}.
$$
Let $i$ tend to infinity. We obtain that $\TT_{\tau, \lambda}^{m_0} u_0 = u_0$. 
By Proposition \ref{existence of solution of discounted HJE}, it follows that $u_0 = u_{\tau, m_0}^\lambda$.
\end{proof}

\begin{proposition} \label{existence of minimizing tau holonomic lambda measure for MFG}
For each $\tau \in (0, 1)$ and each $\lambda \in (0,1]$, there is $m \in \PP ( \T^d )$ such that there exists a minimizing $\tau$-holonomic $\lambda$-measure $\mu_{\tau,m}^\lambda$ for the Lagrangian $L_m$ with
$$
m = \pi \sharp \mu_{\tau,m}^\lambda.
$$
We call such a measure $m$ the minimizing $\tau$-holonomic $\lambda$-measure for DMFGs \eqref{DMFG} and denote it by $m_\tau^\lambda$.
\end{proposition}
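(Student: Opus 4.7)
The plan is to apply a Kakutani–Fan–Glicksberg fixed point theorem to the set-valued map $\Phi: \PP(\T^d) \rightrightarrows \PP(\T^d)$ defined by
$$
\Phi(m) := \bigl\{ \pi \sharp \mu \,\bigm|\, \mu \text{ is a minimizing $\tau$-holonomic $\lambda$-measure for } L_m \bigr\}.
$$
A fixed point $m \in \Phi(m)$ supplies precisely the measure $m_\tau^\lambda$ claimed in the proposition. The ambient set $\PP(\T^d)$ is a compact convex subset of the locally convex space of signed Borel measures on $\T^d$ endowed with the weak-* topology, so the Kakutani–Fan–Glicksberg theorem applies once the standard hypotheses on $\Phi$ are verified.

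First I would check that $\Phi(m)$ is non-empty, convex, and closed for every $m$. Non-emptiness is exactly Proposition \ref{non-empty of minimizing holonomic measure}. Convexity follows because each of the three conditions in Definition \ref{def of minimizng holonomic measure}---being $\tau$-holonomic, having support contained in the fixed set $\tilde{\A}_{L_m, \lambda}^\tau$, and satisfying the linear identity $\int (L_m - \lambda u_{\tau,m}^\lambda)\, d\mu = 0$---is preserved under convex combinations, and the push-forward $\pi \sharp$ is linear in $\mu$. Closedness of $\Phi(m)$ (with $m$ fixed) follows by the same linearity, together with Remark \ref{remark 3.3}, which pins the support of every such $\mu$ inside the fixed compact set $K \subset \T^d \times \R^d$, so that weak-* limits of minimizing measures remain probability measures with the correct support and identity.

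The core step is the closed-graph property of $\Phi$. Suppose $m_i \stackrel{w^*}{\longrightarrow} m_0$ and $\nu_i \in \Phi(m_i)$ with $\nu_i \stackrel{w^*}{\longrightarrow} \nu_0$. Write $\nu_i = \pi \sharp \mu_i$ for a minimizing $\tau$-holonomic $\lambda$-measure $\mu_i$ associated to $L_{m_i}$. By Remark \ref{remark 3.3} all the $\mu_i$ are supported in the same compact $K$, so Prokhorov's theorem yields a subsequence $\mu_{i_k} \stackrel{w^*}{\longrightarrow} \mu_0$ in $\PP(\T^d \times \R^d)$, and necessarily $\nu_0 = \pi \sharp \mu_0$. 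The $\tau$-holonomy condition passes to the weak-* limit directly from its definition against $\varphi \in \CC(\T^d)$. By Lemma \ref{the only limit of solution of discrete lax-oleinik}, after extracting a further subsequence, $u_{\tau, m_{i_k}}^\lambda \to u_{\tau, m_0}^\lambda$ uniformly; combined with (F\ref{MFG_F2}), which forces $L_{m_{i_k}} \to L_{m_0}$ uniformly on $K$, the integrands $L_{m_{i_k}}(x,v) - \lambda u_{\tau, m_{i_k}}^\lambda(x)$ converge uniformly on $K$ to $L_{m_0}(x,v) - \lambda u_{\tau, m_0}^\lambda(x)$. Passing to the limit in $\int(L_{m_{i_k}} - \lambda u_{\tau, m_{i_k}}^\lambda)\, d\mu_{i_k} = 0$ yields the minimizing identity for $\mu_0$ with respect to $L_{m_0}$.

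The main obstacle is the support condition $\supp(\mu_0) \subset \tilde{\A}_{L_{m_0}, \lambda}^\tau$, since the discrete discounted Aubry set depends on $m$ in a way that is not obviously continuous. I would handle this through an upper semi-continuity property of the assignment $m \mapsto \tilde{\A}_{L_m, \lambda}^\tau$: using the uniform convergence $u_{\tau, m_{i_k}}^\lambda \to u_{\tau, m_0}^\lambda$, the continuity of $L$ and of $F$ in $m$, and Proposition \ref{minimizer x is near} to control minimizers in calibrated configurations, any accumulation point of sequences $([x_{i_k}], v_{i_k}) \in \tilde{\A}_{L_{m_{i_k}}, \lambda}^\tau$ lies in $\tilde{\A}_{L_{m_0}, \lambda}^\tau$. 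This is precisely the content of the deferred Lemma \ref{convergence of aubry set} in Appendix A. Combined with the Portmanteau theorem applied to the closed set $\tilde{\A}_{L_{m_0}, \lambda}^\tau$, this yields $\supp(\mu_0) \subset \tilde{\A}_{L_{m_0}, \lambda}^\tau$, so $\mu_0$ is a minimizing $\tau$-holonomic $\lambda$-measure for $L_{m_0}$ and $\nu_0 \in \Phi(m_0)$. Kakutani–Fan–Glicksberg then produces the desired fixed point.
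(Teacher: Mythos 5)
Your proposal follows essentially the same route as the paper: the paper's proof of Proposition \ref{existence of minimizing tau holonomic lambda measure for MFG} defines the same set-valued map, verifies non-emptiness via Proposition \ref{non-empty of minimizing holonomic measure}, convexity, and the closed-graph property using Remark \ref{remark 3.3}, Lemma \ref{the only limit of solution of discrete lax-oleinik} and (F\ref{MFG_F3}), and then invokes Kakutani's theorem. One small correction: the upper semicontinuity of $m \mapsto \tilde{\A}_{L_m,\lambda}^\tau$ at fixed $\tau$ is \emph{not} the content of Lemma \ref{convergence of aubry set}, which concerns the limit $\tau_i \rightarrow 0$ of discrete Aubry sets toward the continuous discounted Aubry set; the property you need is instead proved inline in the paper, by exactly the direct argument you sketch (passing to the limit in the calibration identities of configurations controlled via Proposition \ref{minimizer x is near}), so your proof is correct once that step is carried out rather than cited.
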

\begin{proof}
For each $\tau \in (0, 1)$ and each $\lambda \in (0,1]$, define a set-valued map by
\begin{align*}
\psi: \PP ( \T^d ) & \rightarrow 2^{\PP ( \T^d )} \\
m & \mapsto \psi(m) := \left\{ \pi \sharp \mu \middle\vert\,\, \mu \,\, \text{is a minimizing $\tau$-holonomic $\lambda$-measure for}\,\, L_m \right\}.
\end{align*}
By Proposition \ref{non-empty of minimizing holonomic measure}, $\psi(m)$ is non-empty for any $m \in \PP ( \T^d )$. Moreover, $\psi(m)$ is convex for any $m \in \PP ( \T^d )$ obviously.

If $m_i \stackrel{w^*}{\longrightarrow} m$, $\eta_i \in \psi(m_i)$ and $\eta_i \stackrel{w^*}{\longrightarrow} \eta$ as $i \rightarrow \infty$, we claim that $\eta \in \psi (m)$.
Since $\eta_i \in \psi(m_i)$, there exists $\mu_i$, which is a minimizing $\tau$-holonomic $\lambda$-measure for $L_{m_i}$, such that $\eta_i = \pi \sharp \mu_i$.
By Remark \ref{remark 3.3}, there exists a measure $\mu \in \PP(K)$ such that $\mu_i \stackrel{w^*}{\longrightarrow} \mu$.
It is clear that $\eta = \pi \sharp \mu$. Consider
\begin{align*}
& \left\vert \int_{\T^d \times \R^d} \left( L_m (x,v) - u_{\tau,m}^\lambda (x) \right) d \mu - \int_{\T^d \times \R^d} \left( L_{m_i} (x,v) - u_{\tau, m_i}^\lambda(x) \right)  d \mu_i \right\vert \\
\leq & \left\vert \int_{\T^d \times \R^d} L_m (x,v) d \mu - \int_{\T^d \times \R^d} L_{m_i} (x,v) d \mu_i \right\vert  + \left\vert \int_{\T^d \times \R^d} u_{\tau,m}^\lambda (x) d \mu - \int_{\T^d \times \R^d} u_{\tau, m_i}^\lambda(x) d \mu_i \right\vert \\
\triangleq & B_1 +B_2.
\end{align*}
First of all,
$$
B_1 \leq \left\vert \int_{\T^d \times \R^d} L_m (x,v) d \mu - \int_{\T^d \times \R^d} L_{m} (x,v) d \mu_i \right\vert + \left\vert \int_{\T^d \times \R^d} \left( L_m (x,v) - L_{m_i} (x,v) \right) d \mu_i \right\vert \triangleq B_3 + B_4.
$$
Since $\mu_i \stackrel{w^*}{\longrightarrow} \mu$ and $\mu_i, \mu \in \PP(K)$, we obtain that $\lim_{i \rightarrow \infty} B_3 = 0$. By (F\ref{MFG_F3}), it is clear that
$$
B_4 \leq \int_{\T^d \times \R^d} \Lip(F) d_1 \left( m, m_i \right) d \mu_i = \Lip(F) d_1 \left( m, m_i \right) \rightarrow 0, \quad \text{as}\,\, i \rightarrow \infty.
$$
Moreover, 
$$
B_2 \leq \left\vert \int_{\T^d} \left( u_{\tau,m_i}^\lambda (x) - u_{\tau, m}^\lambda(x) \right) d \eta_i \right\vert + \left\vert \int_{\T^d} u_{\tau,m}^\lambda (x) d \eta_i - \int_{\T^d} u_{\tau, m}^\lambda(x) d \eta \right\vert \triangleq B_5 +B_6.
$$
By Lemma \ref{the only limit of solution of discrete lax-oleinik}, $\lim_{i \rightarrow \infty} B_5 = 0$. Since $\eta_i \stackrel{w^*}{\longrightarrow} \eta$, $\lim_{i \rightarrow \infty} B_6 = 0$.
Above all, we obtain that
$$
\int_{\T^d \times \R^d} \left( L_m (x,v) - u_{\tau,m}^\lambda (x) \right) d \mu = 0.
$$
For any $\varphi \in \CC^1 ( \T^d )$, since $\mu_i \stackrel{w^*}{\longrightarrow} \mu$ and $\mu_i, \mu \in \PP(K)$, we obtain that
$$
\int_{\T^d \times \R^d} \left( \varphi (x+\tau v) - \varphi (x) \right) d \mu = \lim_{i \rightarrow \infty} \int_{\T^d \times \R^d} \left( \varphi (x+\tau v) - \varphi (x) \right) d \mu_i = 0.
$$
If there exists a sequence $([x_i], v_i) \in \tilde{\A}_{L_{m_i}, \lambda}^\tau$ converging to $([x], v) \in \T^d \times \R^d$ as $i \rightarrow \infty$, then for any $n \in \N$, we have $([x_i], v_i) \in \Psi_{L_{m_i},\lambda, \tau}^n \left( \tilde{\Sigma}_{L_{m_i}, \lambda}^\tau \right)$. Without loss of generality, assume that for any $i \in \N$, there exists a sequence of discounted calibrated configuration $\left\{ x^i_{-k} \right\}_{k=0}^\infty$ of $u^\lambda_{\tau, m_i} (x_0^i)$ such that $x^i_{-n-1} \in [0,1]^d$ and $\Pi \left( x^i_{-n-1} \right) = [x_i]$. By Proposition \ref{minimizer x is near}, the set
$$
\left\{ x^i_{-k} \middle\vert i \in \N, 0 \leq k \leq n+1 \right\}
$$
belongs to a compact set. Thus, there exists $x_{-k} \in \R^d$ such that $x^i_{-k} \rightarrow x_{-k}$ for any $0 \leq k \leq n+1$ as $i \rightarrow \infty$. Note that $\Pi (x_0) = [x]$.
Since
$$
u^\lambda_{\tau, m_i} (x^i_0) = \Sigma_{k =0}^n \left( 1-\tau\lambda \right)^k \LL_{\tau,m_i} \left( x^i_{-k-1}, x^i_{-k} \right) + \left( 1-\tau\lambda \right)^{n+1} u^\lambda_{\tau, m_i} (x_i),
$$
by Lemma \ref{the only limit of solution of discrete lax-oleinik}, (L\ref{MFG_L1}) and (F\ref{MFG_F2}), it follows that
$$
u^\lambda_{\tau, m} (x_0) = \Sigma_{k =0}^n \left( 1-\tau\lambda \right)^k \LL_{\tau,m} \left( x_{-k-1}, x_{-k} \right) + \left( 1-\tau\lambda \right)^{n+1} u^\lambda_{\tau, m} (x).
$$
Therefore, we can conclude that $([x], v) \in \Psi_{L_{m},\lambda, \tau}^n \left( \tilde{\Sigma}_{L_m, \lambda}^\tau \right)$. By the arbitrariness of $n$, we obtain that $([x], v) \in \tilde{\A}_{L_m, \lambda}^\tau$, and hence $\supp(\mu) \subset \tilde{\A}_{L_m, \lambda}^\tau$.

In conclusion, the measure $\mu$ is a minimizing $\tau$-holonomic $\lambda$-measure for $L_m$.
We conclude that the set-valued map $\psi$ has a closed graph, indicating that $\psi(m)$ is closed for any $m \in \PP ( \T^d )$.
By Kakutani's theorem, the proof is complete.
\end{proof}

\subsection{Proof of Theorem \ref{theorem 1} (1)(ii)} \label{proof of theorem 1(1)(ii)}
Theorem \ref{theorem 1} (1)(ii) is established with the help of Proposition \ref{convergence to find viscosity solution} and Proposition \ref{m_0 is solution of continuity equation}.

For each $\tau \in (0,1)$ and each $\lambda \in (0,1]$, consider solutions to the discrete Lax-Oleinik equation for (\ref{DMFG}a)
\begin{equation} \label{discrete back}
	u_{\tau, m_{\tau}^\lambda}^\lambda (y) = \inf_{x \in \R^d} \left( (1- \tau\lambda) u_{\tau, m_{\tau}^\lambda}^\lambda (x) + \LL_{\tau, m_\tau^\lambda} (x,y)\right) , \quad \forall y \in \R^d.
\end{equation}

\begin{proposition} \label{convergence to find viscosity solution}
Fix $\lambda \in (0,1]$. There is a sequence $\tau_i \rightarrow 0$ as $i \rightarrow \infty$ such that $m_{\tau_i}^\lambda \stackrel{w^*}{\longrightarrow} m_0^\lambda$ and $u_{\tau_i, m_{\tau_i}^\lambda}^\lambda$ converges to $u_0^\lambda$ uniformly, where $u_{\tau_i, m_{\tau_i}^\lambda}^\lambda$ is the solution to \eqref{discrete back}. Moreover, $u_0^\lambda$ is a viscosity solution to
\begin{equation} \label{has F HJE}
\lambda u+H(x,Du) = F(x,m_0^\lambda).
\end{equation}
\end{proposition}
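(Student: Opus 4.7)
The plan is to extract compact subsequences, pass to the limit in the iterated discrete Lax--Oleinik equation, and recognize the limit as the continuous Lax--Oleinik fixed point, which by the weak KAM characterization is equivalent to being a viscosity solution of $\lambda u + H(x,Du)=F(x,m_0^\lambda)$.

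First I would use compactness. Since $\PP(\T^d)$ is compact in the weak-* topology, extract a subsequence $\tau_i\to 0$ with $m_{\tau_i}^\lambda \stackrel{w^*}{\longrightarrow} m_0^\lambda$. By Proposition \ref{existence of solution of discounted HJE} the functions $u^\lambda_{\tau,m^\lambda_\tau}$ are uniformly bounded by $C_0/\lambda$, and by Proposition \ref{equi-Lipschitz of discounted HJE} they are equi-Lipschitz. Arzel\`a--Ascoli then yields a further subsequence (still denoted $\tau_i$) such that $u^\lambda_{\tau_i,m^\lambda_{\tau_i}}\to u_0^\lambda$ uniformly, with $u_0^\lambda$ Lipschitz and $\Z^d$-periodic.

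Next, for fixed $t>0$ and $y\in\R^d$, I would prove that $u_0^\lambda$ satisfies the continuous Lax--Oleinik identity
$$
u_0^\lambda(y)=\inf_{\gamma}\left(e^{-\lambda t}u_0^\lambda(\gamma(-t))+\int_{-t}^0 e^{\lambda s}L_{m_0^\lambda}(\gamma(s),\dot\gamma(s))\,ds\right),
$$
where the infimum is over absolutely continuous $\gamma:[-t,0]\to\R^d$ with $\gamma(0)=y$. Set $n_i=\lfloor t/\tau_i\rfloor$; iterating \eqref{discrete back} $n_i$ times gives
$$
u^\lambda_{\tau_i,m^\lambda_{\tau_i}}(y)=\inf_{x_0,\dots,x_{n_i-1}}\!\left((1-\tau_i\lambda)^{n_i}u^\lambda_{\tau_i,m^\lambda_{\tau_i}}(x_0)+\sum_{k=1}^{n_i}(1-\tau_i\lambda)^{n_i-k}\LL_{\tau_i,m^\lambda_{\tau_i}}(x_{k-1},x_k)\right),
$$
with $x_{n_i}=y$, and note $(1-\tau_i\lambda)^{n_i}\to e^{-\lambda t}$. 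For the upper bound I would plug in $x_k=\gamma(-t+k\tau_i)$ for a given smooth $\gamma$; Lemma \ref{lemma3.4} together with (F\ref{MFG_F3}) and weak-* convergence of $m_{\tau_i}^\lambda$ let me replace the discrete sum by the continuous action integral over $\gamma$ and swap $m_{\tau_i}^\lambda$ for $m_0^\lambda$ with a uniformly vanishing error. For the lower bound I would pick near-minimizers $\{x_k^i\}$ for the iterated discrete problem; Proposition \ref{minimizer x is near} (applied inductively using the equi-Lipschitz bound from Proposition \ref{equi-Lipschitz of discounted HJE}) ensures $|x_k^i-x_{k-1}^i|\le \tau_i D$ for a fixed $D$. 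Define piecewise linear interpolants $\gamma_i:[-t,0]\to\R^d$; they are equi-Lipschitz, so up to a subsequence $\gamma_i\to \gamma$ uniformly with $\dot\gamma_i\rightharpoonup\dot\gamma$ in $L^2$. Lower semicontinuity of the Tonelli action combined with Lemma \ref{lemma3.4} and continuity of $F$ in $m$ then yields the matching lower bound on $u_0^\lambda(y)$. Combining both inequalities produces the continuous Lax--Oleinik identity.

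Finally, since $u_0^\lambda$ is a fixed point of the continuous Lax--Oleinik semigroup $T_{t,\lambda}^{m_0^\lambda}$ for every $t>0$, the weak KAM / viscosity correspondence recalled in Sections \ref{lax-oleinik semigroup} and after \eqref{equation of solution of HJE_MFG} implies that $u_0^\lambda$ is the (unique) viscosity solution of \eqref{has F HJE}. The main obstacle will be the lower-bound argument: tracking the near-minimizers $\{x_k^i\}$ uniformly in $i$ and showing the discrete action sums are lower semicontinuous along the limiting curve while simultaneously letting $m_{\tau_i}^\lambda\to m_0^\lambda$. The two-parameter diagonal passage ($n_i\tau_i\to t$ with $m_{\tau_i}^\lambda\to m_0^\lambda$) is where the uniform-in-$\tau$ estimates of Section \ref{Priori estimates} and the Lipschitz continuity of $F$ in $m$ do the real work.
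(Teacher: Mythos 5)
Your proposal is correct, but it follows a genuinely different route from the paper's proof. After the common compactness step (which matches the paper exactly), you pass to the limit directly in the $n_i$-fold iterated discrete variational formula: upper bound by discretizing smooth test curves, lower bound by extracting a limit of the piecewise-linear interpolants of exact minimizing chains and invoking Tonelli lower semicontinuity of the convex action. The paper instead proves a one-step consistency estimate $\left\Vert T_{\tau,\lambda}^m u_{\tau,m}^\lambda - \TT_{\tau,\lambda}^m u_{\tau,m}^\lambda \right\Vert_\infty \leq 2\tau^2 C$ (using Lemma \ref{lemma3.4} and Proposition \ref{minimizer x is near}, the same ingredients you cite), then telescopes: since $u_{\tau_i,m_{\tau_i}^\lambda}^\lambda$ is a fixed point of $\TT_{\tau_i,\lambda}^{m_{\tau_i}^\lambda}$, the semigroup law and non-expansiveness of $T_{t,\lambda}^m$ give $\Vert T_{N_i\tau_i,\lambda}^{m_{\tau_i}^\lambda} u_{\tau_i,m_{\tau_i}^\lambda}^\lambda - u_{\tau_i,m_{\tau_i}^\lambda}^\lambda\Vert_\infty \leq 2N_i\tau_i^2 C \leq 2t\tau_i C$, and a separate stability estimate (Lipschitz dependence of $h_{t,\lambda}^m$ on $m$ via (F\ref{MFG_F3})) handles the swap $m_{\tau_i}^\lambda \to m_0^\lambda$; the fixed-point identity $T_{t,\lambda}^{m_0^\lambda}u_0^\lambda = u_0^\lambda$ then follows without ever touching minimizing chains of the multi-step problem. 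The paper's argument is a ``consistency plus stability implies convergence'' scheme that outsources all compactness to the semigroup contraction, and is shorter; yours is more self-contained at the variational level and does not need the continuous semigroup properties as a black box, at the cost of the lower-semicontinuity and chain-compactness work you correctly identify as the delicate part. One small point worth making explicit in your lower bound: because $u_{\tau_i,m_{\tau_i}^\lambda}^\lambda$ is a fixed point of the one-step operator, the $n_i$-step infimum is attained by a backward discounted calibrated configuration, so each step is an exact one-step minimizer and Proposition \ref{minimizer x is near} applies verbatim to give $|x_k^i - x_{k-1}^i|\leq \tau_i D$; with that observed, your argument goes through.
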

\begin{proof}
Recall two Lax-Oleinik operators defined as in \eqref{DMFG laxoleinik} and \eqref{DMFG laxoleinik_discrete}.
By the results in Sect. \ref{Priori estimates}, there exist constants $C,D>0$ such that for any $\tau \in (0,1)$, any $\lambda \in (0,1]$ and any $m \in  \PP ( \T^d )$, we have
\begin{enumerate}[(i)]
\item $\Lip \left(u_{\tau,m}^\lambda \right) \leq C$, $\left\Vert u_{\tau,m}^\lambda \right\Vert_\infty \leq \frac{C}{\lambda}$.
\item For any $y \in \R^d$ and any $x \in \argmin_{x \in \R^d} \left\{ (1- \tau\lambda) u_{\tau,m}^\lambda(x) + \LL_{\tau,m}(x,y) \right\}$, we have $\vert x-y \vert \leq \tau D$. Moreover,
$$
\left\Vert \TT_{\tau,\lambda}^m u_{\tau,m}^\lambda - u_{\tau,m}^\lambda \right\Vert_{\infty} \leq \tau C.
$$
\item For any $y \in \R^d$ and any $x \in \argmin_{x \in \R^d} \left\{ e^{-\lambda \tau} u_{\tau,m}^\lambda (x) + h_{\tau,\lambda}^m(x,y) \right\}$, we have $\vert x-y \vert \leq \tau D$. Moreover,
$$
\left\Vert T_{\tau,\lambda}^m u_{\tau,m}^\lambda - u_{\tau,m}^\lambda \right\Vert_{\infty} \leq \tau C.
$$

\item For any $x,y \in \R^d$ with $\vert x-y \vert \leq \tau D$, we have
$$
\left\vert h_{\tau,\lambda}^m (x,y) - \LL_{\tau,m} (x,y) \right\vert \leq \tau^2 C.
$$
\end{enumerate}

For any $y \in \R^d$ and any $x \in \argmin_{x \in \R^d} \left\{ (1- \tau\lambda) u_{\tau,m}^\lambda (x) + \LL_{\tau,m}(x,y) \right\}$, we obtain that
\begin{align*}
T_{\tau,\lambda}^m u_{\tau,m}^\lambda(y) 
& \leq e^{-\lambda \tau} u_{\tau,m}^\lambda (x) + h_{\tau,\lambda}^m (x,y) \\
& \leq (1- \tau\lambda) u_{\tau,m}^\lambda(x) + \LL_{\tau,m}(x,y) + \tau^2 C + \left( e^{- \lambda \tau}-1+ \tau\lambda \right) u_{\tau,m}^\lambda(x) \\
& \leq \TT_{\tau,\lambda}^m u_{\tau,m}^\lambda(y) + \tau^2 C + \left( e^{-\lambda \tau}-1+ \tau\lambda \right) \frac{C}{\lambda} \leq \TT_{\tau,\lambda}^m u_{\tau,m}^\lambda(y) + 2 \tau^2 C.
\end{align*}
Similarly, for any $y \in \R^d$ and any $x \in \argmin_{x \in \R^d} \left\{ e^{- \lambda \tau} u_{\tau,m}^\lambda(x) + h_{\tau,\lambda}^m(x,y) \right\}$, we obtain that
\begin{align*}
\TT_{\tau,\lambda}^m u_{\tau,m}^\lambda(y)
& \leq (1- \tau\lambda) u_{\tau,m}^\lambda(x) + \LL_{\tau,m}(x,y) \\
& \leq e^{- \lambda\tau} u_{\tau,m}^\lambda(x) + h_{\tau,\lambda}^m (x,y) + \tau^2 C + (1- \tau\lambda -e^{- \lambda \tau}) u_{\tau,m}^\lambda(x) \\
& \leq T_{\tau,\lambda}^m u_{\tau,m}^\lambda(y) + 2 \tau^2 C.
\end{align*}
Thus, we conclude that $\left\Vert T_{\tau,\lambda}^m u_{\tau,m}^\lambda - \TT_{\tau,\lambda}^m u_{\tau,m}^\lambda \right\Vert_\infty \leq 2 \tau^2 C$.

For any $\tau \in (0, 1)$, any $\lambda \in (0,1]$ and any $m \in \PP ( \T^d )$, $u_{\tau,m}^\lambda$ is uniformly bounded by $\frac{C}{\lambda}$ and equi-Lipschitz with Lipschitz constant $C$. 
Since $\PP ( \T^d )$ is compact, there exists a sequence $\tau_i \rightarrow 0$ such that $m_{\tau_i}^\lambda \stackrel{w^*}{\longrightarrow} m_0^\lambda$ for some measure $m_0^\lambda$. By Arz\'ela-Ascoli theorem, taking a subsequence if necessary, there exists a Lipschitz function $u_0^\lambda$ such that $u_{\tau_i, m_{\tau_i}^\lambda}^\lambda \rightarrow u_0^\lambda$ uniformly.

For any $t>0$, choose $N_i \in \N$ such that $N_i \tau_i \leq t < (N_i +1) \tau_i$.
First of all, we claim that $\left\Vert T_{N_i \tau_i, \lambda}^{m_{\tau_i}^\lambda} u_{\tau_i, m_{\tau_i}^\lambda}^\lambda - T_{t, \lambda}^{m_0^\lambda} u_0^\lambda \right\Vert_{\infty} \rightarrow 0$ as $i \rightarrow \infty$.
\begin{align*}
& \left\Vert T_{N_i \tau_i, \lambda}^{m_{\tau_i}^\lambda} u_{\tau_i, m_{\tau_i}^\lambda}^\lambda - T_{t, \lambda}^{m_0^\lambda} u_0^\lambda \right\Vert_{\infty} \\
\leq & \left\Vert T_{N_i \tau_i, \lambda}^{m_{\tau_i}^\lambda} u_{\tau_i, m_{\tau_i}^\lambda}^\lambda - T_{N_i \tau_i, \lambda}^{m_0^\lambda} u_{\tau_i, m_{\tau_i}^\lambda}^\lambda \right\Vert_{\infty} + \left\Vert T_{N_i \tau_i, \lambda}^{m_0^\lambda} u_{\tau_i, m_{\tau_i}^\lambda}^\lambda - T_{t, \lambda}^{m_0^\lambda} u_0^\lambda \right\Vert_{\infty} \triangleq D_1+D_2.
\end{align*}
On the one hand,
\begin{align*}
D_2 
& \leq \left\Vert T_{t- N_i \tau_i, \lambda}^{m_0^\lambda} u_0^\lambda - u_{\tau_i, m_{\tau_i}^\lambda}^\lambda \right\Vert_{\infty} \leq \left\Vert T_{t- N_i \tau_i, \lambda}^{m_0^\lambda} u_0^\lambda - u_0^\lambda \right\Vert_{\infty} + \left\Vert u_0^\lambda - u_{\tau_i, m_{\tau_i}^\lambda}^\lambda \right\Vert_{\infty} \\
& \leq C \left\vert t- N_i \tau_i \right\vert + \left\Vert u_0^\lambda - u_{\tau_i, m_{\tau_i}^\lambda}^\lambda \right\Vert_{\infty} \rightarrow 0.
\end{align*}
On the other hand, for any $y \in \R^d$, any $x \in \argmin_{x \in \R^d} \left\{ e^{- \lambda N_i \tau_i} u_{\tau_i, m_{\tau_i}^\lambda}^\lambda (x) + h_{N_i \tau_i, \lambda}^{m_{\tau_i}^\lambda} (x,y) \right\}$, and any minimizer $\gamma$ for $h_{N_i \tau_i, \lambda}^{m_{\tau_i}^\lambda} (x,y)$, we obtain that
\begin{align*}
& T_{N_i \tau_i, \lambda}^{m_0^\lambda} u_{\tau_i, m_{\tau_i}^\lambda}^\lambda(y) - T_{N_i \tau_i, \lambda}^{m_{\tau_i}^\lambda} u_{\tau_i, m_{\tau_i}^\lambda}^\lambda (y) \\
\leq &  e^{-\lambda N_i \tau_i} u_{\tau_i, m_{\tau_i}^\lambda}^\lambda (x) + h_{N_i \tau_i, \lambda}^{m_0^\lambda} (x,y) - e^{-\lambda N_i \tau_i} u_{\tau_i, m_{\tau_i}^\lambda}^\lambda (x) - h_{N_i \tau_i, \lambda}^{m_{\tau_i}^\lambda} (x,y) \\
\leq & \int_{- N_i \tau_i}^0 e^{\lambda s} \left( L_{m_0^\lambda} \left( \gamma(s), \dot{\gamma} (s) \right) - L_{m_{\tau_i}^\lambda} \left( \gamma(s), \dot{\gamma} (s) \right) \right) ds \\
\leq & \frac{1- e^{- \lambda N_i \tau_i}}{\lambda} \Lip(F) d_1 \left( m_{\tau_i}^\lambda, m_0^\lambda \right).
\end{align*}
Similarly, we have
$$
T_{N_i \tau_i, \lambda}^{m_{\tau_i}^\lambda} u_{\tau_i, m_{\tau_i}^\lambda}^\lambda (y) - T_{N_i \tau_i, \lambda}^{m_0^\lambda} u_{\tau_i, m_{\tau_i}^\lambda}^\lambda(y) \leq \frac{1- e^{- \lambda N_i \tau_i}}{\lambda} \Lip(F) d_1 \left( m_{\tau_i^\lambda}, m_0^\lambda \right).
$$
Thus, we conclude that $\lim_{i \rightarrow \infty} D_1 =0$.
Above all, $\lim_{i \rightarrow \infty} \left\Vert T_{N_i \tau_i, \lambda}^{m_{\tau_i}^\lambda} u_{\tau_i, m_{\tau_i}^\lambda}^\lambda - T_{t, \lambda}^{m_0^\lambda} u_0^\lambda \right\Vert_{\infty} = 0$.

Furthermore, it is clear that
\begin{align*}
& \left\Vert T_{N_i \tau_i, \lambda}^{m_{\tau_i}^\lambda} u_{\tau_i, m_{\tau_i}^\lambda}^\lambda - u_{\tau_i, m_{\tau_i}^\lambda}^\lambda \right\Vert_{\infty} \\
\leq & \left\Vert T_{N_i \tau_i, \lambda}^{m_{\tau_i}^\lambda} u_{\tau_i, m_{\tau_i}^\lambda}^\lambda - T_{(N_i -1) \tau_i, \lambda}^{m_{\tau_i}^\lambda} u_{\tau_i, m_{\tau_i}^\lambda}^\lambda \right\Vert_{\infty} + \cdots + \left\Vert T_{\tau_i, \lambda}^{m_{\tau_i}^\lambda} u_{\tau_i, m_{\tau_i}^\lambda}^\lambda - u_{\tau_i, m_{\tau_i}^\lambda}^\lambda \right\Vert_{\infty} \\
\leq & N_i \left\Vert T_{\tau_i,\lambda}^{m_{\tau_i}^\lambda} u_{\tau_i, m_{\tau_i}^\lambda}^\lambda - u_{\tau_i, m_{\tau_i}^\lambda}^\lambda \right\Vert_{\infty} = N_i \left\Vert T_{\tau_i, \lambda}^{m_{\tau_i}^\lambda} u_{\tau_i, m_{\tau_i}^\lambda}^\lambda - \TT_{\tau_i, \lambda}^{m_{\tau_i}^\lambda} u_{\tau_i, m_{\tau_i}^\lambda}^\lambda \right\Vert_{\infty} \leq 2 N_i \tau_i^2 C \leq 2 t \tau_i C,
\end{align*}
indicating that $\lim_{i \rightarrow \infty} \left\Vert T_{N_i \tau_i, \lambda}^{m_{\tau_i}^\lambda} u_{\tau_i, m_{\tau_i}^\lambda}^\lambda - u_{\tau_i, m_{\tau_i}^\lambda}^\lambda \right\Vert_{\infty} =0$.

In conclusion, for any $i \in \N$, we have
$$
\left\Vert T_{t, \lambda}^{m_0^\lambda} u_0^\lambda -  u_0^\lambda \right\Vert_{\infty} \leq  \left\Vert T_{N_i \tau_i, \lambda}^{m_{\tau_i}^\lambda} u_{\tau_i, m_{\tau_i}^\lambda}^\lambda - T_{t, \lambda}^{m_0^\lambda} u_0^\lambda \right\Vert_{\infty} + \left\Vert T_{N_i \tau_i, \lambda}^{m_{\tau_i}^\lambda} u_{\tau_i, m_{\tau_i}^\lambda}^\lambda - u_{\tau_i, m_{\tau_i}^\lambda}^\lambda \right\Vert_{\infty} + \left\Vert u_{\tau_i, m_{\tau_i}^\lambda}^\lambda -u_0^\lambda \right\Vert_{\infty}.
$$
Let $i$ tend to infinity. We conclude that $T_{t, \lambda}^{m_0^\lambda} u_0^\lambda = u_0^\lambda$, which implies that $u_0^\lambda$ is the viscosity solution to \eqref{has F HJE}.
\end{proof}

The following lemma reveals the relation between discrete discounted Aubry sets and discounted Aubry sets. Since the proof is quite long, we leave the proof in Appendix \ref{proof of Lemma 4.1}.
\begin{lemma} \label{convergence of aubry set}
	Fix $\lambda \in (0,1]$. Define $\tau_i$, $m_{\tau_i}^\lambda$, $u_{\tau_i, m_{\tau_i}^\lambda}^\lambda$, $m_0^\lambda$ and $u_0^\lambda$ as in Proposition \ref{convergence to find viscosity solution}.
	For any sequence $x_i \in \R^d$ converging to some $x_0 \in \R^d$, denote by the sequence $\left\{x_{n, x_i}^{\tau_i, \lambda ,m_{\tau_i}^\lambda }\right\}_{n\leq 0}$ with $x_{0,x_i}^{\tau_i, \lambda, m_{\tau_i}^\lambda} = x_i$ the discounted calibrated configuration of $u_{\tau_i,m_{\tau_i}^\lambda}^\lambda (x_i)$.
	Let $\gamma_{\tau_i,m_{\tau_i}^\lambda}^{\lambda,x_i} (t)$ be the piecewise linear approximation satisfying $\gamma_{\tau_i,m_{\tau_i}^\lambda}^{\lambda,x_i} (n \tau_i) = x_{n,x_i}^{\tau_i, \lambda, m_{\tau_i}^\lambda}$. Then, 
	\begin{enumerate}[(i)]
		\item there exists a curve $\gamma_0: (-\infty, 0] \rightarrow \R^d$ such that $\gamma_{\tau_i , m_{\tau_i}^\lambda}^{\lambda,x_i} \rightarrow \gamma_0$ uniformly and $\dot{\gamma}_{\tau_i, m_{\tau_i}^\lambda}^{\lambda, x_i} \rightarrow \dot{\gamma}_0$ in $L^1$-norm on every compact subset of $(-\infty, 0]$,
		\item there exists a constant $C$ such that the curve $\gamma_0 \in \CC^2 \left( (-\infty, 0], \R^d \right)$ and satisfies $\left\Vert \dot{\gamma}_0 \right\Vert_\infty \leq C$, $\Lip \left( \dot{\gamma}_0 \right) \leq C$,
		\item for any $t \geq 0$, we obtain that $u^\lambda_{m_0^\lambda} (x_0) = e^{- \lambda t} u^\lambda_{m_0^\lambda} \left( \gamma_0 (-t) \right) + \int_{-t}^0 e^{\lambda s} L_{m_0^\lambda} \left( \gamma_0 (s), \dot{\gamma}_0 (s) \right) ds$, where $u^\lambda_{m_0^\lambda}$ is defined as in \eqref{equation of solution of HJE_MFG}.
	\end{enumerate}
\end{lemma}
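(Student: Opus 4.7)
The plan is to use compactness to extract a uniformly convergent subsequence of the discrete trajectories, upgrade this to convergence of velocities via the discrete Euler--Lagrange equation, and then pass to the limit in the iterated calibration identity. I tacitly pass to subsequences throughout without relabelling, and write $x_{-k}^i$ and $v_{-k}^i := (x_{-k+1}^i - x_{-k}^i)/\tau_i$ as shorthand for $x_{-k,x_i}^{\tau_i,\lambda,m_{\tau_i}^\lambda}$ and the associated velocities. To begin, Proposition \ref{equi-Lipschitz of discounted HJE} supplies a constant $C^\ast$ with $\Lip(u_{\tau_i,m_{\tau_i}^\lambda}^\lambda)\leq C^\ast$ uniformly in $i$, and Proposition \ref{minimizer x is near}(1) then bounds $|v_{-k}^i|\leq D$ for a constant $D$ independent of $i$ and $k$. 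Hence the piecewise linear curves $\gamma_{\tau_i,m_{\tau_i}^\lambda}^{\lambda,x_i}$ are $D$-Lipschitz with $\gamma_{\tau_i,m_{\tau_i}^\lambda}^{\lambda,x_i}(0)=x_i\to x_0$; Arzel\`a--Ascoli on $[-N,0]$ for each $N\in\N$ together with a diagonal extraction produces a Lipschitz limit $\gamma_0:(-\infty,0]\to\R^d$ to which the trajectories converge uniformly on compact subsets.

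To obtain (ii) and the $L^1$ part of (i), I would exploit that for each fixed $k$ the point $x_{-k-1}^i$ minimizes $z\mapsto(1-\tau_i\lambda)\LL_{\tau_i,m_{\tau_i}^\lambda}(x_{-k-2}^i,z)+\LL_{\tau_i,m_{\tau_i}^\lambda}(z,x_{-k}^i)$, which follows by a direct comparison using the discrete Lax--Oleinik identity twice and eliminates the non-smooth $u_{\tau_i,m_{\tau_i}^\lambda}^\lambda$ from the variational problem at the intermediate node. Setting $p_{-k}^i:=\partial_v L(x_{-k}^i,v_{-k}^i)$, the first-order condition yields the discrete discounted Euler--Lagrange equation
$$
\frac{p_{-k-1}^i-p_{-k-2}^i}{\tau_i}+\lambda p_{-k-2}^i=\partial_x L_{m_{\tau_i}^\lambda}(x_{-k-1}^i,v_{-k-1}^i),
$$
which is the discretization of $\frac{d}{dt}\bigl[e^{\lambda t}\partial_v L(\gamma_0,\dot\gamma_0)\bigr]=e^{\lambda t}\partial_x L_{m_0^\lambda}(\gamma_0,\dot\gamma_0)$. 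The uniform bounds on $(x,v)$ together with (L\ref{MFG_L1}) give $|p_{-k-1}^i-p_{-k-2}^i|=O(\tau_i)$; a uniform positive lower bound on $\partial^2_{vv}L$ over the compact set $\T^d\times\overline{B(0,D)}$ from (L\ref{MFG_L2}) then lets me invert the Legendre map to deduce $|v_{-k-1}^i-v_{-k-2}^i|=O(\tau_i)$ uniformly in $i,k$. Thus the piecewise linear interpolant $\tilde v_{\tau_i}$ of $\{v_{-k}^i\}$ is equi-Lipschitz; Arzel\`a--Ascoli extracts a uniformly convergent subsequence with Lipschitz limit $w$, and $\|\dot\gamma_{\tau_i,m_{\tau_i}^\lambda}^{\lambda,x_i}-\tilde v_{\tau_i}\|_\infty=O(\tau_i)$ yields $L^\infty$ (hence $L^1$) convergence of $\dot\gamma_{\tau_i,m_{\tau_i}^\lambda}^{\lambda,x_i}$ to $w$ on every compact. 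Integrating identifies $w=\dot\gamma_0$, and passing to the limit in the discrete Euler--Lagrange equation shows $\gamma_0$ satisfies the continuous discounted Euler--Lagrange equation, so $\gamma_0\in\CC^2$ by standard ODE theory with the bounds inherited from the discrete ones.

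For (iii), iterating the discrete calibration $N_i:=\lfloor t/\tau_i\rfloor$ times gives
$$
u_{\tau_i,m_{\tau_i}^\lambda}^\lambda(x_i)=(1-\tau_i\lambda)^{N_i}u_{\tau_i,m_{\tau_i}^\lambda}^\lambda\bigl(x_{-N_i}^i\bigr)+\sum_{k=0}^{N_i-1}(1-\tau_i\lambda)^k\tau_i L_{m_{\tau_i}^\lambda}\bigl(x_{-k-1}^i,v_{-k-1}^i\bigr).
$$
By Proposition \ref{convergence to find viscosity solution} and uniqueness of viscosity solutions to $\lambda u+H(x,Du)=F(x,m_0^\lambda)$, the left side tends to $u_{m_0^\lambda}^\lambda(x_0)$; the boundary term tends to $e^{-\lambda t}u_{m_0^\lambda}^\lambda(\gamma_0(-t))$ since $(1-\tau_i\lambda)^{N_i}\to e^{-\lambda t}$ and $x_{-N_i}^i=\gamma_{\tau_i,m_{\tau_i}^\lambda}^{\lambda,x_i}(-N_i\tau_i)\to\gamma_0(-t)$ by (i) and continuity. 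The Riemann-type sum converges to $\int_{-t}^0 e^{\lambda s}L_{m_0^\lambda}(\gamma_0(s),\dot\gamma_0(s))\,ds$: the factor $(1-\tau_i\lambda)^k$ approximates $e^{\lambda s}$ at $s=-(k+1)\tau_i$, the uniform convergence of positions together with the $L^1$ convergence of velocities on compacts from (i)--(ii) handles the Lagrangian term, (F\ref{MFG_F3}) gives $\|L_{m_{\tau_i}^\lambda}-L_{m_0^\lambda}\|_\infty\leq\Lip(F)\,d_1(m_{\tau_i}^\lambda,m_0^\lambda)\to 0$, and dominated convergence closes the argument. The main obstacle is step (ii): deriving the discrete Euler--Lagrange equation rigorously despite $u_{\tau_i,m_{\tau_i}^\lambda}^\lambda$ being only Lipschitz (handled via the Lax--Oleinik trick above), and extracting the $O(\tau_i)$ control on consecutive velocity increments by carefully tracking the compactness of $(x,v)$ and the uniform positive definiteness of $\partial^2_{vv}L$ across all $i$, $k$, and $m$.
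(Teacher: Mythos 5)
Your proposal is correct and follows essentially the same route as the paper's Appendix A: the uniform velocity bound from Proposition \ref{minimizer x is near}, the elimination of $u^\lambda_{\tau_i,m_{\tau_i}^\lambda}$ by observing that each interior node minimizes $z\mapsto(1-\tau_i\lambda)\LL_{\tau_i,m_{\tau_i}^\lambda}(x_{-k-2}^i,z)+\LL_{\tau_i,m_{\tau_i}^\lambda}(z,x_{-k}^i)$, the resulting discrete discounted Euler--Lagrange equation combined with the uniform positive definiteness of $\partial^2_{vv}L$ to get $O(\tau_i)$ velocity increments, Arzel\`a--Ascoli for positions and velocities, and the passage to the limit in the iterated calibration identity with $(1-\tau_i\lambda)^k\approx e^{\lambda s}$. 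The only cosmetic differences are that the paper deduces $|v^i_{-k-1}-v^i_{-k-2}|=O(\tau_i)$ by integrating the total derivative of $\partial_vL$ along the segment (thus explicitly accounting for the $x$-dependence of the Legendre map, which your sketch should also track) and concludes $\gamma_0\in\CC^2$ from minimizer regularity rather than from the limiting ODE.
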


\begin{remarks} \label{remark after convergence of aubry set}
Fix $\lambda \in (0,1]$. Define a sequence $\tau_i \rightarrow 0$ as in Proposition \ref{convergence to find viscosity solution}.
Then Lemma \ref{convergence of aubry set} implies that
$$
\limsup_{i \rightarrow \infty} \tilde{\A}_{L_{m_{\tau_i}^\lambda, \lambda}}^{\tau_i} := \bigcap_{i=0}^\infty \bigcup_{k=i}^\infty \tilde{\A}_{L_{m_{\tau_k}^\lambda, \lambda}}^{\tau_k} \subset \tilde{\A}_{L_{m_0^\lambda}, \lambda}. 
$$
\end{remarks}

\begin{proposition} \label{u_0 is solution of HJE}
Fix $\lambda \in (0,1]$. There is a measure $\mu_0^\lambda \in \PP ( \T^d \times \R^d )$ such that $\mu_{\tau_i, m_{\tau_i}^\lambda}^\lambda$ converges to $\mu_0^\lambda$ weakly with $m_0^\lambda = \pi \sharp \mu_0^\lambda$, where $\tau_i$, $m_{\tau_i}^\lambda$ and $m_0^\lambda$ are defined as in Proposition \ref{convergence to find viscosity solution}, and $\mu_{\tau_i, m_{\tau_i}^\lambda}^\lambda$ is defined as in Proposition \ref{existence of minimizing tau holonomic lambda measure for MFG}.
Moreover, $\mu_0^\lambda$ is a minimizing $\lambda$-measure for $L_{m_0^\lambda}$.
\end{proposition}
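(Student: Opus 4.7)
The plan is to verify, along a subsequence, weak convergence $\mu_{\tau_i, m_{\tau_i}^\lambda}^\lambda \to \mu_0^\lambda$, the identity $\pi \sharp \mu_0^\lambda = m_0^\lambda$, and then check one by one the three defining properties of a minimizing $\lambda$-measure for $L_{m_0^\lambda}$: closedness, support contained in $\tilde{\A}_{L_{m_0^\lambda}, \lambda}$, and vanishing of $\int (L_{m_0^\lambda} - \lambda u_{m_0^\lambda}^\lambda)\, d\mu_0^\lambda$.

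First, Remark \ref{remark 3.3} furnishes a fixed compact set $K \subset \T^d \times \R^d$ containing the support of every $\mu_{\tau_i, m_{\tau_i}^\lambda}^\lambda$, so the family is tight and Prokhorov's theorem gives a further subsequence converging weakly to some $\mu_0^\lambda \in \PP(\T^d \times \R^d)$ with $\supp(\mu_0^\lambda) \subset K$. Since $\pi$ is continuous and $\pi \sharp \mu_{\tau_i, m_{\tau_i}^\lambda}^\lambda = m_{\tau_i}^\lambda \stackrel{w^*}{\longrightarrow} m_0^\lambda$ by Proposition \ref{convergence to find viscosity solution}, weak continuity of the push-forward immediately yields $\pi \sharp \mu_0^\lambda = m_0^\lambda$.

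For closedness, given $\varphi \in \CC^1(\T^d)$ the $\tau_i$-holonomic identity can be rewritten as
$$
\int_{\T^d \times \R^d} \frac{\varphi(x + \tau_i v) - \varphi(x)}{\tau_i} \, d\mu_{\tau_i, m_{\tau_i}^\lambda}^\lambda = 0.
$$
Uniform continuity of $D\varphi$ together with boundedness of $v$ on $K$ makes the integrands converge uniformly on $K$ to $v \cdot D\varphi(x)$, and weak convergence then produces $\int v \cdot D\varphi(x)\, d\mu_0^\lambda = 0$; integrability $\int |v|\, d\mu_0^\lambda < +\infty$ is immediate from compact support, so $\mu_0^\lambda \in \K(\T^d \times \R^d)$. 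For the support, Remark \ref{remark after convergence of aubry set} is exactly what is needed: any point of $\supp(\mu_0^\lambda)$ is a limit of points in $\supp(\mu_{\tau_i, m_{\tau_i}^\lambda}^\lambda) \subset \tilde{\A}_{L_{m_{\tau_i}^\lambda}, \lambda}^{\tau_i}$ and hence lies in $\limsup_{i} \tilde{\A}_{L_{m_{\tau_i}^\lambda}, \lambda}^{\tau_i} \subset \tilde{\A}_{L_{m_0^\lambda}, \lambda}$.

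Finally, the minimizing identity $\int (L_{m_{\tau_i}^\lambda} - \lambda u_{\tau_i, m_{\tau_i}^\lambda}^\lambda)\, d\mu_{\tau_i, m_{\tau_i}^\lambda}^\lambda = 0$ passes to the limit using (F\ref{MFG_F3}) (which gives uniform convergence $L_{m_{\tau_i}^\lambda} \to L_{m_0^\lambda}$ on $K$) together with the uniform convergence $u_{\tau_i, m_{\tau_i}^\lambda}^\lambda \to u_0^\lambda$ from Proposition \ref{convergence to find viscosity solution}; uniqueness of viscosity solutions to $\lambda u + H(x, Du) = F(x, m_0^\lambda)$ identifies $u_0^\lambda$ with $u_{m_0^\lambda}^\lambda$, so the limit reads $\int (L_{m_0^\lambda} - \lambda u_{m_0^\lambda}^\lambda)\, d\mu_0^\lambda = 0$. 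The main obstacle of the whole argument is the semicontinuity inclusion invoked for the support in the Aubry set, already encapsulated (with considerable work) in Lemma \ref{convergence of aubry set}; granted that, every other step is a routine passage to the limit made possible by the common compact support $K$.
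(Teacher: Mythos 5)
Your proposal is correct and follows essentially the same route as the paper: compactness of supports from Remark \ref{remark 3.3} to extract the weak limit, the difference-quotient argument for closedness, Remark \ref{remark after convergence of aubry set} for the inclusion of the support in $\tilde{\A}_{L_{m_0^\lambda},\lambda}$, and uniqueness of viscosity solutions together with (F\ref{MFG_F3}) to pass the minimizing identity to the limit. No gaps.
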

\begin{proof}
By Remark \ref{remark 3.3}, $\mu_{\tau_i, m_{\tau_i}^\lambda}^\lambda \in \PP(K)$ for any $i \in \N$. Since $\PP(K)$ is compact, there exists a measure $\mu_0^\lambda \in \PP ( \T^d \times \R^d )$ such that $\mu_{\tau_i, m_{\tau_i}^\lambda}^\lambda \stackrel{w^*}{\longrightarrow} \mu_0^\lambda$ as $i \rightarrow +\infty$. Clearly, $m_0^\lambda = \pi \sharp \mu_0^\lambda$.

Since both $u_0^\lambda$ and $u_{m_0^\lambda}^\lambda$, which is defined as in \eqref{equation of solution of HJE_MFG}, are viscosity solutions to \eqref{has F HJE}, it follows that $u_0^\lambda = u_{m_0^\lambda}^\lambda$ by the uniqueness of viscosity solutions.
Thus,
\begin{align*}
& \left\vert \int_{\T^d \times \R^d} \left( L_{m_{\tau_i}^\lambda}(x,v) - \lambda u_{\tau_i, m_{\tau_i}^\lambda}^\lambda(x) \right) d \mu_{\tau_i, m_{\tau_i}^\lambda}^\lambda - \int_{\T^d \times \R^d} \left( L_{m_0^\lambda} (x,v) - \lambda u_0^\lambda (x) \right) d\mu_0^\lambda \right\vert \\
\leq & \int_{\T^d \times \R^d} \left\vert L_{m_{\tau_i}^\lambda}(x,v) - \lambda u_{\tau_i, m_{\tau_i}^\lambda}^\lambda(x) - L_{m_0^\lambda} (x,v) + \lambda u_0^\lambda (x) \right\vert  d \mu_{\tau_i, m_{\tau_i}^\lambda}^\lambda \\
& + \left\vert \int_{\T^d \times \R^d} \left(  L_{m_0} (x,v) - \lambda u_0^\lambda (x) \right)  d \mu_{\tau_i, m_{\tau_i}^\lambda}^\lambda - \int_{\T^d \times \R^d} \left( L_{m_0^\lambda} (x,v) - \lambda u_0^\lambda (x) \right)  d\mu_0^\lambda \right\vert \\
\triangleq & E_1 + E_2.
\end{align*}
By Remark \ref{remark 3.3} and $\mu_{\tau_i, m_{\tau_i}^\lambda}^\lambda \stackrel{w^*}{\longrightarrow} \mu_0^\lambda$, we have $\lim_{i \rightarrow \infty} E_2 = 0$. Since
\begin{align*}
& \left\vert L_{m_{\tau_i}^\lambda}(x,v) - \lambda u_{\tau_i, m_{\tau_i}^\lambda}^\lambda (x) - L_{m_0^\lambda} (x,v) + \lambda u_0^\lambda (x) \right\vert \\
\leq & \left\vert F \left( x, m_{\tau_i}^\lambda \right) - F \left( x, m_0^\lambda \right) \right\vert + \lambda \left\vert u_{\tau_i, m_{\tau_i}^\lambda}^\lambda (x) - u_0^\lambda (x) \right\vert \\
\leq & \Lip(F) d_1 \left( m_{\tau_i}^\lambda, m_0^\lambda \right) + \lambda \left\Vert u_{\tau_i, m_{\tau_i}^\lambda}^\lambda - u_0^\lambda \right\Vert_\infty,
\end{align*}
we obtain that
$$
\lim_{i \rightarrow \infty} E_1 = \lim_{i \rightarrow \infty} \Lip(F) d_1 \left( m_{\tau_i}^\lambda, m_0^\lambda \right) + \lambda \left\Vert u_{\tau_i, m_{\tau_i}^\lambda}^\lambda - u_0^\lambda \right\Vert_\infty = 0.
$$
Above all, we conclude that $\int_{\T^d \times \R^d} \left(  L_{m_0^\lambda}(x,v) -\lambda u_0^\lambda(x) \right) d \mu_0^\lambda = 0$.

For any $\tau>0$ and any $\varphi \in \CC^1 ( \T^d )$, define
$$
\Delta \varphi_\tau (x,v) := \frac{\varphi(x+ \tau v) - \varphi(x)}{\tau}.
$$
It is clear that
$$
\lim_{\tau \rightarrow 0} \Delta \varphi_\tau (x,v) = v D\varphi(x), \quad \forall (x,v) \in \T^d \times \R^d.
$$
In fact, for any compact subset $K^\prime \subset \T^d \times \R^d$, one can deduce that 
\begin{equation}
\lim_{\tau \rightarrow 0} \Delta \varphi_\tau (x,v) = v D\varphi(x)
\end{equation}
uniformly on $K^\prime$.
Note that
\begin{align*}
& \left\vert \int_{\T^d \times \R^d} v D\varphi(x) d \mu_0^\lambda \right\vert = \left\vert \int_{\T^d \times \R^d} \Delta \varphi_{\tau_i}(x,v) d \mu_{\tau_i, m_{\tau_i}}^\lambda - \int_{\T^d \times \R^d} v D\varphi(x) d \mu_0^\lambda \right\vert \\
\leq & \left\vert \int_{\T^d \times \R^d} \left(  \Delta \varphi_{\tau_i}(x,v) - v D\varphi(x) \right)  d \mu_{\tau_i, m_{\tau_i}^\lambda}^\lambda \right\vert + \left\vert \int_{\T^d \times \R^d} v D\varphi(x) d \mu_{\tau_i, m_{\tau_i}^\lambda}^\lambda - \int_{\T^d \times \R^d} v D\varphi(x) d \mu_0^\lambda \right\vert.
\end{align*}
By Remark \ref{remark 3.3} and $\mu_{\tau_i, m_{\tau_i}^\lambda}^\lambda \stackrel{w^*}{\longrightarrow} \mu_0^\lambda$, we obtain that
$$
\int_{\T^d \times \R^d} v D\varphi(x) d \mu_0^\lambda = 0.
$$
Finally, by Remark \ref{remark after convergence of aubry set} and $d_1 \left( \mu_{\tau_i, m_{\tau_i}^\lambda}^\lambda, \mu_0^\lambda \right) \rightarrow 0$, it follows that the measure $\mu_0^\lambda$ is a minimizing $\lambda$-measure for $L_{m_0^\lambda}$.
\end{proof}

\begin{proposition} \label{m_0 is solution of continuity equation}
For any $\lambda \in (0,1]$, the measure $m_0^\lambda$ obtained in Proposition \ref{convergence to find viscosity solution} is a solution to the continuity equation (\ref{DMFG}b) in the sense of distributions.
\end{proposition}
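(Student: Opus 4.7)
The plan is to transfer the required identity from an integral against $m_0^\lambda$ on $\T^d$ to an integral against $\mu_0^\lambda$ on $\T^d\times\R^d$, and then invoke the fact that $\mu_0^\lambda$ is a closed measure together with the Legendre-duality relation satisfied on the discounted Aubry set. Concretely, by Proposition \ref{u_0 is solution of HJE} we have $\mu_0^\lambda\in\K(\T^d\times\R^d)$, $\supp(\mu_0^\lambda)\subset\tilde{\A}_{L_{m_0^\lambda},\lambda}$, and $m_0^\lambda=\pi\sharp\mu_0^\lambda$, while $u_0^\lambda$ coincides with the unique viscosity solution $u_{m_0^\lambda}^\lambda$ of (\ref{DMFG}a) associated with $m_0^\lambda$.

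First, I would record that $u_0^\lambda$ is differentiable on $\A_{L_{m_0^\lambda},\lambda}=\pi(\tilde{\A}_{L_{m_0^\lambda},\lambda})$ (this is stated in Section 1 right after the definition of the discounted Aubry set), so that $Du_0^\lambda(x)$ is well-defined for $m_0^\lambda$-a.e.\ $x\in\T^d$. Next, for any $(x,v)\in\tilde{\A}_{L_{m_0^\lambda},\lambda}$, pick a calibrated curve $\gamma$ with $\gamma(0)=x$, $\dot{\gamma}(0)=v$ that extends on both sides of $0$ (this uses that $\tilde{\A}_{L_{m_0^\lambda},\lambda}$ is invariant under $\Phi^t_{L_{m_0^\lambda},\lambda}$). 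The standard weak-KAM calibration argument then gives $Du_0^\lambda(x)=\partial L/\partial v(x,v)$, and the Legendre transform yields
\[
v=\frac{\partial H}{\partial p}\bigl(x,Du_0^\lambda(x)\bigr).
\]
Hence this identity holds on all of $\supp(\mu_0^\lambda)$.

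With these two ingredients, the verification of the continuity equation is immediate. For any $f\in\CC^\infty(\T^d)$, since the integrand depends only on $x$,
\[
\int_{\T^d}\!\Bigl\langle Df(x),\tfrac{\partial H}{\partial p}(x,Du_0^\lambda(x))\Bigr\rangle dm_0^\lambda=\int_{\T^d\times\R^d}\!\Bigl\langle Df(x),\tfrac{\partial H}{\partial p}(x,Du_0^\lambda(x))\Bigr\rangle d\mu_0^\lambda,
\]
using $m_0^\lambda=\pi\sharp\mu_0^\lambda$. Using the Legendre identity on $\supp(\mu_0^\lambda)$, the right-hand side equals
\[
\int_{\T^d\times\R^d}\langle Df(x),v\rangle\,d\mu_0^\lambda,
\]
which vanishes because $\mu_0^\lambda\in\K(\T^d\times\R^d)$ (applied to $\varphi=f\in\CC^1(\T^d)$). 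This proves that $m_0^\lambda$ satisfies (\ref{DMFG}b) in the sense of distributions.

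The only delicate step is the Legendre-duality identity $v=\partial H/\partial p(x,Du_0^\lambda(x))$ on $\tilde{\A}_{L_{m_0^\lambda},\lambda}$, since we must reconcile the calibration inequality for $u_0^\lambda$ (a viscosity, hence merely Lipschitz, solution) with pointwise differentiability. I would cite or verify, following \cite{MR3663623}, that calibrated curves are $\CC^2$ and produce points of differentiability of the weak KAM solution where $Du_0^\lambda=\partial L/\partial v(\gamma,\dot\gamma)$; the Legendre transform then closes the argument. The rest is a routine chain of substitutions.
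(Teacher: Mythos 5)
Your proposal is correct and follows the same overall skeleton as the paper's proof: push the integral forward from $m_0^\lambda$ to $\mu_0^\lambda$, replace $\frac{\partial H}{\partial p}(x,Du_0^\lambda(x))$ by $v$ on $\supp(\mu_0^\lambda)$, and conclude by the closedness of $\mu_0^\lambda$ applied to $\varphi=f$. The one step where you genuinely diverge is the derivation of the Legendre identity $v=\frac{\partial H}{\partial p}(x,Du_0^\lambda(x))$ on the support. You obtain it pointwise on all of $\tilde{\A}_{L_{m_0^\lambda},\lambda}$ by differentiating the calibration identity along a two-sided calibrated curve (valid here because the discounted Aubry set is flow-invariant and $u_0^\lambda=u^\lambda_{m_0^\lambda}$ is differentiable on its projection). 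The paper instead argues integrally: it combines the minimizing-$\lambda$-measure identity $\int(L_{m_0^\lambda}-\lambda u_0^\lambda)\,d\mu_0^\lambda=0$ with the equation $\lambda u_0^\lambda+H_{m_0^\lambda}(x,Du_0^\lambda(x))=0$ on $\supp(m_0^\lambda)$ and the vanishing of $\int\langle Du_0^\lambda(x),v\rangle\,d\mu_0^\lambda$ (cited from \cite{MR3663623}), and then invokes Fenchel's inequality to force pointwise equality on $\supp(\mu_0^\lambda)$. Your route uses only $\supp(\mu_0^\lambda)\subset\tilde{\A}_{L_{m_0^\lambda},\lambda}$ and not the minimizing-measure property, so it is slightly more economical at this step; the paper's route avoids re-deriving the calibration/differentiability facts by packaging them into the cited integral identity. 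Both ultimately rest on the same regularity input from \cite{MR3663623}, so the difference is one of presentation rather than substance, and your argument is complete.
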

\begin{proof}
For any $x \in \supp (m_0^\lambda)$ and any $v \in \R^d$, we have
$$
L_{m_0^\lambda} (x,v) + H_{m_0^\lambda} \left(x, Du_0^\lambda(x) \right) = L_{m_0^\lambda} (x,v) - \lambda u_0^\lambda (x). 
$$
Integrating both sides with respect to $\mu_0^\lambda$, we obtain
$$
\int_{\T^d \times \R^d} \left( L_{m_0^\lambda} (x,v) + H_{m_0^\lambda} \left(x, Du_0^\lambda(x) \right) \right) d \mu_0^\lambda = 0.
$$

Since $\supp(\mu_0^\lambda) \subset \tilde{\A}_{L_{m_0^\lambda}, \lambda}$, by \cite[Section 4]{MR3663623}, it is clear that
$$
\int_{\T^d \times \R^d} \left( L_{m_0^\lambda} (x,v) + H_{m_0^\lambda} \left(x, Du_0^\lambda(x) \right) - \left\langle Du_0^\lambda(x) , v \right\rangle \right) d \mu_0^\lambda = 0.
$$
Adding with Fenchel inequality
$$
L_{m_0^\lambda} (x,v) + H_{m_0^\lambda} \left(x, Du_0^\lambda(x) \right) - \left\langle Du_0^\lambda(x), v \right\rangle \geq 0, \quad \forall x \in \T^d, \forall v \in \R^d,
$$
we obtain that for any $(x,v) \in \supp(\mu_0^\lambda)$,
$$
L_{m_0^\lambda} (x,v) + H_{m_0^\lambda} \left(x, Du_0^\lambda(x) \right) = \left\langle Du_0^\lambda(x), v \right\rangle.
$$
Thus, by Legendre transform, for any $x \in \T^d$, there exists a unique $v = \frac{\partial H_{m_0^\lambda}}{\partial p} \left(x, Du_0^\lambda(x) \right)$ such that $(x,v) \in \supp(\mu_0^\lambda)$.

Thus, for any $f \in \CC^\infty ( \T^d )$, we have
\begin{align*}
0 & = \int_{\T^d \times \R^d} \left\langle Df(x), v \right\rangle d \mu_0^\lambda = \int_{\T^d} \left\langle Df(x), \frac{\partial H_{m_0^\lambda}}{\partial p} \left(x, Du_0^\lambda(x) \right) \right\rangle d m_0^\lambda \\
& = \int_{\T^d} \left\langle Df(x), \frac{\partial H}{\partial p} \left(x, Du_0^\lambda(x) \right) \right\rangle d m_0^\lambda,
\end{align*}
indicating that $m_0^\lambda$ is a weak solution to the continuity equation (\ref{DMFG}b).
\end{proof}

\subsection{Proof of Theorem \ref{theorem 1} (2)} \label{proof of theorem 1(2)}
Theorem \ref{theorem 1} (2) is established with the help of Proposition \ref{convergence2_prop1} and Proposition \ref{need0806}. Before the proof, it is necessary to show that the term $\left\Vert u^\lambda_m + \frac{c(m)}{\lambda} \right\Vert_\infty$ is uniformly bounded, which allows us to extract a convergent subsequence via Ascoli-Arz\'ela theorem.

\begin{proposition} \label{need0813}
There exists a constant $C$ such that for any $\lambda \in (0,1]$ and any $m \in \PP ( \T^d )$, we have
$$
\left\Vert u^\lambda_m + \frac{c(m)}{\lambda} \right\Vert_\infty \leq C.
$$
\end{proposition}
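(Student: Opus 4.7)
The plan is to shift the unknown by setting $v^\lambda_m := u^\lambda_m + c(m)/\lambda$, which satisfies the shifted equation $\lambda v^\lambda_m + H_m(x, Dv^\lambda_m) = c(m)$. Since $c(m)/\lambda = \int_{-\infty}^0 e^{\lambda s} c(m)\, ds$, the representation \eqref{equation of solution of HJE_MFG} rewrites as
$$v^\lambda_m(x) = \inf_{\sigma} \int_{-\infty}^0 e^{\lambda s}\bigl(L_m(\sigma(s), \dot\sigma(s)) + c(m)\bigr)\, ds,$$
the infimum taken over absolutely continuous $\sigma:(-\infty,0]\to \R^d$ with $\sigma(0)=x$. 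The problem thus reduces to bounding $\|v^\lambda_m\|_\infty$ uniformly in $(\lambda, m) \in (0,1]\times\PP(\T^d)$.

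Before the main estimate I would establish two uniform preliminary bounds. First, $|c(m)|$ is uniformly bounded in $m$: by the characterization of $c(m)$ through closed/Mather measures recalled in Remark \ref{remark 1.2} and the bound $|L_m - L| \leq F_\infty$ from (F\ref{MFG_F1}), one has $|c(m) - c(L)| \leq F_\infty$. Second, there exists $M > 0$, independent of $m$, together with a weak KAM solution $w_m$ of the critical equation $H_m(x, Dw_m) = c(m)$ such that $\|w_m\|_\infty \leq M$. Indeed, any viscosity subsolution $w$ of $H_m(x, Dw) \leq c(m)$ satisfies $H(x, Dw) \leq c(m) + F_\infty$, so by the superlinearity (H\ref{MFG_H3}) (applied with, say, $K=1$) we get $|Dw| \leq c(L) + 2F_\infty - C(1) =: K_0$; hence the oscillation of $w_m$ on $\T^d$ is at most $K_0 \diam(\T^d)$, and after an additive normalization one obtains the uniform bound $\|w_m\|_\infty \leq M := K_0 \diam(\T^d)$.

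Granted these, fix $x \in \T^d$. By weak KAM theory there is a $(w_m, L_m + c(m))$-calibrated curve $\gamma:(-\infty,0] \to \T^d$ with $\gamma(0) = x$, along which
$$\psi(t) := \int_{-t}^0 \bigl(L_m(\gamma(s), \dot\gamma(s)) + c(m)\bigr)\, ds = w_m(x) - w_m(\gamma(-t)),$$
so $|\psi(t)| \leq 2M$ for every $t \geq 0$. The substitution $s = -t$ gives $\int_{-\infty}^0 e^{\lambda s}(L_m + c(m))(\gamma,\dot\gamma)\, ds = \int_0^\infty e^{-\lambda t}\psi'(t)\, dt$; since $\psi(0) = 0$ and $\psi$ is bounded, an integration by parts produces $\lambda \int_0^\infty e^{-\lambda t} \psi(t)\, dt$, whose absolute value is at most $2M$. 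Plugging $\gamma$ into the infimum yields $v^\lambda_m(x) \leq 2M$. For the matching lower bound, for any admissible $\sigma$ the domination $w_m \prec L_m + c(m)$ gives $\psi_\sigma(t) \geq w_m(x) - w_m(\sigma(-t)) \geq -2M$, and the same integration-by-parts identity then shows $\int_{-\infty}^0 e^{\lambda s}(L_m + c(m))(\sigma, \dot\sigma)\, ds \geq -2M$ for every $\sigma$; passing to the infimum gives $v^\lambda_m(x) \geq -2M$, so $C = 2M$ works.

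The main obstacle is conceptual rather than computational: one must ensure that the bound on critical subsolutions does not deteriorate as $m$ varies. The key input is that (H\ref{MFG_H3}) together with $F_\infty < \infty$ makes the family $\{H_m\}_{m \in \PP(\T^d)}$ equi-coercive in $p$, and this, combined with the uniform bound on $c(m)$ from step one, is what makes the Lipschitz constant of critical subsolutions $m$-independent. Once this uniformity is secured, the rest of the argument is the classical critical-versus-discount trick packaged with the elementary integration by parts above.
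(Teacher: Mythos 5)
Your argument is correct, and it shares the paper's two preliminary uniform estimates --- the bound on $c(m)$ obtained from the closed-measure characterization of the critical value together with $\vert L_m - L\vert \leq F_\infty$, and the equi-Lipschitz (hence, after normalization, equi-bounded) family of solutions of the critical equation $H_m(x,Dw)=c(m)$ coming from the coercivity of $H$ --- but it diverges in the final transfer to the discounted equation. The paper shifts a normalized critical solution up and down to manufacture a subsolution $u_m^1 \leq 0$ and a supersolution $u_m^2 \geq 0$ of $\lambda u + H_m(x,Du) = c(m)$ and then invokes the comparison principle to sandwich $\bar u_m^\lambda = u_m^\lambda + c(m)/\lambda$ between them. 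You instead work on the Lagrangian side: you absorb $c(m)/\lambda = \int_{-\infty}^0 e^{\lambda s}c(m)\,ds$ into the representation formula \eqref{equation of solution of HJE_MFG}, plug a $(w_m, L_m+c(m))$-calibrated curve in for the upper bound, use the domination $w_m \prec L_m + c(m)$ for the lower bound, and convert both statements via integration by parts. These are the PDE and variational faces of the same barrier idea; your route never needs the comparison principle for the discounted equation (only the representation formula the paper already records), while the paper's is shorter once that principle is granted.

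One small point to tighten in your lower bound: for a general competitor $\sigma$ the function $\psi_\sigma$ is only bounded \emph{below}, not bounded, so the identity $\int_0^\infty e^{-\lambda t}\psi_\sigma'(t)\,dt = \lambda\int_0^\infty e^{-\lambda t}\psi_\sigma(t)\,dt$ need not hold literally. Instead, integrate by parts on $[0,T]$ to get the boundary term $e^{-\lambda T}\psi_\sigma(T)$, note that $\psi_\sigma \geq -2M$ forces $\liminf_{T\to\infty} e^{-\lambda T}\psi_\sigma(T) \geq 0$, and conclude that the discounted action of $\sigma$ is at least $-2M$ (the case where that action is $+\infty$ being trivial, since $L_m + c(m)$ is uniformly bounded below). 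This is a routine fix and does not affect the validity of your approach.
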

\begin{proof}
Denote $\bar{u}^\lambda_m = u^\lambda_m + \frac{c(m)}{\lambda}$. Then $\bar{u}^\lambda_m$ is a viscosity solution to
$$
\lambda u + H_m (x, Du) = c(m), \quad \forall x \in \T^d.
$$
Let $\mu_0 \in \K ( \T^d \times \R^d )$ and $c[0] \in \R$ such that
$$
- c[0] = \int_{\T^d \times \R^d} L(x,v) d \mu_0.
$$
Then we obtain that
$$
- c(m) \leq \int_{\T^d \times \R^d} L_m(x,v) d \mu_0 \leq -c[0] + F_\infty.
$$
By (L\ref{MFG_L3}), there exists a constant $\hat{C}$ such that $L(x,v) \geq \hat{C}$ for any $(x,v) \in \T^d \times \R^d$.
Thus, for any $\mu \in \K ( \T^d \times \R^d )$, it is clear that
$$
\int_{\T^d \times \R^d} L_m(x,v) d \mu \geq \hat{C}-F_\infty.
$$
Thus, $- c(m) \geq \hat{C}-F_\infty$, and we conclude that $c(m)$ is bounded.
Let $u_m$ be a viscosity solution to
\begin{equation} \label{need0106_2}
	H_m (x, Du) = c(m), \quad \forall x \in \T^d
\end{equation}
with $u_m(0) = 0$.
By \cite[Proposition 4.2.1]{dashu}, since all viscosity solutions to (\ref{need0106_2}) are equi-Lipschitz with Lipschitz constant $\sup_{x \in \T^d, \vert v \vert = 1} \left\vert L(x,v) \right\vert + F_\infty + c(m)$ and $c(m)$ is bounded, we can obtain that there exists a constant $\tilde{C}$ such that $\left\Vert u_m \right\Vert_\infty \leq \tilde{C}$ for any $m \in \PP ( \T^d )$.
By adding suitable constants, we obtain two viscosity solutions $u_m^1$ and $u_m^2$ to \eqref{need0106_2} satisfying $-2 \tilde{C} \leq u_m^1 \leq 0 \leq u_m^2 \leq 2 \tilde{C}$.
Thus, for any $\lambda \in (0,1]$,
$$
\lambda u_m^1 + H_m (x, D u_m^1) \leq c(m),
$$
$$
\lambda u_m^2 + H_m (x, D u_m^2) \geq c(m).
$$
By the comparison principle, it follows that $u_m^1 \leq \bar{u}_m^\lambda \leq u_m^2$ for any $\lambda \in (0,1]$. Therefore, $\left\Vert \bar{u}_m^\lambda \right\Vert_\infty \leq 2\tilde{C}$, where the constant $\tilde{C}$ is independent of $\lambda$ and $m$.
\end{proof}

\begin{proposition} \label{convergence2_prop1}
	Define $m_0^\lambda$ as in Proposition \ref{u_0 is solution of HJE} for any $\lambda \in (0,1]$. There exists a sequence $\lambda_i \rightarrow 0$ such that $m_0^{\lambda_i} \stackrel{w^*}{\longrightarrow} m_0$ for some measure $m_0 \in \PP ( \T^d )$ and $u_{m_0^{\lambda_i}}^{\lambda_i} + \frac{c \left( m_0^{\lambda_i} \right)}{\lambda_i} \rightarrow u_0$ uniformly for some Lipschitz function $u_0$. Moreover, $u_0$ is a viscosity solution to
	\begin{equation} \label{need0705_1}
		H_{m_0} (x,Du) = -c(m_0), \quad \forall x \in \T^d.
	\end{equation}
\end{proposition}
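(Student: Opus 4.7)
The approach is the standard vanishing-discount procedure in weak KAM theory. First I would set $\bar{u}^\lambda_m := u^\lambda_m + \frac{c(m)}{\lambda}$. By Proposition \ref{need0813}, the family $\{\bar{u}^\lambda_m : \lambda \in (0,1],\, m \in \PP(\T^d)\}$ is uniformly bounded in $L^\infty(\T^d)$. Since $u^\lambda_m$ is a viscosity solution of $\lambda u + H_m(x, Du) = 0$ and $D\bar{u}^\lambda_m = Du^\lambda_m$, a direct computation shows $\bar{u}^\lambda_m$ is a viscosity solution of $\lambda u + H_m(x, Du) = c(m)$ on $\T^d$. Together with the superlinearity (H\ref{MFG_H3}) of $H$ (uniform in $m$ by (F\ref{MFG_F1})) and the uniform boundedness of $c(m)$ (established inside the proof of Proposition \ref{need0813}), a standard coercivity argument produces uniform Lipschitz estimates on $\{\bar{u}^\lambda_m\}$ independent of $\lambda$ and $m$.

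With these estimates, compactness of $\PP(\T^d)$ in the weak-$*$ topology together with the Arz\'ela-Ascoli theorem extracts a subsequence $\lambda_i \to 0$ along which $m_0^{\lambda_i} \stackrel{w^*}{\longrightarrow} m_0$ and $\bar{u}^{\lambda_i}_{m_0^{\lambda_i}} \to u_0$ uniformly on $\T^d$, with $u_0$ Lipschitz. To pass the equation to the limit one needs continuity of the map $m \mapsto c(m)$ in the weak-$*$ topology: using the characterization $-c(m) = \min_{\mu \in \K} \int L_m\, d\mu$ from Remark \ref{remark 1.2} (i) together with (F\ref{MFG_F3}) gives $|c(m_1) - c(m_2)| \leq \Lip(F)\, d_1(m_1, m_2)$, so in particular $c(m_0^{\lambda_i}) \to c(m_0)$.

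Finally I would apply the classical stability theorem for viscosity solutions (\cite{MR690039}). Since $\lambda_i \bar{u}^{\lambda_i}_{m_0^{\lambda_i}} \to 0$ uniformly by the $L^\infty$-bound, $H_{m_0^{\lambda_i}}(x,p) = H(x,p) - F(x, m_0^{\lambda_i}) \to H_{m_0}(x,p)$ locally uniformly by (F\ref{MFG_F2}), and $c(m_0^{\lambda_i}) \to c(m_0)$, stability identifies the uniform limit $u_0$ as a viscosity solution of the cell problem for $L_{m_0}$ at the Ma\~n\'e critical level, which is the equation (\ref{need0705_1}) on $\T^d$. The principal difficulty is securing the uniform Lipschitz bound for $\{\bar{u}^\lambda_m\}$ that does not degenerate as $\lambda \to 0$; this rests on the uniform boundedness of $c(m)$ from Proposition \ref{need0813} combined with the coercivity of $H$, and one must be careful that the Lipschitz constant obtained does not blow up when $\lambda$ tends to zero.
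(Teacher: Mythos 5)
Your proposal is correct, and its compactness skeleton (the uniform bound on $\bar{u}^\lambda_m = u^\lambda_m + c(m)/\lambda$ from Proposition \ref{need0813}, equi-Lipschitz estimates, the $\Lip(F)$-continuity of $m \mapsto c(m)$ obtained by testing against closed/Mather measures, and extraction via weak-$*$ compactness of $\PP(\T^d)$ plus Arzel\`a--Ascoli) coincides with the paper's. You genuinely diverge in two places. First, the equi-Lipschitz bound: you derive it directly from coercivity --- $\bar{u}^\lambda_m$ solves $\lambda u + H_m(x,Du) = c(m)$ with $c(m) - \lambda \bar{u}^\lambda_m$ uniformly bounded, and bounded viscosity subsolutions of uniformly coercive equations are uniformly Lipschitz --- whereas the paper inherits it from its discrete scheme, through Proposition \ref{equi-Lipschitz of discounted HJE} and the identification $u_0^\lambda = u^\lambda_{m_0^\lambda}$ established in Propositions \ref{convergence to find viscosity solution} and \ref{u_0 is solution of HJE}; your route is self-contained and purely PDE-theoretic, while the paper's keeps all estimates inside the discretization framework it has already built. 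Second, and more substantially, the identification of the limit equation: you invoke the classical stability theorem for viscosity solutions, with $G_i(x,r,p) = \lambda_i r + H(x,p) - F(x, m_0^{\lambda_i}) - c(m_0^{\lambda_i})$ converging locally uniformly to $H(x,p) - F(x,m_0) - c(m_0)$ (uniform convergence of $F(\cdot, m_0^{\lambda_i})$ follows from (F\ref{MFG_F3}) since $d_1$ metrizes weak-$*$ convergence on the compact $\T^d$), which is shorter and entirely standard; the paper instead introduces the corrected Lax--Oleinik operators $\tilde{T}_t^m$ and $\tilde{T}_{t,\lambda}^m$, uses the fixed-point identity $\tilde{T}_{\tau,\lambda_i}^{m_0^{\lambda_i}} \bar{u}^{\lambda_i}_{m_0^{\lambda_i}} = \bar{u}^{\lambda_i}_{m_0^{\lambda_i}}$, and proves $\tilde{T}_t^{m_0} u_0 = u_0$ for all $t>0$ via a three-term semigroup estimate; this is heavier but stays within the weak KAM formalism and directly delivers the fixed-point characterization \eqref{classical_laxoleinik} of the limit. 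One cosmetic caveat: both your stability argument and the paper's fixed-point argument in fact yield $H_{m_0}(x, Du_0) = c(m_0)$ (the fixed points of $\tilde{T}_t^{m_0}$ are exactly the solutions of \eqref{classical_laxoleinik}), so the minus sign printed in \eqref{need0705_1} is a sign typo in the statement; your limit equation with $+c(m_0)$ is the intended one, consistent with Ma\~n\'e's critical value being the unique $k$ for which $H_{m_0}(x,Du)=k$ is solvable.
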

\begin{proof}
	By Proposition \ref{convergence to find viscosity solution}, for any $\lambda \in (0,1]$, $u_{m_0^\lambda}^{\lambda}$ is equi-Lipschitz.	
	Since $\PP ( \T^d )$ is compact, there exists a sequence $\lambda_i \rightarrow 0$ such that $m_0^{\lambda_i} \stackrel{w^*}{\longrightarrow} m_0$ for some measure $m_0 \in \PP ( \T^d )$.
	Moreover, by Ascoli-Arz\'ela theorem and Proposition \ref{need0813}, taking a subsequence if necessary, we can assume there is a function $u_0$ such that $u_{m_0^{\lambda_i}}^{\lambda_i} + \frac{c \left(m_0^{\lambda_i} \right)}{\lambda_i} \rightarrow u_0$ uniformly.

	For any $\mu \in \K ( \T^d \times \R^d )$ such that $- c(m_0) = \int_{\T^d \times \R^d} L_{m_0} (x,v) d \mu$, we obtain that
	$$
	c(m_0) - c(m_0^{\lambda_i}) \leq \int_{\T^d \times \R^d} \left( L_{m_0^{\lambda_i}} (x,v) - L_{m_0} (x,v) \right) d \mu \leq \Lip (F) d_1 \left(  m_0^{\lambda_i}, m_0 \right).
	$$
	Similarly, we can derive $c(m_0^{\lambda_i}) - c(m_0) \leq \Lip (F) d_1 \left(  m_0^{\lambda_i}, m_0 \right)$. Thus, $c(m_0^{\lambda_i}) \rightarrow c(m_0)$ as $i \rightarrow \infty$.

	For any $t >0$, any $m \in \PP ( \T^d )$, any $u \in \CC ( \T^d )$ and any $\lambda \in (0,1]$, define two Lax-Oleinik operators by
	\begin{align*}
		& \tilde{T}_t^m u(y) := \inf_{x \in \R^d} \left( u(x) + h_t^m (x,y) + c(m)t\right) ,\\
		& \tilde{T}_{t, \lambda}^m u(y) := \inf_{ x \in \R^d} \left( e^{-\lambda t} u(x) + h_{t, \lambda}^m (x,y) + c(m) \frac{1- e^{-\lambda t}}{\lambda}\right) .
	\end{align*}

	Fix $\tau \in (0,1)$. Since $\tilde{T}_{\tau, \lambda_i}^{m_0^{\lambda_i}} \bar{u}_{m_0^{\lambda_i}}^{\lambda_i} = \bar{u}_{m_0^{\lambda_i}}^{\lambda_i}$, we obtain that
	\begin{align*}
		& \left\Vert \tilde{T}_\tau^{m_0} u_0 - u_0 \right\Vert_\infty \\
		\leq & \left\Vert \tilde{T}_\tau^{m_0} u_0 - \tilde{T}_{\tau, \lambda_i}^{m_0^{\lambda_i}} u_0 \right\Vert_\infty + \left\Vert \tilde{T}_{\tau, \lambda_i}^{m_0^{\lambda_i}} u_0 - \tilde{T}_{\tau, \lambda_i}^{m_0^{\lambda_i}} \bar{u}_{m_0^{\lambda_i}}^{\lambda_i} \right\Vert_\infty + \left\Vert \bar{u}_{m_0^{\lambda_i}}^{\lambda_i} -u_0 \right\Vert_\infty \\
		\leq & \left\Vert \tilde{T}_\tau^{m_0} u_0 - \tilde{T}_{\tau, \lambda_i}^{m_0^{\lambda_i}} u_0 \right\Vert_\infty + 2 \left\Vert \bar{u}_{m_0^{\lambda_i}}^{\lambda_i} -u_0 \right\Vert_\infty \\
		\leq & \left\Vert \tilde{T}_\tau^{m_0} u_0 - \tilde{T}_{\tau, \lambda_i}^{m_0} u_0 \right\Vert_\infty + \left\Vert \tilde{T}_{\tau, \lambda_i}^{m_0} u_0 - \tilde{T}_{\tau, \lambda_i}^{m_0^{\lambda_i}} u_0 \right\Vert_\infty + 2 \left\Vert \bar{u}_{m_0^{\lambda_i}}^{\lambda_i} -u_0 \right\Vert_\infty.
	\end{align*}
	For any $y \in \R^d$, any minimizer $x$ of $\tilde{T}_{\tau, \lambda_i}^{m_0^{\lambda_i}} u_0(y)$ and any minimizer $\gamma$ of $h_{\tau, \lambda_i}^{m_0^{\lambda_i}} (x,y)$, we obtain that
	\begin{align*}
		& \tilde{T}_{\tau, \lambda_i}^{m_0} u_0 (y) - \tilde{T}_{\tau, \lambda_i}^{m_0^{\lambda_i}} u_0 (y) \\
		\leq & e^{-\lambda_i \tau} u_0(x) + h_{\tau, \lambda_i}^{m_0} (x,y) - e^{-\lambda_i \tau} u_0(x) - h_{\tau, \lambda_i}^{m_0^{\lambda_i}} (x,y) + \left( c(m_0) - c(m_0^{\lambda_i}) \right) \frac{1- e^{-\lambda_i \tau}}{\lambda_i} \\
		\leq & \int_{-\tau}^0 e^{\lambda_i s} \left( L_{m_0} \left( \gamma(s), \dot{\gamma}(s) \right) - L_{m_0^{\lambda_i}} \left( \gamma(s), \dot{\gamma}(s) \right) \right) ds + \left( c(m_0) - c(m_0^{\lambda_i})\right) \frac{1- e^{-\lambda_i \tau}}{\lambda_i} \\
		\leq & \frac{1- e^{-\lambda_i \tau}}{\lambda_i} \left( \Lip(F) d_1 \left(  m_0^{\lambda_i}, m_0 \right) + c(m_0) - c(m_0^{\lambda_i}) \right).
	\end{align*}
	Similarly, we have
	$$
	\tilde{T}_{\tau, \lambda_i}^{m_0^{\lambda_i}} u_0 (y) - \tilde{T}_{\tau, \lambda_i}^{m_0} u_0 (y) \leq \frac{1- e^{-\lambda_i \tau}}{\lambda_i} \left( \Lip(F) d_1 \left(  m_0^{\lambda_i}, m_0 \right) + c(m_0) - c(m_0^{\lambda_i}) \right).
	$$
	Thus, $\lim_{i \rightarrow \infty} \left\Vert \tilde{T}_{\tau, \lambda_i}^{m_0} u_0 - \tilde{T}_{\tau, \lambda_i}^{m_0^{\lambda_i}} u_0 \right\Vert_\infty = 0$.
	
	The only thing left is to prove
	$$
	\lim_{i \rightarrow \infty} \left\Vert \tilde{T}_\tau^{m_0} u_0 - \tilde{T}_{\tau, \lambda_i}^{m_0} u_0 \right\Vert_\infty = 0.
	$$
	For any $y \in \R^d$ and any minimizer $x$ of $\tilde{T}_{\tau, \lambda_i}^{m_0} u_0 (y)$, there exists a constant $D$ such that $\vert x-y \vert \leq \tau D$ by Proposition \ref{minimizer x is near}. Moreover, 
	\begin{align*}
		\tilde{T}_\tau^{m_0} u_0 (y) - \tilde{T}_{\tau, \lambda_i}^{m_0} u_0 (y) 
		& \leq u_0(x)+h_\tau^{m_0} (x,y) + c(m_0) \tau - e^{-\lambda_i \tau} u_0(x) - h_{\tau, \lambda_i}^{m_0} (x,y) - c(m_0) \frac{1- e^{- \lambda_i \tau}}{\lambda_i} \\
		& = (1- e^{-\lambda_i \tau}) u_0(x) + h_\tau^{m_0} (x,y) - h_{\tau, \lambda_i}^{m_0} (x,y) + c(m_0) \left( \tau - \frac{1- e^{- \lambda_i \tau}}{\lambda_i} \right).
	\end{align*}
	Since $u_0$ and $c(m_0)$ are bounded, we only need to prove:
	$$
	A:= h_\tau^{m_0} (x,y) - h_{\tau, \lambda_i}^{m_0} (x,y) \rightarrow 0.
	$$
	For any absolutely continuous curve $\gamma$ that minimizes $h_{\tau, \lambda_i}^{m_0} (x,y)$, we have
	$$
	A \leq \int_{-\tau}^0 L_{m_0} \left( \gamma(s), \dot{\gamma}(s) \right) ds - \int_{-\tau}^0 e^{\lambda_i s} L_{m_0} \left( \gamma(s), \dot{\gamma}(s) \right) ds.
	$$
	By Lemma \ref{lemma 1}, there exists a constant $C$, independent of $\lambda_i$, such that $\left\Vert \dot{\gamma} \right\Vert_{\infty} \leq C$. It follows that
	$$
	\left\vert L_{m_0} \left( \gamma(s), \dot{\gamma}(s) \right) \right\vert \leq \sup_{x \in \T^d, \vert v \vert \leq C} \left\vert L(x,v) \right\vert +F_\infty.
	$$
	Thus
	$$
	A \leq \left( \sup_{x \in \T^d, \vert v \vert \leq C} \left\vert L(x,v) \right\vert +F_\infty \right) \left( \tau - \frac{1- e^{-\lambda_i \tau}}{\lambda_i} \right) \rightarrow 0.
	$$
	Building on similar results in \cite[Proposition 5 and Lemma 1]{MR4605206}, we conclude that $\tilde{T}_\tau^{m_0} u_0 = u_0$.
	
	For any $t>0$, there exist $n \in \mathbb{N}$ and $\tau \in (0,1)$ such that $t = n\tau$. Thus,
	\begin{align*}
		\left\Vert \tilde{T}_t^{m_0} u_0 - u_0 \right\Vert_\infty 
		& \leq \left\Vert \tilde{T}_{n\tau}^{m_0} u_0 - \tilde{T}_{(n-1)\tau}^{m_0} u_0 \right\Vert_\infty + \cdots +\left\Vert \tilde{T}_\tau^{m_0} u_0 - u_0 \right\Vert_\infty \\
		& \leq n \left\Vert \tilde{T}_\tau^{m_0} u_0 - u_0 \right\Vert_\infty = 0.
	\end{align*}
	Therefore, we can conclude that $u_0$ is a viscosity solution to \eqref{need0705_1}.
\end{proof}

\begin{proposition} \label{convergence2_prop2}
	There exists a measure $\mu_0$ such that $\mu_0^{\lambda_i} \stackrel{w^*}{\longrightarrow} \mu_0$ with $m_0 = \pi \sharp \mu_0$, where $\lambda_i$, $m_0$ are defined as in Proposition \ref{convergence2_prop1}, and $\mu_0^{\lambda_i}$ is defined as in Proposition \ref{u_0 is solution of HJE}.
	Moreover, $\mu_0$ is a Mather measure for $L_{m_0}$.
\end{proposition}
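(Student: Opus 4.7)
The plan is to extract a weak-$*$ limit $\mu_0$ from $\{\mu_0^{\lambda_i}\}$ and then verify the two defining properties of a Mather measure, namely closedness and that $\mu_0$ attains the value $-c(m_0)$ on $L_{m_0}$.

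First I would establish uniform compact support for $\{\mu_0^{\lambda_i}\}_{i\in \mathbb{N}}$. By Proposition \ref{u_0 is solution of HJE}, each $\mu_0^{\lambda_i}$ is the weak-$*$ limit of the sequence $\mu_{\tau_j, m_{\tau_j}^{\lambda_i}}^{\lambda_i}$ of minimizing $\tau_j$-holonomic $\lambda_i$-measures. By Remark \ref{remark 3.3}, all such measures are supported in a compact set $K \subset \T^d \times \R^d$ that does not depend on $\tau, \lambda, m$. Consequently $\supp(\mu_0^{\lambda_i}) \subset K$ for every $i$, and $\{\mu_0^{\lambda_i}\} \subset \PP(K)$. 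By compactness of $\PP(K)$ in the weak-$*$ topology, we may pass to a subsequence (not relabeled) and find $\mu_0 \in \PP(K)$ with $\mu_0^{\lambda_i} \stackrel{w^*}{\longrightarrow} \mu_0$. The identity $m_0 = \pi \sharp \mu_0$ follows directly: $m_0^{\lambda_i} = \pi \sharp \mu_0^{\lambda_i} \stackrel{w^*}{\longrightarrow} \pi \sharp \mu_0$, combined with $m_0^{\lambda_i} \stackrel{w^*}{\longrightarrow} m_0$ from Proposition \ref{convergence2_prop1}.

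Next I would show that $\mu_0 \in \K ( \T^d \times \R^d )$. For any $\varphi \in \CC^1 ( \T^d )$, since $x \mapsto v \cdot D\varphi(x)$ is continuous and $\mu_0^{\lambda_i}, \mu_0$ are all supported in $K$,
$$
\int_{\T^d \times \R^d} v \cdot D\varphi(x)\, d\mu_0 = \lim_{i \rightarrow \infty} \int_{\T^d \times \R^d} v \cdot D\varphi(x)\, d\mu_0^{\lambda_i} = 0,
$$
where the last equality uses $\mu_0^{\lambda_i} \in \K ( \T^d \times \R^d )$ from Proposition \ref{u_0 is solution of HJE}. The integrability condition $\int \vert v \vert\, d\mu_0 < \infty$ is automatic from the compact support.

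The key step is to compute $\int L_{m_0}\, d\mu_0$. Since $\mu_0^{\lambda_i}$ is a minimizing $\lambda_i$-measure for $L_{m_0^{\lambda_i}}$,
$$
\int_{\T^d \times \R^d} L_{m_0^{\lambda_i}}(x,v)\, d\mu_0^{\lambda_i} = \lambda_i \int_{\T^d \times \R^d} u^{\lambda_i}_{m_0^{\lambda_i}}(x)\, d\mu_0^{\lambda_i}.
$$
Writing $\bar{u}^{\lambda_i} := u^{\lambda_i}_{m_0^{\lambda_i}} + c(m_0^{\lambda_i})/\lambda_i$, Proposition \ref{need0813} yields a uniform bound on $\|\bar{u}^{\lambda_i}\|_\infty$, so $\lambda_i \bar{u}^{\lambda_i} \to 0$ uniformly. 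Since $c(m_0^{\lambda_i}) \to c(m_0)$ (established in the proof of Proposition \ref{convergence2_prop1}), we obtain $\lambda_i u^{\lambda_i}_{m_0^{\lambda_i}} = \lambda_i \bar{u}^{\lambda_i} - c(m_0^{\lambda_i}) \to -c(m_0)$ uniformly. Together with the estimate
$$
\left| \int L_{m_0^{\lambda_i}}\, d\mu_0^{\lambda_i} - \int L_{m_0}\, d\mu_0 \right| \leq \Lip(F) d_1(m_0^{\lambda_i}, m_0) + \left| \int L_{m_0}\, d\mu_0^{\lambda_i} - \int L_{m_0}\, d\mu_0 \right|,
$$
where the second term vanishes by weak-$*$ convergence on the compact set $K$, we pass to the limit to deduce $\int L_{m_0}\, d\mu_0 = -c(m_0)$. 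By Remark \ref{remark 1.2}(i), a closed measure attaining this value is a Mather measure for $L_{m_0}$, completing the proof.

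The main obstacle will be the last step: justifying the limit $\lambda_i u^{\lambda_i}_{m_0^{\lambda_i}} \to -c(m_0)$, which requires simultaneously controlling the vanishing of $\lambda_i$, the boundedness of the normalized solutions $\bar{u}^{\lambda_i}$, and the continuity of $c(\cdot)$ along the sequence $m_0^{\lambda_i}$. All ingredients are already in place; the careful bookkeeping is routine but must be done uniformly on the compact support $K$.
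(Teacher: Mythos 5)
Your proposal is correct and follows essentially the same route as the paper: uniform compact support via Remark \ref{remark 3.3}, weak-$*$ compactness of $\PP(K)$, closedness of $\mu_0$ by passing to the limit in $\int v\cdot D\varphi\,d\mu_0^{\lambda_i}=0$, and the identity $\int L_{m_0}\,d\mu_0=-c(m_0)$ obtained from the minimizing $\lambda_i$-measure property together with the uniform bound of Proposition \ref{need0813} and the convergence $c(m_0^{\lambda_i})\to c(m_0)$. The only difference is cosmetic bookkeeping — you isolate the uniform limit $\lambda_i u^{\lambda_i}_{m_0^{\lambda_i}}\to -c(m_0)$ while the paper splits the same quantity into five error terms.
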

\begin{proof}
	By Remark \ref{remark 3.3} and Proposition \ref{u_0 is solution of HJE}, there exists a minimizing $\lambda_i$-measure $\mu_0^{\lambda_i} \in \PP (K)$ for $L_{m_0^{\lambda_i}}$, where $K \subset \T^d \times \R^d$ is a compact set. Consequently, there exists a measure $\mu_0 \in \PP (K)$ such that $\mu_0^{\lambda_i} \stackrel{w^*}{\longrightarrow} \mu_0$, and clearly $\pi \sharp \mu_0 = m_0$.
	For any $\varphi \in \CC^1 ( \T^d )$, since $\supp \left( \mu_0^{\lambda_i} \right) \subset K$ and $\supp \left( \mu_0 \right) \subset K$,
	it follows that
	$$
	\int_{\T^d \times \R^d} v D \varphi (x) d \mu_0 = \lim_{i \rightarrow \infty} \int_{\T^d \times \R^d} v D \varphi (x) d \mu_0^{\lambda_i} = 0.
	$$
	Moreover, by Proposition \ref{need0813}, we have
	\begin{align*}
		& \left\vert \int_{\T^d \times \R^d} L_{m_0} (x,v) d \mu_0 + c(m_0) \right\vert \\
		\leq & \left\vert \int_{\T^d \times \R^d} \left( L_{m_0} (x,v) - L_{m_0^{\lambda_i}} (x,v) \right) d \mu_0^{\lambda_i} \right\vert  + \left\vert \int_{\T^d \times \R^d} L_{m_0} (x,v) d \mu_0^{\lambda_i} - \int_{\T^d \times \R^d} L_{m_0} (x,v) d \mu_0 \right\vert \\
		& + \left\vert \int_{\T^d \times \R^d} \left( L_{m_0^{\lambda_i}} (x,v) - \lambda_i u_{m_0^{\lambda_i}}^{\lambda_i} (x) \right) d \mu_0^{\lambda_i} \right\vert + \left\vert \int_{\T^d \times \R^d} \lambda_i u_{m_0^{\lambda_i}}^{\lambda_i} (x) d \mu_0^{\lambda_i} + c(m_0^{\lambda_i}) \right\vert + \left\vert c(m_0^{\lambda_i}) - c(m_0) \right\vert \\
		\leq & \Lip (F)  d_1 \left(  m_0^{\lambda_i}, m_0 \right) + \left\vert \int_{\T^d \times \R^d} L_{m_0} (x,v) d \mu_0^{\lambda_i} - \int_{\T^d \times \R^d} L_{m_0} (x,v) d \mu_0 \right\vert  + \lambda_i C + \left\vert c(m_0^{\lambda_i}) - c(m_0) \right\vert.
	\end{align*}
	Let $i \rightarrow \infty$. We can conclude that
	$$
	\int_{\T^d \times \R^d} L_{m_0} (x,v) d \mu_0 = - c(m_0),
	$$
	indicating that $\mu_0$ is a Mather measure for $L_{m_0}$.
\end{proof}

By the same method used in \cite[Proposition 11]{MR4605206}, we can obtain the following result.
\begin{proposition} \label{need0806}
	The measure $m_0$ defined as in Proposition \ref{convergence2_prop1} is a solution to (\ref{MFG}b) in the sense of distributions.
\end{proposition}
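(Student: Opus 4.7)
The plan is to follow the template of Proposition \ref{m_0 is solution of continuity equation}, transposed from the discounted to the undiscounted setting; this is precisely the strategy used in \cite[Proposition 11]{MR4605206}. Two essential pieces are already available: by Proposition \ref{convergence2_prop1}, $u_0$ is a viscosity solution to the ergodic Hamilton--Jacobi equation (\ref{MFG}a) for $m=m_0$; by Proposition \ref{convergence2_prop2}, $\mu_0$ is a Mather measure for $L_{m_0}$ with $m_0 = \pi \sharp \mu_0$, so $\mu_0 \in \K(\T^d \times \R^d)$ and $\int L_{m_0}\, d\mu_0 = -c(m_0)$. Since Mather measures are supported in the projected Aubry set of $L_{m_0}$, on which every viscosity solution is differentiable, $Du_0(x)$ exists for $m_0$-a.e.\ $x$ and satisfies the critical HJ relation everywhere on $\pi(\supp \mu_0)$.

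The key step is to upgrade the pointwise Fenchel inequality
$$
L_{m_0}(x,v) + H_{m_0}(x, Du_0(x)) - \langle Du_0(x), v \rangle \ge 0
$$
into a $\mu_0$-a.e.\ equality. Mimicking Proposition \ref{m_0 is solution of continuity equation}, I would approximate $u_0$ by smooth subsolutions $u_\varepsilon \in \CC^\infty(\T^d)$ with $\|u_\varepsilon - u_0\|_\infty \le \varepsilon$ and $H_{m_0}(x, Du_\varepsilon(x)) \le c(m_0) + \varepsilon$ (the Fathi--Siconolfi regularisation, the undiscounted analogue of \cite[Lemma 2.2]{MR3556524}), apply Fenchel at $p = Du_\varepsilon(x)$, integrate against $\mu_0$, and combine closedness $\int \langle Du_\varepsilon(x), v \rangle d\mu_0 = 0$ with the Mather identity $\int L_{m_0}\, d\mu_0 = -c(m_0)$. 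Letting $\varepsilon \to 0$ and using that $H_{m_0}(x, Du_0(x))$ equals the critical value on $\pi(\supp \mu_0)$ turns the resulting inequality into equality. Strict convexity of $H$ in $p$ (assumption (H\ref{MFG_H2})) then forces
$$
v = \frac{\partial H}{\partial p}(x, Du_0(x)) \qquad \mu_0\text{-a.e.}
$$

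The proof is then concluded by one more application of closedness: for any $f \in \CC^\infty(\T^d)$,
$$
0 = \int_{\T^d \times \R^d} \langle Df(x), v \rangle d\mu_0 = \int_{\T^d} \left\langle Df(x), \frac{\partial H}{\partial p}(x, Du_0(x)) \right\rangle dm_0,
$$
which is exactly the distributional form of (\ref{MFG}b). The main technical obstacle is the smoothing step in the Fenchel-equality argument: $u_0$ is merely Lipschitz, so $\langle Du_0, v \rangle$ cannot be integrated against $\mu_0$ directly, and the approximations $u_\varepsilon$ must be chosen so that the limit $\varepsilon \to 0$ may be taken inside each of the three integrals. This is however the very device already used in Proposition \ref{m_0 is solution of continuity equation} via \cite[Section 4]{MR3663623}, so no essentially new difficulty arises.
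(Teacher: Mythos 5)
Your proposal is correct and follows essentially the same route as the paper, which for this proposition simply invokes the method of \cite[Proposition 11]{MR4605206} — i.e., the undiscounted transposition of the argument in Proposition \ref{m_0 is solution of continuity equation}: use that $\mu_0$ is a closed Mather measure supported in the Aubry set (Proposition \ref{convergence2_prop2}), that $u_0$ solves the critical equation and is differentiable there (Proposition \ref{convergence2_prop1}), turn the Fenchel inequality into a $\mu_0$-a.e.\ equality to get $v=\frac{\partial H}{\partial p}(x,Du_0(x))$ on $\supp\mu_0$, and conclude by closedness. The smoothing device you flag for integrating $\langle Du_0,v\rangle$ against $\mu_0$ is the same one the paper uses elsewhere (Proposition \ref{need0818} and the citation of \cite[Section 4]{MR3663623}), so there is no gap.
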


\section{Proof of Theorem \ref{theorem 2}} \label{section5}

Theorem \ref{theorem 2} (1) is established by Proposition \ref{convergence1_prop1} and Proposition \ref{convergence1_prop2}, while Theorem \ref{theorem 2} (2) is established with the help of Proposition \ref{main theorem in classical case}.

Define the constant $\tau_0$ as in Proposition \ref{main theorem in classical case}. 
For any $\tau \in (0,\tau_0)$ and any $m \in \PP ( \T^d )$, denote by $u_{\tau, m}$ the solution to the discrete Lax-Oleinik equation \eqref{discrete_laxoleinik_classical}, define $m_\tau$ and $\mu_{\tau,m}$ as in Proposition \ref{main theorem in classical case}.

\begin{proposition} \label{convergence1_prop1}
For any $\tau \in (0,\tau_0)$, there exists a sequence $\lambda_i \rightarrow 0$ as $i \rightarrow \infty$ such that $m_\tau^{\lambda_i} \stackrel{w^*}{\longrightarrow} m_0^\tau$ and
$$
u^{\lambda_i}_{\tau, m_\tau^{\lambda_i}} - \frac{\bar{L} \left( \tau, m_\tau^{\lambda_i} \right)}{\lambda_i} \rightarrow u_0^\tau
$$
uniformly for some measure $m_0^\tau \in \PP ( \T^d )$ and some function $u_0^\tau$. Moreover, $u_0^\tau$ is the solution to the discrete Lax-Oleinik equation \eqref{discrete_laxoleinik_classical} with respect to the measure $m_0^\tau$.
\end{proposition}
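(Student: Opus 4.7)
The plan is to follow the strategy of Proposition \ref{convergence2_prop1} adapted to the discrete setting. Set $\bar u^{\lambda}_{\tau,m} := u^{\lambda}_{\tau,m} - \bar L(\tau,m)/\lambda$; substitution into \eqref{discrete_laxoleinik} shows that $\bar u^{\lambda}_{\tau,m}$ is the unique $\Z^d$-periodic solution of the twisted equation
$$
\bar u(y) + \tau\bar L(\tau,m) = \inf_{x\in\R^d}\bigl\{(1-\tau\lambda)\bar u(x) + \LL_{\tau,m}(x,y)\bigr\},
$$
which formally reduces to \eqref{discrete_laxoleinik_classical} when $\lambda=0$. It thus suffices to establish uniform precompactness of $\bigl\{\bar u^{\lambda}_{\tau,m_\tau^\lambda}\bigr\}_{\lambda\in(0,1]}$ in $\CC(\T^d)$, verify continuity of the ingredients in $m$, and then pass to the limit.

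For precompactness, equi-Lipschitz continuity (with a constant independent of $\lambda,m$) is immediate from Proposition \ref{equi-Lipschitz of discounted HJE}, and $\Z^d$-periodicity then bounds the oscillation. For the uniform pointwise upper bound, pick a minimizing $\tau$-holonomic measure $\nu$ for $L_m$ (which exists by Proposition \ref{main theorem in classical case} and satisfies $\int L_m\,d\nu = \bar L(\tau,m)$ via \eqref{need0805}), then integrate the inequality $u^{\lambda}_{\tau,m}(x+\tau v) \leq (1-\tau\lambda)u^{\lambda}_{\tau,m}(x) + \tau L_m(x,v)$ against $\nu$; the $\tau$-holonomic condition gives $\int \bar u^{\lambda}_{\tau,m}\,d\nu \leq 0$, so together with the bounded oscillation this yields a uniform upper bound. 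For the lower bound, iterate Lemma \ref{existence of calibrated configuration} to obtain
$$
u^{\lambda}_{\tau,m}(x_0) = \sum_{k\geq 0}(1-\tau\lambda)^k \LL_{\tau,m}(x_{-k-1},x_{-k}),
$$
apply summation by parts to rewrite this series as $\tau\lambda\sum_{k\geq 0}(1-\tau\lambda)^k S_{k+1}$ with $S_N := \sum_{j=0}^{N-1}\LL_{\tau,m}(x_{-j-1},x_{-j})$, and invoke the ergodic bound $S_N \geq N\tau\bar L(\tau,m) - C$ obtained by closing the configuration $\{x_{-N},\dots,x_0\}$ into an approximate $\tau$-holonomic empirical measure supported in a fixed compact set (velocities along calibrated sequences are uniformly bounded by Lemma \ref{existence of calibrated configuration} and Proposition \ref{minimizer x is near}); using $\sum_{k\geq 0}(k+1)(1-\tau\lambda)^k = 1/(\tau\lambda)^2$ one concludes $u^{\lambda}_{\tau,m}(x_0) \geq \bar L(\tau,m)/\lambda - C'$ uniformly, i.e., $\bar u^{\lambda}_{\tau,m} \geq -C'$. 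Ascoli-Arzelà together with weak-$*$ compactness of $\PP(\T^d)$ then yields a sequence $\lambda_i\to 0$ with $m_\tau^{\lambda_i}\stackrel{w^*}{\to}m_0^\tau$ and $\bar u^{\lambda_i}_{\tau,m_\tau^{\lambda_i}}\to u_0^\tau$ uniformly.

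To identify the limit, I first observe that the variational characterization of $\bar L$ combined with (F\ref{MFG_F3}) gives $|\bar L(\tau,m) - \bar L(\tau,m')| \leq \Lip(F)\,d_1(m,m')$, hence $\bar L(\tau, m_\tau^{\lambda_i})\to\bar L(\tau,m_0^\tau)$. Likewise $\LL_{\tau,m_\tau^{\lambda_i}}(x,y)\to\LL_{\tau,m_0^\tau}(x,y)$ uniformly on compacts. Arguing as in Lemma \ref{the only limit of solution of discrete lax-oleinik}, and using Proposition \ref{minimizer x is near} to confine minimizers of the twisted operator to a fixed compact neighborhood, I pass to the limit on both sides of the twisted equation to obtain
$$
u_0^\tau(y) + \tau\bar L(\tau,m_0^\tau) = \inf_{x\in\R^d}\bigl\{u_0^\tau(x) + \LL_{\tau,m_0^\tau}(x,y)\bigr\},
$$
which is precisely \eqref{discrete_laxoleinik_classical} relative to $m_0^\tau$. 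The main technical obstacle is the uniform lower bound on $\bar u^{\lambda}_{\tau,m_\tau^\lambda}$: upper bounds follow cleanly from the variational definition of $\bar L$, but the lower bound rests on the discrete ergodic/subadditivity estimate for calibrated configurations, whose constant must be kept independent of both $\lambda$ and $m$; this is where the uniform velocity bound for discounted calibrated sequences is essential.
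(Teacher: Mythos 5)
Your proposal is correct and shares the overall architecture of the paper's proof (uniform bound on $u^{\lambda}_{\tau,m}-\bar L(\tau,m)/\lambda$, equi-Lipschitz continuity from Proposition \ref{equi-Lipschitz of discounted HJE}, Ascoli--Arzel\`a plus weak-$*$ compactness of $\PP(\T^d)$, then passage to the limit in the discrete Lax--Oleinik equation with minimizers confined by Proposition \ref{minimizer x is near}), but you obtain the key uniform bound by a genuinely different route. The paper evaluates the difference $u^{\lambda}_{\tau,m}-\bar L(\tau,m)/\lambda-u_{\tau,m}$ at its maximum and minimum points and plays the two equations \eqref{discrete_laxoleinik} and \eqref{discrete_laxoleinik_classical} against each other there, reducing both bounds to the boundedness of $u_{\tau,m}$ (Proposition \ref{equi-lipschitz of classical solution}); this is short but leans on the undiscounted solution $u_{\tau,m}$. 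You instead get the upper bound by integrating the subsolution inequality against a minimizing $\tau$-holonomic measure (using \eqref{need0805} and the holonomy cancellation), and the lower bound by Abel summation along a discounted calibrated configuration together with the ergodic estimate $S_N\geq N\tau\bar L(\tau,m)-C$; this mirrors the standard vanishing-discount argument for Hamilton--Jacobi equations and avoids comparing with $u_{\tau,m}$ pointwise. One point to tighten: as stated, an \emph{approximately} $\tau$-holonomic empirical measure does not directly inherit the bound $\int L_m\,d\mu\geq\bar L(\tau,m)$, since \eqref{need0805} is a minimum over exactly holonomic measures. Either close the configuration exactly by appending one atom $(x_0,w)$ with $x_0+\tau w=x_{-N}$ modulo $\Z^d$ (so $|w|\leq\diam(\T^d)/\tau$, which keeps the error constant uniform in $N$, $\lambda$ and $m$ for fixed $\tau$), or, more simply, sum the inequalities $\LL_{\tau,m}(x_{-j-1},x_{-j})\geq\tau\bar L(\tau,m)+u_{\tau,m}(x_{-j})-u_{\tau,m}(x_{-j-1})$ coming from \eqref{discrete_laxoleinik_classical}. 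With that fix the argument is complete; the identification of the limit equation, including the Lipschitz dependence of $\bar L(\tau,\cdot)$ on $m$ and the limit passage in the style of Lemma \ref{the only limit of solution of discrete lax-oleinik}, matches the paper.
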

\begin{proof}
Fix $\tau \in (0,\tau_0)$ and $\lambda \in (0,1]$. For any $m \in \PP ( \T^d )$ and any $y \in \argmax_{y \in \R^d} \left\{ u_{\tau,m}^\lambda (y) - \frac{\bar{L} (\tau,m)}{\lambda} \right.$ $ - u_{\tau,m}(y) \Big{ \}}$, we have
\begin{align*}
	u_{\tau,m}^\lambda (y) - \frac{\bar{L} (\tau,m)}{\lambda} - u_{\tau,m}(y)
	\leq & (1- \tau \lambda) \left( u_{\tau,m}^\lambda (x) - \frac{\bar{L} (\tau,m)}{\lambda} - u_{\tau,m}(x) \right) \\
	& + \LL_{\tau,m} (x,y) - u_{\tau,m}(y) + u_{\tau,m}(x) - \tau \bar{L}(\tau,m) - \tau \lambda u_{\tau,m}(x), \quad \forall x \in \R^d.
\end{align*}
Let $x \in \argmin_{x \in \R^d} \left\{u_{\tau,m}(x) + \LL_{\tau,m} (x,y)\right\}$. Since
$$
u_{\tau,m}^\lambda (x) - \frac{\bar{L} (\tau,m)}{\lambda} - u_{\tau,m}(x) \leq u_{\tau,m}^\lambda (y) - \frac{\bar{L} (\tau,m)}{\lambda} - u_{\tau,m}(y),
$$
we obtain that
$$
u_{\tau,m}^\lambda (y) - \frac{\bar{L} (\tau,m)}{\lambda} - u_{\tau,m}(y) \leq - u_{\tau,m}(x).
$$
Thus, for any $z \in \R^d$, it follows that
$$
u_{\tau,m}^\lambda (z) - \frac{\bar{L} (\tau,m)}{\lambda} - u_{\tau,m}(z) \leq u_{\tau,m}^\lambda (y) - \frac{\bar{L} (\tau,m)}{\lambda} - u_{\tau,m}(y) \leq - u_{\tau,m}(x).
$$
By Proposition \ref{equi-lipschitz of classical solution}, there exists a constant $C$ such that $u_{\tau,m}^\lambda (z) - \frac{\bar{L} (\tau,m)}{\lambda} \leq C$.
Similarly, for any $y \in \argmin_{y \in \R^d} \left\{u_{\tau,m}^\lambda (y) - \frac{\bar{L} (\tau,m)}{\lambda} - u_{\tau,m}(y)\right\}$ and any $x \in \argmin_{x \in \R^d} \left\{(1-\tau\lambda) u_{\tau,m}^\lambda (x) + \LL_{\tau,m} (x,y)\right\}$, we have
\begin{align*}
	u_{\tau,m}^\lambda (y) - \frac{\bar{L} (\tau,m)}{\lambda} - u_{\tau,m}(y)
	= & (1- \tau \lambda) \left( u_{\tau,m}^\lambda (x) - \frac{\bar{L} (\tau,m)}{\lambda} - u_{\tau,m}(x) \right) \\
	& + \LL_{\tau,m} (x,y) - u_{\tau,m}(y) + u_{\tau,m}(x) - \tau \bar{L}(\tau,m) - \tau \lambda u_{\tau,m}(x).
\end{align*}
Since
$$
u_{\tau,m}(y) + \tau \bar{L} (\tau,m) \leq u_{\tau,m}(x) + \LL_{\tau,m}(x,y),
$$
we obtain that
$$
u_{\tau,m}^\lambda (y) - \frac{\bar{L} (\tau,m)}{\lambda} - u_{\tau,m}(y) \geq - u_{\tau,m}(x).
$$
Thus, for any $z \in \R^d$, it follows that
$$
u_{\tau,m}^\lambda (z) - \frac{\bar{L} (\tau,m)}{\lambda} - u_{\tau,m}(z) \geq u_{\tau,m}^\lambda (y) - \frac{\bar{L} (\tau,m)}{\lambda} - u_{\tau,m}(y) \geq - u_{\tau,m}(x).
$$
By Proposition \ref{equi-lipschitz of classical solution}, there exists a constant $C$ such that $u_{\tau,m}^\lambda (z) - \frac{\bar{L} (\tau,m)}{\lambda} \geq -C$.
Thus,
$$
\left\Vert u_{\tau,m}^\lambda - \frac{\bar{L}(\tau,m)}{\lambda} \right\Vert_\infty \leq C.
$$

By Proposition \ref{equi-Lipschitz of discounted HJE}, $u_{\tau,m}^\lambda - \frac{\bar{L}(\tau,m)}{\lambda}$ is equi-Lipschitz. Since $\PP ( \T^d )$ is compact, there exists a sequence $\lambda_i \rightarrow 0$ such that $m_\tau^{\lambda_i} \stackrel{w^*}{\longrightarrow} m_0^\tau$ for some measure $m_0^\tau$. By Ascoli-Arz\'ela theorem, taking a subsequence if necessary, there exists a Lipschitz function $u_0^\tau$ such that
$$
u_{\tau, m_\tau^{\lambda_i}}^{\lambda_i} - \frac{\bar{L} \left( \tau, m_\tau^{\lambda_i} \right)}{\lambda_i} \rightarrow u_0^\tau
$$
uniformly.
For any $x,y \in \R^d$, we have
$$
u_{\tau, m_\tau^{\lambda_i}}^{\lambda_i} (y) - \frac{\bar{L} \left( \tau, m_\tau^{\lambda_i} \right)}{\lambda_i} \leq (1- \tau \lambda_i) \left( u_{\tau, m_\tau^{\lambda_i}}^{\lambda_i} (x) - \frac{\bar{L} \left( \tau, m_\tau^{\lambda_i} \right)}{\lambda_i} \right) + \LL_{\tau, m_\tau^{\lambda_i}} (x,y) - \tau \bar{L} \left( \tau, m_\tau^{\lambda_i} \right).
$$
Let $i$ tend to infinity. We obtain that
$$
u_0^\tau(y) \leq u_0^\tau(x) + \LL_{\tau, m_0^\tau}(x,y) - \tau \bar{L} \left( \tau, m_0^\tau \right).
$$
Moreover, for any $y \in \R^d$, there exists a sequence $x_i \in \R^d$ such that
$$
u_{\tau, m_\tau^{\lambda_i}}^{\lambda_i} (y) = (1- \tau \lambda_i) u_{\tau, m_\tau^{\lambda_i}}^{\lambda_i} (x_i) + \LL_{\tau, m_\tau^{\lambda_i}} (x_i, y).
$$
By Proposition \ref{minimizer x is near}, there exists a constant $D$ such that $\vert x_i - y \vert \leq \tau D$. Since $\left\{ x \in \R^d \middle\vert \,\, \vert y-x\vert \leq \tau D \right\}$ is compact, there exists $x_0 \in \R^d$ such that $x_i \rightarrow x_0$.
Since
$$
u_{\tau, m_\tau^{\lambda_i}}^{\lambda_i} (y) - \frac{\bar{L} \left( \tau, m_\tau^{\lambda_i} \right)}{\lambda_i} = (1- \tau \lambda_i) \left( u_{\tau, m_\tau^{\lambda_i}}^{\lambda_i} (x_i) - \frac{\bar{L} \left( \tau, m_\tau^{\lambda_i} \right)}{\lambda_i} \right) + \LL_{\tau, m_\tau^{\lambda_i}} (x_i,y) - \tau \bar{L} \left( \tau, m_\tau^{\lambda_i} \right),
$$
we obtain $u_0^\tau(y) = u_0^\tau(x_0) + \LL_{\tau, m_0^\tau}(x_0 , y) - \tau \bar{L} \left( \tau, m_0^\tau \right)$ as $i \rightarrow \infty$.
Thus,
$$
u_0^\tau(y) + \tau \bar{L} \left( \tau, m_0^\tau \right) = \inf_{ x \in \R^d} \left( u_0^\tau(x) + \LL_{\tau, m_0^\tau}(x , y)\right) ,
$$
indicating that the limit $u_0^\tau$ is a solution to \eqref{discrete_laxoleinik_classical} with respect to the measure $m_0^\tau$.
\end{proof}

\begin{proposition} \label{convergence1_prop2}
For any $\tau \in (0,\tau_0)$, there exists a measure $\mu_0^\tau$ such that $\mu_{\tau, m_\tau^{\lambda_i}}^{\lambda_i} \stackrel{w^*}{\longrightarrow} \mu_0^\tau$ with $m_0^\tau = \pi \sharp \mu_0^\tau$, where $\lambda_i$, $m_0^\tau$ are defined as in Proposition \ref{convergence1_prop1}, and $\mu_{\tau, m_\tau^{\lambda_i}}^{\lambda_i}$, $m_\tau^{\lambda_i}$ are defined as in Proposition \ref{existence of minimizing tau holonomic lambda measure for MFG}. Moreover, $\mu_0^\tau$ is a minimizing $\tau$-holonomic measure for $L_{m_0^\tau}$, which implies that $m_0^\tau$ is a minimizing $\tau$-holonomic measure for MFGs \eqref{MFG}.
\end{proposition}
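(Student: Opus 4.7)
The plan is to combine uniform compactness of the supports (Remark~\ref{remark 3.3}) with the Lipschitz dependence of the data on $m$, in order to pass to the limit in the identities defining a minimizing $\tau$-holonomic $\lambda$-measure.

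First, I would apply Remark~\ref{remark 3.3} to place every $\mu_{\tau, m_\tau^{\lambda_i}}^{\lambda_i}$ in the fixed compact set $\PP(K)\subset\PP(\T^d\times\R^d)$, and extract a weak$^*$-convergent subsequence with limit $\mu_0^\tau\in\PP(K)$. Since $\pi$ is continuous and all supports lie in $K$, the push-forward commutes with the weak$^*$-limit, yielding $\pi\sharp\mu_0^\tau=\lim_i m_\tau^{\lambda_i}=m_0^\tau$ by Proposition~\ref{convergence1_prop1}. The $\tau$-holonomy of $\mu_0^\tau$ passes to the limit in the same way: for every $\varphi\in\CC(\T^d)$, the functions $(x,v)\mapsto\varphi(x+\tau v)$ and $(x,v)\mapsto\varphi(x)$ are continuous and bounded on $K$, so the identity $\int\varphi(x+\tau v)\,d\mu_{\tau, m_\tau^{\lambda_i}}^{\lambda_i}=\int\varphi(x)\,d\mu_{\tau, m_\tau^{\lambda_i}}^{\lambda_i}$ survives in the limit.

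For the minimality, I would exploit Definition~\ref{def of minimizng holonomic measure}: $\int(L_{m_\tau^{\lambda_i}}-\lambda_i u_{\tau,m_\tau^{\lambda_i}}^{\lambda_i})\,d\mu_{\tau,m_\tau^{\lambda_i}}^{\lambda_i}=0$. Writing $\lambda_i u_{\tau,m_\tau^{\lambda_i}}^{\lambda_i}=\lambda_i\bigl(u_{\tau,m_\tau^{\lambda_i}}^{\lambda_i}-\bar{L}(\tau,m_\tau^{\lambda_i})/\lambda_i\bigr)+\bar{L}(\tau,m_\tau^{\lambda_i})$, the first piece integrates to something $O(\lambda_i)$ because $u_{\tau,m_\tau^{\lambda_i}}^{\lambda_i}-\bar{L}(\tau,m_\tau^{\lambda_i})/\lambda_i\to u_0^\tau$ uniformly by Proposition~\ref{convergence1_prop1}, so it vanishes as $i\to\infty$. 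The left side $\int L_{m_\tau^{\lambda_i}}\,d\mu_{\tau,m_\tau^{\lambda_i}}^{\lambda_i}$ converges to $\int L_{m_0^\tau}\,d\mu_0^\tau$: the $L$-part by weak$^*$-convergence on $K$, the $F$-part by splitting $\int F(x,m_\tau^{\lambda_i})\,dm_\tau^{\lambda_i}$ into a piece controlled by (F\ref{MFG_F3}) and a piece controlled by weak$^*$-convergence of $m_\tau^{\lambda_i}\to m_0^\tau$.

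The remaining step is to identify the limit of $\bar{L}(\tau,m_\tau^{\lambda_i})$. I would separately record that $m\mapsto\bar{L}(\tau,m)$ is $\Lip(F)$-Lipschitz in $d_1$: inserting a minimizer for $m_1$ into \eqref{need0805} for $m_2$ gives $\bar{L}(\tau,m_2)-\bar{L}(\tau,m_1)\leq \Lip(F)\,d_1(m_1,m_2)$, and swapping the roles yields the reverse estimate. Hence $\bar{L}(\tau,m_\tau^{\lambda_i})\to\bar{L}(\tau,m_0^\tau)$, and the limit of the balanced identity becomes $\int L_{m_0^\tau}\,d\mu_0^\tau=\bar{L}(\tau,m_0^\tau)$. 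Together with the $\tau$-holonomy established above, this is exactly the minimum in \eqref{need0805} attained by $\mu_0^\tau$, so $\mu_0^\tau$ is a minimizing $\tau$-holonomic measure for $L_{m_0^\tau}$; since $m_0^\tau=\pi\sharp\mu_0^\tau$, the definition in Sect.~\ref{recall conclusions of W&I paper} identifies $m_0^\tau$ as a minimizing $\tau$-holonomic measure for MFGs~\eqref{MFG}. I anticipate no serious obstacle; the only delicate point is making the contribution $\lambda_i\int u_{\tau,m_\tau^{\lambda_i}}^{\lambda_i}\,d\mu_{\tau,m_\tau^{\lambda_i}}^{\lambda_i}$ vanish in the limit even though $u_{\tau,m_\tau^{\lambda_i}}^{\lambda_i}$ may grow like $1/\lambda_i$, which is precisely the role of subtracting $\bar{L}(\tau,m_\tau^{\lambda_i})/\lambda_i$ in Proposition~\ref{convergence1_prop1}.
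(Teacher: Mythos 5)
Your proposal is correct and follows essentially the same route as the paper: extract a weak$^*$ limit in $\PP(K)$ via Remark~\ref{remark 3.3}, pass the holonomy identity to the limit, and split $\int (L_{m_\tau^{\lambda_i}}-\lambda_i u^{\lambda_i}_{\tau,m_\tau^{\lambda_i}})\,d\mu^{\lambda_i}_{\tau,m_\tau^{\lambda_i}}=0$ into exactly the pieces the paper labels $G_1$--$G_4$, with the $\lambda_i\int u$ term controlled by the uniform bound on $u^{\lambda_i}_{\tau,m_\tau^{\lambda_i}}-\bar L(\tau,m_\tau^{\lambda_i})/\lambda_i$ from Proposition~\ref{convergence1_prop1}. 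The only addition is that you spell out the $\Lip(F)$-Lipschitz dependence of $\bar L(\tau,\cdot)$ on $m$ via \eqref{need0805}, which the paper uses in the $G_4$ estimate without separate proof; this is a correct and welcome detail.
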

\begin{proof}
By the definition of $m_\tau^{\lambda_i}$, there exists a minimizing $\tau$-holonomic $\lambda_i$-measure $\mu_{\tau, m_\tau^{\lambda_i}}^{\lambda_i}$ with $m_\tau^{\lambda_i} = \pi \sharp \mu_{\tau, m_\tau^{\lambda_i}}^{\lambda_i}$. By Remark \ref{remark 3.3}, there exists a compactly supported measure $\mu_0^\tau$ such that $\mu_{\tau, m_\tau^{\lambda_i}}^{\lambda_i} \stackrel{w^*}{\longrightarrow} \mu_0^\tau$ as $i \rightarrow \infty$. Clearly, $m_0^\tau = \pi \sharp \mu_0^\tau$.

By Remark \ref{remark 3.3}, for any $\varphi \in \CC ( \T^d )$, we always have
$$
\int_{\T^d \times \R^d} \left( \varphi(x+\tau v) - \varphi(x) \right) d \mu_0^\tau = \lim_{i \rightarrow \infty} \int_{\T^d \times \R^d} \left( \varphi(x+\tau v) - \varphi(x) \right) d \mu_{\tau, m_\tau^{\lambda_i}}^{\lambda_i} = 0,
$$
indicating that $\mu_0^\tau \in \PP_\tau ( \T^d \times \R^d )$.
Moreover,
\begin{align*}
	& \left\vert \int_{\T^d \times \R^d} L_{m_0^\tau} (x,v) d \mu_0^\tau - \bar{L} \left( \tau, m_0^\tau \right) \right\vert \\
	\leq & \left\vert \int_{\T^d \times \R^d} L_{m_0^\tau} (x,v) d \mu_0^\tau - \int_{\T^d \times \R^d} L_{m_\tau^{\lambda_i}} (x,v) d \mu_{\tau, m_\tau^{\lambda_i}}^{\lambda_i} \right\vert + \left\vert \int_{\T^d \times \R^d} \left( L_{m_\tau^{\lambda_i}} (x,v) - \lambda_i u_{\tau, m_\tau^{\lambda_i}}^{\lambda_i} \right) d \mu_{\tau, m_\tau^{\lambda_i}}^{\lambda_i} \right\vert \\
	& + \left\vert \int_{\T^d \times \R^d} \lambda_i u_{\tau, m_\tau^{\lambda_i}}^{\lambda_i} d \mu_{\tau, m_\tau^{\lambda_i}}^{\lambda_i} - \bar{L} \left( \tau, m_\tau^{\lambda_i} \right) \right\vert + \left\vert \bar{L} \left( \tau, m_\tau^{\lambda_i} \right) - \bar{L} \left( \tau, m_0^\tau \right) \right\vert \\
	\triangleq & G_1 +G_2+G_3+G_4.
\end{align*}
First of all, since $\mu_{\tau, m_\tau^{\lambda_i}}^{\lambda_i}$ is a minimizing $\tau$-holonomic $\lambda_i$-measure, $G_2 = 0$.
Since $\mu_{\tau, m_\tau^{\lambda_i}}^{\lambda_i} \stackrel{w^*}{\longrightarrow} \mu_0^\tau$,
\begin{align*}
	G_1 
	& \leq \int_{\T^d \times \R^d} \left\vert L_{m_\tau^{\lambda_i}} (x,v) - L_{m_0^\tau} (x,v) \right\vert d \mu_{\tau, m_\tau^{\lambda_i}}^{\lambda_i} + \left\vert \int_{\T^d \times \R^d} L_{m_0^\tau} (x,v) d \mu_{\tau, m_\tau^{\lambda_i}}^{\lambda_i} - \int_{\T^d \times \R^d} L_{m_0^\tau} (x,v) d \mu_0^\tau \right\vert \\
	& \leq \Lip(F) d_1 \left( m_\tau^{\lambda_i}, m_0^\tau \right) + \left\vert \int_{\T^d \times \R^d} L_{m_0^\tau} (x,v) d \mu_{\tau, m_\tau^{\lambda_i}}^{\lambda_i} - \int_{\T^d \times \R^d} L_{m_0^\tau} (x,v) d \mu_0^\tau \right\vert \rightarrow 0.
\end{align*}
Moreover, by Proposition \ref{need0813},
$$
\lim_{i \rightarrow \infty} G_3 = \lim_{i \rightarrow \infty} \lambda_i \left\vert \int_{\T^d \times \R^d} \left( u_{\tau, m_\tau^{\lambda_i}}^{\lambda_i}  - \frac{\bar{L} \left( \tau, m_\tau^{\lambda_i} \right)}{\lambda_i} \right) d \mu_{\tau, m_\tau^{\lambda_i}}^{\lambda_i} \right\vert \leq \lim_{i \rightarrow \infty} \lambda_i C = 0.
$$
Finally,
$$
\lim_{i \rightarrow \infty} G_4 \leq \lim_{i \rightarrow \infty} \Lip (F) d_1 \left( m_\tau^{\lambda_i}, m_0 \right) = 0.
$$
Thus, $\mu_0^\tau$ is a minimizing $\tau$-holonomic measure for $L_{m_0^\tau}$ with $m_0^\tau = \pi \sharp \mu_0^\tau$, implying that $m_0^\tau$ is a minimizing $\tau$-holonomic measures for MFGs \eqref{MFG}.
\end{proof}

\bigskip

\section*{Acknowledgement}
\addcontentsline{toc}{section}{Acknowledgements}
Kaizhi Wang gratefully acknowledges the hospitality and support provided by the Tianyuan Mathematical Center in Northeast China during his visit to Changchun in July 2025.
\bigskip

\section*{Statements and Declarations}
\addcontentsline{toc}{section}{Statements and Declarations}
\begin{itemize}
	\item {\bf The Data Availability Statement}. No datasets were generated or analysed
	during the current study.
	\item {\bf The Conflict of Interest Statement}. We have no conflicts of interest to disclose.
	\item {\bf Funding}. 
	Renato Iturriaga was partially supported by Conacyt Mexico grant A1-S-33854.
	Cristian Mendico was partially supported by Istituto Nazionale di Alta Matematica, INdAM-GNAMPA project 2023, and the King Abdullah University of Science and Technology (KAUST) project CRG2021-4674 ``Mean-Field Games: models, theory and computational aspects''. He also acknowledges the MIUR Excellence Department Project MatMod@TOV awarded to the Department of Mathematics, University of Rome Tor Vergata, CUP E83C23000330006. 
	Kaizhi Wang is partially supported by National Natural Science Foundation of China (Grant Nos. 12525107, 12171315).
\end{itemize}

\bigskip
\section*{Appendix}
\addcontentsline{toc}{section}{Appendix}

\subsection*{Appendix A. Proof of Lemma \ref{convergence of aubry set}} \label{proof of Lemma 4.1}
\addcontentsline{toc}{subsection}{Appendix A. Proof of Lemma \ref{convergence of aubry set}}

	Fix $\tau \in (0,1)$ and $\lambda \in (0,1]$. Denote $v_{n, x_i}^{\tau, \lambda, m_\tau^\lambda} := \frac{1}{\tau} \left( x_{n+1, x_i}^{\tau, \lambda, m_\tau^\lambda} - x_{n, x_i}^{\tau, \lambda, m_\tau^\lambda} \right)$ for any integer $n \leq 0$. By Proposition \ref{minimizer x is near}, there exists a constant $D$ such that
	$$
	\left\vert v_{n, x_i}^{\tau, \lambda, m_\tau^\lambda} \right\vert \leq D.
	$$
	We first prove that there exists a constant $C$ such that
	$$
	\left\vert v_{n, x_i}^{\tau, \lambda, m_\tau^\lambda} - v_{n-1, x_i}^{\tau, \lambda, m_\tau^\lambda} \right\vert \leq C\tau, \quad \forall n \leq 0.
	$$
	For any $n \leq -1$, we obtain that
	\begin{align*}
		u_{\tau,m_\tau^\lambda}^\lambda \left( x_{n+1, x_i}^{\tau, \lambda, m_\tau^\lambda} \right)
		& = (1-\tau\lambda) u_{\tau,m_\tau^\lambda}^\lambda \left( x_{n, x_i}^{\tau, \lambda, m_\tau^\lambda} \right) +\LL_{\tau,m_\tau^\lambda} \left( x_{n, x_i}^{\tau, \lambda, m_\tau^\lambda},  x_{n+1, x_i}^{\tau, \lambda, m_\tau^\lambda} \right) \\
		& = (1-\tau\lambda)^2 u_{\tau,m_\tau^\lambda}^\lambda \left( x_{n-1, x_i}^{\tau, \lambda, m_\tau^\lambda} \right) +(1-\tau\lambda) \LL_{\tau,m_\tau^\lambda} \left( x_{n-1, x_i}^{\tau, \lambda, m_\tau^\lambda}, x_{n, x_i}^{\tau, \lambda, m_\tau^\lambda} \right) + \LL_{\tau,m_\tau^\lambda} \left( x_{n, x_i}^{\tau, \lambda, m_\tau^\lambda},  x_{n+1, x_i}^{\tau, \lambda, m_\tau^\lambda} \right) \\
		& \leq (1-\tau\lambda) u_{\tau,m_\tau^\lambda}^\lambda \left( x \right) +\LL_{\tau,m_\tau^\lambda} \left( x,  x_{n+1, x_i}^{\tau, \lambda, m_\tau^\lambda} \right) \\
		& \leq (1-\tau\lambda)^2 u_{\tau,m_\tau^\lambda}^\lambda \left( x_{n-1, x_i}^{\tau, \lambda, m_\tau^\lambda} \right) +(1-\tau\lambda) \LL_{\tau,m_\tau^\lambda} \left( x_{n-1, x_i}^{\tau, \lambda, m_\tau^\lambda}, x \right) + \LL_{\tau,m_\tau^\lambda} \left( x,  x_{n+1, x_i}^{\tau, \lambda, m_\tau^\lambda} \right), \quad \forall x \in \R^d,
	\end{align*}
	indicating that $x_{n, x_i}^{\tau, \lambda, m_\tau^\lambda}$ satisfies
	$$
	x_{n, x_i}^{\tau, \lambda, m_\tau^\lambda} \in \argmin_{x \in \R^d} \left\{ (1-\tau\lambda) \LL_{\tau,m_\tau^\lambda} \left( x_{n-1, x_i}^{\tau, \lambda, m_\tau^\lambda}, x \right) + \LL_{\tau,m_\tau^\lambda} \left( x,  x_{n+1, x_i}^{\tau, \lambda, m_\tau^\lambda} \right)\right\}.
	$$
	Thus, we obtain that
	$$
	(1-\tau\lambda) \frac{\partial \LL_{\tau,m_\tau^\lambda}}{\partial y} \left( x_{n-1, x_i}^{\tau, \lambda, m_\tau^\lambda}, x_{n, x_i}^{\tau, \lambda, m_\tau^\lambda} \right) + \frac{\partial \LL_{\tau,m_\tau^\lambda}}{\partial x} \left( x_{n, x_i}^{\tau, \lambda, m_\tau^\lambda},  x_{n+1, x_i}^{\tau, \lambda, m_\tau^\lambda} \right) = 0,
	$$
	$$
	(1-\tau\lambda) \frac{\partial L_{m_\tau^\lambda}}{\partial v} \left( x_{n-1, x_i}^{\tau, \lambda, m_\tau^\lambda}, v_{n-1, x_i}^{\tau, \lambda, m_\tau^\lambda} \right) + \tau \frac{\partial L_{m_\tau^\lambda}}{\partial x} \left( x_{n, x_i}^{\tau, \lambda, m_\tau^\lambda}, v_{n, x_i}^{\tau, \lambda, m_\tau^\lambda} \right) - \frac{\partial L_{m_\tau^\lambda}}{\partial v}\left( x_{n, x_i}^{\tau, \lambda, m_\tau^\lambda}, v_{n, x_i}^{\tau, \lambda, m_\tau^\lambda} \right) = 0,
	$$
	$$
	\frac{1}{\tau} \left( \frac{\partial L_{m_\tau^\lambda}}{\partial v}\left( x_{n, x_i}^{\tau, \lambda, m_\tau^\lambda}, v_{n, x_i}^{\tau, \lambda, m_\tau^\lambda} \right) - \frac{\partial L_{m_\tau^\lambda}}{\partial v} \left( x_{n-1, x_i}^{\tau, \lambda, m_\tau^\lambda}, v_{n-1, x_i}^{\tau, \lambda, m_\tau^\lambda} \right) \right) = \frac{\partial L_{m_\tau^\lambda}}{\partial x} \left( x_{n, x_i}^{\tau, \lambda, m_\tau^\lambda}, v_{n, x_i}^{\tau, \lambda, m_\tau^\lambda} \right) - \lambda \frac{\partial L_{m_\tau^\lambda}}{\partial v} \left( x_{n-1, x_i}^{\tau, \lambda, m_\tau^\lambda}, v_{n-1, x_i}^{\tau, \lambda, m_\tau^\lambda} \right).
	$$
	By (L\ref{MFG_L2}), for any $x \in \R^d$ and any $\vert v \vert \leq D$, there exists a constant $\alpha (D) >0$ such that
	$$
	h^T \cdot \frac{\partial^2 L}{\partial v^2} (x,v) \cdot h \geq \alpha (D) \vert h \vert^2,\quad \forall h \in \R^d.
	$$
	On the one hand, we obtain that
	\begin{align*}
		& \int_0^1 \frac{d}{dt} \left( \frac{\partial L_{m_\tau^\lambda}}{\partial v} \left( x_{n-1, x_i}^{\tau, \lambda, m_\tau^\lambda} + t \left( x_{n, x_i}^{\tau, \lambda, m_\tau^\lambda} - x_{n-1, x_i}^{\tau, \lambda, m_\tau^\lambda} \right), v_{n-1, x_i}^{\tau, \lambda, m_\tau^\lambda} + t \left( v_{n, x_i}^{\tau, \lambda, m_\tau^\lambda} - v_{n-1, x_i}^{\tau, \lambda, m_\tau^\lambda} \right) \right) \right) dt \\
		= & \frac{\partial L_{m_\tau^\lambda}}{\partial v} \left( x_{n, x_i}^{\tau, \lambda, m_\tau^\lambda}, v_{n, x_i}^{\tau, \lambda, m_\tau^\lambda} \right) - \frac{\partial L_{m_\tau^\lambda}}{\partial v} \left( x_{n-1, x_i}^{\tau, \lambda, m_\tau^\lambda}, v_{n-1, x_i}^{\tau, \lambda, m_\tau^\lambda} \right) \\
		= & \tau \frac{\partial L_{m_\tau^\lambda}}{\partial x} \left( x_{n, x_i}^{\tau, \lambda, m_\tau^\lambda}, v_{n, x_i}^{\tau, \lambda, m_\tau^\lambda} \right) - \tau \lambda \frac{\partial L_{m_\tau^\lambda}}{\partial v} \left( x_{n-1, x_i}^{\tau, \lambda, m_\tau^\lambda}, v_{n-1, x_i}^{\tau, \lambda, m_\tau^\lambda} \right).
	\end{align*}
	On the other hand,
	\begin{align*}
		& \int_0^1 \frac{d}{dt} \left( \frac{\partial L_{m_\tau^\lambda}}{\partial v} \left( x_{n-1, x_i}^{\tau, \lambda, m_\tau^\lambda} + t \left( x_{n, x_i}^{\tau, \lambda, m_\tau^\lambda} - x_{n-1, x_i}^{\tau, \lambda, m_\tau^\lambda} \right), v_{n-1, x_i}^{\tau, \lambda, m_\tau^\lambda} + t \left( v_{n, x_i}^{\tau, \lambda, m_\tau^\lambda} - v_{n-1, x_i}^{\tau, \lambda, m_\tau^\lambda} \right) \right) \right) dt \\
		= & \int_0^1 \frac{\partial^2 L_{m_\tau^\lambda}}{\partial x \partial v} \left( x_{n-1, x_i}^{\tau, \lambda, m_\tau^\lambda} + t \left( x_{n, x_i}^{\tau, \lambda, m_\tau^\lambda} - x_{n-1, x_i}^{\tau, \lambda, m_\tau^\lambda} \right), v_{n-1, x_i}^{\tau, \lambda, m_\tau^\lambda} + t \left( v_{n, x_i}^{\tau, \lambda, m_\tau^\lambda} - v_{n-1, x_i}^{\tau, \lambda, m_\tau^\lambda} \right) \right) \cdot \left( x_{n, x_i}^{\tau, \lambda, m_\tau^\lambda} - x_{n-1, x_i}^{\tau, \lambda, m_\tau^\lambda} \right) \\
		& + \frac{\partial^2 L_{m_\tau^\lambda}}{\partial v^2} \left( x_{n-1, x_i}^{\tau, \lambda, m_\tau^\lambda} + t \left( x_{n, x_i}^{\tau, \lambda, m_\tau^\lambda} - x_{n-1, x_i}^{\tau, \lambda, m_\tau^\lambda} \right), v_{n-1, x_i}^{\tau, \lambda, m_\tau^\lambda} + t \left( v_{n, x_i}^{\tau, \lambda, m_\tau^\lambda} - v_{n-1, x_i}^{\tau, \lambda, m_\tau^\lambda} \right) \right) \cdot \left( v_{n, x_i}^{\tau, \lambda, m_\tau^\lambda} - v_{n-1, x_i}^{\tau, \lambda, m_\tau^\lambda} \right) dt \\
		\geq & \left( \inf_{x \in \T^d, \vert v \vert \leq D} \frac{\partial^2 L_{m_\tau^\lambda}}{\partial x \partial v}(x,v) \right) \cdot \left( x_{n, x_i}^{\tau, \lambda, m_\tau^\lambda} - x_{n-1, x_i}^{\tau, \lambda, m_\tau^\lambda} \right) + \left( \inf_{x \in \T^d, \vert v \vert \leq D} \frac{\partial^2 L_{m_\tau^\lambda}}{\partial v^2}(x,v) \right) \cdot \left( v_{n, x_i}^{\tau, \lambda, m_\tau^\lambda} - v_{n-1, x_i}^{\tau, \lambda, m_\tau^\lambda} \right) .
	\end{align*}
	By $\frac{\partial L_{m_\tau^\lambda}}{\partial v} = \frac{\partial L}{\partial v}$, we obtain that
	\begin{align*}
		\alpha (D) \left\vert v_{n, x_i}^{\tau, \lambda, m_\tau^\lambda} - v_{n-1, x_i}^{\tau, \lambda, m_\tau^\lambda} \right\vert
		\leq & \sup_{x \in \T^d, \vert v \vert \leq D} \left\vert \frac{\partial^2 L}{\partial x \partial v} (x,v) \right\vert \cdot \left\vert x_{n, x_i}^{\tau, \lambda, m_\tau^\lambda} - x_{n-1, x_i}^{\tau, \lambda, m_\tau^\lambda} \right\vert \\
		& +\tau \sup_{x \in \T^d, \vert v \vert \leq D} \left\vert \frac{\partial L}{\partial x} (x,v) \right\vert + \tau F_\infty + \tau \lambda \sup_{x \in \T^d, \vert v \vert \leq D} \left\vert \frac{\partial L}{\partial v} (x,v) \right\vert.
	\end{align*}
	By $\left\vert x_{n, x_i}^{\tau, \lambda, m_\tau^\lambda} - x_{n-1, x_i}^{\tau, \lambda, m_\tau^\lambda} \right\vert \leq \tau D$ and (L\ref{MFG_L1}), there exists a constant $C$ such that
	$$
	\left\vert v_{n, x_i}^{\tau, \lambda, m_\tau^\lambda} - v_{n-1, x_i}^{\tau, \lambda, m_\tau^\lambda} \right\vert \leq \tau C, \quad \forall n \leq 0.
	$$

	Let $\gamma_{\tau,m_\tau^\lambda}^{\lambda, x_i} : (-\infty, 0] \rightarrow \R^d$ be the piecewise affine path interpolating the points $x_{n, x_i}^{\tau, \lambda, m_\tau^\lambda}$ at time $n \tau$.
	For any $s < t \leq 0$, if there exists $n \leq 0$ such that $s,t \in ((n-1)\tau, n \tau]$, then $\left\vert \gamma_{\tau,m_\tau^\lambda}^{\lambda, x_i} (t)- \gamma_{\tau,m_\tau^\lambda}^{\lambda, x_i}(s) \right\vert \leq (t-s) D$. Otherwise, there exist integers $n_1 \leq n_2$ such that $s \in ((n_1-1)\tau, n_1 \tau]$ and $t \in ((n_2-1)\tau, n_2 \tau]$. In this case,
	\begin{align*}
		\left\vert \gamma_{\tau,m_\tau^\lambda}^{\lambda, x_i} (t)- \gamma_{\tau,m_\tau^\lambda}^{\lambda, x_i}(s) \right\vert
		& \leq \left\vert \gamma_{\tau,m_\tau^\lambda}^{\lambda, x_i} (t)- x_{n_2 - 1, x_i}^{\tau, \lambda, m_\tau^\lambda} \right\vert + \left\vert x_{n_2 - 1, x_i}^{\tau, \lambda, m_\tau^\lambda} - x_{n_2 - 2, x_i}^{\tau, \lambda, m_\tau^\lambda} \right\vert + \cdots \left\vert x_{n_1, x_i}^{\tau, \lambda, m_\tau^\lambda} - \gamma_{\tau,m_\tau^\lambda}^{\lambda, x_i}(s) \right\vert \\
		& \leq D \left( t - (n_2 - 1)\tau + (n_2 - 1)\tau - (n_2 - 2)\tau + \cdots + n_1\tau - s \right) = D(t-s).
	\end{align*}
	Above all, the curve $\gamma_{\tau,m_\tau^\lambda}^{\lambda, x_i}$ is Lipschitz with Lipschitz constant $D$.
	Thus, taking a subsequence if necessary, we obtain that $\gamma_{\tau_i ,m_{\tau_i}^\lambda}^{\lambda, x_i} \rightarrow \gamma_0$ uniformly on any compact interval of $(-\infty, 0]$ for some curve $\gamma_0$ satisfying $\gamma_0(0)=x_0$.
	
	We claim that there exists a Lipschitz function $V : (-\infty, 0] \rightarrow \R^d$ such that
	$$
	\int_t^0 V(s) ds = x_0 - \gamma_0 (t), \quad \forall t \leq 0.
	$$
	Let $T \subset (-\infty, 0]$ be a countable dense subset. Define a map $V_i : (-\infty, 0) \rightarrow \R^d$ by
	$$
	V_i (t) := \frac{1}{\tau_i} \left( x_{n, x_i}^{\tau_i, \lambda, m_{\tau_i}^\lambda} - x_{n-1, x_i}^{\tau_i, \lambda, m_{\tau_i}^\lambda} \right),\quad \forall t \in [(n-1)\tau_i, n \tau_i),\,\, \forall n\leq 0.
	$$
	Since $\left\vert V_i (t) \right\vert \leq D$ for any $i \in \N$ and any $t \leq 0$, by taking a subsequence if necessary, we define $\tilde{V}$ by $V_i (t) \rightarrow \tilde{V}(t)$ for any $t \in T$.
	For any $s < t <0$, there exist integers $n_1 < n_2$ such that $s \in [(n_1-1)\tau_i, n_1 \tau_i)$ and $t \in [(n_2-1)\tau_i, n_2 \tau_i)$. Thus,
	$$
	\left\vert V_i (t) - V_i (s) \right\vert = \left\vert v_{n_2 -1, x_i}^{\tau_i, \lambda, m_{\tau_i}^\lambda} - v_{n_1 -1, x_i}^{\tau_i, \lambda, m_{\tau_i}^\lambda} \right\vert \leq (n_2 - n_1)\tau_i C \leq (t-s) C + \tau_i C.
	$$
	Let $i$ tend to infinity. We obtain that
	$$
	\left\vert \tilde{V} (t) - \tilde{V} (s) \right\vert \leq (t-s) C, \quad \forall s,t \in T.
	$$
	Let $V : (-\infty, 0) \rightarrow \R^d$ be the unique Lipschitz extension of $\tilde{V}: T \rightarrow \R^d$. Then $V_i(t) \rightarrow V (t)$ for every $t \in (-\infty, 0)$. Since
	$$
	\int_t^0 V_i(s) ds = x_i - \gamma_{\tau_i , m_{\tau_i}^\lambda}^{\lambda, x_i} (t), \quad \forall t<0,
	$$
	the claim is established. Then the curve $\gamma_0$ is of class $\CC^1$, and its derivative is Lipschitz with Lipschitz constant $C$. Moreover, by the dominated convergence theorem, it is clear that $\dot{\gamma}_{\tau_i, m_{\tau_i}^\lambda}^{\lambda, x_i} \rightarrow \dot{\gamma}_0$ in the $L^1$-norm on every compact subset of $(-\infty, 0]$.

We claim that
\begin{equation} \label{need0705}
	u_0^\lambda (x_0) - e^{\lambda t} u_0^\lambda \left( \gamma_0 (t) \right) = \int_t^0 e^{\lambda s} L_{m_0^\lambda} \left( \gamma_0 (s), \dot{\gamma}_0 (s) \right) ds, \quad \forall t \leq 0.
\end{equation}
For any $n \leq 0$, it is clear that
\begin{equation} \label{need0105}
	u_{\tau_i,m_{\tau_i}^\lambda}^\lambda (x_i) = \left( 1 - \tau_i \lambda \right)^{-n} u_{\tau_i,m_{\tau_i}^\lambda}^\lambda \left( \gamma_{\tau_i,m_{\tau_i}^\lambda}^{\lambda, x_i} (n \tau_i) \right) + \Sigma_{k = n}^{-1} \left( 1- \tau_i \lambda \right)^{-k-1} \tau_i L_{m_{\tau_i}^\lambda} \left( \gamma_{\tau_i, m_{\tau_i}^\lambda}^{\lambda, x_i} (k \tau_i), V_i (k \tau_i) \right).
\end{equation}
For any $t <0$, there exists an integer $n \leq 0$ such that $(n-1) \tau_i \leq t < n \tau_i$. Then
$$
I := \left\vert \Sigma_{k=n}^{-1} \left( 1 - \tau_i \lambda \right)^{-k-1} \tau_i L_{m_{\tau_i}^\lambda} \left( \gamma_{\tau_i, m_{\tau_i}^\lambda}^{\lambda, x_i} (k \tau_i), V_i (k \tau_i) \right) - \int_{n \tau_i} ^0 e^{\lambda s} L_{m_{\tau_i}^\lambda} \left( \gamma_{\tau_i, m_{\tau_i}^\lambda}^{\lambda, x_i} (s), \dot{\gamma}_{\tau_i, m_{\tau_i}^\lambda}^{\lambda, x_i} (s) \right) ds \right\vert \leq I_1 + I_2 + I_3,
$$
where
\begin{align*}
	I_1
	& = \Sigma_{k=n}^{-1} \left( 1 - \tau_i \lambda \right)^{-k-1} \int_{k \tau_i}^{(k+1) \tau_i} \left\vert L_{m_{\tau_i}^\lambda} \left( \gamma_{\tau_i, m_{\tau_i}^\lambda}^{\lambda, x_i} (k \tau_i), V_i (k \tau_i) \right) - L_{m_{\tau_i}^\lambda} \left( \gamma_{\tau_i, m_{\tau_i}^\lambda}^{\lambda, x_i} (s), \dot{\gamma}_{\tau_i, m_{\tau_i}^\lambda}^{\lambda, x_i} (s) \right) \right\vert ds \\
	& \leq \left( \sup_{x \in \T^d, \vert v \vert \leq D} \left\vert \frac{\partial L}{\partial x} (x,v) \right\vert + F_\infty \right) D \tau_i \frac{1- \left( 1 - \tau_i \lambda \right)^{-n}}{1-1+ \tau_i \lambda} \tau_i \\
	& \leq \left( \sup_{x \in \T^d, \vert v \vert \leq D} \left\vert \frac{\partial L}{\partial x} (x,v) \right\vert + F_\infty \right) D \frac{\tau_i}{\lambda}, \\
	I_2
	& \leq \Sigma_{k=n}^{-1} \left( \left( 1 - \tau_i \lambda \right)^{-k-1} - \left( 1 - \tau_i \lambda \right)^{-k} \right) \int_{k \tau_i}^{(k+1) \tau_i} \left\vert L_{m_{\tau_i}^\lambda} \left( \gamma_{\tau_i, m_{\tau_i}^\lambda}^{\lambda, x_i} (s), \dot{\gamma}_{\tau_i, m_{\tau_i}^\lambda}^{\lambda, x_i} (s) \right) \right\vert ds \\
	& \leq \left( \sup_{x \in \T^d, \vert v \vert \leq D} \left\vert L (x,v) \right\vert + F_\infty \right) \tau_i \left( 1 - \left( 1 - \tau_i \lambda \right)^{-n} \right) \leq \tau_i \left( \sup_{x \in \T^d, \vert v \vert \leq D} \left\vert L (x,v) \right\vert + F_\infty \right), \\
	I_3
	& = \Sigma_{k=n}^{-1} \int_{k \tau_i}^{(k+1) \tau_i} \left( e^{\lambda s} - \left( 1 - \tau_i \lambda \right)^{-k} \right) \left\vert L_{m_{\tau_i}^\lambda} \left( \gamma_{\tau_i, m_{\tau_i}^\lambda}^{\lambda, x_i} (s), \dot{\gamma}_{\tau_i, m_{\tau_i}^\lambda}^{\lambda, x_i} (s) \right) \right\vert ds \\
	& \leq \left( \sup_{x \in \T^d, \vert v \vert \leq D} \left\vert L (x,v) \right\vert + F_\infty \right) \left( \int_{n \tau_i}^0 e^{\lambda s} ds - \tau_i \Sigma_{k=n}^{-1} \left( 1 - \tau_i \lambda \right)^{-k} \right) \\
	& \leq \left( \sup_{x \in \T^d, \vert v \vert \leq D} \left\vert L (x,v) \right\vert + F_\infty \right) \tau_i.
\end{align*}
In conclusion, we obtain that
$$
I \leq D \left( \sup_{x \in \T^d, \vert v \vert \leq D} \left\vert \frac{\partial L}{\partial x} (x,v) \right\vert + F_\infty \right) \frac{\tau_i}{\lambda} + 2 \left( \sup_{x \in \T^d, \vert v \vert \leq D} \left\vert L (x,v) \right\vert + F_\infty \right) \tau_i,
$$
indicating that $\lim_{i \rightarrow \infty} I = 0$.
Besides, for any $t \leq 0$,
\begin{align*}
	& \left\vert \int_t^0 e^{\lambda s} \left( L_{m_0^\lambda} \left( \gamma_0(s), \dot{\gamma}_0(s) \right) - L_{m_{\tau_i}^\lambda} \left( \gamma_{\tau_i, m_{\tau_i}^\lambda}^{\lambda, x_i} (s), \dot{\gamma}_{\tau_i, m_{\tau_i}^\lambda}^{\lambda, x_i} (s) \right) \right) ds \right\vert \\
	\leq & \int_t^0 e^{\lambda s} \left\vert L_{m_0^\lambda} \left( \gamma_0(s), \dot{\gamma}_0(s) \right) - L_{m_0^\lambda} \left( \gamma_{\tau_i, m_{\tau_i}^\lambda}^{\lambda, x_i} (s), \dot{\gamma}_{\tau_i, m_{\tau_i}^\lambda}^{\lambda, x_i} (s) \right) \right\vert ds \\
	& + \int_t^0 e^{\lambda s} \left\vert L_{m_0^\lambda} \left( \gamma_{\tau_i, m_{\tau_i}^\lambda}^{\lambda, x_i} (s), \dot{\gamma}_{\tau_i, m_{\tau_i}^\lambda}^{\lambda, x_i} (s) \right) - L_{m_{\tau_i}^\lambda} \left( \gamma_{\tau_i, m_{\tau_i}^\lambda}^{\lambda, x_i} (s), \dot{\gamma}_{\tau_i, m_{\tau_i}^\lambda}^{\lambda, x_i} (s) \right) \right\vert ds \\
	& \leq -t \left( \sup_{x \in \T^d, \vert v \vert \leq C+ D} \left\vert \frac{\partial L}{\partial x} (x,v) \right\vert + F_\infty \right) \left( \sup_{s \in (t,0)} \left\vert \gamma_0 (s) - \gamma_{\tau_i , m_{\tau_i}^\lambda}^{\lambda, x_i} (s) \right\vert \right) \\
	&  + \left( \sup_{x \in \T^d, \vert v \vert \leq C+D} \left\vert \frac{\partial L}{\partial v} (x,v) \right\vert \right) \int_t^0  \left\vert \dot{\gamma}_0 (s) - \dot{\gamma}_{\tau_i , m_{\tau_i}^\lambda}^{\lambda, x_i} (s) \right\vert ds - t \Lip(F) d_1 (m_0^\lambda, m_{\tau_i}^\lambda) \rightarrow 0.
\end{align*}
Thus, \eqref{need0705} is established by (\ref{need0105}).
Thus, the minimizing curve $\gamma_0$ is of class $\CC^2$. The proof is finished by the uniqueness of the solution to (\ref{DMFG}a).

\subsection*{Appendix B. An example: the non-uniqueness of solutions to \eqref{DMFG}}
\addcontentsline{toc}{subsection}{Appendix B. An example: the non-uniqueness of solutions to \eqref{DMFG}}

Fix $\lambda>0$. Consider the following example
\begin{equation} \label{counterexample}
	\begin{cases}
		\lambda u + \frac{1}{2} \left\vert Du(x) \right\vert ^2= f(m) + g(x), & x \in \T^d
		\\
		\ddiv \left( m  Du(x) \right) = 0, & x \in \T^d 
		\\
		\int_{\T^d} m \, dx =1, 
	\end{cases}
\end{equation}
where  $f: \PP \left( \T^d \right) \rightarrow \R$ is Lipschitz and $g: \T^d \rightarrow \R$ is of class $\CC^2$. It is clear that $\argmin_{x \in \T^d} g(x)$ is non-empty.
Assume that $\argmin_{x \in \T^d} g(x)$ is not a singleton.

For any $m\in\PP(\T^d)$,
$$
L_m (x,v) = \frac{1}{2} \left\vert v \right\vert ^2 + f(m) + g(x), \quad \forall (x,v)\in \mathbb{T}^d\times\mathbb{R}^d.
$$
Take a point $x_0 \in \argmin_{x \in \T^d} g(x)$. Note that the curve $\gamma_0: t \mapsto x_0$ for $t \in \R$ satisfies the Euler-Lagrange equation \eqref{lagrangian flow of DMFG}
$$
\frac{d}{dt} \left( e^{\lambda t} \frac{\partial L_m}{\partial v} \left( \gamma_0(t), \dot{\gamma}_0(t) \right)  \right) = e^{\lambda t} \frac{\partial L_m}{\partial x}\left( \gamma_0 (t), \dot{\gamma}_0 (t) \right),
$$
i.e., $\phi^t_{L_m, \lambda} (x_0, 0) = (x_0, 0)$ for all $t \in \R$.
Let $\mu_0 := \delta_{(x_0, 0)}$, where $\delta_{(x_0, 0)}$ is the atomic measure supported on $(x_0, 0)$. 

 By the definition of $\mu_0$, it is clear that $\mu_0 \in \K \left( \T^d \times \R^d \right)$. 
By \eqref{equation of solution of HJE_MFG}, we obtain that
$$
u^\lambda_m (x_0) \leq \int_{-\infty}^0 e^{\lambda s} L_m \left( \gamma_0(s), \dot{\gamma}_0 (s) \right) ds = \frac{f(m) + g(x_0)}{\lambda}.
$$
On the other hand, for any absolutely continuous curve $\gamma: (-\infty, 0] \rightarrow \T^d$ with $\gamma(0) = x_0$, there holds
$$
L_m (\gamma(s), \dot{\gamma}(s)) = \frac{1}{2} \left\vert \dot{\gamma} (s) \right\vert^2 +f(m) + g \left( \gamma(s) \right)  \geq f(m) + g(x_0) , \quad \forall s \leq 0.
$$
Therefore, 
$$
u^\lambda_m (x_0) \geq \left( \int_{-\infty}^0 e^{\lambda s} ds\right)  \left( f(m) + g(x_0) \right)  = \frac{f(m) + g(x_0) }{\lambda}.
$$
Consequently, we have $u^\lambda_m (x_0) = \frac{f(m) + g(x_0) }{\lambda}$.
Thus, for any $t, t^\prime \in \R$, there holds
$$
e^{\lambda t} u^\lambda_m \left( \gamma_0 (t) \right) - e^{\lambda t^\prime} u^\lambda_m \left( \gamma_0 (t^\prime) \right) = \frac{e^{\lambda t} - e^{\lambda t^\prime}}{\lambda} \left( f(m) + g(x_0) \right)  = \int_{t^\prime}^t e^{\lambda s} L_m \left( \gamma_0(s), \dot{\gamma}_0(s) \right) ds.
$$
Thus, the curve $\gamma_0$ is a $(u^\lambda_m, L_m)$-$\lambda$-calibrated curve. We conclude that $\supp(\mu_0) = \{(x_0, 0)\} \subset \tilde{\A}_{L_m, \lambda}$.
So far, we have proved that $\mu_0$ is a minimizing $\lambda$-measure for $L_m$. 

Let $m_0 = \delta_{x_0}$. By the same argument we used for $m$, one can deduce that $\mu_0$ is also a minimizing $\lambda$-measure for $L_{m_0}$. 
Define $u_0 = u^\lambda_{m_0}$ as in \eqref{equation of solution of HJE_MFG}.
Therefore, $(u_0,m_0)$ is a solution to \eqref{counterexample} by the definition of $u_0$ and the same method used in the proof of  Proposition \ref{m_0 is solution of continuity equation}.

More precisely, for any $x \in \A_{L_m, \lambda}$ and any $v \in \R^d$, we have
$$
L_{m_0} (x, v) + H_{m_0} \left( x, D u_0 (x)\right) = L_{m_0} (x, v) - \lambda u_0 (x).
$$
Integrating both sides with respect to $\mu_0$, we obtain that
$$
\int_{\T^d \times \R^d} \left(  L_{m_0} (x, v) + H_{m_0} \left( x, D u_0 (x)\right) \right) d \mu_0 =0.
$$
Thus, we obtain that
$$
L_{m_0} (x_0, 0) + H_{m_0} \left( x_0, D u_0 (x_0)\right) =0,
$$
indicating that $D u_0 (x_0) = 0$. Therefore,
$m_0$ is a solution of $\ddiv\left( m Du_0(x) \right) = 0$ in the sense of distributions. And thus $(u_0,m_0)$ is a solution to \eqref{counterexample}.

Take $x_1$, $x_2 \in \argmin_{x \in \T^d} g(x)$ with $x_1 \neq x_2$. Let $m_1 = \delta_{x_1}$ and $m_2 = \delta_{x_2}$. Define $u_1 = u^\lambda_{m_1}$ and $u_2 = u^\lambda_{m_2}$ as in \eqref{equation of solution of HJE_MFG}. Then we conclude that $(u_1, m_1)$ and $(u_2, m_2)$ are two distinct solutions to \eqref{counterexample}.

\bigskip







\bibliographystyle{plain}
\bibliography{references}

\end{document}